
\documentclass[11pt]{article}
\usepackage{amsthm,amsmath,amssymb}
\usepackage{natbib}
\usepackage{multirow}
\usepackage[pdftex]{graphicx}
\usepackage{subfigure}
\usepackage{makecell}
\usepackage{booktabs}
\usepackage{array}
\usepackage{fullpage}
\usepackage{url}
\usepackage{algorithm}
\usepackage{algorithmic}
\usepackage{bm}
\usepackage{smile}
\usepackage{mathtools}
\usepackage{wrapfig}
\usepackage{lipsum}
\usepackage{mathrsfs}
\usepackage{dsfont}
\usepackage{booktabs}
\usepackage{natbib}
\usepackage{multirow}
\usepackage{appendix}
\usepackage[usenames,dvipsnames,svgnames,table]{xcolor}
\usepackage[colorlinks=true,linkcolor=blue,urlcolor=blue, citecolor=blue]{hyperref}

\setcounter{MaxMatrixCols}{10}

\hfuzz=5.002pt
\newcolumntype {Q}{>{$\displaystyle}l<{$}}
\newcolumntype {A}{>{$}c <{$}}

\def\tr{\mathop{\text{tr}}\kern.2ex}

\def\P{{\mathbb P}}
\def\E{{\mathbb E}}

\long
\def\comment#1{}

\def\tr{\mathop{\text{Tr}}}

\providecommand{\norm}[1]{\vvvert#1\vvvert}

\def\reals{{\mathbb{R}}}
\def\R{{\reals}}

\newtheorem{assumption}{Assumption}
\newtheorem{condition}{Condition}

\newcommand{\blue}{}

\newcommand{\hf}[1]{{{#1}}}

\theoremstyle{definition}

\numberwithin{equation}{section}
\numberwithin{theorem}{section}
\numberwithin{corollary}{section}
\numberwithin{asmp}{section}
\numberwithin{definition}{section}

\linespread{1.09}
\def\E{\mathbb{E}}
\def \U{\mathbb{U}}
\def\P{\mathbb{P}}
\def\Q{\mathbb{Q}}
\def\Sm{\mathbb{S}}
\def\S{\mathcal{S}}
\def\T{\mathbb{T}}
\def \U{\mathbb{U}}

\def\btheta{\bm\theta}

\def\bbeta{\bm\beta}

\def\bgamma{\bm{\gamma}}
\def\bwn{\bm{W}_n}
\def\btn{\hat{\bm{t}}_n}
\def\btsn{\bm{t}^*_n}
\def\Deltab{\mathbf{\Delta}}
\newcommand{\xlast}{\tilde{\bx}}
\newcommand{\Xlast}{\tilde{\bX}}
\newcommand{\indic}{\mathds{1}}
\newcommand{\sfH}{\mathsf{H}}
\newcommand{\sfC}{\mathsf{C}}
\newcommand{\sfK}{\mathsf{K}}
\newcommand{\sfA}{\mathsf{A}}
\newcommand{\sfc}{\mathsf{c}}
\def\F{\cF}
\newcommand*\diff{\mathop{}\!\mathrm{d}}
\allowdisplaybreaks
\linespread{1.14}

\begin{document}

\title{{\LARGE On Rank Estimators in Increasing Dimensions}}
\author{Yanqin Fan\thanks{%
Department of Economics, University of Washington, Seattle, WA 98195, USA;
email: \texttt{fany88@uw.edu}.}~\thanks{Corresponding author}, ~Fang Han\thanks{%
Department of Statistics, University of Washington, Seattle, WA 98195, USA;
e-mail: \texttt{fanghan@uw.edu}.}, ~Wei Li\thanks{%
School of Mathematical Sciences, Peking University, Beijing 100871, China;
e-mail: \texttt{weylpeking@pku.edu.cn}.},~~and Xiao-Hua Zhou\thanks{%
Department of Biostatistics, University of Washington, Seattle, WA 98195,
USA; e-mail: \texttt{azhou@uw.edu}.}~\thanks{%
International Center for Mathematical Research, Peking University, Beijing,
China.}}
\maketitle

\begin{abstract}
The family of rank estimators, including Han's maximum rank correlation %
\citep{han1987non} as a notable example, has been widely exploited in
studying regression problems. 
For these estimators, although the linear index is introduced for
alleviating the impact of dimensionality, the effect of large dimension on
inference is rarely studied. This paper fills this gap via studying the
statistical properties of a larger family of M-estimators, whose objective
functions are formulated as U-processes and may be discontinuous in
increasing dimension set-up where the number of parameters, $p_{n}$, in the
model is allowed to increase with the sample size, $n$. First, we find that
often in estimation, as $p_{n}/n\rightarrow 0$, $(p_{n}/n)^{1/2}$ rate of
convergence is obtainable. Second, we establish Bahadur-type bounds and
study the validity of normal approximation, which we find often requires a
much stronger scaling requirement than $p_{n}^{2}/n\rightarrow 0$. Third, we
state conditions under which the numerical derivative estimator of
asymptotic covariance matrix is consistent, and show that the step size in
implementing the covariance estimator has to be adjusted with respect to $%
p_{n}$. 
All theoretical results are further backed up by simulation studies. 
\end{abstract}

\setlength{\abovedisplayskip}{5pt} \setlength{\belowdisplayskip}{5pt} %
\setlength{\abovedisplayshortskip}{5pt} \setlength{%
\belowdisplayshortskip}{5pt}

\setlength{\abovedisplayskip}{5pt} \setlength{\belowdisplayskip}{5pt} %
\setlength{\abovedisplayshortskip}{5pt} \setlength{%
\belowdisplayshortskip}{5pt}

\setlength{\abovedisplayskip}{5pt} \setlength{\belowdisplayskip}{5pt} %
\setlength{\abovedisplayshortskip}{5pt} \setlength{%
\belowdisplayshortskip}{5pt}

\setlength{\abovedisplayskip}{5pt} \setlength{\belowdisplayskip}{5pt} %
\setlength{\abovedisplayshortskip}{5pt} \setlength{%
\belowdisplayshortskip}{5pt}

\setlength{\abovedisplayskip}{5pt} \setlength{\belowdisplayskip}{5pt} %
\setlength{\abovedisplayshortskip}{5pt} \setlength{%
\belowdisplayshortskip}{5pt}

{Keywords:} Bahadur-type bounds, degenerate U-processes, maximal
inequalities, uniform bounds.

{JEL Codes}: C55, C14.

\newpage

\section{Introduction}

\label{sec:intro}

\subsection{The General Set-up, Motivation, and Main Results}

Let $\bZ_{1},\ldots ,\bZ_{n}\in {\mathbb{R}}^{m_{n}}$ denote a random sample
of size $n$ from the probability measure $\mathbb{P}$. Let $\cF:=\{f(\cdot
,\cdot ;\btheta ):\btheta \in \Theta \subset {\mathbb{R}}^{p_{n}}\}$ be
a class of real-valued, possibly asymmetric and \textit{discontinuous},
functions on ${\mathbb{R}}^{m_{n}}\times {\mathbb{R}}^{m_{n}}$. This paper
studies the following M-estimator with an objective function of a U-process
structure, 
\begin{equation}
\hat{\btheta }_{n}:=\argmax_{\btheta \in \Theta }\Gamma _{n}(\btheta )=%
\argmax_{\btheta \in \Theta }\frac{1}{n(n-1)}\sum_{i\neq j=1}^{n}f(\bZ_{i},%
\bZ_{j};\btheta ).  \label{eq:U}
\end{equation}%
Let 
\begin{equation*}
\btheta _{0}:=\argmax_{\btheta \in \Theta }\Gamma (\btheta )=\argmax_{%
\btheta \in \Theta }\mathbb{E}\Gamma _{n}(\btheta ).
\end{equation*}%
This paper aims to establish asymptotic properties of $\hat{\btheta }_{n}$
as an estimator of $\btheta _{0}$ in situations with large or increasing
dimensions $m_{n}\rightarrow \infty $ and $p_{n}\rightarrow \infty $ (with
respect to the sample size $n$), {to} which existing results do not apply.

Members of \eqref{eq:U} include the following notable examples proposed and
studied in the current literature in fixed dimension, i.e., $m_{n}\equiv m$
and $p_{n}\equiv p$ for all $n$: (1) Han's maximum rank correlation (MRC)
estimator for the generalized regression model \citep{han1987non}; (2)
Cavanagh and Sherman's rank estimator for the same model as Han's %
\citep{cavanagh1998rank}; (3) Khan and Tamer's rank estimator for the
semiparametric censored duration model \citep{khan2007partial}; and (4)
Abrevaya and Shin's rank estimator for the generalized partially linear
index model \citep{abrevaya2011rank}. One common feature of these models is
the presence of a linear index of the form $\bx^{\top }\btheta $, where $%
\bx$ represents covariates of dimension $p$ which is typically large in many
economic applications. The linear index structure is introduced to alleviate
the \textquotedblleft curse of dimensionality\textquotedblright\ associated
with fully nonparametric models. Although motivated by possibly large
dimension $p$, properties of $\hat{\btheta }_{n}$ in these examples have
only been established for fixed $p$ when $n$ approaches {infinity} (i.e., $p$
does not change with $n$). Instead, this paper models the large $p$ case by
allowing $p$ to go to infinity as $n\rightarrow \infty $, denoted as $p_{n}$%
, facilitating an explicit characterization of the effect of dimensionality
on inference in these models.

More broadly, for the general set-up \eqref{eq:U}, we allow both $m_{n}$ and 
$p_{n}$ to go to infinity as $n\rightarrow \infty $ and establish the
following properties of $\hat{\btheta }_{n}$: (i) consistency; (ii) rate
of convergence; (iii) normal approximation; and (iv) accuracy of normal
approximation. The last property is also referred to as the
\textquotedblleft Bahadur-Kiefer representation\textquotedblright\ or {simply%
} the \textquotedblleft Bahadur-type bound\textquotedblright\ %
\citep{bahadur1966note,kiefer1967bahadur, he1996general}, and is the major
focus of this paper. 
Specifically, in Theorems \ref{thm: consistency}, \ref{thm: general consrate}%
, and \ref{thm:generalASN}, under different scaling requirements for $n$, $%
p_{n}$, and $\nu _{n}$, where $\nu _{n}$ characterizes the function
complexity of $\cF$, we prove consistency, efficient rate of convergence,
and derive Bahadur-type bounds for the general M-estimator $\hat{\btheta }%
_{n}$ of the form \eqref{eq:U}. 
To facilitate inference, we {construct consistent estimators of the
asymptotic covariance matrix of }$\hat{\btheta }_{n}$ {similar to the
numerical derivative estimators in \cite{pakes1989simulation}, \cite%
{sherman1993limiting}, and \cite{khan2007partial}. }The increasing dimension
set-up in this paper reveals that for consistent variance-covariance matrix
estimation, the step size in computing the numerical derivative should
depend not only on the sample size $n$ but also the dimensions $m_{n}$ and $%
p_{n}$.

To provide further insight on the role of the dimension $p_{n}$, we apply
our general results, Bahadur-type bounds especially, to the aforementioned
rank estimators (1)-(4). Note that for these estimators $\nu _{n}=m_{n}=p_{n}
$. Corollaries \ref{cor:H}--\ref{cor:A} provide sufficient conditions to
guarantee consistency, efficient rate of convergence, and asymptotic
normality (ASN) of the rank correlation estimators in increasing dimension.
They demonstrate that, compared to competing alternatives such as simple
linear regression, in terms of estimation, rank estimators are very
appealing, maintaining the minimax optimal $(p_{n}/n)^{1/2}$ rates %
\citep{yu1997assouad}, while enjoying an additional robustness property to
outliers and modeling assumptions. With regard to normal approximation, on
the other hand, a much stronger scaling requirement might be needed, and a
lower accuracy in normal approximation is anticipated. This observation also
echoes a common belief in robust statistics that stronger scaling
requirement than $p_{n}^{2}/n\rightarrow 0$ is needed for normal
approximation validity \citep{jurevckova2012methodology}. 

All the theoretical results are further backed up by simulation studies. In
particular, using Han's MRC estimator introduced below, we have demonstrated
that for a given sample size, the accuracy of the normal approximation
deteriorates quickly as the number of parameters $p_{n}$ increases,
indicating that our theoretical bound is difficult to improve further. Also,
our simulation results suggest that for variance estimation, the step size
needs to be adjusted with respect to $p_{n}$. Practically, our results
indicate that although the linear index was introduced to alleviate the
curse of dimensionality, one must be cautious in conducting inference using
rank estimators when there are many covariates.

{\color{red} }

\subsection{The Generalized Regression Model and Han's MRC}

\label{subsec: HanMRC}

Han's MRC in Example (1) is the first rank correlation estimator proposed to
estimate the parameter $\bbeta _{0}$ in the generalized regression model: 
\begin{equation}
Y=D\circ F(\bX^{\top }\bbeta _{0},\epsilon ),  \label{GRM}
\end{equation}%
where $\bbeta _{0}\in \mathbb{R}^{p_{n}+1}$, $F(\cdot ,\cdot )$ is a
strictly increasing function of each of its arguments, and $D(\cdot )$ is a
non-degenerate monotone increasing function of its argument. Important
members of the generalized regression model in (\ref{GRM}) include many
widely known and extensively used econometrics models in diverse areas in
empirical microeconomics such as the binary choice models, the ordered
discrete response models, transformation models with unknown transformation
functions, the censored regression models, and proportional and additive
hazard models under the independence assumption and monotonicity constraints.%
\textbf{\ }

\cite{han1987non} proposed estimating $\bbeta _{0}$ in (\ref{GRM}) with 
\begin{equation}
\hat\bbeta _{n}^{\mathsf{H}}=\argmax_{\bbeta :\beta _{1}=1}\Big\{\frac{%
1}{n(n-1)}\sum_{i\neq j}\mathds{1}(Y_{i}>Y_{j})\mathds{1}(\bX_{i}^{\top }\bm%
\beta >\bX_{j}^{\top }\bbeta )\Big\}. \label{eqn:MRC}
\end{equation}%
%
%
%
%
For model identification, following \cite{sherman1993limiting}, we assume
the first component of $\bbeta _{0}$ is equal to 1, and express $\bbeta
_{0}$ as $\bbeta _{0}=(1,\btheta _{0}^{\top })^{\top }$. We consider
estimating $\btheta _{0}$ by $\hat\btheta _{n}^{\mathsf{H}}:=\hat\bbeta _{n,-1}^{\mathsf{H}}$, the subvector of $\hat\bbeta _{n}^{\mathsf{H%
}}$ excluding its first component. We will use the generalized regression
model (\ref{GRM}) and Han's MRC $\hat\btheta _{n}^{\mathsf{H}}$ to
illustrate our notation, assumptions, and main results in Section \ref%
{sec:general}. We defer a rigorous analysis of Han's estimator including
verification of assumptions to Section \ref{sec:application} which also
presents results for the other three rank correlation estimators. 

Empirically, consider estimating the individual demand curve for a durable
good such as a refrigerator. Let $Y_{i}$ be whether the individual $i$ buys
a refrigerator and $\bX_{i}$ be the vector of characteristics of the
individual and the refrigerator included in the model. There are many
potential candidates for the components of $\bX_{i}$ such as personal
income, marital status, the number of children, space of the kitchen, food
habits; size of the refrigerator, temperature controls, lighting, shelves,
dairy compartment, chiller, door styles. Assuming a single index form with $%
m_n=p_n+1$, this binary choice model falls into our framework with (\ref{GRM}).
Our increasing dimension set-up allows more characteristics to be included
in $\bX_{i}$ as the sample size $n$ increases and our results show that even
with the single index form, estimation and inference are possible if $p_n$
increases very mildly with $n$ but otherwise are very challenging.

\subsection{A Brief Review of Related Works and Technical Challenge}

In contrast with the fixed dimension setting, where the model is assumed
unchanged as $n$ goes to infinity, the increasing dimension triangular array
setting 
\citep{portnoy1984asymptotic, fan2015power,
chernozhukov2015valid,chernozhukov2017central} makes our analysis different
from and more challenging than most existing ones (cf. Theorem 3.2.16 and
Example 3.2.22 in \cite{wellner1996weak}, or the main theorem in \cite%
{he1996general}). Technically, this paper builds on and contributes to two
distinct literatures: the literature on estimation and inference in
increasing dimension where existing works exclude \textit{discontinuous}
loss functions and the literature on rank estimation where existing works
focus exclusively on finite dimensions. As a technical contribution, we
establish a maximal inequality, yielding a uniform bound for degenerate
U-processes in increasing dimensions which not only allows us to extend
existing results on rank estimation in finite dimension to increasing
dimensions but also establish Bahadur-type bounds. Besides the crucial role
played by our new maximal inequality for degenerate U-processes in this
paper, it should prove to be an indispensible tool in nonparametric and
semiparametric econometrics in increasing dimensions where many estimators
and test statistics are closely related to U-processes.

Since Huber's seminal paper \citep{huber1973robust}, there has been a long
history in statistics on evaluating the impact of parameter dimension on
inference. Huber himself raised questions on the scaling limits of $(n,p_n)$
for assuring M-estimation consistency and asymptotic normality in his 1973
paper \citep{huber1973robust}. For addressing them, \cite%
{portnoy1984asymptotic}, \cite{portnoy1985asymptotic}, \cite%
{mammen1989asymptotics}, and \cite{mammen1993bootstrap} studied the linear
regression model using smooth M-estimators such as the ordinary least
squares. Their results revealed that, 
in response to Huber's question, {for the simple linear regression model,}
asymptotic normality is {usually} attainable even when $p_n^{2}/n$ is large.
In contrast, \cite{portnoy1988asymptotic} studied maximum likelihood
estimators of generalized linear models, and proved that, for guaranteeing
the validity of normal approximation, the requirement $p_n^{2}/n\rightarrow
0 $ is in general unrelaxable. Different from the analysis in large $%
p_n^{2}/n$ setting, the techniques in \cite{portnoy1988asymptotic} are
applicable to more general cases. For example, focusing on the general
likelihood problem with a \textit{differentiable} likelihood function, \cite%
{spokoiny2012parametric} has provided a finite-sample analysis of normal
approximation accuracy. Related results have also been developed in \cite%
{he2000parameters}. As a direct consequence, a set of regularity conditions
could be derived for constructing Bahadur-type bounds, guaranteeing ASN
provided {some scaling requirements hold}. 

Extending existing works allowing for increasing parameter dimension, this
paper studies asymptotic properties of $\hat{\btheta}_{n}$ in \eqref{eq:U}%
, {allowing both} $m_n$ and $p_n$ to go to infinity as $n\rightarrow \infty $%
. The potential discontinuity and U-process structure of the objective
function $\Gamma _{n}(\btheta)$ prevent results or the proof strategy in
the current literature on increasing parameter dimension from being directly
applicable. On the other hand, for \eqref{eq:U}, the increasing dimension
set-up in this paper poses technical challenges to the proof strategy
adopted for fixed $m_n$ and $p_n$ exclusively studied in the current
literature. To see this, recall that the main argument used in the current
literature to establish asymptotic properties {for} estimators of the form %
\eqref{eq:U} for fixed $m_n$ and $p_n$ follows Sherman 
\citep{sherman1993limiting,
sherman1994maximal}, which relies on the Hoeffding decomposition, a uniform
bound for degenerate U-processes, and the classical M-estimation framework
tracing back to Huber's seminal paper, \cite{huber1967behavior}. 
Specifically, for the statistic $\Gamma _{n}(\btheta)$ in \eqref{eq:U}, 
\cite{hoeffding1948class} derived the following well-known expansion now
known as the Hoeffding decomposition: 
\begin{equation}
\Gamma _{n}(\btheta)=\Gamma (\btheta)+\mathbb{P}_{n}g(\cdot ;\btheta )+%
\mathbb{U}_{n}h(\cdot ,\cdot ;\btheta),  \label{eq:hoeffding}
\end{equation}%
where%
\begin{align}
g(\bz;\btheta)& :=\mathbb{E}f(\bz,\cdot ;\btheta)+\mathbb{E}f(\cdot ,\bz;%
\btheta)-2\Gamma (\btheta),  \notag  \label{eq:h} \\
h(\bz_{1},\bz_{2};\btheta)& :=f(\bz_{1},\bz_{2};\btheta)-\mathbb{E}f(\bz%
_{1},\cdot ;\btheta)-\mathbb{E}f(\cdot ,\bz_{2};\btheta)+\Gamma (\bm%
\theta ), \\
\mathbb{P}_{n}g(\cdot ;\btheta)& :=\sum_{i=1}^{n}g(\bZ_{i})/n,\text{ and} 
\notag \\
\mathbb{U}_{n}h(\cdot ,\cdot ;\btheta)& :=\sum_{i\neq j=1}^{n}h(\bZ_{i},\bZ%
_{j};\btheta)/\{n(n-1)\}.  \notag
\end{align}%
\cite{hoeffding1948class} further showed that for fixed $m_n$ and $p_n$, 
\begin{equation}
\Gamma _{n}(\btheta)\approx \underbrace{\Gamma (\btheta)+\mathbb{P}%
_{n}g(\cdot ;\btheta)}_{\tilde{\Gamma}_{n}(\btheta)},
\label{eq:tildeGamma}
\end{equation}%
where the remainder term $\mathbb{U}_{n}h(\cdot ,\cdot ;\btheta)$,
formulated as a degenerate U-statistic, is asymptotically negligible in
large samples. As a result, $\hat{\btheta}_{n}$ is asymptotically
equivalent to $\tilde{\btheta}_{n}$ defined below: 
\begin{equation}
\tilde{\btheta}_{n}:=\argmax_{\btheta\in \Theta }\tilde{\Gamma}_{n}(\bm%
\theta).  \label{eq:tildeBeta}
\end{equation}%
Sherman \citep{sherman1993limiting, sherman1994maximal} was the first to
notice that, by \eqref{eq:hoeffding} and the negligibility of $\mathbb{U}%
_{n}h(\cdot ,\cdot ;\btheta)$, the U-statistic formulation has
intrinsically helped smooth the loss function in \eqref{eq:U} from $\Gamma
_{n}(\btheta)$ to $\tilde{\Gamma}_{n}(\btheta)$, and hence renders an
asymptotically normal estimator $\hat{\btheta}_{n}$, even though the
original loss function $\Gamma _{n}(\btheta)$ may not be differentiable.

For increasing dimensions $m_n$ and $p_n$, the Hoeffding decomposition of $\
\Gamma _{n}(\btheta)$ takes the same form as in the case of fixed $m_n$
and $p_n$. However existing maximal inequalities or uniform bounds for
degenerate U-processes for finite dimensions crucial to Sherman 
\citep{sherman1993limiting,
sherman1994maximal} and the classical M-estimation theory for finite
dimensions are inapplicable. In response to the first challenge, this paper
develops a maximal inequality, yielding a uniform bound for degenerate
U-processes in increasing dimensions, which allows us to show that under
regularity conditions, $\hat{\btheta}_{n}$ is asymptotically equivalent to 
$\tilde{\btheta}_{n}$. Due to the smoothness of $\tilde{\Gamma}_{n}(\bm%
\theta )$, we are able to build on and improve arguments used in the proofs
of \cite{spokoiny2012parametric} on M-estimators with \textit{differentiable}
objective functions in increasing dimensions to establish asymptotic
properties of $\tilde{\btheta}_{n}$.




\subsection{Notation}

For a set $\mathcal{S}$, denote its binary Cartesian product as $\mathcal{S}%
\otimes \mathcal{S}$. For a probability measure $\mathbb{P}$, denote its
product measure as $\mathbb{P}\otimes\mathbb{P}$. For $q\in[1,\infty]$, the $%
L_q$-norm of a vector $\bbeta$ is denoted by $\norm{\bbeta}_{q}$. The $L_q$%
-induced matrix operator norm of a matrix $\Ab$ is denoted by $\norm{\Ab}%
_{q} $. One example is the spectral norm $\norm{\Ab}_2$, which represents
the maximal singular value of $\Ab$. In the sequel, when no confusion is
possible, we will omit the subscript in the $L_q$-norm of $\bbeta$ or $\Ab$
when $q=2$. The minimum and maximum eigenvalues of a {real symmetric} matrix
are denoted by $\lambda_{\min}(\cdot) $ and $\lambda_{\max}(\cdot)$
respectively. Let $\Ib_p$ denote the $p\times p $ identity matrix. Let $%
\mathbb{S}^{p-1}$ denote the unit-sphere of ${{\mathbb{R}}}^p$ under $%
\norm{\cdot}$. %
For a twice differentiable real-valued function $\tau(\btheta)$, let $%
\nabla_1\tau(\btheta)$ denote the vector of partial derivatives $(\partial
\tau/\partial \theta_1,\ldots,\partial\tau/\partial \theta_p)^\top$ and $%
\nabla_2\tau(\btheta)$ denote the Hessian matrix of $\tau(\btheta)$. 
Let $\mathcal{B}(\btheta_0,r)=\{\btheta\in\Theta,\norm{\btheta-\btheta_0}%
<r\}$ denote an open ball of radius $r>0$ centered at $\btheta_0\in\Theta$%
, and let $\overline{\mathcal{B}}(\btheta_0,r)=\{\btheta\in\Theta,%
\norm{\btheta-\btheta_0}\leq r\}$ denote a closed ball of center $\bm%
\theta_0 $ and radius $r$. For two real numbers $a$ and $b$, we define $%
a\vee b=\max(a,b)$ and $a\wedge b=\min(a,b)$. We use $\xrightarrow{\P}$ to
denote convergence in probability with respect to $\mathbb{P} $, and $%
\Rightarrow$ to denote convergence in distribution. {{For any two real
sequences $\{a_n\}$ and $\{b_n\}$, we write $a_n=O(b_n)$ if there exists an
absolute positive constant $C$ such that $|a_n|\leq C|b_n|$ for any large
enough $n$. We write $a_n\asymp b_n$ if both $a_n=O(b_n)$ and $b_n=O(a_n)$
hold. We write $a_n=o(b_n)$ if for any absolute positive constant $C$, we
have $|a_n|\leq C|b_n|$ for any large enough $n$. We write $a_n=O_{\mathbb{P}%
}(b_n)$ and $a_n=o_{\mathbb{P}}(b_n)$ if $a_n=O(b_n)$ and $a_n=o(b_n)$ hold
stochastically.}} 
We let $C,C^{\prime },C^{\prime \prime },c,c^{\prime },c^{\prime \prime
},\ldots$ be generic {absolute} positive constants, whose values will vary
at different locations. 

\subsection{Paper Organization}

The rest of this paper is organized as follows. In Section \ref{sec:general}%
, we introduce general methods for handling M-estimators of the particular
format. In particular, Section \ref{sec:U-process} gives a new U-process
bound in increasing dimensions, and Section \ref{sec:rank-general} studies
M-estimators of the form \eqref{eq:U}, whose loss functions are possibly
discontinuous. Section \ref{sec:application} applies the results in Section %
\ref{sec:general} to the four motivating rank estimators. Section \ref%
{sec:simulation} offers detailed finite-sample studies, illustrating the
impact of dimension on coverage probability and tuning parameter selection
in the asymptotic covariance estimation. Concluding remarks and possible
extensions are put in the end of the main text. 
All proofs are relegated to {an appendix}. 

\section{Asymptotic Theory for the M-estimator}

\label{sec:general}

Recall that $\bZ_{1},\bZ_{2},\ldots ,\bZ_{n}\in {\mathbb{R}}^{m_n}$ is a
random sample from $\mathbb{P}$, rendering an empirical measure $\mathbb{P}%
_{n}$. Let $\cF=\{f(\cdot ,\cdot ;\btheta):\btheta\in \Theta \subset {{%
\mathbb{R}}}^{p_n}\}$ be a VC-subgraph class of real-valued functions, with $%
\nu _{n}$ denoting the $VC$-dimension of $\cF$ (see Section 2.6.2 in \cite%
{wellner1996weak} for explicit definitions of VC-subgraph and VC-dimension
of a VC-subgraph class). 
In addition, we assume the function class $\cF$ to be uniformly bounded by
an absolute constant. The family of bounded VC-subgraph classes includes, as
subfamilies, those rank estimators proposed in \cite{han1987non}, \cite%
{cavanagh1998rank}, \cite{khan2007partial}, and \cite{abrevaya2011rank}, and
suffices for our purpose.

Without loss of generality, we assume that 
\begin{equation}
f(\bz_{1},\bz_{2};\btheta _{0})=0~~\mathrm{for~all~}(\bz_{1},\bz_{2})\in {%
\mathbb{R}}^{m_{n}}\otimes {\mathbb{R}}^{m_{n}},  \label{eq:zero}
\end{equation}%
which can always be arranged by working with $f(\bz_{1},\bz_{2};\btheta
)-f(\bz_{1},\bz_{2};\btheta _{0})$ throughout.

{{The derivation of asymptotic properties of $\hat{\btheta}_{n}$ can be
understood in two steps.}} First we show the asymptotic equivalence of $\hat{%
\btheta}_{n}$ and $\tilde{\btheta}_{n}$ by proving negligibility of $%
\mathbb{U}_{n}h(\cdot ,\cdot ;\btheta)$ and then establish asymptotic
properties of $\tilde{\btheta}_{n}$. Essential to the first step is an
increasing dimension analogue of maximal inequalities for degenerate
U-processes in finite dimensions. Because of increasing dimensions, we need
to calculate an exact order of the decaying rate of $\sup_{\btheta}|%
\mathbb{U}_{n}h(\cdot ,\cdot ;\btheta)|$ in a local neighborhood of $\bm%
\theta_{0}$, the proof of which requires a substantial amount of
modifications to the decoupling arguments in \cite{nolan1987u}. For the
second step, 
{{we exploit Spokoiny's bracketing device technique (cf. Corollary 2.2 in 
\cite{spokoiny2012supp}) on M-estimators with differentiable objective
functions.}}

\subsection{A Maximal Inequality for Degenerate U-processes}

\label{sec:U-process}

For fixed dimensions, Sherman 
\citep{sherman1993limiting,
sherman1994maximal} proved a maximal inequality for degenerate U-processes
and used it to show that, when $\cF$ is $\mathbb{P}$-Donsker %
\citep{dudley1999uniform}, uniformly over a small neighborhood $\Theta _{0}$
surrounding $\btheta_{0}$, 
\begin{equation}
\sup_{\btheta\in \Theta _{0}}|\Gamma _{n}(\btheta)-\tilde{\Gamma}_{n}(\bm%
\theta)|=\sup_{\btheta\in \Theta _{0}}|\mathbb{U}_{n}h(\cdot ,\cdot ;\bm%
\theta)|=o_{\mathbb{P}}(1/n),  \label{eq:sherman}
\end{equation}%
which, combined with the fact that $g(\cdot )$ is usually a smooth function
by integration, is sufficient to guarantee that the stochastic
differentiability condition (cf. Theorem 3.2.16 in \cite%
{wellner1996weak}) holds. This suffices for establishing ASN in fixed
dimension. However, when we allow the dimension to increase with the sample
size, \eqref{eq:sherman} is no longer correct.

To account for the effect of increasing dimension, we establish a new
maximal inequality for degenerate U-processes in increasing dimensions.
Theorem \ref{thm:uniformloose} below works out an exact order of the rate of
convergence of $\sup_{\btheta\in \Theta _{0}}|\mathbb{U}_{n}h(\cdot ,\cdot
;\btheta)|$ as $\Theta _{0}$ shrinks to the true point $\btheta_{0}$ at
different rates $r_{n}\rightarrow 0$. It is formulated as two maximal
inequalities, corresponding to the {Glivenko-Cantalli and Donsker properties}%
, for a degenerate U-process.

\begin{theorem}\label{thm:uniformloose}
Suppose that  $\cF$ is uniformly bounded by an absolute constant, of VC-dimension $\nu_n$,  and $h(\cdot)$ is defined as in \eqref{eq:h}. Further recall that we have assumed $f(\cdot,\cdot;\btheta_0)$ satisfies \eqref{eq:zero}. If $\nu_n/n\rightarrow0$, then the following two claims hold.
  \begin{enumerate}
    \item[(i)]Let $r_n$ and $\epsilon_n$ be two sequences of nonnegative real numbers converging to zero. If
    \[
    \sup_{\btheta\in\overline{\mathcal{B}}(\btheta_0,r_n)}\E h^2(\cdot, \cdot;\btheta)\leq \epsilon_n,
    \]
    then there exists a sequence  of nonnegative real numbers $\delta_n$ (\hf{only} depending on $\epsilon_n,\nu,n$) converging to zero such that
        \[
       \P\bigg\{ \sup_{\btheta\in\overline{\mathcal{B}}(\btheta_0,r_n)}|\U_n h(\cdot,\cdot;\btheta)|\leq \delta_n \nu_n/n\bigg\}=1-o(1).
        \]
         \item[(ii)] Let { $r_n:=r(\nu_n,p_n,n)$} be a sequence of nonnegative real numbers converging to zero, and $\tilde\epsilon_n=\epsilon(\nu_n,p_n,n,r_n)$ be a sequence of nonnegative real numbers (\hf{only} depending on $\nu_n,p_n,n,r_n$) converging to zero. Denote $\tilde\eta_n=\eta(\nu_n,p_n,n,r_n) = \sqrt{\nu_n/n} \vee\tilde\epsilon_n$. Suppose
         \[
         \sup_{\btheta\in\overline{\mathcal{B}}(\btheta_0,r_n)}\E h^2(\cdot,\cdot;\btheta)\leq \tilde\epsilon_n.
         \]
    We then have
        \begin{align}\label{eq:uniform-han}
       \E\sup_{\btheta\in\overline{\mathcal{B}}(\btheta_0,r_n)}|\U_nh(\cdot,\cdot;\btheta)| \leq \frac{C\log(1/\tilde\eta_n)\tilde\eta_n^{1/2}\nu_n}{n}
        \end{align}
        holds for \hf{all} sufficiently large $n$.
  \end{enumerate}
\end{theorem}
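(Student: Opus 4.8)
The plan is to reduce the control of the degenerate $U$-process $\U_n h(\cdot,\cdot;\btheta)$ over the shrinking ball $\overline{\mathcal{B}}(\btheta_0,r_n)$ to a maximal inequality for a Rademacher-symmetrized empirical process indexed by the $P\otimes P$-class $\mathcal{H}_{r_n}:=\{h(\cdot,\cdot;\btheta):\btheta\in\overline{\mathcal{B}}(\btheta_0,r_n)\}$, and then to chain against the covering numbers inherited from the VC-subgraph hypothesis. First I would invoke a decoupling inequality (following \cite{nolan1987u}, and de la Pe\~na--Gin\'e style bounds) to pass from $\sup_{\btheta}|\U_n h(\cdot,\cdot;\btheta)|$ to the expectation of the supremum of a decoupled, randomized bilinear form $\frac{1}{n(n-1)}\sum_{i\neq j}\varepsilon_i\varepsilon_j' h(\bZ_i,\bZ_j';\btheta)$, where $(\bZ_i')$ is an independent copy and $(\varepsilon_i),(\varepsilon_i')$ are independent Rademachers. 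Since $h$ is canonical (completely degenerate) by construction, the usual first-order terms vanish, and only the genuinely second-order part survives; this is where the extra smallness comes from. Conditioning on the $\bZ$'s, the randomized form is, for each fixed realization, a quadratic Rademacher chaos, and I would apply a chaos version of the Hoeffding/Talagrand concentration bound together with a contraction step to strip the $\varepsilon_i'$ variables.

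The second ingredient is the entropy input. Because $\cF$ is VC-subgraph of dimension $\nu_n$ and uniformly bounded, the symmetric class $\mathcal{H}_{r_n}$ has $L_2(Q)$ covering numbers bounded by $(A/\tau)^{c\nu_n}$ uniformly in $Q$; I would carry out a Dudley-type chaining for the chaos process, where the key quantitative inputs are the envelope size (an absolute constant) and, crucially, the $L_2$-radius of the class, controlled by $\sup_{\btheta\in\overline{\mathcal{B}}(\btheta_0,r_n)}\E h^2(\cdot,\cdot;\btheta)\le \tilde\epsilon_n$. The chaining integral over a ball of $L_2$-radius $\sqrt{\tilde\epsilon_n}$ produces a bound of order (roughly) $\frac{\nu_n}{n}\bigl(\sqrt{\nu_n/n}\vee\tilde\epsilon_n\bigr)^{1/2}\log(1/\tilde\eta_n)$, which is exactly \eqref{eq:uniform-han}; the logarithmic factor is the residue of integrating $\sqrt{\log(A/\tau)}$ near $\tau\to 0$, truncated at scale $\tilde\eta_n$. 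For part (i), the same argument with $r_n,\epsilon_n\to 0$ plus Markov's inequality yields the in-probability statement, with $\delta_n$ chosen to absorb the slowly varying factors; the key point is that the nontrivial rate $\sqrt{\tilde\epsilon_n}$ only multiplies the leading $\nu_n/n$ term, so under $\nu_n/n\to 0$ and $\epsilon_n\to 0$ we get $o(\nu_n/n)$.

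The main obstacle is tracking the dimension dependence uniformly across the scale of the chaining so that the final exponent on $\tilde\eta_n$ is the sharp $1/2$ and the prefactor is exactly $\nu_n/n$ up to the log: naively decoupling and then applying a generic $U$-process maximal inequality would lose polynomial factors in $\nu_n$, which is fatal when $\nu_n=p_n$ grows. Concretely, the delicate part is that the randomization must be done in a way that keeps the bound driven by the $L_2$-diameter $\sqrt{\tilde\epsilon_n}$ of $\mathcal{H}_{r_n}$ (not merely the envelope), which forces a careful truncation/peeling of $\mathcal{H}_{r_n}$ into $L_2$-shells and a separate treatment of the diagonal-type remainder; this is the step that demands the ``substantial modifications to the decoupling arguments in \cite{nolan1987u}'' alluded to above. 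Once the shelled chaining bound is in place, assembling parts (i) and (ii) is bookkeeping: choose $\delta_n$ in (i) to dominate $\log(1/\tilde\eta_n)\tilde\eta_n^{1/2}$ with $\tilde\epsilon_n$ replaced by $\epsilon_n$, and read off (ii) directly from the chaining integral.
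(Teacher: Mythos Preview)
Your overall plan---decouple, randomize to a second-order Rademacher chaos, then chain---is plausible in spirit, but it diverges from the paper's route and, more importantly, leaves the decisive step underspecified.

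The paper does \emph{not} carry out a full decoupling-plus-chaos-chaining program. It explicitly rejects that route on the grounds that the moment-inequality constants (e.g., Sherman's $C(k,q)$, or the de~la~Pe\~na--Gin\'e analogues) are hard to track in increasing dimension. Instead it applies Theorem~6 of \cite{nolan1987u} as a black box: for a degenerate, bounded class,
\[
\E\sup_{\btheta}|\Sm_n h(\cdot,\cdot;\btheta)|\ \le\ C\,\E\Big[\sigma_n+\tau_n J_n\Big(\frac{\sigma_n}{\tau_n}\Big)\Big]
\ \lesssim\ n\nu_n\, H\!\Big(\big[\E\sup_{\btheta}\U_{2n}h^2(\cdot,\cdot;\btheta)\big]^{1/2}\Big),
\]
with $H(x)=x\{1+\log(1/x)\}$ and the second inequality coming from concavity of $H$ and $x\mapsto x^{1/2}$. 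All the dimension dependence now sits in a single scalar: the expected \emph{empirical} second moment $\E\sup_{\btheta}\U_{2n}h^2$. The paper's key idea---which your plan does not contain---is to Hoeffding-decompose $h^2$ itself:
\[
\E\sup_{\btheta}\U_{2n}h^2\ \le\ \underbrace{\sup_{\btheta}\E h^2}_{\le\ \tilde\epsilon_n}\ +\ \underbrace{\E\sup_{\btheta}|\P_{2n}h_1|}_{\lesssim\ \sqrt{\nu_n/n}}\ +\ \underbrace{\E\sup_{\btheta}|\U_{2n}h_2|}_{\lesssim\ \nu_n/n}\ \lesssim\ \tilde\eta_n,
\]
where the middle and last pieces are controlled by the auxiliary Lemmas~\ref{lem:ep} and~\ref{lem:up} (standard empirical-process and Lemma~\ref{lem:up}-type bounds for the bounded VC class $\{h^2(\cdot,\cdot;\btheta)\}$, whose covering numbers are polynomially related to those of $\cF$ via Lemma~\ref{lem:fsquare}). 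Plugging $\tilde\eta_n$ into $H(\cdot^{1/2})$ gives \eqref{eq:uniform-han}; part~(i) is then Markov with $\delta_n=H^{1/2}((\epsilon_n+C_2\sqrt{\nu_n/n}+C_3\nu_n/n)^{1/2})$.

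The concrete gap in your plan is the passage from the \emph{population} $L_2$-radius $\sqrt{\tilde\epsilon_n}$ to the \emph{empirical} $L_2$-radius that actually governs the conditional chaining after you fix the data. You write that the chaining input is ``the $L_2$-radius of the class, controlled by $\sup_{\btheta}\E h^2\le\tilde\epsilon_n$,'' but once you condition on $(\bZ_i,\bZ_j')$ the relevant diameter is $\sup_{\btheta}\U_{2n}h^2$, not $\sup_{\btheta}\E h^2$. Bridging the two is exactly the Hoeffding-of-$h^2$ step above, and it is what produces the $\sqrt{\nu_n/n}$ contribution inside $\tilde\eta_n$. Your proposed ``peeling into $L_2$-shells and separate treatment of the diagonal-type remainder'' does not substitute for this; without it you cannot justify replacing the random radius by $\tilde\epsilon_n^{1/2}$, and the argument stalls precisely at the point you flag as the main obstacle. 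If you want to stay on your route, the cleanest fix is to insert the same $h^2$-decomposition to control the empirical diameter before chaining; but at that point you have essentially reproduced the paper's proof with Nolan--Pollard's Theorem~6 replaced by its ingredients.
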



For deriving Theorem \ref{thm:uniformloose}, one might consider employing
the decoupling techniques as introduced in the proofs of the Main Corollary
in \cite{sherman1994maximal}, or Theorem 5.3.7 in \cite{de2012decoupling}.
However, since the considered U-process depends on an increasing number of
covariates, the constants in the moment inequalities therein ({e.g.,} $%
C(k,q) $ in \cite{sherman1994maximal}) are no longer finite and are
difficult to characterize in increasing dimensions. Instead, we resort to
Nolan and Pollard's original treatment of degenerate U-processes.

Specifically, denoting 
\begin{equation*}
\mathbb{S}_n f(\cdot,\cdot;\btheta)=n(n-1)\mathbb{U}_n f(\cdot,\cdot;\bm%
\theta),
\end{equation*}
a modification to Theorem 6 in \cite{nolan1987u} will give us 
\begin{align}
\mathbb{E}\Big\{\sup_{\btheta\in \overline{\mathcal{B}}(\bm%
\theta_0,r_{n})}|\mathbb{S}_{n}h(\cdot ,\cdot ;\btheta)/(n\nu )|\Big\}&
\leq CH\Big(\Big[\mathbb{E}\Big\{\sup_{\btheta\in \overline{\mathcal{B}}(%
\btheta_{0},r_{n})}\mathbb{U}_{2n}h^{2}(\cdot ,\cdot ;\btheta)\Big\}\Big]%
^{1/2}\Big)  \notag  \label{eq:uniform-han2} \\
& \leq CH\Big(\Big[\sup_{\btheta\in \overline{\mathcal{B}}(\bm%
\theta_0,r_{n})}\mathbb{E}h^{2}(\cdot ,\cdot ;\btheta)+\mathbb{E}\Big\{%
\sup_{\btheta\in \overline{\mathcal{B}}(\btheta_{0},r_{n})}|\mathbb{P}%
_{2n}h_{1}(\cdot ;\btheta)|\Big\}  \notag \\
& \hspace{1em}+\mathbb{E}\Big\{\sup_{\btheta\in \overline{\mathcal{B}}(\bm%
\theta_{0},r_{n})}|\mathbb{P}_{2n}h_{2}(\cdot ,\cdot ;\btheta)|\Big\}\Big]%
^{1/2}\Big).
\end{align}
Here $H(x):=x\{1+\log(1/x)\}$ for any $x\in(0,\infty)$, $\mathbb{U}_{2n}$
and $\mathbb{P}_{2n}$ have been introduced in \eqref{eq:h}, and $h_1(\bz,\bm%
\theta):=\mathbb{E} h^2(\bz,\cdot;\btheta)+\mathbb{E} h^2(\cdot,\bz;\bm%
\theta)-2\mathbb{E} h^2(\cdot,\cdot;\btheta)$ and $h_2(\bz_1,\bz_2;\bm%
\theta):=h^2(\bz_1,\bz_2;\btheta)-\mathbb{E} h^2(\bz_1,\cdot;\btheta) -%
\mathbb{E} h^2(\cdot,\bz_2;\btheta)+\mathbb{E} h^2(\cdot,\cdot;\btheta)$
are two functions generated from $h(\cdot,\cdot;\btheta)$. We have thus
explicitly transformed the analysis of a degenerate U-process to that of a
moment bound, and two empirical processes. Lastly, the bounds on the two
empirical processes could be derived using, for example, Theorem 9.3 in \cite%
{kosorok2007introduction}.





\subsection{Main Results}

\label{sec:rank-general}

We are now ready to state the main results in this section. For analyzing
the statistical properties of the general M-estimator $\hat\btheta_{n}$,
three targets are in order: (i) consistency; (ii) rate of convergence; and
(iii) Bahadur-type bounds. Of note, our analysis is under the increasing
dimension triangular array setting where the true data generating process $%
\mathbb{P}$ is allowed to change with the sample size $n$.

We first establish consistency. This is via the following two assumptions. 
%
%


\begin{assumption}
\label{ass:new1} For each specified $p_n$, $\Theta$ is a compact subset of ${{%
\mathbb{R}}}^{p_n}$, and there exists an absolute constant $r_0>0$
such that $\mathcal{B}(\btheta_0,r_0)\subset \Theta$ and for any positive absolute constant $r<r_0$, there exists another absolute constant $\xi_0>0$ depending on $r$ such that 
\begin{equation}  \label{eqn: new cond}
\begin{aligned}
\Gamma(\btheta_0)-\max_{\Theta\setminus\mathcal{B}(\btheta_0,r)}\Gamma(\bm{%
\theta})\geq \xi_0. \end{aligned}
\end{equation}
\end{assumption}



\begin{assumption}
\label{ass:continuous} $\Gamma(\btheta)$ is a continuous function at any
$\btheta\in\Theta$, and $f(\cdot,\cdot;\btheta)$ is almost
everywhere continuous at $\btheta_0$.
\end{assumption}

Assumption \ref{ass:new1} is the standard identifiability condition. Since $%
\Gamma(\btheta)$ as a function of $\btheta\in\mathbb{R}^{p_n}$ is also
to change with $n$, it is regulated by a constant $\xi_0$ to eliminate the
non-identifiable cases in large $n$. 
Assumption \ref{ass:continuous} enforces certain level of smoothness on $%
\Gamma $ and $f$. Both are regular, and in particular, verifiable for all
the considered examples of rank estimators using explicit expressions for $%
\Gamma$ and $f$ for these estimators. For example, for Han's MRC, Assumption %
\ref{ass:new1} can be established using Taylor expansion applied to $\Gamma (%
\btheta )=\Gamma ^{\mathsf{H}}(\btheta )=S^{\mathsf{H}}(\bbeta )-S^{%
\mathsf{H}}(\bbeta _{0})$ with $S^{\mathsf{H}}(\bbeta ):=\mathbb{E}\{%
\mathds{1}(Y_{1}>Y_{2})\mathds{1}(\bX_{1}^{\top }\bbeta >\bX_{2}^{\top }\bm%
\beta )\}.$

With Assumptions \ref{ass:new1} and \ref{ass:continuous}, we immediately
obtain the following theorem, establishing consistency for the studied
M-estimator $\hat{\btheta}_{n}$.

\begin{theorem}\label{thm: consistency}
Suppose that Assumptions~\ref{ass:new1}--\ref{ass:continuous} hold. If $\nu_n/n\rightarrow 0$, then $\norm{\hat\btheta_n-\btheta_0}\xrightarrow{\P} 0$.
\end{theorem}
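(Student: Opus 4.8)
\quad The plan is to run the classical argmax--consistency argument in the form that needs only a well-separated population maximizer (Assumption~\ref{ass:new1}) together with uniform convergence of the criterion $\Gamma_n$ to $\Gamma$ on $\Theta$, adapted to the two features that make this setting nonstandard: $\Gamma_n$ is a $U$-process and may be discontinuous, and $\mathbb{P},\Theta,\btheta_0,\Gamma$ all drift with $n$ in a triangular array. If the discontinuity of $\Gamma_n$ prevents the supremum from being attained, one reads $\hat\btheta_n$ as any near-maximizer, $\Gamma_n(\hat\btheta_n)\ge\sup_{\Theta}\Gamma_n-\varepsilon_n$ with $\varepsilon_n=o_{\mathbb{P}}(1)$ (and $\varepsilon_n\equiv0$ when an exact maximizer exists); this only costs an extra $o_{\mathbb{P}}(1)$ slack below. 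Assumptions~\ref{ass:new1}--\ref{ass:continuous} are used precisely to supply the separation \eqref{eqn: new cond}, the attainment of the constrained maximum there (continuity of $\Gamma$ on the compact set $\Theta\setminus\mathcal{B}(\btheta_0,r)$), and the measurability/separability needed for the maximal inequalities below; the normalization \eqref{eq:zero} yields the convenient identity $\Gamma_n(\btheta_0)=\Gamma(\btheta_0)=0$.

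\emph{Step 1: uniform convergence of the criterion.}\quad I would Hoeffding-decompose as in \eqref{eq:hoeffding}, so $\Gamma_n(\btheta)-\Gamma(\btheta)=\mathbb{P}_ng(\cdot;\btheta)+\mathbb{U}_nh(\cdot,\cdot;\btheta)$. Since $\cF$ is uniformly bounded by an absolute constant and VC-subgraph of dimension $\nu_n$, the induced classes $\{g(\cdot;\btheta):\btheta\in\Theta\}$ and $\{h(\cdot,\cdot;\btheta):\btheta\in\Theta\}$, obtained from $f$ by finitely many sums, products, and conditional expectations, stay uniformly bounded and keep $\log$-covering numbers of order $\nu_n$. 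A standard symmetrization/entropy bound for bounded VC classes then gives $\mathbb{E}\sup_{\btheta\in\Theta}|\mathbb{P}_ng(\cdot;\btheta)|\le C\sqrt{\nu_n/n}$. For the degenerate part I would feed the Nolan--Pollard-type inequality behind Theorem~\ref{thm:uniformloose}, namely \eqref{eq:uniform-han2}, with the index set $\Theta$ in place of a shrinking ball: using only that $h$ is uniformly bounded (so $\sup_{\Theta}\mathbb{E}h^2(\cdot,\cdot;\btheta)$ and the two \emph{centered} auxiliary terms $\mathbb{E}\sup_{\Theta}|\mathbb{P}_{2n}h_1|$, $\mathbb{E}\sup_{\Theta}|\mathbb{P}_{2n}h_2|$ are $O(1)$) and that $H(x)=x\{1+\log(1/x)\}$ is bounded on bounded sets, one gets $\mathbb{E}\sup_{\btheta\in\Theta}|\mathbb{U}_nh(\cdot,\cdot;\btheta)|\le C\nu_n/n$. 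By Markov and $\nu_n/n\to0$, both suprema, hence $\sup_{\btheta\in\Theta}|\Gamma_n(\btheta)-\Gamma(\btheta)|$, are $o_{\mathbb{P}}(1)$.

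\emph{Step 2: identification.}\quad Fix any absolute $r\in(0,r_0)$ and let $\xi_0=\xi_0(r)>0$ be as in Assumption~\ref{ass:new1}. On the event $\mathcal{E}_n=\{\sup_{\Theta}|\Gamma_n-\Gamma|<\xi_0/3\}\cap\{\varepsilon_n<\xi_0/3\}$, which by Step~1 has probability $1-o(1)$, optimality of $\hat\btheta_n$ and $\Gamma_n(\btheta_0)=0$ give $\Gamma_n(\hat\btheta_n)\ge-\varepsilon_n$, whence $\Gamma(\hat\btheta_n)\ge\Gamma_n(\hat\btheta_n)-\xi_0/3\ge-2\xi_0/3>-\xi_0$. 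If $\hat\btheta_n\notin\mathcal{B}(\btheta_0,r)$, then \eqref{eqn: new cond} together with $\Gamma(\btheta_0)=0$ forces $\Gamma(\hat\btheta_n)\le-\xi_0$, a contradiction; hence $\norm{\hat\btheta_n-\btheta_0}<r$ on $\mathcal{E}_n$. Therefore $\mathbb{P}(\norm{\hat\btheta_n-\btheta_0}\ge r)\to0$ for every $r\in(0,r_0)$, which --- since $r_0$ is an absolute constant --- is exactly $\norm{\hat\btheta_n-\btheta_0}\xrightarrow{\P}0$; the only dimensional input is $\nu_n/n\to0$.

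\emph{Main obstacle.}\quad The one genuinely nontrivial step is the uniform control of the \emph{degenerate} $U$-process $\sup_{\btheta\in\Theta}|\mathbb{U}_nh(\cdot,\cdot;\btheta)|$ with a dependence on $\nu_n$ that stays $o(1)$, for a kernel $f$ that need not be continuous: Sherman's finite-dimensional degenerate-$U$-process inequalities are unavailable and their constants are not uniform in the dimension, which is exactly what makes Theorem~\ref{thm:uniformloose} (and the Nolan--Pollard decoupling behind it) necessary here. The remaining points --- stability of the covering-number bounds under conditional expectations, measurability of the suprema, and the possible non-attainment of the maximum of a discontinuous $\Gamma_n$ --- are routine.
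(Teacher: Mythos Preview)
Your proof is correct and follows essentially the same two-step argmax argument as the paper: uniform convergence of $\Gamma_n$ to $\Gamma$ on $\Theta$ via VC/Nolan--Pollard $U$-process bounds under $\nu_n/n\to0$, then the well-separated maximizer condition of Assumption~\ref{ass:new1}. The only cosmetic difference is that the paper invokes a single uniform almost-sure convergence lemma for the $U$-process $\Gamma_n-\Gamma$ directly (its Lemma~A.4, after checking the covering-number conditions from the VC assumption), whereas you Hoeffding-decompose and bound $\mathbb{P}_ng$ and $\mathbb{U}_nh$ separately with explicit rates $\sqrt{\nu_n/n}$ and $\nu_n/n$ (which is exactly the content of the paper's Lemmas~A.2--A.3); your route gives convergence in probability with a quantitative rate as a byproduct, the paper's is slightly more economical but yields almost-sure convergence.
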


{It is of interest to point out that consistency is established solely based
on an requirement of $\nu _{n}$ (which also intrinsically depends on $%
m_{n},p_{n}$), since the uniform consistency of $\Gamma _{n}$ to $\Gamma $
can be determined solely by the relation between $\nu _{n}$ and $n$.} 
For the four examples of rank correlation estimators (1)-(4), $\nu _{n}=p_{n}
$ so consistency is ensured under Assumptions \ref{ass:new1} and \ref%
{ass:continuous} as long as the number of parameters $p_{n}$ increases at a
slower rate than the sample size $n$.

For establishing rates of convergence and Bahadur-type bounds, on the other
hand, {more assumptions} are needed. For each $\bz$ in ${\mathbb{R}}^{m_{n}}$
and for each $\btheta \in \Theta $, define 
\begin{equation*}
\tau (\bz;\btheta )=\mathbb{E}f(\bz,\cdot ;\btheta )+\mathbb{E}f(\cdot ,%
\bz;\btheta )~~\mathrm{and}~~\zeta (\bz;\btheta )=\tau (\bz;\btheta )-%
\mathbb{E}\tau (\cdot ;\btheta ).
\end{equation*}%
Here $\tau (\bz;\btheta )$ corresponds to $\tilde{\Gamma}_{n}(\btheta )$
in \eqref{eq:tildeGamma}, and is the key for establishing ASN of $\tilde\btheta_{n}$ in \eqref{eq:tildeBeta}. The following assumption regulates $%
\tau (\cdot ;\cdot )$.

\begin{assumption}
\label{ass:taufunction} For each $r\leq r_0$, the following conditions hold.

\begin{enumerate}
\item[(i)] For each $\bz$ in ${\mathbb{R}}^{m_n}$, all mixed second partial
derivatives of $\tau(\bz;\btheta)$ with respect to $\btheta$ exist on $\overline{\mathcal{B}}(%
\btheta_0,r)$.

\item[(ii)] There exist two positive absolute constants $c_{\min}, c_{\max}$
such that $0<c_{\min}\leq \lambda_{\min}(-\Vb)\leq \lambda_{\max}(-\Vb)\leq
c_{\max}$, where $2\Vb := \mathbb{E}\nabla_2\tau(\cdot;\btheta_0)$.

\item[(iii)] There exists a positive constant { $\rho(r)<\frac{%
c_{\min}}{11c_{\max}}\wedge cpr$ for some absolute constant $c>0$}, such
that $\norm{{\Ib}_p-\Vb^{-1/2}\Vb(\btheta)\Vb^{-1/2}}\leq \rho(r)$ for any $%
\btheta\in\overline{\mathcal{B}}(\btheta_0,r)$, where $2\Vb(%
\btheta) := \mathbb{E}\nabla_2\tau(\cdot;\btheta)$.

\item[(iv)] Assume $0<d_{\min}\leq\lambda_{\min}(\bDelta)\leq
\lambda_{\max}(\bDelta)\leq d_{\max}$, where $\bDelta:=%
\mathbb{E}\nabla_1\tau(\cdot;\btheta_0)\{\nabla_1\tau(\cdot;\btheta_0)\}^{\top}$ and $d_{\min}, d_{\max}$ are two positive absolute constants.

\item[(v)] There exist absolute constants $\nu_0>0$ and $\ell_0>0$ such
that, for any $\btheta\in\overline{\mathcal{B}}(\btheta_0,r)$, the
following holds:
\begin{equation*}
\sup_{\bm{\gamma}_1,\bm{\gamma}_2\in\mathbb{S}^{p_n-1}}\log\mathbb{E}%
\exp\left\{\lambda \bm{\gamma}_1 ^{\top} \nabla_2\zeta(\cdot;\btheta)%
\bm{\gamma}_2\right\}\leq \frac{\nu_0^2\lambda^2}{2}, \quad \mathrm{for~all~}%
|\lambda|\leq \ell_0.
\end{equation*}
\end{enumerate}
\end{assumption}


Assumption \ref{ass:taufunction} is the key assumption in order to establish
Bahadur-type bounds for $\hat{\btheta }_{n}$, and is posed for the
M-estimation problem \eqref{eq:tildeGamma} of loss function $\tilde{\Gamma}%
_{n}(\btheta )$ corresponding to the function $\tau (\cdot )$. In the
following we discuss more about this assumption. In detail, Assumptions \ref%
{ass:taufunction}(i), (ii), and (iv) are regularity conditions to make sure
that the studied problem is well posited, a condition corresponding to the
local strong convexity condition in the high dimensional statistics
literature (cf. Section 2.4 in \cite{negahban2012unified}), and are
verifiable for different methods. Consider, for example, Han's MRC estimator 
$\hat\btheta _{n}^{\mathsf{H}}$ introduced in Section \ref{subsec: HanMRC}
for which $\tau =\tau ^{\mathsf{H}}$:%
\begin{equation*}
\tau ^{\mathsf{H}}(\bz;\btheta ):=\mathbb{E}f^{\mathsf{H}}(\bz,\cdot ;\bm%
\theta )+\mathbb{E}f^{\mathsf{H}}(\cdot ,\bz;\btheta ),
\end{equation*}%
where 
\begin{equation*}
f^{\mathsf{H}}(\bz_{1},\bz_{2};\btheta ):=\mathds{1}(y_{1}>y_{2})\{%
\mathds{1}(\bx_{1}^{\top }\bbeta >\bx_{2}^{\top }\bbeta )-\mathds{1}(\bx%
_{1}^{\top }\bbeta _{0}>\bx_{2}^{\top }\bbeta _{0})\}.
\end{equation*}%
{Assumptions \ref{ass:taufunction}(i), (ii), and (iv) then are immediately
ensured by Theorem 4 and subsequent discussions in \cite{sherman1993limiting}%
. Assumption \ref{ass:taufunction}(iii) requires that $\mathbb{E}\tau (\cdot
;\btheta )$ is sufficiently smooth in $\btheta $, for example, $\mathbb{E%
}\tau (\cdot ;\btheta )$ has continuous and bounded mixed partial
derivatives up to three. Assumption \ref{ass:taufunction}(v) requires the
existence of exponential moments of the errors. They correspond to the
\textquotedblleft local identifiability condition": Assumption ($\mathcal{L}%
_{0}$), and the \textquotedblleft exponential moment condition", Assumption (%
$ED_{2}$), in \cite{spokoiny2012parametric} and \cite{spokoiny2013bernstein}
separately. These conditions are often implied by subgaussian designs. }%
Particularly, in Theorem \ref{prop:H} in Section \ref{sec:Han}, we will verify Assumptions \ref%
{ass:taufunction}(iii) and (v) for $\tau ^{\mathsf{H}}$, i.e., Han's MRC
under primitive conditions.


With the above assumptions, statistical properties of $\hat\btheta_n$
could then be established as follows.

\begin{theorem}\label{thm: general consrate}
If $(\nu_n\vee p_n)/n\rightarrow 0$ and Assumptions~\ref{ass:new1}--\ref{ass:taufunction} hold, we have
 \[
\norm{\hat\btheta_n-\btheta_0}^2 =O_\P\Big(\frac{\nu_n\vee p_n}{n}\Big).
  \]
\end{theorem}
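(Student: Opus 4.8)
The plan is to run an M-estimation ``basic inequality'' argument on the Hoeffding decomposition \eqref{eq:hoeffding}, splitting the increment $\Gamma_n(\hat\btheta_n)-\Gamma_n(\btheta_0)$ into a deterministic curvature term, a smooth empirical-gradient term, and the degenerate U-statistic remainder, and then to bound the three pieces using, respectively, Assumption~\ref{ass:taufunction}(ii)--(iii), a Spokoiny-type local analysis built on Assumption~\ref{ass:taufunction}(iv)--(v), and Theorem~\ref{thm:uniformloose}. Throughout one uses the normalization \eqref{eq:zero}, which forces $\Gamma(\btheta_0)=0$, $g(\cdot;\btheta_0)\equiv0$, $h(\cdot,\cdot;\btheta_0)\equiv0$, together with the identities $g(\cdot;\btheta)=\zeta(\cdot;\btheta)$, $\E\tau(\cdot;\btheta)=2\Gamma(\btheta)$, and $\nabla_2\Gamma(\btheta)=\Vb(\btheta)$. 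Since $\btheta_0\in\Theta$ and $\hat\btheta_n$ maximizes $\Gamma_n$, \eqref{eq:hoeffding} gives
\begin{equation*}
0\;\le\;\Gamma_n(\hat\btheta_n)-\Gamma_n(\btheta_0)\;=\;\big[\Gamma(\hat\btheta_n)-\Gamma(\btheta_0)\big]\;+\;\P_n\zeta(\cdot;\hat\btheta_n)\;+\;\U_nh(\cdot,\cdot;\hat\btheta_n).
\end{equation*}
The first step would be to localize. By Theorem~\ref{thm: consistency}, $\norm{\hat\btheta_n-\btheta_0}\xrightarrow{\P}0$, so there is a deterministic $r_n\downarrow0$ with $\P\{\hat\btheta_n\in\overline{\mathcal{B}}(\btheta_0,r_n)\}\to1$, and it suffices to control $\norm{\hat\btheta_n-\btheta_0}$ on this event. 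On $\overline{\mathcal{B}}(\btheta_0,r_n)$ the U-statistic remainder is handled by Theorem~\ref{thm:uniformloose}(i): since $\cF$ is uniformly bounded and $f(\cdot,\cdot;\btheta)$ is a.e.\ continuous at $\btheta_0$ with $f(\cdot,\cdot;\btheta_0)\equiv0$ (Assumption~\ref{ass:continuous}), dominated convergence yields $\sup_{\btheta\in\overline{\mathcal{B}}(\btheta_0,r_n)}\E h^2(\cdot,\cdot;\btheta)\le\epsilon_n$ for some $\epsilon_n\downarrow0$, whence $\sup_{\btheta\in\overline{\mathcal{B}}(\btheta_0,r_n)}|\U_nh(\cdot,\cdot;\btheta)|=o_\P(\nu_n/n)$; in particular $\U_nh(\cdot,\cdot;\hat\btheta_n)=o_\P(\nu_n/n)$.

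Next one treats the two smooth terms on $\overline{\mathcal{B}}(\btheta_0,r_n)$. Because $\btheta_0$ is an interior maximizer of $\Gamma$ (Assumption~\ref{ass:new1}), $\E\nabla_1\tau(\cdot;\btheta_0)=0$, so $\nabla_1\zeta(\cdot;\btheta_0)=\nabla_1\tau(\cdot;\btheta_0)$ is centered, and Assumption~\ref{ass:taufunction}(iv) with Markov's inequality gives $\norm{\P_n\nabla_1\zeta(\cdot;\btheta_0)}=O_\P\big((p_n/n)^{1/2}\big)$, since $\E\norm{\P_n\nabla_1\zeta(\cdot;\btheta_0)}^2=\tr(\bDelta)/n\le d_{\max}p_n/n$. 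A second-order Taylor expansion of the scalar map $\btheta\mapsto\P_n\zeta(\cdot;\btheta)$ about $\btheta_0$, legitimate by Assumption~\ref{ass:taufunction}(i) and using $\zeta(\cdot;\btheta_0)\equiv0$, then gives
\begin{equation*}
\P_n\zeta(\cdot;\btheta)\;\le\;\norm{\P_n\nabla_1\zeta(\cdot;\btheta_0)}\,\norm{\btheta-\btheta_0}\;+\;\tfrac12\Big(\sup_{\btheta'\in\overline{\mathcal{B}}(\btheta_0,r_n)}\norm{\P_n\nabla_2\zeta(\cdot;\btheta')}\Big)\norm{\btheta-\btheta_0}^2 .
\end{equation*}
The Hessian supremum here is $o_\P(1)$ by Spokoiny's bracketing-device argument (Corollary~2.2 in \cite{spokoiny2012supp}), which turns the sub-Gaussian bound of Assumption~\ref{ass:taufunction}(v) into a uniform deviation bound via covering numbers of $\mathbb{S}^{p_n-1}$ and of the shrinking ball, and succeeds precisely because $p_n/n\to0$. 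Finally, a second-order Taylor expansion of $\Gamma$ about its interior maximizer $\btheta_0$, using $-\Vb(\btheta)\succeq c'\Ib_{p_n}$ on $\overline{\mathcal{B}}(\btheta_0,r_n)$ for an absolute $c'>0$ (Assumption~\ref{ass:taufunction}(ii)--(iii), noting $\rho(r)<c_{\min}/(11c_{\max})<1$), yields $\Gamma(\btheta)-\Gamma(\btheta_0)\le-\tfrac{c'}{2}\norm{\btheta-\btheta_0}^2$.

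Collecting the three bounds with $R:=\norm{\hat\btheta_n-\btheta_0}$, the basic inequality becomes, for all large $n$,
\begin{equation*}
0\;\le\;-\tfrac{c'}{2}R^2\;+\;O_\P\!\big((p_n/n)^{1/2}\big)\,R\;+\;o_\P(1)\,R^2\;+\;o_\P(\nu_n/n),
\end{equation*}
so that $c''R^2\le O_\P\big((p_n/n)^{1/2}\big)R+o_\P(\nu_n/n)$ for an absolute $c''>0$ once the $o_\P(1)$ term is absorbed. Solving this quadratic inequality (via $xy\le\tfrac{c''}{2}x^2+\tfrac{1}{2c''}y^2$) gives $R^2=O_\P(p_n/n)+o_\P(\nu_n/n)=O_\P\big((\nu_n\vee p_n)/n\big)$, which is the claim. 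The main obstacle will be the Spokoiny-type uniform control of the empirical Hessian $\P_n\nabla_2\zeta(\cdot;\btheta)$ over a neighborhood in growing dimension -- the only place the scaling $p_n/n\to0$ is genuinely needed -- together with the bookkeeping of the chicken-and-egg between consistency-based localization and the U-process and Hessian bounds, both of which must be run on a \emph{shrinking} ball.
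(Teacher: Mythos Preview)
Your strategy is essentially the paper's: localize via Theorem~\ref{thm: consistency}, run the basic inequality $0\le\Gamma_n(\hat\btheta_n)$ through the Hoeffding decomposition, control $\Gamma(\btheta)$ by the quadratic curvature from Assumption~\ref{ass:taufunction}(ii)--(iii), control $\U_nh$ by Theorem~\ref{thm:uniformloose}(i), and control $\P_ng=\P_n\zeta$ by a Taylor expansion plus Spokoiny's machinery, then solve the resulting quadratic in $R=\norm{\hat\btheta_n-\btheta_0}$. The paper works on a \emph{fixed} ball $\overline{\mathcal B}(\btheta_0,r)$ (with $r\le r_0$ so that $\rho(r)<c_{\min}/(11c_{\max})$) rather than a shrinking one, but this is immaterial for the rate.

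The one place where your packaging differs, and where you should be careful, is the control of the smooth empirical term. You second-order expand $\P_n\zeta(\cdot;\btheta)$ and claim $\sup_{\btheta'\in\overline{\mathcal B}(\btheta_0,r_n)}\norm{\P_n\nabla_2\zeta(\cdot;\btheta')}=o_\P(1)$ via Spokoiny. But Assumption~\ref{ass:taufunction}(v) gives sub-Gaussian control of $\bgamma_1^\top\nabla_2\zeta(\cdot;\btheta)\bgamma_2$ only \emph{pointwise} in $\btheta$; to chain the \emph{Hessian} process uniformly over the ball you would need one more derivative (third-order) or sub-Gaussian increments in $\btheta$, neither of which is assumed. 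The paper avoids this by working one level down: it introduces the \emph{gradient-increment} process $\Lambda(\btheta)=\sqrt{n}\,\P_n\{\nabla_1\zeta(\cdot;\btheta)-\nabla_1\zeta(\cdot;\btheta_0)\}$, notes that $\nabla_1\Lambda(\btheta)=\sqrt{n}\,\P_n\nabla_2\zeta(\cdot;\btheta)$, and applies Theorem~A.3 of \cite{spokoiny2013bernstein} (for which Assumption~\ref{ass:taufunction}(v) is exactly the required derivative condition) to obtain
\[
\P\Big\{\sup_{\btheta\in\overline{\mathcal B}(\btheta_0,r)}\norm{\Lambda(\btheta)}>6\nu_0\,r\,d_p(\varepsilon)\Big\}\le\varepsilon,\qquad d_p(\varepsilon)\asymp\sqrt{p_n}.
\]
This yields $\big|\P_ng(\cdot;\btheta)-n^{-1/2}(\btheta-\btheta_0)^\top\bwn\big|\le 4c_{\max}\rho(r)\norm{\btheta-\btheta_0}^2+6\nu_0 r\,n^{-1/2}d_p(\varepsilon)\norm{\btheta-\btheta_0}$ with high probability, which feeds into the same quadratic inequality you wrote. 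The fix to your argument is trivial---replace the second-order expansion of $\P_n\zeta$ by a first-order one and bound the resulting gradient increment via $\Lambda$---and the rest of your proof goes through verbatim.
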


For the four examples of rank correlation estimators, $\nu _{n}=p_{n}$ so
Theorem \ref{thm: general consrate} leads to the minimax optimal rate $%
\left( p_{n}/n\right) ^{1/2}$ under the condition: $p_{n}/n\rightarrow 0$.
However, Theorem \ref{thm:generalASN} below implies that much stronger
requirements on $p_{n}$ are needed to establish Bahadur-type bounds, see
Corollaries \ref{cor:H}-\ref{cor:A} for details.

\begin{theorem}\label{thm:generalASN}
 Suppose Assumptions~\ref{ass:new1}--\ref{ass:taufunction} hold,  and there exists a constant $\epsilon_n=\epsilon(\nu_n,p_n,n)$ depending on $\nu_n, p_n,n$  such that, for any $c>0$,
 \[
\sup_{\btheta\in\overline{\mathcal{B}}\{\btheta_0,c\sqrt{(\nu_n\vee p_n)/n}\}} \E  h^2(\cdot,\cdot;\btheta)\leq \tilde C\epsilon_n,
 \]
where $\tilde C$ only depends on $c$.  
Then, the following two statements hold.
  \begin{enumerate}
    \item[(i)] Denote $\eta_n=\eta(\nu_n,p_n,n) =\sqrt{\nu_n/n} \vee\epsilon_n$. If $\eta_n=o(1)$ and $\{(\nu_n\vee p_n)^{5/2} /n^{3/2}\}\vee \{\log(1/\eta_n)\eta_n^{1/2}\nu_n/n\}=o(1)$, we have
\[
 \big\|\hat\btheta_n-\btheta_0+\Vb^{-1}\P_n \nabla_1 \tau(\cdot;\btheta_0)\big\|^2 = O_\P\Big\{\frac{(\nu_n\vee p_n)^{5/2}}{n^{3/2}}+ \frac{\log(1/\eta_n)\eta_n^{1/2}\nu_n}{n}\Big\}.
\]
  \item[(ii)]
If we further have $\{(\nu_n\vee p_n)^{5/2} /n^{1/2}\}\vee \{\log(1/\eta_n)\eta_n^{1/2}\nu_n\}=o(1)$, then for any $\bgamma\in\R^{p_n}$,
  \[
  \sqrt{n}\bgamma^\top(\hat\btheta_n-\btheta_0) / (\bgamma^\top \Vb^{-1}\Deltab \Vb^{-1}\bgamma)^{1/2}\Rightarrow N(0,1).
  \]
  \end{enumerate}
\end{theorem}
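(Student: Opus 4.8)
The plan is to prove the theorem in two stages, exactly along the lines advertised in the paper: first reduce $\hat\btheta_n$ to $\tilde\btheta_n$ (the maximizer of the smoothed objective $\tilde\Gamma_n$) by controlling the degenerate $U$-process remainder, and then establish the Bahadur representation for $\tilde\btheta_n$ using the differentiable M-estimation machinery. For the first stage I would invoke Theorem~\ref{thm: general consrate}, which already gives $\|\hat\btheta_n-\btheta_0\|=O_\P(\sqrt{(\nu_n\vee p_n)/n})$, so that with probability tending to one both $\hat\btheta_n$ and $\tilde\btheta_n$ lie in the shrinking ball $\overline{\mathcal B}\{\btheta_0,c\sqrt{(\nu_n\vee p_n)/n}\}$ for a suitable $c$. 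On that ball the hypothesis of the theorem bounds $\E h^2(\cdot,\cdot;\btheta)$ by $\tilde C\epsilon_n$, so Theorem~\ref{thm:uniformloose}(ii) applies with $r_n\asymp\sqrt{(\nu_n\vee p_n)/n}$, $\tilde\epsilon_n\asymp\epsilon_n$, and $\tilde\eta_n\asymp\eta_n=\sqrt{\nu_n/n}\vee\epsilon_n$, yielding
\[
\E\sup_{\btheta\in\overline{\mathcal B}(\btheta_0,r_n)}|\U_n h(\cdot,\cdot;\btheta)|\le \frac{C\log(1/\eta_n)\eta_n^{1/2}\nu_n}{n},
\]
hence by Markov $\sup_{\btheta}|\Gamma_n(\btheta)-\tilde\Gamma_n(\btheta)|=O_\P\{\log(1/\eta_n)\eta_n^{1/2}\nu_n/n\}$ uniformly over the ball. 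This is the increasing-dimension replacement for Sherman's $o_\P(1/n)$ bound.

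The second stage is the quadratic-expansion argument for $\tilde\Gamma_n(\btheta)=\Gamma(\btheta)+\P_n g(\cdot;\btheta)$, which up to constants is $\tfrac12\P_n\tau(\cdot;\btheta)$ in the notation of Assumption~\ref{ass:taufunction}. I would Taylor-expand $\P_n\tau(\cdot;\btheta)$ around $\btheta_0$ to second order. The linear term is $\P_n\nabla_1\tau(\cdot;\btheta_0)$, which by Assumption~\ref{ass:taufunction}(iv) and a moment/maximal bound has Euclidean norm $O_\P(\sqrt{p_n/n})$; the Hessian term splits as $\E\nabla_2\tau(\cdot;\btheta_0)=2\Vb$ plus a stochastic part $\P_n\nabla_2\zeta(\cdot;\btheta_0)$ plus a $\btheta$-to-$\btheta_0$ smoothing error controlled by Assumption~\ref{ass:taufunction}(iii). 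Assumption~\ref{ass:taufunction}(v) gives the exponential-moment (Bernstein-type) bound needed to show $\sup_{\btheta\in\overline{\mathcal B}(\btheta_0,r_n)}\|\P_n\nabla_2\zeta(\cdot;\btheta)\|_{op}=O_\P(\sqrt{p_n/n})$ via a discretization of $\mathbb{S}^{p_n-1}\otimes\mathbb{S}^{p_n-1}$ and a union bound, which costs an extra $\sqrt{p_n}$ factor; combined with the radius $r_n\asymp\sqrt{(\nu_n\vee p_n)/n}$ this produces the $(\nu_n\vee p_n)^{5/2}/n^{3/2}$ term. Concretely, I would invoke Spokoiny's bracketing device (Corollary~2.2 of \cite{spokoiny2012supp}) exactly as the paper indicates: its hypotheses are precisely Assumption~\ref{ass:taufunction}(ii),(iii),(v), and its conclusion is a finite-sample bound
\[
\big\|\tilde\btheta_n-\btheta_0+\Vb^{-1}\P_n\nabla_1\tau(\cdot;\btheta_0)\big\|^2=O_\P\Big\{\frac{(\nu_n\vee p_n)^{5/2}}{n^{3/2}}\Big\}.
\]
Adding the first-stage bound gives part~(i).

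For part~(ii), I would show the linear term drives a CLT. Write $\sqrt n\,\bgamma^\top(\hat\btheta_n-\btheta_0)=-\sqrt n\,\bgamma^\top\Vb^{-1}\P_n\nabla_1\tau(\cdot;\btheta_0)+\sqrt n\,\bgamma^\top R_n$ with $\|R_n\|^2$ bounded by the rate in part~(i); the extra scaling hypothesis $\{(\nu_n\vee p_n)^{5/2}/n^{1/2}\}\vee\{\log(1/\eta_n)\eta_n^{1/2}\nu_n\}=o(1)$ is exactly what makes $\sqrt n\,\bgamma^\top R_n/(\bgamma^\top\Vb^{-1}\bDelta\Vb^{-1}\bgamma)^{1/2}=o_\P(1)$, using that the denominator is bounded below by $d_{\min}/c_{\max}^2\|\bgamma\|^2$ from Assumption~\ref{ass:taufunction}(ii),(iv). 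The leading term is a normalized sum of i.i.d.\ mean-zero scalars $-\bgamma^\top\Vb^{-1}\nabla_1\tau(\bZ_i;\btheta_0)$ with variance $\bgamma^\top\Vb^{-1}\bDelta\Vb^{-1}\bgamma$, and since $\nabla_1\tau$ is bounded (the class $\cF$ is uniformly bounded, and differentiation under the integral is justified by Assumption~\ref{ass:taufunction}(i)) with variance bounded above and below by absolute constants times $\|\bgamma\|^2$, the Lindeberg condition holds; the Lindeberg–Feller CLT then gives the asymptotic normality.

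\textbf{Main obstacle.} The delicate step is the second-stage quadratic expansion in increasing dimension: controlling $\sup_{\btheta\in\overline{\mathcal B}(\btheta_0,r_n)}\|\P_n\nabla_2\zeta(\cdot;\btheta)\|_{op}$ and chaining it against the Taylor remainder so that the resulting error is genuinely $O_\P\{(\nu_n\vee p_n)^{5/2}/n^{3/2}\}$ and not larger. This is where the $p_n$-dependence of the constants matters, where Assumption~\ref{ass:taufunction}(iii),(v) are used in full strength, and where one must verify that the hypotheses of Spokoiny's bracketing device are met uniformly as the triangular array changes with $n$; the $U$-process reduction, by contrast, is essentially a direct application of Theorem~\ref{thm:uniformloose}(ii).
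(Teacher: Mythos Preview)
Your proposal is essentially correct and uses the same ingredients as the paper: localize via Theorem~\ref{thm: general consrate}, control the degenerate $U$-process on the shrinking ball via Theorem~\ref{thm:uniformloose}(ii), and carry out a second-order expansion of the smooth part using Assumption~\ref{ass:taufunction} together with Spokoiny's bracketing device. The one organizational difference is that the paper never actually passes through $\tilde\btheta_n$. Instead it builds the uniform quadratic approximation directly for $\Gamma_n$ (combining the three Hoeffding pieces, so the $U$-process bound simply enters as one more additive error in $\phi_\varepsilon$), and then compares $\Gamma_n(\hat\btheta_n)\ge\Gamma_n(\btheta_0+\btsn/\sqrt n)$ with the explicit ``ideal'' point $\btsn=-\Vb^{-1}\bwn$. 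Localizing $\btheta_0+\btsn/\sqrt n$ in the ball is immediate from $\E\|\btsn\|^2\le p_n\,d_{\max}/c_{\min}^2$, whereas your route would require a separate rate argument for $\tilde\btheta_n$.

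This matters for your sentence ``Adding the first-stage bound gives part~(i)'': a Bahadur bound for $\tilde\btheta_n$ together with a sup-norm bound on $\Gamma_n-\tilde\Gamma_n$ does \emph{not} by itself yield a Bahadur bound for $\hat\btheta_n$; you still have to observe that $\hat\btheta_n$ is a $2B_n$-near-maximizer of $\tilde\Gamma_n$ (or equivalently that $\Gamma_n$ itself enjoys the quadratic approximation) and then rerun the argmax comparison. That step is easy, and once you make it explicit you will have reproduced the paper's argument; the paper's formulation just avoids the detour through $\tilde\btheta_n$ altogether.
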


\begin{remark}
In the analysis, $p_n$ and $\nu_n$ characterize the behavior of the smoothed estimator $\tilde\btheta_n$ and the degenerate U-process $\{\mathbb{U}_{n}h(\cdot ,\cdot ;\btheta);\btheta\in\overline{\mathcal{B}}(\btheta_0,r_n) \}$ separately. On the other hand, throughout the above three theorems, the dimension of data points, $m_n$, is not present. Instead, the impact of $m_n$ on estimation and inference has been characterized by $p_n$ and $\nu_n$, both of which are usually of an order equal to or even greater than $m_n$. It is also noteworthy to point out that our analysis does allow an arbitrary subset of $(m_n,p_n,\nu_n)$ to be fixed, and the theory will directly proceed. In particular, when $m_n,p_n,\nu_n$ are all invariant with regard to $n$, we derived the conventional Bahadur representation for the studied class of M-estimators under the low-dimensional setting, which is a stronger result than asymptotic normality.
\end{remark}




We conclude this section with a brief discussion on consistent estimation of
the asymptotic covariance matrix in Theorem \ref{thm:generalASN}. For this,
we are focused on the covariance estimator of a numerical derivative form,
used in \cite{pakes1989simulation}, \cite{sherman1993limiting}, and \cite%
{khan2007partial}. 

First, for each $\bz$ in ${{\mathbb{R}}}^{m_n}$ and for each $\btheta$ in $%
\Theta $, define 
\begin{equation*}
\begin{aligned} \tau_n(\bz;\btheta)=\mathbb{P}_n f(\bz,\cdot;\btheta) +
\mathbb{P}_n f(\cdot,\bz;\btheta). \end{aligned}
\end{equation*}%
Then, we define the numerical derivative of $\tau _{n}(\bz;\btheta)$ as
follows: 
\begin{equation*}
\begin{aligned} p_{ni}(\bz;\btheta) =
\varepsilon_n^{-1}\{\tau_n(\bz;\btheta+\varepsilon_n \bu_i) -
\tau_n(\bz;\btheta)\}, \end{aligned}
\end{equation*}%
where $\varepsilon _{n}$ denotes a sequence of real numbers converging to
zero, and $\bu_{i}$ denotes the unit vector in ${{\mathbb{R}}}^{p_n}$ with
the $i$th component equal to one. Finally, we define the estimator of the
matrix $\bDelta$ as $\hat{\bDelta}=(\hat{\delta}_{ij})$ with 
\begin{equation*}
\begin{aligned} \hat\delta_{ij} :=
\mathbb{P}_n\{p_{ni}(\cdot;\hat\btheta_n)p_{nj}(\cdot;\hat\btheta_n)\}.
\end{aligned}
\end{equation*}

To estimate the matrix $\Vb$, we define the following function: 
\begin{equation*}
\begin{aligned} p_{nij}(\bz;\btheta) =
\varepsilon_n^{-2}\{\tau_n(\bz;\btheta+\varepsilon_n(\bu_i+\bu_j))-
\tau_n(\bz;\btheta+\varepsilon_n\bu_i)-\tau_n(\bz;\btheta+\varepsilon_n%
\bu_j)+\tau_n(\bz;\btheta)\}. \end{aligned}
\end{equation*}
Then, we define the estimator of the matrix $\Vb$ as $\hat\Vb=(\hat v_{ij})$
with 
\begin{equation*}
\begin{aligned} \hat v_{ij} := \frac{1}{2}\mathbb{P}_n
p_{nij}(\cdot;\hat\btheta_n). \end{aligned}
\end{equation*}

Let $\tilde{\cF}=\{f(\bz,\cdot ;\btheta)+f(\cdot ,\bz;\btheta):\bz\in {{%
\mathbb{R}}}^{m},\btheta\in \Theta \}$, and let $\tilde{\nu}_n$ denote the
VC-dimension of $\tilde{\cF}$. The following theorem establishes the
consistency of the covariance estimator. 
\begin{theorem}\label{thm: consistency cov}
  Suppose that Assumptions~\ref{ass:new1}--\ref{ass:taufunction} hold and $(\tilde \nu_n \vee \nu_n\vee p_n)^{5/2} /n^{1/2} = o(1)$. If the sequence $\varepsilon_n$ satisfies: $\varepsilon_n\sqrt{p_n}=o(1) $ and $\varepsilon_n^{-2}(\tilde \nu_n \vee \nu_n \vee p_n) /\sqrt{n}= o(1)$, then
  \begin{equation*}
    \begin{aligned}
      \norm{\hat\Vb^{-1}\hat\Deltab\hat\Vb^{-1} - \Vb^{-1}\Deltab\Vb^{-1}}\xrightarrow{\P} 0 .
    \end{aligned}
  \end{equation*}
\end{theorem}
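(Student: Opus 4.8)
The plan is to show that each of the building blocks $\hat\Vb$ and $\hat\Deltab$ converges in probability (in spectral norm) to $\Vb$ and $\Deltab$ respectively, and then combine these with the eigenvalue bounds in Assumption~\ref{ass:taufunction}(ii) and (iv) to pass to the product $\hat\Vb^{-1}\hat\Deltab\hat\Vb^{-1}$. The first step is to peel off the estimation error in the plug-in point: since Theorem~\ref{thm: general consrate} gives $\|\hat\btheta_n-\btheta_0\|^2 = O_\P((\nu_n\vee p_n)/n)$, and the assumed scaling forces $(\nu_n\vee p_n)/n\to0$, one may restrict attention to a shrinking ball $\overline{\mathcal{B}}(\btheta_0,r_n)$ with $r_n\asymp\sqrt{(\nu_n\vee p_n)/n}$ and work on the event $\{\hat\btheta_n\in\overline{\mathcal{B}}(\btheta_0,r_n)\}$, which has probability $1-o(1)$. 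On that event one wants uniform (over $\btheta$ in the ball) control of $\hat v_{ij}(\btheta)$ and $\hat\delta_{ij}(\btheta)$ as approximations to $v_{ij}$ and $\delta_{ij}$.

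For $\hat\Vb$: write $\hat v_{ij}(\btheta) - v_{ij} = \tfrac12\mathbb{P}_n p_{nij}(\cdot;\btheta) - \tfrac12\bm\gamma_i^\top\E\nabla_2\tau(\cdot;\btheta_0)\bm\gamma_j$ and split it into (a) a numerical-differentiation bias term, controlled by Assumption~\ref{ass:taufunction}(i) and (iii) — a second-order Taylor expansion of $\E\tau(\cdot;\btheta)$ in $\btheta$ shows the second difference equals a mixed partial up to an error of order $\varepsilon_n\cdot(\text{bound on third derivatives})$, so that this piece is $O(\varepsilon_n)$ uniformly and hence $o(1)$ since $\varepsilon_n\sqrt{p_n}=o(1)$; (b) a sampling term $\tfrac12\{\mathbb{P}_n - \E\}p_{nij}(\cdot;\btheta)$, for which one applies the maximal inequality machinery of Section~\ref{sec:U-process} / Theorem~9.3 of \cite{kosorok2007introduction} to the VC-class $\tilde{\cF}$. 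Because $p_{nij}$ carries a prefactor $\varepsilon_n^{-2}$ and is built from differences of $\tau_n$-evaluations, the empirical-process fluctuation is of order $\varepsilon_n^{-2}\sqrt{\tilde\nu_n/n}$ (times a log factor), which is exactly why the hypothesis requires $\varepsilon_n^{-2}(\tilde\nu_n\vee\nu_n\vee p_n)/\sqrt{n}=o(1)$; and (c) a term for replacing $\btheta_0$ by $\hat\btheta_n$ in the target, handled by Lipschitz-continuity of $\btheta\mapsto\E\nabla_2\tau(\cdot;\btheta)$ from Assumption~\ref{ass:taufunction}(iii) together with $r_n\to0$. Summing $i,j$ over $p_n$ coordinates costs at most a $p_n$ factor when passing from entrywise to spectral-norm control (via $\|\cdot\|_2\le\|\cdot\|_F$), which is absorbed by the $(\tilde\nu_n\vee\nu_n\vee p_n)^{5/2}/\sqrt n=o(1)$ assumption. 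An identical but one-order-lower argument (with $p_{ni}$ and prefactor $\varepsilon_n^{-1}$, and with $\delta_{ij}$ being a product of first numerical derivatives, so that a cross-term expansion $\mathbb{P}_n\{p_{ni}p_{nj}\} - \E\{\nabla_i\tau\nabla_j\tau\}$ is split into a bias part, a linearization part, and a degenerate remainder) gives $\|\hat\Deltab - \Deltab\|\xrightarrow{\P}0$.

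Finally, to assemble: on the event where $\|\hat\Vb-\Vb\|$ is small, the eigenvalue lower bound $\lambda_{\min}(-\Vb)\ge c_{\min}>0$ from Assumption~\ref{ass:taufunction}(ii) guarantees $\hat\Vb$ is invertible with $\|\hat\Vb^{-1}\|$ bounded, and $\|\hat\Vb^{-1}-\Vb^{-1}\| \le \|\hat\Vb^{-1}\|\,\|\Vb^{-1}\|\,\|\hat\Vb-\Vb\| \xrightarrow{\P}0$; combining with $\|\Deltab\|\le d_{\max}$ and the triangle/submultiplicativity inequalities yields $\|\hat\Vb^{-1}\hat\Deltab\hat\Vb^{-1}-\Vb^{-1}\Deltab\Vb^{-1}\|\xrightarrow{\P}0$. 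I expect the main obstacle to be step (b): controlling the empirical process $\{\mathbb{P}_n-\E\}p_{nij}(\cdot;\btheta)$ uniformly over the shrinking ball, because the $\varepsilon_n^{-2}$ blow-up interacts delicately with the VC-entropy of $\tilde{\cF}$ in increasing dimension, and one must track the dependence of all constants on $\tilde\nu_n,\nu_n,p_n$ rather than treating them as fixed — this is precisely where the two competing requirements on $\varepsilon_n$ (small enough to kill bias, not so small that $\varepsilon_n^{-2}$ dominates) must be balanced, and where the increasing-dimension maximal inequality of Theorem~\ref{thm:uniformloose}-type reasoning does the real work.
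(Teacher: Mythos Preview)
Your proposal is essentially correct and follows the same overall route as the paper: control $\|\hat\Vb-\Vb\|$ and $\|\hat\Deltab-\Deltab\|$ entrywise (bias + sampling + plug-in), pass to spectral norm at cost $p_n$, then assemble via Assumption~\ref{ass:taufunction}(ii),(iv) and the algebraic identity for $\hat\Vb^{-1}-\Vb^{-1}$.

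The one place you diverge from the paper is your step~(b), which you also flag as the ``main obstacle.'' In fact it is not an obstacle at all, and the paper's treatment is simpler than what you sketch. Rather than controlling the empirical process of the compound object $p_{nij}(\cdot;\btheta)$ (which is awkward because $p_{nij}$ already involves $\mathbb{P}_n$ through $\tau_n$, so your expression $\{\mathbb{P}_n-\E\}p_{nij}$ is not quite well-defined), the paper first establishes a single uniform bound
\[
\sup_{\bz\in\mathbb{R}^{m_n},\,\btheta\in\Theta}\bigl|\tau_n(\bz;\btheta)-\tau(\bz;\btheta)\bigr| \;=\; O_\P\bigl(\sqrt{\tilde\nu_n/n}\bigr)
\]
via Lemma~\ref{lem:ep} applied to the VC-class $\tilde\cF$. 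Since $p_{nij}$ is a linear combination of four $\tau_n$-evaluations divided by $\varepsilon_n^2$, this immediately replaces $\tau_n$ by the smooth $\tau$ at cost $O_\P(\varepsilon_n^{-2}\sqrt{\tilde\nu_n/n})$, with no log factor and no need for the degenerate-U-process machinery of Theorem~\ref{thm:uniformloose}. After this substitution everything is a Taylor expansion of the smooth function $\tau$: the plug-in error (your (c)) is handled by the $R_{n1},R_{n2}$ decomposition recycling Steps~1--2 of the proof of Theorem~\ref{thm: general consrate}, and the numerical-differentiation bias (your (a)) by the $T_{n1},T_{n2}$ split using Assumption~\ref{ass:taufunction}(ii),(iii),(v). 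So the ``real work'' you anticipate is a one-line consequence of a standard Glivenko--Cantelli bound, not a delicate increasing-dimension maximal inequality.
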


The increasing dimension set-up reveals that for consistent
variance-covariance matrix estimation, the step size in computing the
numerical derivative should depend not only on the sample size but also on
the dimensions $m_{n}$ and $p_{n}$.

\section{Asymptotic Properties of Rank Estimators}

\label{sec:application}

This section studies the four examples introduced in \hyperref[sec:intro]{%
Introduction}. In the sequel, the data points are understood to be
independent and identically drawn from the considered model. {{Of note,
throughout the following four examples, when the studied model is fixed, our
result renders the conventional Bahadur representation for the corresponding
estimator in fixed dimensions (see, for example, \cite{subbotin2008essays}
for such a bound in fixed dimensions). Hence, we recover the
asymptotic-normality-type theory in the corresponding paper, but under a
stronger moment condition in order to take the impact of increasing
dimension into consideration. In addition, it is worthwhile to point out
that, for all studied methods, the dimension of the data points $m_{n}$ and
the VC dimensions $\nu _{n}$ and $\tilde{\nu}_{n}$ of the studied function
classes are all of the same order as $p_{n}$, the number of parameters to be
estimated. Accordingly, in the following, we can use $p_{n}$ to solely
characterize the impact of dimension on inference.}}



\subsection{Han's Maximum Rank Correlation Estimator}\label{sec:Han}

This section studies the generalized regression model \eqref{GRM} and Han's
MRC estimator, as have been introduced in Section \ref{subsec: HanMRC}. Let $%
\mathcal{B}$ be a subset of $\{\bbeta \in {{\mathbb{R}}}^{p_{n}+1}:\beta
_{1}=1\}$. For any $\bbeta \in \mathcal{B}$, let $\bbeta =(1,\btheta
^{\top })^{\top }$, where $\btheta \in \Theta ^{\mathsf{H}}\subset {{%
\mathbb{R}}}^{p_{n}}$. For any vector $\bz=(y,\bx^{\top })^{\top }$, we
define $\zeta ^{\mathsf{H}}(\bz;\btheta )=\tau ^{\mathsf{H}}(\bz;\btheta
)-\mathbb{E}\tau ^{\mathsf{H}}(\cdot ;\btheta ),$ 
\begin{equation*}
~\bDelta^{\mathsf{H}}=\mathbb{E}\nabla _{1}\tau ^{\mathsf{H}}(\cdot ;\bm%
\theta _{0})\{\nabla _{1}\tau ^{\mathsf{H}}(\cdot ;\btheta _{0})\}^{\top
},~~\mathrm{and}~~2\Vb^{\mathsf{H}}=\mathbb{E}\nabla _{2}\tau ^{\mathsf{H}%
}(\cdot ;\btheta _{0}).
\end{equation*}%
Write $\Gamma _{n}^{\mathsf{H}}(\btheta )=S_{n}^{\mathsf{H}}(\bbeta
)-S_{n}^{\mathsf{H}}(\bbeta _{0})$ with 
\begin{equation*}
S_{n}^{\mathsf{H}}(\bbeta):=\frac{1}{n(n-1)}\sum_{i\neq j}\mathds{1}%
(Y_{i}>Y_{j})\mathds{1}(\bX_{i}^{\top }\bbeta >\bX_{j}^{\top }\bbeta ). 
\end{equation*}
Thus, Han's MRC estimator of $\btheta _{0}$, $\hat\btheta _{n}^{\mathsf{H%
}}$, can be expressed as 
\begin{equation*}
\hat\btheta _{n}^{\mathsf{H}}=\argmax_{\btheta \in \Theta ^{\mathsf{H}%
}}\Gamma _{n}^{\mathsf{H}}(\btheta ).
\end{equation*}

To conduct inference on $\btheta_{0}$ {based on $\hat\btheta_{n}^{%
\mathsf{H}}$, we further define 
\begin{align*}
& \tau _{n}^{\mathsf{H}}(\bz;\btheta )=\mathbb{P}_{n}f^{\mathsf{H}}(\bz%
,\cdot ;\btheta )+\mathbb{P}_{n}f^{\mathsf{H}}(\cdot ,\bz;\btheta
),~~p_{ni}^{\mathsf{H}}(\bz;\btheta )=\varepsilon _{n}^{-1}\{\tau _{n}^{%
\mathsf{H}}(\bz;\btheta +\varepsilon _{n}\bu_{i})-\tau _{n}^{\mathsf{H}}(%
\bz;\btheta )\},~~\mathrm{and}~~ \\
& p_{nij}^{\mathsf{H}}(\bz;\btheta )=\varepsilon _{n}^{-2}\{\tau _{n}^{%
\mathsf{H}}(\bz;\btheta +\varepsilon _{n}(\bu_{i}+\bu_{j}))-\tau _{n}^{%
\mathsf{H}}(\bz;\btheta +\varepsilon _{n}\bu_{i})-\tau _{n}^{\mathsf{H}}(%
\bz;\btheta +\varepsilon _{n}\bu_{j})+\tau _{n}^{\mathsf{H}}(\bz;\btheta
)\}.
\end{align*}%
Then, we define the estimator of the matrix $\bDelta^{\mathsf{H}}$ as $\hat{%
\bDelta}^{\mathsf{H}}=(\hat{\delta}_{ij}^{\mathsf{H}})$ and the estimator of
the matrix $\Vb^{\mathsf{H}}$ as $\hat{\Vb}^{\mathsf{H}}=(\hat{v}_{ij}^{%
\mathsf{H}})$, where 
\begin{equation*}
\begin{aligned} \hat\delta_{ij}^\sfH =
\mathbb{P}_n\{p_{ni}^\sfH(\cdot;\hat\btheta_n^\sfH)p_{nj}^\sfH(\cdot;\hat%
\btheta_n^\sfH)\} ~~\mathrm{and}~~ \hat v_{ij}^\sfH =
\frac{1}{2}\mathbb{P}_n p_{nij}^\sfH(\cdot;\hat\btheta_n^\sfH). \end{aligned}
\end{equation*}%
}

Let $\bX=(X_{1},\tilde{\bX}^{\top })^{\top }$, where $\tilde{\bX}$ denotes
the last $p$ components in $\bX$. Assume the following assumption holds 
\begin{assumption}
\label{ass:gen to han} Assume

\begin{enumerate}
\item[(i)] Assumption \ref{ass:new1} holds for $\Theta^\mathsf{H}$ and $\Gamma^\mathsf{H}(\btheta)$.

\item[(ii)] The random variables $\bX$ and $\epsilon$ are independent.

\item[(iii)]  Assume $X_1$ has an everywhere positive Lebesgue density, conditional
on $\tilde{\bX}$.

\item[(iv)] Assumption \ref{ass:taufunction} holds for $\tau^\mathsf{H}(\bz;\btheta)$ and $\zeta^\mathsf{H}(\bz;\btheta)$.
\end{enumerate}
\end{assumption}

\begin{assumption}
\label{ass:inftynormfinite} For some absolute constant $C>0$, $%
\sup_{i=2,\cdots,p+1}\mathbb{E}|X_i|^2\leq C$.
\end{assumption}

\begin{assumption}
\label{ass:conditionaldensityfinite} Let $f_0(\cdot\mid \tilde{\bx})$ denote the conditional density function
of $\bX^\top\bbeta_0$ given $\tilde{\bX}=\tilde{\bx}$. Assume $f_0(\cdot\mid \tilde{\bx})\leq C_0$
for any $\tilde{\bx}$ in the support of $\tilde{\bX}$, where $C_0>0$ is an
absolute constant.
\end{assumption}

%
We then have the following corollary. 
\begin{corollary}\label{cor:H} We have
\begin{enumerate}
  \item[(i)]   Under Assumption~\ref{ass:gen to han}(i)--(iii), if $p_n/n=o(1)$, then $\norm{\hat\theta^\sfH_n-\theta_0}\xrightarrow{\P} 0$.
  \item[(ii)]  Under Assumption~\ref{ass:gen to han},
 if $p_n/n=o(1)$, then
   \[
 \norm{\hat\btheta^\sfH_n-\btheta_0}^2 =O_\P(p_n/n).
  \]
 \item[(iii)] Under Assumptions~\ref{ass:gen to han}--\ref{ass:conditionaldensityfinite}, if $p_n^2/n=o(1)$ and $\log(n/p_n^2)p_n^{3/2}/n^{5/4}=o(1)$, we have
 \begin{align}\label{eq:kiefer-han}
 \norm{\hat\btheta_n^\sfH-\btheta_0+(\Vb^\sfH)^{-1}\P_n \nabla_1 \tau^\sfH(\cdot;\btheta_0)}^2 = O_\P\big\{\log(n/p_n^2)p_n^{3/2}/n^{5/4}\big\}.
 \end{align}
Furthermore, if $\log(n/p_n^2)p_n^{3/2}/n^{1/4}=o(1)$, then for any $\bgamma\in\R^{p_n}$,
  \[
  \sqrt{n}\bgamma^\top(\hat\btheta^\sfH_n-\btheta_0) / \{\bgamma^\top (\Vb^\sfH)^{-1}\Deltab^\sfH (\Vb^\sfH)^{-1}\bgamma\}^{1/2}\Rightarrow N(0,1).
  \]
  {\blue \item[(iv)] Under conditions in (iii), if we further have $\varepsilon_n\sqrt{p_n} = o(1)$ and $\varepsilon_n^{-2}p_n/\sqrt{n}=o(1)$, then
  \[
  \norm{(\hat\Vb^\sfH)^{-1}\hat\Deltab^\sfH(\hat\Vb^\sfH)^{-1}- (\Vb^\sfH)^{-1}\Deltab^\sfH(\Vb^\sfH)^{-1}}\xrightarrow{\P} 0.
  \]
  }
  \hf{In particular, we could choose $\epsilon_n\asymp(p_n/n)^{1/6}$, which will render a consistent covariance estimator under the same scaling condition as (iii).}
\end{enumerate}
\end{corollary}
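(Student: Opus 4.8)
Every claim in Corollary~\ref{cor:H} should follow by specializing the general results of Section~\ref{sec:general} to $f=f^{\mathsf{H}}$, so the plan is mostly bookkeeping together with two genuine estimates. First I would record the function-complexity inputs: with $\bbeta=(1,\btheta^\top)^\top$ the $\btheta$-dependence of $\mathds{1}(\bx_1^\top\bbeta>\bx_2^\top\bbeta)$ is an affine halfspace indicator in $\btheta\in\R^{p_n}$, so $\cF^{\mathsf{H}}$ is VC-subgraph with $\nu_n\asymp p_n$, and appending the free argument $\bz$ in $f^{\mathsf{H}}(\bz,\cdot;\btheta)+f^{\mathsf{H}}(\cdot,\bz;\btheta)$ adds only $m_n=p_n+1$ coordinates, so $\tilde\nu_n\asymp p_n$ as well; combined with $m_n\asymp p_n$ this collapses every scaling hypothesis of Theorems~\ref{thm: consistency}--\ref{thm: consistency cov} to a condition on $(p_n,n)$. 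I would then check Assumption~\ref{ass:continuous}: $f^{\mathsf{H}}(\bz_1,\bz_2;\cdot)$ is continuous off $\{\bx_1^\top\bbeta=\bx_2^\top\bbeta\}$, which is $\P\otimes\P$-null since $\beta_1=1$ and $X_1$ has an everywhere positive conditional density given $\tilde\bX$ (Assumption~\ref{ass:gen to han}(iii)), and dominated convergence then gives continuity of $\Gamma^{\mathsf{H}}$. Since Assumption~\ref{ass:gen to han}(i) is Assumption~\ref{ass:new1} for $(\Theta^{\mathsf{H}},\Gamma^{\mathsf{H}})$ and Assumption~\ref{ass:gen to han}(iv) is Assumption~\ref{ass:taufunction} for $\tau^{\mathsf{H}},\zeta^{\mathsf{H}}$, parts (i) and (ii) come directly from Theorems~\ref{thm: consistency} and \ref{thm: general consrate} with $\nu_n\vee p_n\asymp p_n$, i.e.\ under $p_n/n\to0$.

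The substantive estimate is the extra hypothesis of Theorem~\ref{thm:generalASN}: a bound $\sup_{\btheta\in\overline{\mathcal{B}}\{\btheta_0,c(p_n/n)^{1/2}\}}\E (h^{\mathsf{H}})^2(\cdot,\cdot;\btheta)\le\tilde C\epsilon_n$. Since $h^{\mathsf{H}}$ is a Hoeffding recentering of $f^{\mathsf{H}}$ and $|f^{\mathsf{H}}|\le|\mathds{1}(\bx_1^\top\bbeta>\bx_2^\top\bbeta)-\mathds{1}(\bx_1^\top\bbeta_0>\bx_2^\top\bbeta_0)|$, I would reduce this to bounding the probability that the two indicators disagree, which forces $\bX_1^\top\bbeta_0-\bX_2^\top\bbeta_0$ into an interval of half-length $|(\tilde\bX_1-\tilde\bX_2)^\top(\btheta-\btheta_0)|$. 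Conditioning on $(\tilde\bX_1,\tilde\bX_2)$ and using that $\bX^\top\bbeta_0$ has conditional density at most $C_0$ (Assumption~\ref{ass:conditionaldensityfinite}), this probability is $\lesssim C_0\,\E|(\tilde\bX_1-\tilde\bX_2)^\top(\btheta-\btheta_0)|\le C_0\{2\,\norm{\btheta-\btheta_0}^2\,\mathrm{tr}\,\cov(\tilde\bX)\}^{1/2}\lesssim\norm{\btheta-\btheta_0}\,\sqrt{p_n}$, the last step by $\E X_i^2\le C$ (Assumption~\ref{ass:inftynormfinite}). Thus one may take $\epsilon_n\asymp p_n/\sqrt n$, hence $\eta_n=\sqrt{\nu_n/n}\vee\epsilon_n\asymp p_n/\sqrt n$. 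Feeding $\nu_n\asymp p_n$ and this $\eta_n$ into Theorem~\ref{thm:generalASN}: the hypothesis $\eta_n=o(1)$ becomes $p_n^2/n\to0$; $\log(1/\eta_n)\eta_n^{1/2}\nu_n/n\asymp\log(n/p_n^2)p_n^{3/2}/n^{5/4}$ and $\log(1/\eta_n)\eta_n^{1/2}\nu_n\asymp\log(n/p_n^2)p_n^{3/2}/n^{1/4}$, exactly the conditions displayed in (iii), while the polynomial remainder $(\nu_n\vee p_n)^{5/2}/n^{3/2}=p_n^{5/2}/n^{3/2}$ is $o(1)$ under $p_n^2/n\to0$ and is absorbed by the retained term; this produces the Bahadur-type bound \eqref{eq:kiefer-han} and, under the stronger scaling, the $N(0,1)$ limit for $\bgamma^\top(\hat\btheta_n^{\mathsf{H}}-\btheta_0)$. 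Part (iv) is then Theorem~\ref{thm: consistency cov} with $\tilde\nu_n\vee\nu_n\vee p_n\asymp p_n$: its hypotheses reduce to $p_n^{5/2}/n^{1/2}=o(1)$ (implied by the scaling of (iii)), $\varepsilon_n\sqrt{p_n}=o(1)$ and $\varepsilon_n^{-2}p_n/\sqrt n=o(1)$, and a short computation shows $\varepsilon_n\asymp(p_n/n)^{1/6}$ satisfies the last two at once.

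I expect the main obstacle to be the $\E(h^{\mathsf{H}})^2$ estimate and its accounting rather than any subtlety in invoking the general theorems. The factor $\sqrt{p_n}$ there comes from $\mathrm{tr}\,\cov(\tilde\bX)$, not from an $O(1)$ operator-norm bound (which Assumption~\ref{ass:inftynormfinite} does not supply), and it is exactly this factor that inflates $\epsilon_n$ from $(p_n/n)^{1/2}$ to $p_n/n^{1/2}$ and so forces the stronger-than-$p_n^2/n$ scaling behind \eqref{eq:kiefer-han}; sharpening the corollary would require a better moment hypothesis here. Two bookkeeping items also need care so the general theorems apply verbatim: verifying $f^{\mathsf{H}}(\bz_1,\bz_2;\btheta_0)\equiv0$, so \eqref{eq:zero} holds (this is why $f^{\mathsf{H}}$ carries the subtracted term $\mathds{1}(\bx_1^\top\bbeta_0>\bx_2^\top\bbeta_0)$), and confirming that treating $\bz$ as a free argument does not push the VC index of $\tilde\cF^{\mathsf{H}}$ beyond $O(p_n)$.
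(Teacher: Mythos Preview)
Your overall strategy---verify $\nu_n,\tilde\nu_n\asymp p_n$ and Assumptions~\ref{ass:new1}--\ref{ass:taufunction} for $f^{\sfH}$, then read off (i)--(iv) from Theorems~\ref{thm: consistency}--\ref{thm: consistency cov}---coincides with the paper's, but your derivation of the key input $\sup_{\overline{\mathcal B}(\btheta_0,r)}\E\{h^{\sfH}(\cdot,\cdot;\btheta)\}^2\lesssim\sqrt{p_n}\,\|\btheta-\btheta_0\|$ is genuinely different and considerably shorter. The paper (its Lemma for this bound) writes $H(\btheta):=\E(h^{\sfH})^2$, splits it into five pieces coming from the Hoeffding recentering, Taylor-expands $H(\btheta)=(\btheta-\btheta_0)^\top\nabla_1 H(\btheta')$, and then bounds $\|\nabla_1 H\|_\infty$ by explicitly differentiating six auxiliary probability functions $\varphi_1,\varphi_2,\varphi_3,\omega_1,\omega_2,\omega_3$ under Assumption~\ref{ass:conditionaldensityfinite}; the factor $\sqrt{p_n}$ enters only at the very end via $\|\btheta-\btheta_0\|_1\le\sqrt{p_n}\|\btheta-\btheta_0\|_2$. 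Your route---pass from $h^{\sfH}$ back to $f^{\sfH}$ (justified by $L^2$-orthogonality of the Hoeffding decomposition, so $\E(h^{\sfH})^2\le\E(f^{\sfH})^2$), observe that $(f^{\sfH})^2$ is bounded by the indicator that $\mathds 1(\bx_1^\top\bbeta>\bx_2^\top\bbeta)$ and $\mathds 1(\bx_1^\top\bbeta_0>\bx_2^\top\bbeta_0)$ disagree, and bound that probability by conditioning on $(\tilde\bX_1,\tilde\bX_2)$---reaches the same estimate without any differentiation. The paper's calculation gives the exact first-order structure of $H$, which is potentially reusable; yours is more transparent about the origin of the $\sqrt{p_n}$.

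Two points to tighten. First, your claim that $p_n^{5/2}/n^{3/2}$ is ``absorbed'' by $\log(n/p_n^2)p_n^{3/2}/n^{5/4}$ is not automatic under the hypotheses displayed in (iii) (it would need $p_n/n^{1/4}=O(\log(n/p_n^2))$); the paper makes the same elision when specializing Theorem~\ref{thm:generalASN}(i), so you match the target, but note that for the asymptotic-normality half of (iii) the absorption \emph{is} valid, since $\log(n/p_n^2)p_n^{3/2}/n^{1/4}\to0$ forces $p_n=o(n^{1/6})$ and hence $p_n^{5/2}/n^{1/2}\to0$. Second, the paper's VC argument is a full subgraph construction embedding $\cF^{\sfH}$ (and $\tilde\cF^{\sfH}$) inside a $(2p+5)$-dimensional affine family and invoking the Pollard and Pakes--Pollard permanence lemmas; since $f^{\sfH}$ is a signed difference of products of indicators rather than a single halfspace, your one-line heuristic will need those lemmas to be made rigorous.
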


In the following, we discuss more on the assumptions posed for Han's MRC
estimator. Since the estimator takes pairwise differences as input, without
loss of generality, the design is assumed to be zero-mean. First, Assumption %
\ref{ass:new1} can be established using Assumptions \ref{ass:taufunction}%
(ii), (iii), and Taylor expansion. Secondly, the conditions in Assumptions %
\ref{ass:continuous} and \ref{ass:taufunction}(i) are regular and can be
satisfied. Then, Theorem 4 and subsequent discussions in \cite%
{sherman1993limiting} ensure Assumptions \ref{ass:taufunction}(ii) and (iv)
hold. Lastly, we deal with Assumptions \ref{ass:taufunction}{\blue(iii) and
(v), which indeed} deserve more discussion. In the following, we give
sufficient conditions for guaranteeing Assumptions \ref{ass:taufunction}%
(iii) and (v) hold.

More notation is needed. Let $f_0(\cdot\mid \tilde \bx, y)$ denote the
conditional density function of $X_1$ given $\tilde \bX = \tilde \bx$ and $Y
= y$. Let $f_0(\cdot)$ denote the marginal density function of $\bX^\top\bm%
\beta_0$. Let 
\begin{align*}
\kappa^\mathsf{H}(y,t) = \mathbb{E}\{\mathds{1}(y>Y)-\mathds{1}(y<Y)\mid \bX%
^\top\bbeta_0=t\}, ~~\lambda^\mathsf{H}(y,t)=\kappa^\mathsf{H}(y,t)f_0(t),
\\
~~\mathrm{and}~~\lambda^\mathsf{H}_2(y,t)=\frac{\partial}{\partial t}\lambda^%
\mathsf{H}(y,t).
\end{align*}

We assume the following conditions on the design as well as the noisy hold.

\begin{condition}
\label{cond:H2} Suppose $\bX$ is multivariate subgaussian, i.e., there
exists an absolute constant $c^{\prime }>0$ such that $\sup_{\bm{\gamma}\in%
\mathbb{S}^{p}}\norm{\bgamma^\top\bX}_{\psi_2}\leq c^{\prime }$, where $%
\norm{\bgamma^\top\bX}_{\psi_2} := \sup_{q\geq 1} q^{-1/2}(\mathbb{E}|%
\bm{\gamma}^\top\bX|^q)^{1/q}$.
\end{condition}

\begin{condition}
\label{cond: cond density X1} (i) Suppose that $f_{0}(\cdot \mid \tilde{\bx}%
,y)$ has uniformly bounded derivatives up to order three, i.e., there exists
an absolute constant $C^{\prime \prime }>0$ such that $|f_{0}^{(j)}(\cdot
\mid \tilde{\bx},y)|\leq C^{\prime \prime }$ $(j=1,2,3)$ for any $\tilde{\bx}
$ and $y$ in the support of $\tilde{\bX}$ and $Y$, respectively; (ii) $%
\lim_{\left\vert t\right\vert \rightarrow \infty }f_{0}^{(2)}(t\mid \tilde{%
\bx},y)=0$ for any $\tilde{\bx}$ and $y$; (iii) Universally over the support
of $Y$ and any $\btheta\in \overline{\mathcal{B}}(\btheta_{0},r)$, $\int
|f_{0}^{(3)}(t-\tilde{\bx}^{\top }\btheta\mid s,\tilde{\bx})|G_{\tilde{\bX}%
\mid Y=s}(\diff\tilde{\bx})\leq c\{1\wedge c^{\prime}|t|^{ -(1+c^{\prime
\prime })}\}$ for some positive absolute constants $c,c^{\prime}, c^{\prime
\prime }$, where $G_{\tilde{\bX}\mid Y=s}(\cdot )$ represents the
probability measure of $\tilde{\bX}$ given $Y=s$.
\end{condition}

\begin{condition}
\label{cond:H1} Suppose that $\lambda_2^\mathsf{H}(y,t)$ is bounded, i.e.,
there exists an absolute constant $c^{\prime \prime }>0$ such that $%
|\lambda_2^\mathsf{H}(y,t)|\leq c^{\prime \prime }$ for any $y$ and $t$ in
the support of $Y$ and $\bX^\top\bbeta_0$, respectively.
\end{condition}

We then have the following theorem, which states that the above conditions
are sufficient ones to ensure Assumptions \ref{ass:taufunction}(iii) and (v)
hold.

\begin{theorem}\label{prop:H}
  Under Conditions \ref{cond:H2}--\ref{cond:H1}, Assumptions \ref{ass:taufunction}(iii) and (v) hold in this example.
\end{theorem}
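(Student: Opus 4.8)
The plan is to exploit the fact that, although $f^\sfH(\cdot,\cdot;\btheta)$ is discontinuous in $\btheta$, its ``half-smoothed'' version $\tau^\sfH(\bz;\btheta)=\mathbb{E}f^\sfH(\bz,\cdot;\btheta)+\mathbb{E}f^\sfH(\cdot,\bz;\btheta)$ is three times continuously differentiable in $\btheta$ on $\overline{\mathcal{B}}(\btheta_0,r)$ for each fixed $\bz$, and to derive workable integral representations of $\nabla_2\tau^\sfH$ and $\nabla_3\tau^\sfH$ by interchanging differentiation and expectation. Writing $\bX=(X_1,\tilde{\bX}^\top)^\top$ and $\bbeta=(1,\btheta^\top)^\top$, one has $\bx^\top\bbeta>\bX'^\top\bbeta\iff X_1'<x_1+(\tilde{\bx}-\tilde{\bX}')^\top\btheta$; conditioning on $(\tilde{\bX}',Y')$ and integrating out $X_1'$ against the conditional density $f_0(\cdot\mid\tilde{\bX}',Y')$ turns the inner indicator into a conditional c.d.f.\ whose $k$-th $\btheta$-derivative is $f_0^{(k-1)}(\cdot\mid\tilde{\bX}',Y')$ times the $k$-fold tensor power of $\tilde{\bx}-\tilde{\bX}'$. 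The Leibniz interchange and the finiteness of the resulting integrals are licensed by Condition~\ref{cond: cond density X1}: part~(i) gives $|f_0^{(j)}|\le C''$ for $j\le3$, part~(ii) gives decay of $f_0^{(2)}$ at infinity, and part~(iii) gives integrable tail control once the unbounded coordinate $X_1$ is integrated out; these also underlie the usual reduction (via $Y\perp\!\!\!\perp\bX\mid\bX^\top\bbeta_0$, from model~\eqref{GRM} and Assumption~\ref{ass:gen to han}(ii)) that expresses the leading Hessian $2\Vb^\sfH=\mathbb{E}\nabla_2\tau^\sfH(\cdot;\btheta_0)$ through $\lambda^\sfH$ and $\lambda_2^\sfH$, with Condition~\ref{cond:H1} bounding $\lambda_2^\sfH$ and Condition~\ref{cond:H2} controlling every linear functional $\bm{\gamma}^\top(\tilde{\bx}-\tilde{\bX}')$ that appears.

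For Assumption~\ref{ass:taufunction}(iii), I reduce the claim to an operator-norm bound via $\norm{\Ib_p-(\Vb^\sfH)^{-1/2}\Vb^\sfH(\btheta)(\Vb^\sfH)^{-1/2}}\le\norm{(\Vb^\sfH)^{-1}}\,\norm{\Vb^\sfH(\btheta)-\Vb^\sfH(\btheta_0)}\le c_{\min}^{-1}\norm{\Vb^\sfH(\btheta)-\Vb^\sfH(\btheta_0)}$, where $c_{\min}$ comes from Assumption~\ref{ass:taufunction}(ii), already available via Theorem~4 of \cite{sherman1993limiting}. A mean-value expansion along the segment $[\btheta_0,\btheta]$ then gives, for unit vectors $\bm{\gamma}_1,\bm{\gamma}_2$,
\[
\bm{\gamma}_1^\top\bigl(\Vb^\sfH(\btheta)-\Vb^\sfH(\btheta_0)\bigr)\bm{\gamma}_2=\tfrac12\,\mathbb{E}\bigl\langle\nabla_3\tau^\sfH(\cdot;\bar\btheta),\ (\btheta-\btheta_0)\otimes\bm{\gamma}_1\otimes\bm{\gamma}_2\bigr\rangle,
\]
and the representation above bounds this in absolute value by $C''\,\mathbb{E}\bigl[\,|(\btheta-\btheta_0)^\top(\tilde{\bX}-\tilde{\bX}')|\,|\bm{\gamma}_1^\top(\tilde{\bX}-\tilde{\bX}')|\,|\bm{\gamma}_2^\top(\tilde{\bX}-\tilde{\bX}')|\,\bigr]$, which by H\"older's inequality and the subgaussian moment bounds of Condition~\ref{cond:H2} is at most $C\norm{\btheta-\btheta_0}\le Cr$ for an absolute $C$. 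Taking $\rho(r):=Cr/c_{\min}$ then satisfies $\rho(r)<c\,p_n r$ with $c:=C/c_{\min}+1$ (since $p_n\ge1$) and $\rho(r)<c_{\min}/(11c_{\max})$ once $r$ (equivalently $r_0$) is below an absolute threshold, which is precisely Assumption~\ref{ass:taufunction}(iii).

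For Assumption~\ref{ass:taufunction}(v), note that the target is only a Bernstein-type m.g.f.\ bound on the fixed interval $|\lambda|\le\ell_0$, so it suffices to show that $\bm{\gamma}_1^\top\nabla_2\zeta^\sfH(\bZ;\btheta)\bm{\gamma}_2$ is centered subexponential with parameters uniform in $\btheta\in\overline{\mathcal{B}}(\btheta_0,r)$ and $\bm{\gamma}_1,\bm{\gamma}_2\in\mathbb{S}^{p_n-1}$. Using $\nabla_2\zeta^\sfH=\nabla_2\tau^\sfH-\mathbb{E}\nabla_2\tau^\sfH$ and the integral formula for $\nabla_2\tau^\sfH$, one obtains $|\bm{\gamma}_1^\top\nabla_2\tau^\sfH(\bz;\btheta)\bm{\gamma}_2|\le C''\,\mathbb{E}_{\tilde{\bX}'}\bigl[|\bm{\gamma}_1^\top(\tilde{\bx}-\tilde{\bX}')|\,|\bm{\gamma}_2^\top(\tilde{\bx}-\tilde{\bX}')|\bigr]\le C''\bigl[(\bm{\gamma}_1^\top\tilde{\bx})^2+(\bm{\gamma}_2^\top\tilde{\bx})^2\bigr]+C$ by AM--GM and the bounded second moments of the projections. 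Since $(0,\bm{\gamma}_i^\top)^\top$ is a unit vector, Condition~\ref{cond:H2} makes $\bm{\gamma}_i^\top\tilde{\bX}$ subgaussian with an absolute $\psi_2$-norm, so $(\bm{\gamma}_i^\top\tilde{\bX})^2$ is subexponential with an absolute parameter; hence $\bm{\gamma}_1^\top\nabla_2\tau^\sfH(\bZ;\btheta)\bm{\gamma}_2$ is dominated by a subexponential random variable, and subtracting its (bounded) mean leaves a centered subexponential random variable with absolute constants, which is exactly Assumption~\ref{ass:taufunction}(v).

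The main obstacle is the first step: rigorously justifying the exchange of differentiation and expectation for a genuinely discontinuous integrand and, crucially for the increasing-dimension setting, checking that every constant produced --- the density-derivative bound, the $\psi_2$-constant, the thresholds on $r$ --- is \emph{absolute}, i.e.\ independent of $p_n$ and stable as the law $\mathbb{P}$ drifts with $n$. This is exactly why the tail hypotheses in Condition~\ref{cond: cond density X1}(ii)--(iii) are indispensable rather than cosmetic: without integrable decay of $f_0^{(2)}$ and $f_0^{(3)}$ as $X_1$ is integrated out, the representations of $\nabla_2\tau^\sfH$, $\nabla_3\tau^\sfH$, and the single-index form of $2\Vb^\sfH$ need not be finite, let alone uniformly bounded, and the smoothing-plus-Taylor scheme breaks down.
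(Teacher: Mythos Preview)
Your proposal is correct, and for part~(iii) it follows essentially the same route as the paper: both compute a third-order Taylor remainder for $\Vb^\sfH(\btheta)-\Vb^\sfH$, bound it through the uniform control on $f_0^{(j)}$ from Condition~\ref{cond: cond density X1}, and close with subgaussian moment bounds from Condition~\ref{cond:H2}. The difference is only in the integral representation. You condition on $(\tilde{\bX}',Y')$ and integrate out $X_1'$ directly, so that the $k$-th $\btheta$-derivative produces $f_0^{(k-1)}$ times $(\tilde{\bx}-\tilde{\bX}')^{\otimes k}$; the paper instead writes $\tau^\sfH$ as an integral in $t$ against the conditional density $g_0(t\mid s;\btheta)$ of $\bX'^\top\bbeta$ given $Y'=s$, which after repeated differentiation leaves an indefinite integral $\int_{-\infty}^{\bx^\top\bbeta}g_{0,3}(t\mid s;\btheta)\,dt$ that must be shown finite---this is precisely where Conditions~\ref{cond: cond density X1}(ii)--(iii) enter in the paper's argument. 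Your parametrization avoids that indefinite integral, so your remark that (ii)--(iii) are ``indispensable'' is true for the paper's derivation but not for your own. The paper additionally bounds $\|\Vb^\sfH(\btheta)-\Vb^\sfH\|$ a second time via the crude $\|\cdot\|\le p\sup_{i,j}|\cdot|$ route to verify $\rho(r)\le cpr$; your spectral bound $\rho(r)=Cr/c_{\min}$ already dominates this since $p_n\ge1$.

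For part~(v) you take a genuinely different route. The paper invokes Sherman's closed-form Hessian at $\btheta_0$, $\nabla_2\tau^\sfH(\bZ;\btheta_0)=\{\tilde{\bX}-\E(\tilde{\bX}\mid\bX^\top\bbeta_0)\}^{\otimes2}\lambda_2^\sfH(Y,\bX^\top\bbeta_0)$, uses Lemma~\ref{lem:multsubgaussian} and Condition~\ref{cond:H1} to get a subgaussian-times-subgaussian product, and then extends to $\btheta\in\overline{\mathcal{B}}(\btheta_0,r)$ by continuity of the m.g.f.\ on the compact set $[-c_2,c_2]\times\overline{\mathcal{B}}(\btheta_0,r)$. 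Your argument bypasses both Sherman's formula and the compactness extension: from your representation you bound $|\bgamma_1^\top\nabla_2\tau^\sfH(\bZ;\btheta)\bgamma_2|$ directly by $C''[(\bgamma_1^\top\tilde{\bX})^2+(\bgamma_2^\top\tilde{\bX})^2]+C$, a $\btheta$-free subexponential envelope, and conclude. This buys two things. First, it makes the uniformity in $(\btheta,\bgamma_1,\bgamma_2)$ and the absoluteness of the constants transparent; the paper's compactness step, taken literally, produces a bound for each fixed $p_n$ and fixed $(\bgamma_1,\bgamma_2)$, which is delicate in the increasing-dimension triangular array. Second, your argument never touches $\lambda_2^\sfH$ and hence does not use Condition~\ref{cond:H1}, whereas the paper's route does. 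What the paper's approach buys is an exact structural formula for the Hessian at $\btheta_0$ that is of independent interest and connects directly to Sherman's original analysis.
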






\subsection{Cavanagh and Sherman's Rank Estimator}

\label{subsec: example C}

In contrast to Han's original proposal, \cite{cavanagh1998rank} proposed
estimating $\bbeta_{0}$ in \eqref{GRM} using 
\begin{equation*}
\hat{\bbeta}_{n}^{\mathsf{C}}=\argmax_{\bbeta:\beta _{1}=1}S_{n}^{%
\mathsf{C}}(\bbeta ),
\end{equation*}%
where 
\begin{equation*}
S_{n}^{\mathsf{C}}(\bbeta):=\frac{1}{n(n-1)}\sum_{i\neq j}M(Y_{i})%
\mathds{1}(\bX_{i}^{\top }\bbeta>\bX_{j}^{\top }\bbeta)
\end{equation*}%
and one candidate function for $M(y)$ is 
\begin{equation*}
M(y)=a\mathds{1}(y<a)+y\mathds{1}(a\leq y\leq b)+b\mathds{1}(y>b).
\end{equation*}%
Here $a$ and $b$ are two absolute constants, and hence $M(y)$ is a trimming
function for balancing the statistical efficiency and robustness to
outliers. Let $\bbeta_0=(1,\btheta_0^\top)^\top$, and we aim to estimate 
$\btheta_0$.




We define the estimator $\hat\btheta_n^\mathsf{C}$ and other parameters
similarly as in Section \ref{subsec: HanMRC} and Section \ref{sec:Han}, with their explicit
definitions relegated to the appendix Section \ref{sec:app2}. Then we have
the following corollary. 
\begin{corollary}\label{cor:C} We have
\begin{enumerate}
  \item[(i)] Under Assumption~\ref{ass:gen to C}(i)--(iii) in the appendix Section \ref{sec:app2}, if $p_n/n=o(1)$, then $\norm{\hat\btheta^\sfC_n-\btheta_0}\xrightarrow{\P} 0$.
  \item[(ii)]  Suppose that Assumption~\ref{ass:gen to C} holds.
 If $p_n/n=o(1)$, then
  \[
\norm{\hat\btheta^\sfC_n-\btheta_0}^2 =O_\P(p_n/n).
  \]
  \item[(iii)] Suppose that Assumptions~\ref{ass:inftynormfinite}--\ref{ass:gen to C} hold.  If $p_n^2/n=o(1)$ and $\log(n/p_n^2)p_n^{3/2}/n^{5/4}=o(1)$, we have
 \[
 \norm{\hat\btheta_n^\sfC-\btheta_0+(\Vb^\sfC)^{-1}\P_n \nabla_1 \tau^\sfC(\cdot;\btheta_0)}^2 = O_\P\big\{\log(n/p_n^2)p_n^{3/2}/n^{5/4}\big\}.
 \]
 If further $\log(n/p_n^2)p_n^{3/2}/n^{1/4}=o(1)$, then for any $\bgamma\in\R^{p_n}$,
  \[
  \sqrt{n}\bgamma^\top(\hat\btheta^\sfC_n-\btheta_0) / \{\bgamma^\top (\Vb^\sfC)^{-1}\Deltab^\sfC (\Vb^\sfC)^{-1}\bgamma\}^{1/2}\Rightarrow N(0,1).
  \]
  {\blue \item[(iv)] Under conditions in (iii), if we further have $\varepsilon_n\sqrt{p_n} = o(1)$ and $\varepsilon_n^{-2}p_n/\sqrt{n}=o(1)$, then
  \[
  \norm{(\hat\Vb^\sfC)^{-1}\hat\Deltab^\sfC(\hat\Vb^\sfC)^{-1}- (\Vb^\sfC)^{-1}\Deltab^\sfC(\Vb^\sfC)^{-1}}\xrightarrow{\P} 0.
  \]
  }
    \hf{In particular, we could choose $\epsilon_n\asymp(p_n/n)^{1/6}$, which will render a consistent covariance estimator under the same scaling condition as (iii).}
\end{enumerate}
\end{corollary}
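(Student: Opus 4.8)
The plan is to obtain all four parts of Corollary~\ref{cor:C} by specializing the general results of Section~\ref{sec:general} to Cavanagh and Sherman's estimator, which is of the form \eqref{eq:U} with kernel
\[
f^{\mathsf{C}}(\bz_1,\bz_2;\btheta)=M(y_1)\big\{\mathds{1}(\bx_1^\top\bbeta>\bx_2^\top\bbeta)-\mathds{1}(\bx_1^\top\bbeta_0>\bx_2^\top\bbeta_0)\big\},\qquad\bbeta=(1,\btheta^\top)^\top,
\]
so that the normalization \eqref{eq:zero} holds by construction. Since $M(\cdot)$ is a fixed bounded trimming function and the only $\btheta$-dependence enters through the single linear index $\bx^\top\bbeta$, the class $\{f^{\mathsf{C}}(\cdot,\cdot;\btheta)\}$ and the class $\tilde{\cF}^{\mathsf{C}}$ used to form the covariance estimator are uniformly bounded VC-subgraph classes of VC-dimension $\nu_n\asymp\tilde\nu_n\asymp p_n$; in particular $\nu_n/n\to0$ whenever $p_n/n=o(1)$. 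For part (i) I would verify Assumptions~\ref{ass:new1} and \ref{ass:continuous} for $\Gamma^{\mathsf{C}}$: continuity of $\Gamma^{\mathsf{C}}$ and almost-everywhere continuity of $f^{\mathsf{C}}(\cdot,\cdot;\btheta_0)$ follow by dominated convergence once $X_1$ has an everywhere positive conditional Lebesgue density (Assumption~\ref{ass:gen to C}(iii)), since the discontinuity set of the index indicator is then $\P\otimes\P$-null; the quantitative identifiability \eqref{eqn: new cond} follows from a second-order Taylor expansion of the population objective around $\btheta_0$ using the negative-definiteness of $\E\nabla_2\tau^{\mathsf{C}}(\cdot;\btheta_0)=2\Vb^{\mathsf{C}}$ supplied through Assumption~\ref{ass:gen to C}(iv). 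Theorem~\ref{thm: consistency} then yields $\norm{\hat\btheta_n^{\mathsf{C}}-\btheta_0}\xrightarrow{\P}0$.

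For part (ii) it suffices to check Assumptions~\ref{ass:new1}--\ref{ass:taufunction} and invoke Theorem~\ref{thm: general consrate} with $\nu_n\vee p_n\asymp p_n$. Assumptions~\ref{ass:taufunction}(i), (ii), (iv)---existence of the mixed second derivatives of $\tau^{\mathsf{C}}$ and the eigenvalue bounds on $-\Vb^{\mathsf{C}}$ and on $\Deltab^{\mathsf{C}}$---are the local strong-convexity and nondegeneracy facts established pointwise at $\btheta_0$, hence transferable to the triangular-array setting with absolute constants, by the analysis of \cite{cavanagh1998rank} paralleling Theorem~4 of \cite{sherman1993limiting}; these are packaged into Assumption~\ref{ass:gen to C}(iv). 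Assumptions~\ref{ass:taufunction}(iii) (third-order smoothness of $\btheta\mapsto\E\tau^{\mathsf{C}}(\cdot;\btheta)$ with a small modulus $\rho(r)$) and \ref{ass:taufunction}(v) (a uniform sub-Gaussian bound on $\bgamma_1^\top\nabla_2\zeta^{\mathsf{C}}(\cdot;\btheta)\bgamma_2$ over the sphere) are the delicate ones; I would verify them in Section~\ref{sec:app2} under primitive conditions on the conditional density of $\bX^\top\bbeta_0$ given the remaining covariates, on the trimming function $M$, and under a sub-Gaussian design---exactly as Theorem~\ref{prop:H} does for Han's MRC, the difference being that the observation-specific weight $M(y_1)$ replaces the coupled weight $\mathds{1}(y_1>y_2)$, which in fact simplifies the relevant conditional-expectation computations. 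Theorem~\ref{thm: general consrate} then gives $\norm{\hat\btheta_n^{\mathsf{C}}-\btheta_0}^2=O_\P(p_n/n)$ under $p_n/n=o(1)$.

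For part (iii) the one genuinely new computation is the order of $\epsilon_n:=\sup_{\btheta\in\overline{\mathcal{B}}(\btheta_0,c\sqrt{p_n/n})}\E h^2(\cdot,\cdot;\btheta)$, which controls both the degenerate-U-process remainder through $\eta_n=\sqrt{\nu_n/n}\vee\epsilon_n$ and the scaling requirements in Theorem~\ref{thm:generalASN}. Because $|M|$ is bounded and $f^{\mathsf{C}}$ is a bounded multiple of a $\{-1,0,1\}$-valued function, $\E h^2\le\E(f^{\mathsf{C}})^2\lesssim\P\{\sign(\bX_1^\top\bbeta-\bX_2^\top\bbeta)\neq\sign(\bX_1^\top\bbeta_0-\bX_2^\top\bbeta_0)\}$, and this probability is that of $(\bX_1-\bX_2)^\top\bbeta_0$ lying between $0$ and $-(\tilde\bX_1-\tilde\bX_2)^\top(\btheta-\btheta_0)$; conditioning on $(\tilde\bX_1,\tilde\bX_2)$, using the uniformly bounded conditional density of $\bX^\top\bbeta_0$ (Assumption~\ref{ass:conditionaldensityfinite}) and then the coordinatewise second-moment bound (Assumption~\ref{ass:inftynormfinite}) gives $\E h^2\lesssim\norm{\btheta-\btheta_0}_1\le\sqrt{p_n}\,\norm{\btheta-\btheta_0}$, which over the ball $\overline{\mathcal{B}}(\btheta_0,c\sqrt{p_n/n})$ is of order $\sqrt{p_n}\cdot\sqrt{p_n/n}=p_n/\sqrt n$; hence $\epsilon_n\asymp\eta_n\asymp p_n/\sqrt n$. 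Plugging $\nu_n\asymp p_n$ and this $\eta_n$ into Theorem~\ref{thm:generalASN}(i) reproduces the Bahadur-type bound displayed in the statement, the governing term being $\log(1/\eta_n)\eta_n^{1/2}\nu_n/n\asymp\log(n/p_n^2)p_n^{3/2}/n^{5/4}$, under $p_n^2/n=o(1)$ and $\log(n/p_n^2)p_n^{3/2}/n^{5/4}=o(1)$; Theorem~\ref{thm:generalASN}(ii) then delivers the claimed asymptotic normality of $\sqrt n\bgamma^\top(\hat\btheta_n^{\mathsf{C}}-\btheta_0)$ once, in addition, $\log(n/p_n^2)p_n^{3/2}/n^{1/4}=o(1)$ (which also forces $p_n^{5/2}/n^{1/2}=o(1)$).

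For part (iv) I would build $\hat\Vb^{\mathsf{C}}$ and $\hat\Deltab^{\mathsf{C}}$ as the numerical-derivative estimators of Section~\ref{sec:rank-general} formed from $\tau_n^{\mathsf{C}}$, and apply Theorem~\ref{thm: consistency cov}: since $\tilde\nu_n\vee\nu_n\vee p_n\asymp p_n$, the requirement $(\tilde\nu_n\vee\nu_n\vee p_n)^{5/2}/n^{1/2}=o(1)$ is already implied by the scalings in part~(iii), while the step-size conditions $\varepsilon_n\sqrt{p_n}=o(1)$ and $\varepsilon_n^{-2}p_n/\sqrt n=o(1)$ are simultaneously met by $\varepsilon_n\asymp(p_n/n)^{1/6}$, yielding $\norm{(\hat\Vb^{\mathsf{C}})^{-1}\hat\Deltab^{\mathsf{C}}(\hat\Vb^{\mathsf{C}})^{-1}-(\Vb^{\mathsf{C}})^{-1}\Deltab^{\mathsf{C}}(\Vb^{\mathsf{C}})^{-1}}\xrightarrow{\P}0$. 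The main obstacles are the two items isolated above and shared with the Han analysis: (a) the Section~\ref{sec:app2} analogue of Theorem~\ref{prop:H}, i.e.\ verifying Assumptions~\ref{ass:taufunction}(iii) and (v) for $\tau^{\mathsf{C}}$ from primitive conditions, which requires differentiating the index-indicator expectation to third order and controlling exponential moments of $\nabla_2\zeta^{\mathsf{C}}$ uniformly over the growing-dimensional sphere; and (b) pinning down the exact order $\epsilon_n\asymp p_n/\sqrt n$ via the anti-concentration argument, since the extra factor $\sqrt{p_n}$ there is precisely what produces the $p_n^{3/2}$ in the Bahadur rate and hence the stronger-than-$p_n^2/n\to0$ scaling needed for normal approximation.
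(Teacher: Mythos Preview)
Your approach is essentially the paper's: show the VC-dimensions $\nu_n,\tilde\nu_n$ are of order $p_n$, then invoke Theorems~\ref{thm: consistency}--\ref{thm: consistency cov}; for part~(iii) you correctly compute $\sup_{\overline{\mathcal B}(\btheta_0,c\sqrt{p_n/n})}\E\{h^{\sfC}\}^2\lesssim \sqrt{p_n}\,\|\btheta-\btheta_0\|\asymp p_n/\sqrt n$, which is the paper's Lemma~\ref{lem: fastrate C}, and your anti-concentration argument via $\E h^2\le\E (f^{\sfC})^2$ and the bounded conditional density is a clean route to that bound.

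One point of confusion: you treat the verification of Assumption~\ref{ass:taufunction}(iii),(v) for $\tau^{\sfC}$ as an open ``main obstacle'' to be discharged from primitive conditions \`a la Theorem~\ref{prop:H}. In the statement of Corollary~\ref{cor:C}, however, Assumption~\ref{ass:gen to C}(iv) \emph{is} exactly ``Assumption~\ref{ass:taufunction} holds for $\tau^{\sfC},\zeta^{\sfC}$'', so for parts~(ii)--(iv) there is nothing to verify---the paper simply assumes it. Likewise for part~(i): Assumption~\ref{ass:gen to C}(i) already asserts Assumption~\ref{ass:new1} for $\Gamma^{\sfC}$, so you need not (and, since (iv) is not assumed in part~(i), should not) derive the quantitative identifiability from the negative definiteness of $\Vb^{\sfC}$. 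Drop these extra verifications and your sketch coincides with the paper's proof.
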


\subsection{Khan and Tamer's Rank Estimator for Duration Models}

\label{subsec: example K} Consider Khan and Tamer's setting %
\citep{khan2007partial}, where the data are subject to censoring and the
variable $Y$ is no longer always observed. Use $\xi$ to denote the random
censoring variable, which can be arbitrarily correlated with $\bX$. Let $R$
be a binary variable indicating whether $Y$ is uncensored or not. Let $V$
denote a scalar random variable with $V=Y$ for uncensored observations, and $%
V=\xi$ otherwise. Consider the following right censored transformation model %
\citep{khan2007partial}: 
\begin{align*}
T(V)&=\min(\bX^\top \bbeta_0+\epsilon,\xi), \\
R &=\mathds{1}(\bX^\top\bbeta_0+\epsilon\leq \xi),
\end{align*}
where $T(\cdot)$ is assumed to be strictly monotonic. The $(p_n+1)$%
-dimensional vector $\bbeta_0$ is unknown and is to be estimated.

\cite{khan2007partial} proposed estimating $\bbeta_0$ with $\hat\bbeta^%
\mathsf{K}_n =\argmax_{\bbeta:\beta _{1}=1}S^\mathsf{K}_n(\bm%
\beta)$, where 
\begin{equation*}
S^\mathsf{K}_n(\bbeta):=\frac{1}{n(n-1)}\sum_{i\neq j}R_i\mathds{1}(V_i<
V_j)\mathds{1}(\bX_i^\top\bbeta< \bX_j^\top\bbeta).
\end{equation*}
Let $\bbeta_0=(1,\btheta_0^\top)^\top$, and we consider estimation of $%
\btheta_0 $.

We define the estimator $\hat\btheta_n^\mathsf{K}$ and other parameters
similarly as in Section \ref{subsec: HanMRC} and Section \ref{sec:Han}, with their explicit
definitions relegated to the appendix Section \ref{sec:app3}. Then we have
the following corollary.

\begin{corollary}\label{cor:K} We have
\begin{enumerate}
  \item[(i)] Under Assumption~\ref{ass:gen to K}(i)--(iii) in the appendix Section \ref{sec:app3}, if $p_n/n=o(1)$, then $\norm{\hat\btheta^\sfK_n-\btheta_0}\xrightarrow{\P} 0$.
  \item[(ii)]  Under Assumption~\ref{ass:gen to K}, if $p_n/n=o(1)$, then
   \[
 \norm{\hat\btheta^\sfK_n-\btheta_0}^2 = O_\P(p_n/n).
  \]
  \item[(iii)] Suppose that Assumptions~\ref{ass:inftynormfinite}--\ref{ass:conditionaldensityfinite} and
\ref{ass:gen to K} hold.  If $p_n^2/n=o(1)$ and $\log(n/p_n^2)p_n^{3/2}/n^{5/4}=o(1)$, we have
 \[
 \norm{\hat\btheta_n^\sfK-\btheta_0+(\Vb^\sfK)^{-1}\P_n \nabla_1 \tau^\sfK(\cdot;\btheta_0)}^2 = O_\P\big\{\log(n/p_n^2)p_n^{3/2}/n^{5/4}\big\}.
 \]
 If further $\log(n/p_n^2)p_n^{3/2}/n^{1/4}=o(1)$, then for any $\bgamma\in\R^{p_n}$,
  \[
  \sqrt{n}\bgamma^\top(\hat\btheta^\sfK_n-\btheta_0) / \{\bgamma^\top (\Vb^\sfK)^{-1}\Deltab^\sfK (\Vb^\sfK)^{-1}\bgamma\}^{1/2}\Rightarrow N(0,1).
  \]
  {\blue \item[(iv)] Under conditions in (iii), if we further have $\varepsilon_n\sqrt{p_n} = o(1)$ and $\varepsilon_n^{-2}p_n/\sqrt{n}=o(1)$, then
  \[
  \norm{(\hat\Vb^\sfK)^{-1}\hat\Deltab^\sfK(\hat\Vb^\sfK)^{-1}- (\Vb^\sfK)^{-1}\Deltab^\sfK(\Vb^\sfK)^{-1}}\xrightarrow{\P} 0.
  \]
  }
    \hf{In particular, we could choose $\epsilon_n\asymp(p_n/n)^{1/6}$, which will render a consistent covariance estimator under the same scaling condition as (iii).}
\end{enumerate}
\end{corollary}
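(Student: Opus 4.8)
The plan is to obtain Corollary~\ref{cor:K} by specializing the machinery of Section~\ref{sec:general} to the (recentered) Khan--Tamer loss
\[
f^{\sfK}(\bz_1,\bz_2;\btheta)-f^{\sfK}(\bz_1,\bz_2;\btheta_0),\qquad f^{\sfK}(\bz_1,\bz_2;\btheta):=R_1\mathds{1}(V_1<V_2)\mathds{1}(\bX_1^\top\bbeta<\bX_2^\top\bbeta),
\]
so that the normalization \eqref{eq:zero} holds. First I would record the structural facts that make the general results applicable: $\{f^{\sfK}(\cdot,\cdot;\btheta)\}$ is uniformly bounded by $1$ and, being a Boolean combination of half-space indicators, is VC-subgraph with VC-dimension $\nu_n\asymp p_n$; likewise $\tilde\nu_n\asymp p_n$, and $m_n=p_n+1$, so that every occurrence of $\nu_n$, $\tilde\nu_n$, $m_n$ in Theorems~\ref{thm: consistency}--\ref{thm: consistency cov} collapses to the single scale $p_n$. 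Next I would check that Assumption~\ref{ass:gen to K} (stated in Section~\ref{sec:app3}), together with Assumptions~\ref{ass:inftynormfinite}--\ref{ass:conditionaldensityfinite}, yields Assumptions~\ref{ass:new1}--\ref{ass:taufunction} for $\tau^{\sfK}$ and $\zeta^{\sfK}$: Assumption~\ref{ass:gen to K}(iv) carries Assumption~\ref{ass:taufunction}; Assumption~\ref{ass:new1} for $(\Theta^{\sfK},\Gamma^{\sfK})$ follows from a Taylor expansion of $\Gamma^{\sfK}$ about $\btheta_0$ using Assumptions~\ref{ass:taufunction}(ii)--(iii), exactly as in the discussion after Corollary~\ref{cor:H}; and Assumption~\ref{ass:continuous} is immediate from the explicit forms of $\Gamma^{\sfK}$ and $f^{\sfK}$.

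Given this, parts~(i) and~(ii) are immediate: Theorem~\ref{thm: consistency} with $\nu_n\asymp p_n$ gives $\norm{\hat\btheta^{\sfK}_n-\btheta_0}\xrightarrow{\P}0$ as soon as $p_n/n\to0$, and Theorem~\ref{thm: general consrate} gives $\norm{\hat\btheta^{\sfK}_n-\btheta_0}^2=O_\P(p_n/n)$ under the same scaling.

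For part~(iii) the one genuinely new computation is the bound, required by Theorem~\ref{thm:generalASN}, on $\E(h^{\sfK})^2$ over the ball $\overline{\mathcal{B}}\{\btheta_0,c\sqrt{p_n/n}\}$. Since $R_1\mathds{1}(V_1<V_2)\in\{0,1\}$, for $\btheta\in\overline{\mathcal{B}}(\btheta_0,r)$,
\[
\E(h^{\sfK})^2(\cdot,\cdot;\btheta)\ \lesssim\ \P\big(\mathds{1}(\bX_1^\top\bbeta<\bX_2^\top\bbeta)\neq\mathds{1}(\bX_1^\top\bbeta_0<\bX_2^\top\bbeta_0)\big),
\]
and such a discrepancy forces $(\bX_1-\bX_2)^\top\bbeta_0$ to lie within $|(\bX_1-\bX_2)^\top(\bbeta-\bbeta_0)|$ of the origin; conditioning and using the bounded conditional density of $\bX^\top\bbeta_0$ (Assumption~\ref{ass:conditionaldensityfinite}) together with $\E|(\bX_1-\bX_2)^\top(\bbeta-\bbeta_0)|\lesssim\norm{\bbeta-\bbeta_0}_1\lesssim\sqrt{p_n}\,\norm{\bbeta-\bbeta_0}$ (Assumption~\ref{ass:inftynormfinite}) bounds the probability by $O(\sqrt{p_n}\,r)$. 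Taking $r\asymp\sqrt{p_n/n}$ gives $\epsilon_n\asymp p_n/\sqrt n$, hence $\eta_n=\sqrt{\nu_n/n}\vee\epsilon_n\asymp p_n/\sqrt n$ and $\log(1/\eta_n)\asymp\log(n/p_n^2)$. Substituting $\nu_n\asymp p_n$ and this $\eta_n$ into Theorem~\ref{thm:generalASN}(i), the degenerate-U-process remainder $\log(1/\eta_n)\eta_n^{1/2}\nu_n/n$ becomes $\asymp\log(n/p_n^2)p_n^{3/2}/n^{5/4}$, the theorem's scaling requirement is implied by $p_n^2/n\to0$ and $\log(n/p_n^2)p_n^{3/2}/n^{5/4}\to0$, and under this scaling that remainder is the binding contribution, giving the stated Bahadur-type bound for $\hat\btheta^{\sfK}_n$. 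The extra requirement in Theorem~\ref{thm:generalASN}(ii) reduces to $\log(n/p_n^2)p_n^{3/2}/n^{1/4}\to0$; under it the Bahadur remainder is $o_\P(1/n)$, so $\sqrt n\,\bgamma^\top(\hat\btheta^{\sfK}_n-\btheta_0)$ is asymptotically equivalent to $-\sqrt n\,\bgamma^\top(\Vb^{\sfK})^{-1}\P_n\nabla_1\tau^{\sfK}(\cdot;\btheta_0)$, an average of i.i.d.\ mean-zero summands of variance $\bgamma^\top(\Vb^{\sfK})^{-1}\Deltab^{\sfK}(\Vb^{\sfK})^{-1}\bgamma$ (well conditioned by Assumption~\ref{ass:taufunction}(iv)), so the Lindeberg CLT delivers the claimed asymptotic normality. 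Finally, part~(iv) follows from Theorem~\ref{thm: consistency cov} with $\tilde\nu_n\vee\nu_n\vee p_n\asymp p_n$: its VC-scaling hypothesis $p_n^{5/2}/n^{1/2}=o(1)$ is implied by the scaling $\log(n/p_n^2)p_n^{3/2}/n^{1/4}=o(1)$ of part~(iii), its step-size hypotheses are exactly $\varepsilon_n\sqrt{p_n}=o(1)$ and $\varepsilon_n^{-2}p_n/\sqrt n=o(1)$, and the choice $\varepsilon_n\asymp(p_n/n)^{1/6}$ turns both into $p_n^{2/3}/n^{1/6}=o(1)$, which again holds under that scaling.

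I expect the main obstacle to be the verification of Assumption~\ref{ass:taufunction}(iii) and~(v) for $\tau^{\sfK}$ --- the existence and uniform boundedness of the mixed third partials of $\btheta\mapsto\E\tau^{\sfK}(\cdot;\btheta)$, and a sub-Gaussian bound for the moment generating function of $\bm{\gamma}_1^\top\nabla_2\zeta^{\sfK}(\cdot;\btheta)\bm{\gamma}_2$ --- which, as for Han's estimator in Theorem~\ref{prop:H}, must be reduced to primitive conditions but is more delicate here because of the censoring map $(V,R)=(\min(\bX^\top\bbeta_0+\epsilon,\xi),\mathds{1}(\bX^\top\bbeta_0+\epsilon\le\xi))$: one has to work through the joint law of $(V,R,\bX)$ rather than that of $(Y,\bX)$, so the counterparts of Conditions~\ref{cond:H2}--\ref{cond:H1} must be phrased for the conditional densities induced by the censoring mechanism, and their smoothness and tail decay verified. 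Everything else is bookkeeping --- collapsing $(m_n,\nu_n,\tilde\nu_n)$ to the single scale $p_n$ and arithmetic simplification of the composite rates produced by the general theorems.
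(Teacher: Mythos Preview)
Your proposal is correct and follows the paper's approach: reduce everything to Theorems~\ref{thm: consistency}--\ref{thm: consistency cov} by (a) showing the VC-dimensions of $\cF^{\sfK}$ and $\tilde\cF^{\sfK}$ are of order $p_n$, and (b) bounding $\E\{h^{\sfK}(\cdot,\cdot;\btheta)\}^2$ by $C\sqrt{p_n}\,\|\btheta-\btheta_0\|$. The paper does step~(b) via Lemma~\ref{lem: fastrate K}, which Taylor-expands $H(\btheta)=\E\{h^{\sfK}\}^2$ and bounds $\|\nabla_1 H\|_\infty$; your route via the $L_2$-orthogonality of the Hoeffding decomposition, $\E(h^{\sfK})^2\le\E(f^{\sfK})^2\le\P\{\mathds{1}(\bX_1^\top\bbeta<\bX_2^\top\bbeta)\neq\mathds{1}(\bX_1^\top\bbeta_0<\bX_2^\top\bbeta_0)\}$, followed by the margin bound, is a legitimate and somewhat shorter alternative that reaches the same $\sqrt{p_n}\,\|\btheta-\btheta_0\|$ conclusion.

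One correction: your last paragraph's ``main obstacle'' is not an obstacle for this corollary. Assumption~\ref{ass:taufunction}(iii) and~(v) for $\tau^{\sfK}$ are part of Assumption~\ref{ass:gen to K}(iv), which is an explicit hypothesis of parts~(ii)--(iv); you already noted this yourself earlier. The paper only reduces Assumption~\ref{ass:taufunction} to primitive conditions for Han's MRC (Theorem~\ref{prop:H}) as an illustration, and does not attempt the analogous reduction for the Khan--Tamer estimator, so no work on the censoring mechanism is needed to prove Corollary~\ref{cor:K} as stated.
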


\subsection{Abrevaya and Shin's Rank Estimator for Partially Linear Index
Models}

\label{subsec: example A} Consider Abrevaya and Shin's partially linear
index model \citep{abrevaya2011rank}: 
\begin{equation*}
Y=T(\bX^{\top }\bbeta_{0}+\eta (W)+\epsilon ),
\end{equation*}%
where $\bX\in {\mathbb{R}}^{p_n+1}$, $W\in {\mathbb{R}}$, $T(\cdot )$ is a
non-degenerate monotone function, $\eta (\cdot )$ is a smooth function, and $%
\epsilon $ is a random noisy independent of $(\bX^{\top },W)^{\top }$. Our
primary interest is to estimate $\bbeta_{0}\in {{\mathbb{R}}}^{p_n+1}$.
For this, \cite{abrevaya2011rank} proposed using $\hat{\bbeta}_{n}^{%
\mathsf{A}}=\argmax_{\bbeta:\beta _{1}=1}S_{n}^{\mathsf{A}}(%
\bbeta)$, where 
\begin{equation*}
S_{n}^{\mathsf{A}}(\bbeta):=\frac{1}{n(n-1)}\sum_{i\neq j}\mathds{1}%
(Y_{i}>Y_{j})\mathds{1}(\bX_{i}^{\top }\bbeta>\bX_{j}^{\top }\bm%
\beta)K_{b}(W_{i}-W_{j}).
\end{equation*}%
Here $K_{b}(u):=b^{-1}K(u/b)$ is a function facilitating pairwise comparison %
\citep{honore1997pairwise}. It involves a kernel function $K(\cdot )$ and a
bandwidth parameter $b$. Let $\bbeta_0=(1,\btheta_0^\top)^\top$. Our aim
is to estimate $\btheta_0$.

With the estimator $\hat\btheta_n^\mathsf{A}$ and other parameters
similarly defined as in Section \ref{subsec: HanMRC} and Section \ref{sec:Han} and put in the appendix
Section \ref{sec:app4}, we have the following corollary. 
\begin{corollary}\label{cor:A} We have
\begin{enumerate}
  \item[(i)] Under Assumptions~\ref{ass:gen to A}(i)--(vii) in the appendix Section \ref{sec:app4}, if $p_n/n^{1-2\delta}=o(1)$, then $\norm{\hat\btheta^\sfA_n-\btheta_0}\xrightarrow{\P} 0$.
 \item[(ii)]  Under Assumptions
  \ref{ass:gen to A},
 if $p_n/n^{1-\delta}\rightarrow 0$, then
  \[
\norm{\hat\btheta^\sfA_n-\btheta_0}^2 =O_\P\Big(\frac{p_n}{n^{1-\delta}}\wedge\frac{p_n^{3/2}}{n} \Big).
  \]
  \item[(iii)] Under Assumptions \ref{ass:inftynormfinite} and
  \ref{ass:gen to A}-\ref{ass: cdfinite addw}, as $p_n^2/n^{1-\delta}=o(1)$ and $\log(n^{1-\delta}/p_n^2)p_n^{3/2}/n^{(5-5\delta)/4}=o(1)$, we have
 \[
 \norm{\hat\btheta_n^\sfA-\btheta_0+(\Vb^\sfA)^{-1}\P_n \nabla_1 \tau^\sfA(\cdot;\btheta_0)}^2 = O_\P\big\{n^{-\delta J}\vee \log(n^{1-\delta}/p_n^2)p_n^{3/2}/n^{(5-5\delta)/4}\big\}.
 \]
 If further $\log(n^{1-\delta}/p_n^2)p_n^{3/2}/n^{(1-5\delta)/4}=o(1)$, then for any $\bgamma\in\R^{p_n}$,
  \[
\sqrt{n}\bgamma^\top(\hat\btheta^\sfA_n-\btheta_0) / \{\bgamma^\top (\Vb^\sfA)^{-1}\Deltab^\sfA (\Vb^\sfA)^{-1}\bgamma\}^{1/2}\Rightarrow N(0,1).
  \]
  {\blue \item[(iv)] Under conditions in (iii), if we further have $\varepsilon_n\sqrt{p_n} = o(1)$ and  $\varepsilon_n^{-2}p_n/\sqrt{n^{1-2\delta}}=o(1)$,  then
  \[
  \norm{(\hat\Vb^\sfA)^{-1}\hat\Deltab^\sfA(\hat\Vb^\sfA)^{-1}- (\Vb^\sfA)^{-1}\Deltab^\sfA(\Vb^\sfA)^{-1}}\xrightarrow{\P} 0.
  \]
  }
    \hf{In particular, we could choose $\epsilon_n\asymp(p_n/n^{1-2\delta})^{1/6}$. This will render a consistent covariance estimator under the scaling condition $[p_n^4/n^{1-2\delta}\vee \{\log(n^{1-\delta}/p_n^2)\}^{4}p_n^{6}/n^{1-5\delta}] =o(1)$, which, at various cases, will be the same as the scaling condition in (iii).}
\end{enumerate}
\end{corollary}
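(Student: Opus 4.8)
The plan is to derive Corollary~\ref{cor:A} by specialising the general theory of Section~\ref{sec:general} to Abrevaya and Shin's kernel-weighted rank estimator, in exactly the way Corollary~\ref{cor:H} specialises it to Han's MRC; the only genuinely new ingredient is the bandwidth $b\asymp n^{-\delta}$. Concretely, $\hat\btheta_n^\sfA$ has the U-process form \eqref{eq:U} with $f^\sfA(\bz_1,\bz_2;\btheta)=\mathds{1}(y_1>y_2)\{\mathds{1}(\bx_1^\top\bbeta>\bx_2^\top\bbeta)-\mathds{1}(\bx_1^\top\bbeta_0>\bx_2^\top\bbeta_0)\}K_b(w_1-w_2)$, which is \emph{not} bounded by an absolute constant (its envelope is of order $b^{-1}$) and whose second moment is of order $b^{-1}$, so Theorems~\ref{thm:uniformloose}--\ref{thm: consistency cov} cannot be invoked verbatim. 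First I would rescale: replace $f^\sfA$ by $b\,f^\sfA$, which is bounded by $\|K\|_\infty$ and generates a VC-subgraph class with $\nu_n\asymp\tilde\nu_n\asymp m_n\asymp p_n$, noting that the $\argmax$ is unchanged. Rescaling multiplies $\Gamma^\sfA$, the identifiability gap $\xi_0$, the matrices $\Vb^\sfA,\Deltab^\sfA$, and the bracketing level $\epsilon_n$ by a power of $b$; keeping track of these factors is exactly what turns the ``$n$'' of the Han-type bounds in Corollary~\ref{cor:H} into the effective sample size $nb=n^{1-\delta}$, while the bandwidth bias shows up as the additive term $n^{-\delta J}$.

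Next I would pin down the population target and verify Assumptions~\ref{ass:new1}--\ref{ass:taufunction}. Following \cite{abrevaya2011rank} I would show that $\Gamma^\sfA(\btheta;b):=\E\Gamma_n^\sfA(\btheta)$ converges as $b\to0$ to a limiting criterion uniquely maximised at $\btheta_0$, and that $\Gamma^\sfA(\cdot;b)-\Gamma^\sfA(\cdot;0)$ together with its first two $\btheta$-derivatives is $O(b^{J})$ uniformly on $\overline{\mathcal{B}}(\btheta_0,r_0)$, where $J$ is the bias exponent produced by the kernel order and the smoothness of $\eta$ and of the conditional densities encoded in Assumptions~\ref{ass:gen to A}--\ref{ass: cdfinite addw}. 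Assumptions~\ref{ass:taufunction}(i), (ii), (iv) (existence of mixed derivatives of $\tau^\sfA$, strong concavity of $\E\tau^\sfA$ at $\btheta_0$, nondegeneracy of $\Deltab^\sfA$) then follow from the Hessian and gradient computations in \cite{abrevaya2011rank}, just as Theorem~4 of \cite{sherman1993limiting} serves this role for Han; Assumption~\ref{ass:new1} follows by Taylor expansion of $\Gamma^\sfA$ about $\btheta_0$ as in the discussion after Corollary~\ref{cor:H}; and the smoothness condition \ref{ass:taufunction}(iii) and exponential-moment condition \ref{ass:taufunction}(v) are checked from the subgaussian design and bounded-derivative conditions in Assumptions~\ref{ass:gen to A}--\ref{ass: cdfinite addw}, in complete analogy with Theorem~\ref{prop:H}. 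The one new point relative to Han's case is that all of these must hold for the $b$-smoothed $\tau^\sfA$ \emph{uniformly over small $b$} with constants $c_{\min},c_{\max},d_{\min},d_{\max},\nu_0,\ell_0$ free of $b$; this is where the $\int|f_0^{(3)}(\cdot)|\,G(\diff\tilde\bx)$-type integrability hypotheses do their work, keeping the kernel convolutions uniformly controlled.

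With the assumptions in hand I would run the three-step machine of Section~\ref{sec:general} under the substitution $n\mapsto n^{1-\delta}$ plus the additive bias $n^{-\delta J}$. For (i), Theorem~\ref{thm: consistency} applies to $b\,f^\sfA$; since its identifiability gap is of order $b\asymp n^{-\delta}$, the Glivenko--Cantelli error $\asymp\nu_n/n$ must be $o(b^2)$, i.e.\ $p_n/n^{1-2\delta}=o(1)$. For (ii), Theorem~\ref{thm: general consrate} with $\nu_n\vee p_n\asymp p_n$ and effective sample size $n^{1-\delta}$ gives the leading rate $p_n/n^{1-\delta}$, while bounding the degenerate-U remainder directly via Theorem~\ref{thm:uniformloose}(ii) yields the alternative $p_n^{3/2}/n$, whence the reported minimum. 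For (iii), Theorem~\ref{thm:generalASN}(i) produces the remainder $(\nu_n\vee p_n)^{5/2}/n^{3/2}$ of the general theory, here sharpened to the Han-type form $\log(n^{1-\delta}/p_n^2)p_n^{3/2}/n^{(5-5\delta)/4}$ because $\nu_n\asymp p_n$ and the relevant $\eta_n$ is of order $(p_n/n^{1-\delta})^{1/2}$; adding the bandwidth bias gives the analogue of \eqref{eq:kiefer-han}, and the displayed CLT is Theorem~\ref{thm:generalASN}(ii) once the stronger scaling $\log(n^{1-\delta}/p_n^2)p_n^{3/2}/n^{(1-5\delta)/4}=o(1)$ makes both the Bahadur remainder and the bias $\sqrt n$-negligible. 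Finally (iv) is Theorem~\ref{thm: consistency cov} applied to $\tilde{\cF}^\sfA$: the numerical-derivative variance is inflated by $b^{-2}=n^{2\delta}$, so the step-size conditions read $\varepsilon_n\sqrt{p_n}=o(1)$ and $\varepsilon_n^{-2}p_n/\sqrt{n^{1-2\delta}}=o(1)$, and the balanced choice $\varepsilon_n\asymp(p_n/n^{1-2\delta})^{1/6}$ is then admissible under the stated scaling.

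The main obstacle is the kernel-smoothed degenerate U-process. Theorem~\ref{thm:uniformloose} is stated for a class bounded by an absolute constant with $\sup_{\btheta\in\overline{\mathcal{B}}(\btheta_0,r_n)}\E h^2\le\tilde\epsilon_n\to0$, whereas here $\E\{h^\sfA\}^2$ carries a factor $b^{-1}$; one must therefore re-run the Nolan--Pollard decoupling behind \eqref{eq:uniform-han2} keeping the $b$-dependence explicit, and check that over a ball of radius $r_n\asymp(p_n/n^{1-\delta})^{1/2}$ the contraction $\E\{h^\sfA\}^2=O(r_n/b)$ still tends to zero --- which is precisely what forces $\delta$ to be small and couples it to $p_n$ throughout (i)--(iv). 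Equivalently, one absorbs $b$ into the definitions of $\nu_n,\epsilon_n,\eta_n$ in Theorems~\ref{thm:generalASN} and \ref{thm: consistency cov} and verifies that the stated scaling conditions are exactly those obtained from $n\mapsto n^{1-\delta}$ plus the additive $n^{-\delta J}$; making that substitution rigorous --- in particular showing the curvature and moment constants of Assumption~\ref{ass:taufunction} can be chosen uniformly in $b$, and quantifying the bandwidth bias in $\nabla_1\Gamma^\sfA(\btheta_0;b)$ --- is the technical heart of the argument.
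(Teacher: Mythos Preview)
Your overall strategy---rescale so the kernel factor is bounded, trace the $b$-dependence through the general machinery, and add the $O(b^J)$ bias---is exactly the one the paper uses, and your identification of the effective sample size $nb=n^{1-\delta}$ and of the degenerate U-process as the delicate step is on target. There is, however, a framing difference worth correcting. You propose to work with the $b$-dependent quantities (``verify Assumptions~\ref{ass:new1}--\ref{ass:taufunction} for the $b$-smoothed $\tau^\sfA$ uniformly over small $b$''), but after your rescaling the identifiability gap and the eigenvalue bounds $c_{\min},c_{\max},d_{\min},d_{\max}$ all pick up factors of $b\to0$, so Assumptions~\ref{ass:new1} and~\ref{ass:taufunction} \emph{fail} for the rescaled problem and Theorems~\ref{thm: consistency}--\ref{thm: consistency cov} cannot be invoked as black boxes. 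The paper avoids this by defining $\Gamma^\sfA,\tau^\sfA,\Vb^\sfA,\Deltab^\sfA$ as the $b$-independent \emph{limits} (so that Assumption~\ref{ass:gen to A}(i),(viii) gives absolute constants directly), and then re-opens the \emph{proofs} of Theorems~\ref{thm: consistency}--\ref{thm: consistency cov} step by step: it shows $|\E\Gamma_n^\sfA-\Gamma^\sfA|$ and $|\P_n g_n^\sfA-\P_n(\tau^\sfA-\tau^\sfA_0-2\Gamma^\sfA)|$ are each $O(b^J)$ via the $J$th-order Taylor expansion in $b$ (your second paragraph has this ingredient), runs Steps~1--2 of the proof of Theorem~\ref{thm: general consrate} against the limit $\tau^\sfA$ unchanged, and for Step~3 applies Theorem~\ref{thm:uniformloose} to the \emph{bounded} $h^\sfA$ (not $h_n^\sfA$) and only afterward divides by $b$. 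This is cleaner than re-deriving maximal inequalities with explicit $b$-dependence. Two small slips: in part~(i) the ``Glivenko--Cantelli error $\asymp\nu_n/n$'' should read $\sqrt{\nu_n/n}$ (your conclusion $p_n/n^{1-2\delta}=o(1)$ is nonetheless correct once you compare to the gap $b\xi_0$); and in part~(iii) the relevant $\eta_n$ is of order $p_n/n^{(1-\delta)/2}$, not $(p_n/n^{1-\delta})^{1/2}$, though the final Bahadur rate you quote is the right one once the extra $1/b$ from $h_n^\sfA=h^\sfA/b$ is included.
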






\section{Simulation Results}

\label{sec:simulation}

This section presents results from a small simulation study to illustrate
two main implications of our theory. First for each fixed $n$, the normal
approximation to the finite sample distribution of the studied rank
correlation estimator will quickly become unreliable as $p_n$ grows,
suggesting that our theoretical bound is difficult to be improved in a
significant way. Secondly, in estimating the asymptotic covariance based on
the covariance estimator of the numerical derivative form, as $n$ fixed, the
tuning parameter that minimizes the Median Absolute Error (MAE) of the
estimator will increase with the dimension $p_n$, echoing our theoretical
observation. 

In the simulation study, we focus on Han's MRC estimator of the form %
\eqref{eqn:MRC} and the following binary choice model: 
\begin{equation*}
Y_{i}=\mathds{1}(\bX_{i}^{\top }\bbeta ^{\ast }+\epsilon _{i}\geq 0),\text{
}i=1,...,n,
\end{equation*}%
where $\bX_{i}\sim N(\zero,\bSigma)$ with $\bSigma_{jk}=0.5^{|j-k|}$, $%
\epsilon _{i}\sim N(0,1),$ and $\bbeta ^{\ast }=(2,4,6,\ldots
,2(p+1))^{\top }$ representing the true regression coefficient. For each $%
n=100,200,400$ and $p_n=1,2,3,4$, we simulate independent observations $%
\{Y_{i},\bX_{i}\}_{i=1}^{n} $ from the above model. Let $\bbeta_{0}^{\ast
}:=\bbeta ^{\ast }/\bbeta_{1}^{\ast }$ be the normalized regression
coefficient. We aim to estimate $\bbeta_{0}^{\ast }$ using Han's estimator 
$\hat\bbeta_{n}^{\mathsf{H}}$, which is implemented using the iterative
marginal optimization algorithm proposed by \cite{wang2007note}, with the
initial point chosen to be the truth.

Based on 1,000 independent replications and using two-sided normal
confidence interval, Tables \ref{tab:1}-\ref{tab:3} present the coverage
probability as the nominal one varies from 0.5 to 0.95 for three projections
of the same directions as $(1,1,\ldots ,1)^{\top }$, $(1,0,\ldots ,0)^{\top
} $, and $(1,2,\ldots ,p_n)^{\top }$. For calculating the confidence
intervals, we used the sample standard deviation of 1,000 replications. We
further plot the kernel estimates of the density functions of the normalized
three projected estimates against the density function of $N(0,1)$ in
Figures \ref{fig:1}-\ref{fig:3}. The normalization is based on the true mean
and the previous simulation-based standard deviation. In computing the
kernel density estimates, we used normal kernel function and the bandwidth
based on Silverman's rule-of-thumb.

Both the tables and figures reveal the same overall pattern that, for each
fixed $n$, as $p_n$ increases, the {coverage probability} will deviate more
from the nominal, and the kernel estimates of the density function of the
normalized estimator itself will deviate more from the standard normal. As
observed, the deviation from normal has become very severe even for very
small $p_n$. For example, for $p_n=2$, we need $n$ to be approximately 400
for achieving satisfactory coverage probability. This supports the
theoretical observations in Theorem \ref{thm:generalASN} and Corollary \ref%
{cor:H}(iii). We further conduct different types of normality tests
(Kolmogorov-Smirnov, Lilliefors, Jarque-Bera, Anderson-Darling,
Henze-Zirkler) on the derived projected estimates as well as the original
multi-dimensional estimates. They all reject the null hypothesis of
normality except when $p_n=1,n=400${.}


We then move on to study the estimation accuracy of the asymptotic
covariance estimator discussed at the end of Section \ref{sec:rank-general}.
For this, we focus on the same setup as previously conducted. Table \ref%
{tab:4} presents the MAE of the asymptotic covariance estimator for the
projection direction $\{p_n^{-1/2},\ldots,p_n^{-1/2}\}^\top$. There, it
could be observed that, for each fixed $n$, the tuning parameter that
attains the smallest MAE will in general become larger as $p_n$ increases,
supporting our observation in Theorem \ref{thm: consistency cov} and
Corollary \ref{cor:H}(iv).

\section*{Concluding Remarks}

This paper provided a first study of asymptotic properties of a general
class of estimators defined as minimizers of possibly discontinuous
objective functions of U-process structure allowing for the dimension of the
parameter vector of interest to increase to infinity as the sample size $n$
increases to infinity. Members of this class include important rank
correlation estimators as detailed throughout this paper. Technically we
have established a maximal inequality for degenerate U-processes in
increasing dimensions which has played a critical role in deriving our
theoretical results. We have also applied our general theory to the four
motivating rank correlation estimators. Using Han's MRC estimator of the
form \eqref{eqn:MRC}, we have provided numerical support to our theoretical
findings that for a given sample size, the accuracy of the normal
approximation deteriorates quickly as the number of parameters $p_n$
increases and that for the variance estimation, the step size needs to be
adjusted with respect to $p_n$.


This paper is focused on the setting that the parameter of interest itself
is of an increasing dimension and inference has to be drawn on it. On the
contrary, a growing literature studies the case that the parameter to be
inferred is of a fixed dimension, but allows for a dimension-increasing (but
still less than $n$) nuisance in the model. Substantial developments have
been made along this line. For example, \cite{cattaneo2016alternative} and 
\cite{cattaneo2017inference} studied inferring the fixed-dimension linear
component in a partially linear model, and \cite{lei2016asymptotics}
established asymptotic normality of margins of linear and robust regression
estimators in a simple linear model. Their set-up is fundamentally different
from ours due to the difference of goals.\footnote{%
We note that our set-up is also fundamentally different from works on "many
moment asymptotics" in GMM models such as \cite{han2006gmm}, \cite%
{newey2009generalized}, and \cite{caner2014near}, where the number of moment
conditions increases but the number of parameters in such models is fixed as
the sample size increases.}


We end this section with a brief discussion on further extensions. An
immediate extension is on studying \textquotedblleft penalized" rank
estimators in ultra high dimensional settings where the dimension could be
even larger than the sample size. For this much more challenging setting, to
the authors' knowledge, most literature is still focused on simple
structural statistical models (cf. \cite{zhang2014confidence}, \cite%
{van2014asymptotically}, \cite{lee2016exact}, and \cite{javanmard2015biasing}
among many others). A notable exception is the post-selection inference
framework proposed in \cite{belloni2014uniform} and \cite%
{belloni2015uniformly}, where a general set of regularization conditions has
been posed for inference validity of Z-estimation. The authors believe that,
combined with our local entropy analysis of the degenerate U-processes and
the empirical process techniques developed by Talagrand and Spokoiny and
specialized to rank estimators in this paper, the post-selection inference
framework will prove useful in extending the current study to ultra high
dimensional models. However, there are still many technical gaps, which we
believe are fundamental and related to some key challenges in high
dimensional probability in extending the scalar empirical processes to
vector and matrix ones if no further smoothing (cf. \cite{han2017provable})
is made. We will leave this for future research.

\section*{Acknowledgement}

We thank Dr. Hansheng Wang for providing the code to implement the iterative
marginal optimization algorithm, Mr. Shuo Jiang for helping conduct the
simulations, and seminar/conference participants at Emory University, Peking
University, and the 2019 Econometrics Workshop at Shanghai University of
Finance and Economics for helpful comments. The research
of Fang Han was supported in part by NSF grant DMS-1712536. We are also grateful to the
Associate Editor and two anonymous referees for instructive comments that
have greatly improved the paper. 



\appendix

\section{Appendix}

\subsection{Additional Notation}

For a vector $\bm{\alpha}\in{{\mathbb{R}}}^l$, we define $|\bm{\alpha}%
|=(|\alpha_1|,\ldots,|\alpha_l|)^\top$. For two sequences of real numbers $%
a_n$ and $b_n$, $a_n \lesssim b_n$ means that $a_n\leq b_n$ up to a
multiplicative constant. We use the symbol $a_n\sim b_n$ to denote that $a_n
\lesssim b_n$ and $b_n \lesssim a_n $. In this appendix we drop the
subscript $n$ in $m_n, \nu_n,p_n$.

\subsection{Notation and Assumptions in Section \protect\ref{sec:application}%
}

Throughout this section, let $\bX=(X_1,\tilde{\bX}^\top)^\top$, where $%
\tilde{\bX}$ denotes the last $p$ components in $\bX$.




\subsubsection{Notation and Assumptions in Section \protect\ref{subsec:
example C}}

\label{sec:app2}

The following definitions are similar to those in Section \ref{subsec:
HanMRC}. We use $S^{\mathsf{C}}(\bbeta)$ to denote the expected value of $%
S_{n}^{\mathsf{C}}(\bbeta)$, and $S^{\mathsf{C}}(\bbeta)=\mathbb{E}%
\{M(Y_{1})\mathds{1}(\bX_{1}^{\top }\bbeta>\bX_{2}^{\top }\bbeta)\}$.
Let $\bz=(y,\bx^\top)^\top$. We define 
\begin{align*}
&f^\mathsf{C}(\bz_1,\bz_2;\btheta) = M(y_1) \{\mathds{1}(\bx_1^\top\bm%
\beta>\bx_2^\top\bbeta)- \mathds{1}(\bx_1^\top\bbeta_0>\bx_2^\top\bm%
\beta_0)\}, \\
& \tau^\mathsf{C}(\bz;\btheta)=\mathbb{E} f^\mathsf{C}(\bz,\cdot;\bm%
\theta) +\mathbb{E} f^\mathsf{C}(\cdot,\bz;\btheta), ~~ \zeta^\mathsf{C}(%
\bz; \btheta)=\tau^\mathsf{C}(\bz;\btheta)-\mathbb{E}\tau^\mathsf{C}%
(\cdot;\btheta), \\
& \bDelta^\mathsf{C} =\mathbb{E}\nabla_1\tau^\mathsf{C}(\cdot;\bm%
\theta_0)\{\nabla_1\tau^\mathsf{C}(\cdot;\btheta_0)\}^{\top}, ~\mathrm{and}%
~ 2\Vb^\mathsf{C} = \mathbb{E}\nabla_2\tau^\mathsf{C}(\cdot;\btheta_0).
\end{align*}
Write $\Gamma^\mathsf{C}(\btheta)$ for $S^\mathsf{C}(\bbeta)-S^\mathsf{C}%
(\bbeta_0)$ and $\Gamma^\mathsf{C}_n(\btheta)$ for $S^\mathsf{C}_n(\bm%
\beta)-S^\mathsf{C}_n(\bbeta_0)$. The estimator $\hat\btheta^\mathsf{C}%
_n $ is defined as 
\begin{equation*}
\hat\btheta^\mathsf{C}_n=\argmax_{\btheta\in\Theta^\mathsf{C}}\Gamma^%
\mathsf{C}_n(\btheta).
\end{equation*}

To conduct inference on $\btheta_{0}$ based on $\hat\btheta_{n}^{\mathsf{%
C}}$, we further define 
\begin{align*}
& \tau _{n}^{\mathsf{C}}(\bz;\btheta)=\mathbb{P}_{n}f^{\mathsf{C}}(\bz%
,\cdot ;\btheta)+\mathbb{P}_{n}f^{\mathsf{C}}(\cdot ,\bz;\btheta
),~~p_{ni}^{\mathsf{C}}(\bz;\btheta)=\varepsilon _{n}^{-1}\{\tau _{n}^{%
\mathsf{C}}(\bz;\btheta+\varepsilon _{n}\bu_{i})-\tau _{n}^{\mathsf{C}}(\bz%
;\btheta)\},~~\mathrm{and}~~ \\
& p_{nij}^{\mathsf{C}}(\bz;\btheta)=\varepsilon _{n}^{-2}\{\tau _{n}^{%
\mathsf{C}}(\bz;\btheta+\varepsilon _{n}(\bu_{i}+\bu_{j}))-\tau _{n}^{%
\mathsf{C}}(\bz;\btheta+\varepsilon _{n}\bu_{i})-\tau _{n}^{\mathsf{C}}(\bz%
;\btheta+\varepsilon _{n}\bu_{j})+\tau _{n}^{\mathsf{C}}(\bz;\btheta )\}.
\end{align*}%
Then, we define the estimator of the matrix $\bDelta^{\mathsf{C}}$ as $\hat{%
\bDelta}^{\mathsf{C}}=(\hat{\delta}_{ij}^{\mathsf{C}})$ and the estimator of
the matrix $\Vb^{\mathsf{C}}$ as $\hat{\Vb}^{\mathsf{C}}=(\hat{v}_{ij}^{%
\mathsf{C}})$, where 
\begin{equation*}
\begin{aligned} \hat\delta_{ij}^\sfC =
\mathbb{P}_n\{p_{ni}^\sfC(\cdot;\hat\btheta_n^\sfC)p_{nj}^\sfC(\cdot;\hat%
\btheta_n^\sfC)\}, ~~\mathrm{and}~~ \hat v_{ij}^\sfC =
\frac{1}{2}\mathbb{P}_n p_{nij}^\sfC(\cdot;\hat\btheta_n^\sfC). \end{aligned}
\end{equation*}

We then make the following assumptions.

\begin{assumption}
\label{ass:gen to C} Assume

\begin{enumerate}
\item[(i)] Assumption \ref{ass:new1} holds for $\Theta^\mathsf{C}$ and $%
\Gamma^\mathsf{C}(\btheta)$.


\item[(ii)] The random variables $\bX$ and $\epsilon$ are independent, and $%
\mathbb{E}\{M(Y)\mid \bX\}$ depends on $\bX$ only through $\bX^\top\bbeta_0 $.

\item[(iii)] $X_1$ has an everywhere positive Lebesgue density, conditional
on $\tilde{\bX}$.

\item[(iv)] Assumption \ref{ass:taufunction} holds for $\tau^\mathsf{C}(\bz;%
\btheta)$ and $\zeta^\mathsf{C}(\bz;\btheta)$.
\end{enumerate}
\end{assumption}

\subsubsection{Notation and Assumptions in Section \protect\ref{subsec:
example K}}

\label{sec:app3}

The following definitions are similar to those in Section \ref{subsec:
HanMRC}. Let $S^\mathsf{K}(\bbeta)$ denote the expected value of $S^%
\mathsf{K}_n(\bbeta)$, and $S^\mathsf{K}(\bbeta)=\mathbb{E}\{R_1%
\mathds{1}(V_1< V_2) \mathds{1}(\bX_1^\top\bbeta< \bX_2^\top\bbeta)\}$.
Let $\bz=(r,v,\bx^\top)^\top$. We define 
\begin{align*}
&f^\mathsf{K}(\bz_1,\bz_2;\btheta) = r_1\mathds{1}(v_1<v_2) \{\mathds{1}(%
\bx_1^\top\bbeta<\bx_2^\top\bbeta)- \mathds{1}(\bx_1^\top\bbeta_0<\bx%
_2^\top\bbeta_0)\}, \\
& \tau^\mathsf{K}(\bz;\btheta)=\mathbb{E} f^\mathsf{K}(\bz,\cdot;\bm%
\theta) +\mathbb{E} f^\mathsf{K}(\cdot,\bz;\btheta),~~ \zeta^\mathsf{K}(\bz%
; \btheta)=\tau^\mathsf{K}(\bz;\btheta)-\mathbb{E}\tau^\mathsf{K}(\cdot;%
\btheta), \\
&\bDelta^\mathsf{K} =\mathbb{E}\nabla_1\tau^\mathsf{K}(\cdot;\bm%
\theta_0)\{\nabla_1\tau^\mathsf{K}(\cdot;\btheta_0)\}^{\top}, ~~\mathrm{and%
}~~ 2\Vb^\mathsf{K} = \mathbb{E}\nabla_2\tau^\mathsf{K}(\cdot;\btheta_0).
\end{align*}
Write $\Gamma^\mathsf{K}(\btheta)$ for $S^\mathsf{K}(\bbeta)-S^\mathsf{K}%
(\bbeta_0)$ and $\Gamma^\mathsf{K}_n(\btheta)$ for $S^\mathsf{K}_n(\bm%
\beta)-S^\mathsf{K}_n(\bbeta_0)$. The estimator $\hat\btheta^\mathsf{K}%
_n $ is defined as 
\begin{equation*}
\hat\btheta^\mathsf{K}_n=\argmax_{\btheta\in\Theta^\mathsf{K}}\Gamma^%
\mathsf{K}_n(\btheta).
\end{equation*}

To conduct inference on $\btheta_{0}$ based on $\hat\btheta_{n}^{\mathsf{%
K}}$, we further define 
\begin{align*}
& \tau _{n}^{\mathsf{K}}(\bz;\btheta)=\mathbb{P}_{n}f^{\mathsf{K}}(\bz%
,\cdot ;\btheta)+\mathbb{P}_{n}f^{\mathsf{K}}(\cdot ,\bz;\btheta
),~~p_{ni}^{\mathsf{K}}(\bz;\btheta)=\varepsilon _{n}^{-1}\{\tau _{n}^{%
\mathsf{K}}(\bz;\btheta+\varepsilon _{n}\bu_{i})-\tau _{n}^{\mathsf{K}}(\bz%
;\btheta)\},~~\mathrm{and}~~ \\
& p_{nij}^{\mathsf{K}}(\bz;\btheta)=\varepsilon _{n}^{-2}\{\tau _{n}^{%
\mathsf{K}}(\bz;\btheta+\varepsilon _{n}(\bu_{i}+\bu_{j}))-\tau _{n}^{%
\mathsf{K}}(\bz;\btheta+\varepsilon _{n}\bu_{i})-\tau _{n}^{\mathsf{K}}(\bz%
;\btheta+\varepsilon _{n}\bu_{j})+\tau _{n}^{\mathsf{K}}(\bz;\btheta )\}.
\end{align*}%
Then, we define the estimator of the matrix $\bDelta^{\mathsf{K}}$ as $\hat{%
\bDelta}^{\mathsf{K}}=(\hat{\delta}_{ij}^{\mathsf{K}})$ and the estimator of
the matrix $\Vb^{\mathsf{K}}$ as $\hat{\Vb}^{\mathsf{K}}=(\hat{v}_{ij}^{%
\mathsf{K}})$, where 
\begin{equation*}
\begin{aligned} \hat\delta_{ij}^\sfK =
\mathbb{P}_n\{p_{ni}^\sfK(\cdot;\hat\btheta_n^\sfK)p_{nj}^\sfK(\cdot;\hat%
\btheta_n^\sfK)\} ~~\mathrm{and}~~ \hat v_{ij}^\sfK =
\frac{1}{2}\mathbb{P}_n p_{nij}^\sfK(\cdot;\hat\btheta_n^\sfK). \end{aligned}
\end{equation*}

We then make the following assumptions.

\begin{assumption}
\label{ass:gen to K} Assume

\begin{enumerate}
\item[(i)] Assumption \ref{ass:new1} holds for $\Theta^\mathsf{K}$ and $%
\Gamma^\mathsf{K}(\btheta)$.


\item[(ii)] The random variables $(\xi,\bX)$ and $\epsilon$ are independent,
and $\mathbb{E}(\xi\mid \bX)$ depends on $\bX$ only through $\bX^\top\bbeta_0$.

\item[(iii)] $X_1$ has an everywhere positive Lebesgue density, conditional
on $\tilde{\bX}$.

\item[(iv)] Assumption \ref{ass:taufunction} holds for $\tau^\mathsf{K}(\bz;%
\btheta)$ and $\zeta^\mathsf{K}(\bz;\btheta)$.
\end{enumerate}
\end{assumption}

\subsubsection{Notation and Assumptions in Section \protect\ref{subsec:
example A}}

\label{sec:app4}

Let $\phi(\cdot)$ denote the density of $W$. Let $\bz=(y,\bx^\top,w)^\top$.
We define 
\begin{align*}
&f^\mathsf{A}(\bz_1,\bz_2;\btheta) = \mathds{1}(y_1>y_2)\{\mathds{1}(\bx%
_1^\top\bbeta>\bx_2^\top\bbeta) -\mathds{1}(\bx_1^\top\bbeta_0>\bx%
_2^\top\bbeta_0)\}K\{(w_1-w_2)/b\}, \\
&m(\bz_1,\bz_2;\btheta)=\mathds{1}(y_1>y_2)\mathds{1}(\bx_1^\top\bbeta>%
\bx_2^\top\bbeta), \\
&\psi(w_1,w_2;\btheta)=\mathbb{E}\{m(\bZ_1,\bZ_2;\btheta)-m(\bZ_1,\bZ_2;%
\btheta_0)\mid W_1=w_1,W_2=w_2\}, \\
&\Gamma^\mathsf{A}(\btheta)= \mathbb{E}_W\{\psi(W,W;\btheta)\phi(W)\}, \\
&\tau^\mathsf{A}(\bz;\theta)=\mathbb{E}\{m(\bz,\bZ_2;\btheta)\mid
W_2=w\}\phi(w)+\mathbb{E}\{m(\bZ_1,\bz;\btheta)\mid W_1=w\}\phi(w), \\
&\zeta^\mathsf{A}(\bz; \btheta)=\tau^\mathsf{A}(\bz;\btheta)-\mathbb{E}%
\tau^\mathsf{A}(\cdot;\btheta),~~ \bDelta^\mathsf{A} =\mathbb{E}%
\nabla_1\tau^\mathsf{A}(\cdot;\btheta_0)\{\nabla_1\tau^\mathsf{A}(\cdot;\bm%
\theta_0)\}^{\top}, ~~\mathrm{and}~~ 2\Vb^\mathsf{A} = \mathbb{E}%
\nabla_2\tau^\mathsf{A}(\cdot;\btheta_0).
\end{align*}
Write $\Gamma_n^\mathsf{A}(\btheta)$ for $S_n^\mathsf{A}(\bbeta)-S_n^%
\mathsf{A}(\bbeta_0)$. The estimator $\hat\btheta^\mathsf{A}_n$ is
defined as 
\begin{equation*}
\hat\btheta^\mathsf{A}_n=\argmax_{\btheta\in\Theta^\mathsf{A}}\Gamma^%
\mathsf{A}_n(\btheta).
\end{equation*}

Note that $\mathbb{E}\Gamma _{n}^{\mathsf{A}}(\btheta)\neq \Gamma ^{%
\mathsf{A}}(\btheta)$. This is different from the general set-up in
Section \ref{sec:rank-general}. However, by Taylor expansion, we show that $%
\sup_{\btheta\in \Theta ^{\mathsf{A}}}\big|\mathbb{E}\Gamma _{n}^{\mathsf{A%
}}(\btheta)-\Gamma ^{\mathsf{A}}(\btheta)\big|$ is negligible under the
assumptions adopted in this section. Then, following the proof of the
general method, we can similarly establish the consistency and asymptotic
normality of $\hat{\btheta}_{n}^{\mathsf{A}}$. 

To conduct inference on $\btheta_{0}$ based on $\hat\btheta_{n}^{\mathsf{%
A}}$, we further define 
\begin{align*}
& \tau _{n}^{\mathsf{A}}(\bz;\btheta)=\mathbb{P}_{n}f^{\mathsf{A}}(\bz%
,\cdot ;\btheta)+\mathbb{P}_{n}f^{\mathsf{A}}(\cdot ,\bz;\btheta
),~~p_{ni}^{\mathsf{A}}(\bz;\btheta)=\varepsilon _{n}^{-1}\{\tau _{n}^{%
\mathsf{A}}(\bz;\btheta+\varepsilon _{n}\bu_{i})-\tau _{n}^{\mathsf{A}}(\bz%
;\btheta)\},~~\mathrm{and}~~ \\
& p_{nij}^{\mathsf{A}}(\bz;\btheta)=\varepsilon _{n}^{-2}\{\tau _{n}^{%
\mathsf{A}}(\bz;\btheta+\varepsilon _{n}(\bu_{i}+\bu_{j}))-\tau _{n}^{%
\mathsf{A}}(\bz;\btheta+\varepsilon _{n}\bu_{i})-\tau _{n}^{\mathsf{A}}(\bz%
;\btheta+\varepsilon _{n}\bu_{j})+\tau _{n}^{\mathsf{A}}(\bz;\btheta )\}.
\end{align*}%
Then, we define the estimator of the matrix $\bDelta^{\mathsf{A}}$ as $\hat{%
\bDelta}^{\mathsf{A}}=(\hat{\delta}_{ij}^{\mathsf{A}})$ and the estimator of
the matrix $\Vb^{\mathsf{A}}$ as $\hat{\Vb}^{\mathsf{A}}=(\hat{v}_{ij}^{%
\mathsf{A}})$, where 
\begin{equation*}
\begin{aligned} \hat\delta_{ij}^\sfA =
\mathbb{P}_n\{p_{ni}^\sfA(\cdot;\hat\btheta_n^\sfA)p_{nj}^\sfA(\cdot;\hat%
\btheta_n^\sfA)\}, ~~\mathrm{and}~~ \hat v_{ij}^\sfA =
\frac{1}{2}\mathbb{P}_n p_{nij}^\sfA(\cdot;\hat\btheta_n^\sfA). \end{aligned}
\end{equation*}

We make the following assumptions.

\begin{assumption}
\label{ass:gen to A} Assume

\begin{enumerate}
\item[(i)] Assumption \ref{ass:new1} holds for $\Theta^\mathsf{A}$ and $%
\Gamma^\mathsf{A}(\btheta)$;


\item[(ii)] The random variables $(\bX,W)$ and $\epsilon$ are independent.

\item[(iii)] $X_1$ has an everywhere positive Lebesgue density, conditional
on $\tilde{\bX}$ and $W$.

\item[(iv)] $W$ is continuously distributed on a compact subset $\mathcal{W}$
of ${{\mathbb{R}}}$.

\item[(v)] The kernel function $K(\cdot)$ satisfies: (1) $K(\cdot)$ is twice
continuously differential with compact interval $[-C,C]\supseteq \mathcal{W}$%
; (2) $K(\cdot)$ is symmetric about 0 and integrates to 1; (3) for some
integer $J\geq 6$, $\int u^jK(u)\diff u=0$ with $j =1,\ldots,J-1$ and $\int
u^JK(u)\diff u$ is bounded.

\item[(vi)] The bandwidth $b$ is defined as $b=cn^{-\delta}$ for constants $%
c>0$ and $\frac{1}{J}<\delta<\frac{1}{5}$.

\item[(vii)] For any $w_2$, the $J$th derivative of $\psi(w_1,w_2;\btheta%
)\cdot\phi(w_1)$ with respect to $w_1$ is continuous and bounded for all $%
\btheta\in\Theta^\mathsf{A}$.

\item[(viii)] Assumption \ref{ass:taufunction} holds for $\tau^\mathsf{A}(\bz%
;\btheta)$ and $\zeta^\mathsf{A}(\bz;\btheta)$.
\end{enumerate}
\end{assumption}

\begin{assumption}
\label{ass: cdfinite addw} Let $f_{0}(\cdot\mid \tilde{\bx},w)$ denote the
conditional density function of $\bX^\top\bbeta_0$ given $(\tilde{\bX},W)=(%
\tilde{\bx}, w)$. Assume $f_{0}(\cdot\mid \tilde{\bx},w)\leq C_1$ for any $%
\tilde{\bx}$ and $w$ in the support of $\tilde{\bX}$ and $W$, respectively,
where $C_1$ is an absolute positive constant.
\end{assumption}

\subsection{Proofs in Section \protect\ref{sec:general}}

For each $\btheta\in\Theta$, define measures 
\begin{equation*}
\mathbb{S}_n f(\cdot,\cdot;\btheta)=n(n-1)\mathbb{U}_n f(\cdot,\cdot;\bm%
\theta)
\end{equation*}
and 
\begin{equation*}
\mathbb{T}_n f(\cdot,\cdot;\btheta)= \sum_{i\neq j}\{f(\bZ_{2i},\bZ_{2j};%
\newline
\btheta)+f(\bZ_{2i},\bZ_{2j-1};\btheta) +f(\bZ_{2i-1},\bZ_{2j};\bm%
\theta)+f(\bZ_{2i-1},\bZ_{2j-1};\btheta)\}.
\end{equation*}
To prove Theorems \ref{thm:uniformloose}--\ref{thm:generalASN} in Section %
\ref{sec:general}, we need several lemmas. For simplicity, we omit the
parameter $\btheta$ in each function $f(\cdot,\cdot;\btheta)\in\cF$ in
the lemmas. 
Let $F$ denote the envelope function of $\cF$ for which $0<\mathbb{E}
F^r<\infty$, for any $r\geq 1$. The covering number $N_r(\varepsilon,\mathbb{%
P}\otimes\mathbb{P},\cF,F)$ is defined as the smallest cardinality for a
subclass $\cF^*$ of $\cF$ such that $\min_{f^*\in\cF^*}\mathbb{E}%
|f-f^*|^r\leq \varepsilon^r\mathbb{E} F^r$, for each $f\in\cF$.

\subsubsection{Some Auxiliary Lemmas}

\label{sec: app-auxlemmas} 
\begin{lemma}\label{lem:fsquare}
  Suppose that $\F$ is $b$-uniformly bounded, then the class $
  \F^2=\{f^2:f\in \F\}$
  with envelope $b^2$ satisfies $
  N_r(2\varepsilon ,\P\otimes\P, \F^2, b^2)\leq N_r(\varepsilon, \P\otimes\P, \F, b)$.
\end{lemma}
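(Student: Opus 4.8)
The plan is to show that squaring the members of an $\varepsilon$-net for $\F$ (measured against the constant envelope $b$) yields a $2\varepsilon$-net for $\F^2$ (measured against the constant envelope $b^2$), from which the claimed inequality between cardinalities is immediate.

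First I would fix an $r$-th power $L_r(\P\otimes\P)$-net $\F^*\subseteq\F$ that attains $N_r(\varepsilon,\P\otimes\P,\F,b)$; by definition, for every $f\in\F$ there is some $f^*\in\F^*$ with $\E|f-f^*|^r\leq \varepsilon^r\, b^r$, where I have used that the envelope of $\F$ is the constant $b$, so $\E F^r=b^r$. The key elementary step is the pointwise factorization $f^2-(f^*)^2=(f-f^*)(f+f^*)$: since $|f|\leq b$ and $|f^*|\leq b$ everywhere by the $b$-uniform boundedness of $\F$, we obtain $|f^2-(f^*)^2|\leq 2b\,|f-f^*|$ at every point. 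Taking $r$-th moments then gives
\[
\E\,|f^2-(f^*)^2|^r \leq (2b)^r\,\E\,|f-f^*|^r \leq (2b)^r\varepsilon^r b^r = (2\varepsilon)^r\,(b^2)^r .
\]
Because $\F^*\subseteq\F$, the collection $\{(f^*)^2:f^*\in\F^*\}$ is a subclass of $\F^2$, of cardinality at most $|\F^*|$, and the display shows it is a $2\varepsilon$-net for $\F^2$ relative to the envelope $b^2$. Taking the infimum over all admissible nets yields $N_r(2\varepsilon,\P\otimes\P,\F^2,b^2)\leq N_r(\varepsilon,\P\otimes\P,\F,b)$.

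There is essentially no obstacle in this argument; the only point requiring a little care is the bookkeeping that the candidate net for $\F^2$ is genuinely a subclass of $\F^2$ (so that the definition of $N_r$ applies verbatim) and that its cardinality does not exceed that of the original net, which holds since $f^*\mapsto (f^*)^2$ maps $\F^*$ onto the candidate net.
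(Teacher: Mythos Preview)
Your proof is correct and follows essentially the same approach as the paper's: both take an $\varepsilon$-net for $\F$, use the pointwise bound $|f^2-(f^*)^2|\leq 2b\,|f-f^*|$ coming from $|f+f^*|\leq 2b$, and conclude that the squared net is a $2\varepsilon$-net for $\F^2$ relative to the envelope $b^2$. Your write-up is slightly more explicit about the net being a genuine subclass of $\F^2$ and about the cardinality bookkeeping, but the argument is the same.
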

\begin{proof}
  Find functions $f_1,\ldots,f_m$ such that
  \begin{equation*}
    \min_{i}\E|f-f_i|^r\leq \varepsilon^r b^r, \quad \text{for each $f\in\F$.}
  \end{equation*}
  Then, with the appropriate $i$,
  \begin{equation*}
    \begin{aligned}
      \E|f^2-f^2_i|^r\leq (2b)^r\E|f-f_i|^r\leq (2b)^r\varepsilon^r b^r= (2\epsilon)^r (b^2)^r.
    \end{aligned}
  \end{equation*}
  This implies that $N_r(2\varepsilon ,\P\otimes\P, \F^2, b^2)\leq N_r(\varepsilon, \P\otimes\P, \F, b)$.
\end{proof}

\begin{lemma}\label{lem:ep}
  Suppose that $\F$ is $b$-uniformly bounded. Then $\E\sup_{g\in\P\F}|\P_n g-\E g|\lesssim \sqrt{\nu/n}$,
  where $\P\F:=\{\E_{\P}f(\bz,\cdot):f\in\F\}$.
\end{lemma}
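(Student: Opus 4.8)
The plan is to deduce the bound from a standard uniform-entropy maximal inequality applied to the one-argument class $\P\F$, after transferring covering-number control from $\F$ — which by the standing assumptions of Section~\ref{sec:general} is a VC-subgraph class of index $\nu$, uniformly bounded by the absolute constant $b$ — to $\P\F$. First note that every $g\in\P\F$ has the form $g(\bz)=\E_\P f(\bz,\cdot)$ for some $f\in\F$, so $|g|\le b$ and $\P\F$ admits the constant envelope $b$.

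The key step is the entropy transfer. Fix a probability measure $Q$ on $\R^{m}$ and take $f_1,f_2\in\F$ with projections $g_k(\bz)=\E_\P f_k(\bz,\cdot)$. Jensen's inequality applied to $t\mapsto t^2$ gives, for every $\bz$, $|g_1(\bz)-g_2(\bz)|^2\le \E_\P|f_1(\bz,\cdot)-f_2(\bz,\cdot)|^2$, and integrating against $Q$ yields $\|g_1-g_2\|_{L_2(Q)}^2\le\|f_1-f_2\|_{L_2(Q\otimes\P)}^2$. Hence any $\varepsilon b$-net of $\F$ in $L_2(Q\otimes\P)$ projects to an $\varepsilon b$-net of $\P\F$ in $L_2(Q)$, so $\sup_Q N(\varepsilon b,\P\F,L_2(Q))\le \sup_{Q'}N(\varepsilon b,\F,L_2(Q'))$, the supremum on the right running over probability measures on the product space. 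Because $\F$ is VC-subgraph of index $\nu$ with constant envelope $b$, Theorem~2.6.7 in \cite{wellner1996weak} bounds the right-hand side by $C\nu(16e)^{\nu}(1/\varepsilon)^{2(\nu-1)}$ for $0<\varepsilon<1$, whence $\sup_Q\log N(\varepsilon b,\P\F,L_2(Q))\lesssim \nu\{1+\log(1/\varepsilon)\}$ and the uniform entropy integral satisfies $J(1,\P\F):=\int_0^1\sup_Q\sqrt{1+\log N(\varepsilon b,\P\F,L_2(Q))}\,\diff\varepsilon\lesssim\sqrt{\nu}$.

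Finally I would invoke the uniform-entropy maximal inequality (Theorem~9.3 in \cite{kosorok2007introduction}, or Theorem~2.14.1 in \cite{wellner1996weak}) for the class $\P\F$ with constant envelope $b$, giving $\sqrt{n}\,\E\sup_{g\in\P\F}|\P_n g-\E g|\lesssim J(1,\P\F)\,b\lesssim b\sqrt{\nu}$; dividing by $\sqrt{n}$ and using that $b$ is an absolute constant produces $\E\sup_{g\in\P\F}|\P_n g-\E g|\lesssim\sqrt{\nu/n}$. The only genuinely substantive ingredient is the entropy-transfer argument of the second paragraph; the rest is a direct application of cited results, and — as throughout the paper — I would suppress the routine outer-measurability caveats attached to the supremum.
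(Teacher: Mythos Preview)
Your proof is correct and follows essentially the same route as the paper: transfer covering-number control from $\cF$ to $\P\cF$ (the paper cites Lemma~20 in \cite{nolan1987u} for this, whereas you spell it out via Jensen's inequality), invoke the VC polynomial bound, and then apply an entropy-integral maximal inequality. The only cosmetic difference is that the paper unpacks the last step into symmetrization plus Dudley's bound, while you cite the packaged Theorem~2.14.1 in \cite{wellner1996weak}; note that Theorem~9.3 in \cite{kosorok2007introduction} is the VC covering-number bound, not a maximal inequality, so your parenthetical citation of it in the final step is misplaced.
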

\begin{proof}
  With a little abuse of notation, let $\epsilon_1,\epsilon_2,\cdots$ be the Rademacher sequence, where $\epsilon_i\in\{-1,1\}$ is symmetric around 0. By the classic symmetrization theorem \citep[cf. Theorem 8.8 in][]{kosorok2007introduction}, we  have
  \begin{equation}\label{eqn: symmetrization}
    \E\sup_{g\in\P\F}|\P_n g-\E g|\leq \E_{\bZ}\E_{\epsilon}\sup_{g\in\P\F}\Big|\frac{1}{n}\sum_{i=1}^n \epsilon_i g(\bZ_i)\Big|.
  \end{equation}

  Next, we try to bound $\E_{\epsilon}\sup_{g\in\P\F}|\sum_{i=1}^n \epsilon_i g(\bz_i)/n|$ for fixed $\bz_i$. To that end, consider the stochastic process $\{\sum_{i=1}^n \epsilon_i g(\bz_i)/\sqrt{n}:g\in\P\F\}$. It is easy to verify that $\sum_{i=1}^n \epsilon_i \{g_1(\bz_i)-g_2(\bz_i)\}/\sqrt{n}$ is sub-gaussian with parameter $\norm{g_1-g_2}^2_{L_2(\P_n)}:=\sum_{i=1}^n\{g_1(\bz_i)-g_2(\bz_i)\}^2/n$, where $g_1,g_2\in\P\F$. Consequently, Dudley's entropy integral, combined with the fact that $\sup_{g_1,g_2\in\P\F}\norm{g_1-g_2}_{L_2(\P_n)}\leq 2b$, implies that
  \begin{equation}\label{eqn:entropybound}
    \E_{\epsilon}\sup_{g\in\P\F}\Big|\frac{1}{n}\sum_{i=1}^n \epsilon_i g(\bz_i)\Big|\leq \frac{24}{\sqrt{n}}\int_0^{2b}\sqrt{\log N_2(t/b,\P_n,\P\F,b)}\diff t.
  \end{equation}
  By  Theorem 9.3 in \cite{kosorok2007introduction} and Lemma 20 in \cite{nolan1987u}, there exists a universal constant $K$ such that $N_2(t/b,\P_n,\P\F,b)\leq K\nu(16e)^\nu(b/t)^{2(\nu-1)}$. Substituting this bound into \eqref{eqn:entropybound}, we find that there exist constants $c_0, c_1$, only depending on $K,b$ but not on $(\nu,n)$, such that
   \begin{equation*}
    \E_{\epsilon}\sup_{g\in\P\F}\Big|\frac{1}{n}\sum_{i=1}^n \epsilon_i g(\bz_i)\Big|\leq c_0 \sqrt{\frac{\nu}{n}}\left\{1+\int_0^{2b}\sqrt{\log(b/t)}\diff t\right\} \leq c_1\sqrt{\frac{\nu}{n}}.
  \end{equation*}
  Combining this with \eqref{eqn: symmetrization} implies that $\E\sup_{g\in\P\F}|\P_n g-\E g|\leq c_1\sqrt{\nu/n}$.
  This completes the proof.
\end{proof}

\begin{lemma}\label{lem:up}
  Suppose that $\F$ is $\P$-degenerate and $b$-uniformly bounded. Then $\E\sup_{f\in\F}|\U_n f|\lesssim\nu/n$.
\end{lemma}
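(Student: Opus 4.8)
The plan is to feed the class $\F$ directly into the same decoupling-plus-chaining estimate that produced \eqref{eq:uniform-han2}: here there is no localization around $\btheta_0$ to exploit, so the crude pointwise bound $f^2\le b^2$ will already be enough. Recall $\Sm_n f=n(n-1)\U_n f$. Running the argument behind \eqref{eq:uniform-han2} --- a modification of Theorem~6 in \cite{nolan1987u}, built on the VC covering-number bound of Lemma~20 in \cite{nolan1987u} (see also Theorem~9.3 in \cite{kosorok2007introduction}) --- for the $\P$-degenerate, $b$-uniformly bounded, VC-dimension-$\nu$ class $\F$ gives
\[
\E\Big\{\sup_{f\in\F}\Big|\frac{\Sm_n f}{n\nu}\Big|\Big\}\le C\,H\Big(\Big[\E\sup_{f\in\F}\U_{2n}f^2\Big]^{1/2}\Big),\qquad H(x)=x\{1+\log(1/x)\},
\]
the normalizing factor $n\nu$ being exactly what the decoupling and the two chaining integrals over $\F$ (one per argument of the kernel) produce.

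Next I would bound the right-hand side by brute boundedness: since $\F$ is $b$-uniformly bounded, $0\le f^2\le b^2$ pointwise, hence $\U_{2n}f^2\le b^2$ surely and $[\E\sup_{f\in\F}\U_{2n}f^2]^{1/2}\le b$. Because $\U_n(\cdot)$ is homogeneous of degree one in its kernel and $b$ is an absolute constant, we may rescale $\F$ and assume $b\le 1$; then $H'(x)=-\log x\ge 0$ on $(0,1]$, so $H$ is nondecreasing there and $H([\E\sup_{f}\U_{2n}f^2]^{1/2})\le H(1)=1$. Consequently $\E\sup_{f\in\F}|\Sm_n f/(n\nu)|\le C$ for an absolute constant $C$.

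Converting back, $\U_n f=\Sm_n f/\{n(n-1)\}=\tfrac{\nu}{n-1}\cdot\tfrac{\Sm_n f}{n\nu}$, so that for every $n\ge 2$
\[
\E\sup_{f\in\F}|\U_n f|=\frac{\nu}{n-1}\,\E\sup_{f\in\F}\Big|\frac{\Sm_n f}{n\nu}\Big|\le \frac{C\nu}{n-1}\lesssim\frac{\nu}{n},
\]
which is the claim.

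The only genuine content is the first step: establishing the Nolan--Pollard-type maximal inequality when $\nu$ is allowed to grow with $n$. A packaged degenerate-$U$-process inequality will not do, because the constants $C(k,q)$ in \cite{sherman1994maximal} (and in Theorem~5.3.7 of \cite{de2012decoupling}) blow up as the number of covariates increases; one must instead reproduce Nolan and Pollard's own route --- symmetrize; split the sample into two independent halves and decouple; condition on one half to reduce to a Rademacher-randomized empirical process indexed by $\F$; and bound the resulting Dudley entropy integral via the VC covering bound --- while checking at each stage that every constant is absolute and independent of $(\nu,n)$. Granting that inequality, the remainder (the pointwise control of $f^2$, the monotonicity of $H$ near the origin, and the algebra linking $\Sm_n$ with $\U_n$) is routine.
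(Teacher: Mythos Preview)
Your proposal is correct and follows essentially the same route as the paper. Both arguments apply Theorem~6 of \cite{nolan1987u} directly (with the VC covering-number bound absorbing the dimension into the entropy integral as $J_n(x)\le c\,\nu\,H(x)$), then use the crude uniform bound $f^2\le b^2$ to force the argument of $H$ into a compact subset of $(0,1]$ where $H$ is bounded; the paper phrases this as $\sigma_n/\tau_n\le 1/4$ while you phrase it as $\U_{2n}f^2\le b^2$, but these are the same observation. One cosmetic point: the display you invoke, \eqref{eq:uniform-han2}, is itself established in the proof of Theorem~\ref{thm:uniformloose}, which in turn cites the present lemma to handle the degenerate piece of $\U_{2n}h^2$; to keep the logic acyclic you should, as the paper does, appeal to Nolan--Pollard's Theorem~6 in its raw form here rather than to the already-processed inequality.
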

\begin{proof}
 First, by the relationship between $\Sm_n$ and $\U_n$: $\Sm_n = n(n-1)\U_n$, we just need to show that
$\E\sup_{f\in\F}|\Sm_n f/(n\nu)|$ is bounded.
Apply Theorem~6 in \citet{nolan1987u} to get
\begin{equation}\label{eqn:maximalinequality}
  \begin{aligned}
     \E\sup_{f\in\F}|\Sm_nf|\leq C\E\bigg\{\sigma_n+\tau_nJ_n\Big(\frac{\sigma_n}{\tau_n}\Big)\bigg\},
  \end{aligned}
\end{equation}
where $C$ is a universal constant, $\sigma_n=\sup_{f\in\cF}(\T_n f^2)^{1/2}/4$, $\tau_n=(\T_n b^2)^{1/2}$, and $J_n(x)=\int_0^x\log N_2(t,\\ \T_n,\F,b)\diff t$. By  Theorem 9.3 in \cite{kosorok2007introduction}, we have $N_2(t,\T_n,\F,1)\leq K \nu(4e)^\nu(2/t)^{2(\nu-1)}$, and thus $J_n(x)\leq cH(x)\nu$ for some constant $c$ depending on $K$, where $H(x)=x\{1+\log(1/x)\}$.

Since $\F$ is $b$-uniformly bounded, it holds that $\sigma_n/\tau_n\in[0,1/4]$. Note also that
 $H(x)$ is bounded when $x\in[0,1]$.  We immediately have $H(\sigma_n/\tau_n)$ is bounded. Additionally,
by the definition of $\T_n$, we see that $\tau_n=\{4n(n-1)\}^{1/2}\lesssim n$.
Combining all these points with
 \eqref{eqn:maximalinequality} implies that there exists some constant $c'$ depending on $C,c$ such that
\begin{equation*}
  \begin{aligned}
    \frac{\E\sup_{f\in\F}|\Sm_nf|}{nv}\leq  c'\E H\Big(\frac{\sigma_n}{\tau_n}\Big) \frac{\tau_n}{n}<C'
    \end{aligned}
    \end{equation*}
for some large enough absolute constant $C'$.  This completes the proof.
\end{proof}

\begin{lemma}\label{lem:uniform as convergence}
 If for each $\varepsilon>0$, (i) $\log N_1(\varepsilon,\T_n,\F,F)=O_\P(n)$, (ii) $\log N_1(\varepsilon, \P_n\otimes \P,\F,F)=o_{\P}(n)$, (iii) $\log N_1(\varepsilon,\P\otimes \P,\F,F)=o(n)$, then $\sup_{f\in\F}|\U_n f-\E f|\rightarrow 0$ almost surely.
\end{lemma}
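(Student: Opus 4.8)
The plan is to split $\U_n f-\E f$ by the Hoeffding decomposition into a linear (Hájek) term, which is an ordinary empirical process, and a degenerate remainder, and to treat the former by the classical Glivenko--Cantelli theorem and the latter by the decoupling/chaining device already used in the proof of Lemma~\ref{lem:up}. For $f\in\F$ set $f^{(1)}(\bz):=\E f(\bz,\cdot)-\E f$, $f^{(2)}(\bz):=\E f(\cdot,\bz)-\E f$, and $h_f(\bz_1,\bz_2):=f(\bz_1,\bz_2)-\E f(\bz_1,\cdot)-\E f(\cdot,\bz_2)+\E f$, so that $h_f$ is $\P$-degenerate in each argument; since $\U_n$ of a function of one argument equals $\P_n$ of it,
\begin{equation*}
\U_n f-\E f=\P_n f^{(1)}+\P_n f^{(2)}+\U_n h_f,\qquad \E f^{(1)}=\E f^{(2)}=0 .
\end{equation*}
It thus suffices to prove $\sup_{f\in\F}|\P_n f^{(j)}|\to0$ a.s.\ for $j=1,2$ and $\sup_{f\in\F}|\U_n h_f|\to0$ a.s.

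For the linear terms, the classes $\F^{(j)}:=\{f^{(j)}:f\in\F\}$ have envelopes with all moments finite (by Jensen), and since conditional expectation is an $L_1$-contraction, $\P_n|f^{(1)}-g^{(1)}|\le(\P_n\otimes\P)|f-g|+(\P\otimes\P)|f-g|$ for all $f,g\in\F$; intersecting a $(\P_n\otimes\P)$-net and a $(\P\otimes\P)$-net of $\F$ therefore gives a net of $\F^{(1)}$ with
\begin{equation*}
\log N_1(\varepsilon,\P_n,\F^{(1)},F^{(1)})\le\log N_1(c\varepsilon,\P_n\otimes\P,\F,F)+\log N_1(c\varepsilon,\P\otimes\P,\F,F)=o_\P(n)
\end{equation*}
for each $\varepsilon>0$ by conditions (ii) and (iii) (the latter also makes $\F^{(j)}$ $\P$-totally bounded, pinning the limit to $\E f^{(j)}=0$), and similarly for $\F^{(2)}$. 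The Glivenko--Cantelli theorem for classes with integrable envelope and $o_\P(n)$ empirical $L_1$-entropy (cf.\ \cite{wellner1996weak,kosorok2007introduction}), together with the reverse-submartingale property of $\sup_{f\in\F}|\P_n f^{(j)}-\E f^{(j)}|$, which upgrades convergence in probability to almost-sure convergence, then yields $\sup_{f\in\F}|\P_n f^{(j)}|\to0$ a.s.

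For the degenerate remainder, fix $\varepsilon>0$, take a minimal $(\P\otimes\P)$-net $\{f_1,\dots,f_N\}$ of $\F$ with $N=N_1(\varepsilon,\P\otimes\P,\F,F)<\infty$ (by (iii)), and for $f\in\F$ let $\pi(f)$ be a nearest net point; after truncating the envelope at a fixed high level $M$ (a vacuous step when $\F$ is uniformly bounded, the tail costing at most $\E[F\ind(F>M)]$), the oscillation functions $h_f-h_{\pi(f)}$ are $\P$-degenerate with $(\P\otimes\P)$-$L_2$ norm at most $C\varepsilon$. Then $\sup_{f\in\F}|\U_n h_f|\le\max_{k\le N}|\U_n h_{f_k}|+\sup_{f\in\F}|\U_n(h_f-h_{\pi(f)})|$: the first term is a finite maximum of degenerate $U$-statistics of variance $O(n^{-2})$, hence $\to0$ a.s.\ and in $L_1$; for the second I would run the decoupling/symmetrization reduction to the measure $\T_n$ and then Theorem~6 of \cite{nolan1987u} exactly as in the proof of Lemma~\ref{lem:up}, observing that for the oscillation sub-class the ``noise level'' $\sigma_n$ there is $O_\P(\varepsilon\,n)$ while $\tau_n\asymp n$, and that condition (i) bounds the entropy integral $\int_0^{c}\log N_1(t,\T_n,\{h_f-h_g\},\cdot)\,\diff t$ by $O_\P(n)$ (the $\T_n$-covering numbers of $\{h_f-h_g\}$ being controlled by those of $\F$, again by $L_1$-contraction); substituting and dividing by $n(n-1)$ gives $\E\sup_{f\in\F}|\U_n(h_f-h_{\pi(f)})|\le C'\varepsilon$ for all large $n$. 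Hence $\limsup_n\E\sup_{f\in\F}|\U_n h_f|\le C'\varepsilon$ for every $\varepsilon>0$, so $\E\sup_{f\in\F}|\U_n h_f|\to0$, and since $\sup_{f\in\F}|\U_n h_f|$ is a reverse submartingale this $L_1$-convergence is in fact almost sure.

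The hard part is this degenerate step. Because only $L_1$ (not $L_2$) covering numbers are assumed, one is forced to truncate the envelope, and converting the merely $O_\P(n)$ entropy of condition (i) into a genuine $o_\P(1)$ control of $\U_n h_f$ relies crucially on first localizing to an $\varepsilon$-oscillation sub-class so that $\sigma_n$ is small rather than $\asymp n$; the remaining care is in the routine uniform laws of large numbers needed to make the $\sigma_n$ estimate uniform over the class, in handling the low-probability events on which the random entropy $\log N_1(\cdot,\T_n,\cdot)$ is atypically large, and in the passage from $L_1$- to almost-sure convergence.
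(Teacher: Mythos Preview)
Your proposal is correct and follows essentially the same route the paper indicates: the paper's ``proof'' is simply to invoke Theorem~7 of \cite{nolan1987u} (with a slightly altered condition~(iii)), and the ingredients you assemble---entropy control under the three measures, the chaining/maximal inequality of Nolan--Pollard's Theorem~6, and the reverse-submartingale upgrade from $L_1$ to almost-sure convergence---are exactly those underlying that result. Your organization via the Hoeffding decomposition (handling the H\'ajek projection by a classical Glivenko--Cantelli argument and the degenerate remainder by Theorem~6) is a clean variant of Nolan--Pollard's more direct symmetrization, and the technical caveats you flag (truncation to pass from $L_1$ to $L_2$ covering, controlling $\sigma_n$ for the oscillation class via a subsidiary ULLN, and handling the exceptional set where the random $\T_n$-entropy exceeds $Kn$) are precisely the points where the argument needs care but goes through.
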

The proof of this lemma follows along the same lines as the proof of Theorem
7 in \cite{nolan1987u}, though the condition (iii) in this lemma is
different from there.

\subsubsection{Proof of Theorem~\protect\ref{thm:uniformloose}}

\label{sec:app-uniform} 
\begin{proof}
   (i)  It is equivalent to showing that there exists a sequence of nonnegative real numbers  $\delta_n$ converging to zero such that
\begin{equation*}
       \P\bigg\{ \sup_{\btheta\in\overline{\mathcal{B}}(\btheta_0,r_n)}|\U_n h(\cdot,\cdot;\btheta)|\geq \delta_n \nu/n\bigg\}=o(1),
\end{equation*}
or
\begin{equation*}
   \P\bigg\{ \sup_{\btheta\in\overline{\mathcal{B}}(\btheta_0,r_n)}|\Sm_n h(\cdot,\cdot;\btheta)/(n\nu)|\geq \delta_n \bigg\}=o(1).
\end{equation*}
By Chebyshev's inequality, it suffices to show that
\begin{equation*}
  \E\Big\{\sup_{\btheta\in\overline{\mathcal{B}}(\btheta_0,r_n)}|\Sm_n h(\cdot,\cdot;\btheta)/(n\nu)|\Big\}/\delta_n=o(1).
\end{equation*}

We try to bound $\E\{\sup_{\btheta\in\overline{\mathcal{B}}(\btheta_0,r_n)}|\Sm_n h(\cdot,\cdot;\btheta)/(n\nu)|\}$.
 Without loss of generality, assume $\F$ is uniformly bounded by $b=1/4$. Thus, for any $\btheta\in\Theta$, $h(\cdot,\cdot;\btheta)\leq 1$, i.e., the class of functions $\mathscr{H}:=\{h^2(\cdot,\cdot;\btheta):\btheta\in\overline{\mathcal{B}}(\btheta_0,r_n)\}$ is 1-uniformly bounded. Similar to the proof of Lemma~\ref{lem:up}, we apply Theorem 6 in \citet{nolan1987u} here to get
\begin{equation}\label{eqn:beijing}
  \begin{aligned}
   \E\Big\{\sup_{\btheta\in\overline{\mathcal{B}}(\btheta_0,r_n)}|\Sm_n h(\cdot,\cdot;\btheta)/(n\nu)|\Big\}&\leq C_1 \E H\Big(\sup_{\btheta\in\overline{\mathcal{B}}(\btheta_0,r_n)}\{\T_n h^2(\cdot,\cdot;\btheta)\}^{1/2}/(2n)\Big)\\
    &\leq C_1 H\Big(\E\Big[\sup_{\btheta\in\overline{\mathcal{B}}(\btheta_0,r_n)}\{\T_n h^2(\cdot,\cdot;\btheta)\}^{1/2}/(2n)\Big]\Big)\\
    &= C_1 H\Big(\E\Big\{\sup_{\btheta\in\overline{\mathcal{B}}(\btheta_0,r_n)}\T_n h^2(\cdot,\cdot;\btheta)/(2n)^2\Big\}^{1/2}\Big),
  \end{aligned}
\end{equation}
where $C_1$ is some constant. The second inequality holds because $H(x)$ is concave in $x$.

Note that $\T_n  h^2(\cdot,\cdot;\btheta)/(2n)^2=\T_n  h^2(\cdot,\cdot;\btheta)/\{2n(2n-1)\}\cdot \{2n(2n-1)\}/(2n)^2\leq \U_{2n}  h^2(\cdot,\cdot;\btheta)\leq 1$ and that $H(x)$ is increasing in $(0,1]$. {\blue Thus, from \eqref{eqn:beijing}, we additionally have}
\begin{equation}\label{eqn:shanghai}
  \begin{aligned}
     \E\Big\{\sup_{\btheta\in\overline{\mathcal{B}}(\btheta_0,r_n)}|\Sm_n h(\cdot,\cdot;\btheta)/(n\nu)|\Big\}&\leq C_1 H\Big(\E\Big\{\sup_{\btheta\in\overline{\mathcal{B}}(\btheta_0,r_n)}\U_{2n} h^2(\cdot,\cdot;\btheta)\Big\}^{1/2}\Big)\\
    &\leq C_1  H\Big(\Big[\E\Big\{\sup_{\btheta\in\overline{\mathcal{B}}(\btheta_0,r_n)}\U_{2n} h^2(\cdot,\cdot;\btheta)\Big\}\Big]^{1/2}\Big),
  \end{aligned}
\end{equation}
where the last inequality holds because $x^{1/2}$ is concave in $x$. Now, we need only to consider $ \E\{\sup_{\btheta\in\overline{\mathcal{B}}(\btheta_0,r_n)}\U_{2n} h^2(\cdot,\cdot;\btheta)\}$.

By a decomposition of $\U_{2n}h^2(\cdot,\cdot,\btheta)$ into a sum of its expected value, plus a smoothly parameterized, zero-mean empirical process, plus a degenerate $U$-process of order two, we have
\begin{equation}\label{eqn: decomposition hsquare}
  \begin{aligned}
    \E\Big\{\sup_{\btheta\in\overline{\mathcal{B}}(\btheta_0,r_n)}\U_{2n} h^2(\cdot,\cdot;\btheta)\Big\}
    \leq & \sup_{\btheta\in\overline{\mathcal{B}}(\btheta_0,r_n)} \E h^2(\cdot,\cdot;\btheta)+\E \Big\{\sup_{\btheta\in\overline{\mathcal{B}}(\btheta_0,r_n)}|\P_{2n}h_1(\cdot;\btheta)| \Big\}\\
   &\hspace{1em} +\E \Big\{\sup_{\btheta\in\overline{\mathcal{B}}(\btheta_0,r_n)}|\P_{2n}
   h_2(\cdot,\cdot;\btheta)| \Big\},
  \end{aligned}
\end{equation}
where $h_1(\bz,\btheta)=\E h^2(\bz,\cdot;\btheta)+\E h^2(\cdot,\bz;\btheta)-2\E h^2(\cdot,\cdot;\btheta)$
and $h_2(\bz_1,\bz_2;\btheta)=h^2(\bz_1,\bz_2;\btheta)-\E h^2(\bz_1,\cdot;\btheta)
-\E h^2(\cdot,\bz_2;\btheta)+\E h^2(\cdot,\cdot;\btheta)$.

By the condition in (i), it holds that $\sup_{\btheta\in\overline{\mathcal{B}}(\btheta_0,r_n)} \E h^2(\cdot,\cdot;\btheta)\leq \epsilon_n$. By Lemmas 16 and 20 in \citet{nolan1987u}, and Lemma \ref{lem:fsquare},  we have $N_r(\varepsilon,\Q,\mathscr{H},1)\leq N_r(\varepsilon/16,\Q,\cF,1/4)^4$. Then, following the proof of Lemma \ref{lem:ep}, we have $\E \{\sup_{\btheta\in\overline{\mathcal{B}}(\btheta_0,r_n)}|\P_{2n}h_1(\cdot;\btheta)| \}\leq C_2\sqrt{\nu/n}$ for some constant $C_2$. Additionally, following the proof of Lemma \ref{lem:up}, we have $\E \{\sup_{\btheta\in\overline{\mathcal{B}}(\btheta_0,r_n)}|\P_{2n}
   h_2(\cdot,\cdot;\btheta)| \}\leq C_3\nu/n$ for some constant $C_3$.

 Take $\delta_n=H^{1/2}((\epsilon_n+C_2 \sqrt{\nu/n}+C_3\nu/n)^{1/2})$. If $\epsilon_n\rightarrow 0$ and $\nu/n\rightarrow 0$, then
 \begin{equation*}
   \begin{aligned}
     \E\Big\{\sup_{\btheta\in\overline{\mathcal{B}}(\btheta_0,r_n)}|\Sm_n h(\cdot,\cdot;\btheta)/(n\nu)|\Big\}/\delta_n \leq C_1 H^{1/2}((\epsilon_n+C_2 \sqrt{\nu/n}+C_3\nu/n)^{1/2})=o(1),
   \end{aligned}
 \end{equation*}
 because $H(x)\rightarrow 0 $ as $x\rightarrow 0$. This completes proof of (i).

 (ii) The proof is based on \eqref{eqn:beijing}--\eqref{eqn: decomposition hsquare} in the proof of (i). First, by the condition in (ii), it holds that $\sup_{\btheta\in\overline{\mathcal{B}}(\btheta_0,r_n)}  \E h^2(\cdot,\cdot;\btheta)\leq \tilde\epsilon_n$. Then, similar to the proof of (i), $\E\{\sup_{\btheta\in\overline{\mathcal{B}}(\btheta_0,r_n)}|\P_{2n}h_1(\cdot;\btheta)| \}\leq c'\sqrt{\nu/n}$ for some constant $c'$, and $\E\{\sup_{\btheta\in\overline{\mathcal{B}}(\btheta_0,r_n)}|\P_{2n}
   h_2(\cdot,\cdot;\btheta)| \}\leq C'\nu/n$ for some constant $C'$. Since $\tilde \eta_n = \sqrt{\nu/n} \vee\tilde\epsilon_n$ and $\nu/n\rightarrow 0$, there exists a constant $c''$ depending on $c',C'$ such that
   \begin{equation*}
     \begin{aligned}
       \E\Big\{\sup_{\btheta\in\overline{\mathcal{B}}(\btheta_0,r_n)}\U_{2n} h^2(\cdot,\cdot;\btheta)\Big\}\leq c''\tilde\eta_n
     \end{aligned}
   \end{equation*}
   holds for sufficiently large $n$. Combining this with \eqref{eqn:shanghai} implies that
   \begin{equation*}
     \begin{aligned}
        \E\Big\{\sup_{\btheta\in\overline{\mathcal{B}}(\btheta_0,r_n)}|\Sm_n h(\cdot,\cdot;\btheta)/(n\nu)|\Big\}\leq C''\log(1/\tilde\eta_n)\tilde\eta_n^{1/2}
     \end{aligned}
   \end{equation*}
   for some constant $C''$. Finally, by the relationship between $\U_n$ and $\Sm_n$, we conclude that
   \begin{equation*}
     \begin{aligned}
        &\E \sup_{\btheta\in\overline{\mathcal{B}}(\btheta_0,r_n)}|\U_nh(\cdot,\cdot;\btheta)| \leq C''\log(1/\tilde\eta_n)\tilde\eta_n^{1/2}\nu/n
     \end{aligned}
   \end{equation*}
   holds for sufficiently large $n$.
 \end{proof}

\subsubsection{Proof of Theorem~\protect\ref{thm: consistency}}

\begin{proof}
  The proof is twofold. We first show the uniform convergence of $\Gamma_n(\btheta)$, and then establish the consistency of $\hat\btheta_n$.

  \noindent
  {\it Step 1.} By  Theorem 9.3 in \cite{kosorok2007introduction},  we have $\log N_1(\varepsilon,\mu,\F,F)\lesssim \nu$ for any $\varepsilon>0$ and any finite measure $\mu$. If $\nu/n\rightarrow 0$, then all the three conditions in Lemma
  \ref{lem:uniform as convergence} hold. {\blue Apply this lemma here to get that} $\Gamma_n(\btheta)$ converges almost surely to $\Gamma(\btheta)$ uniformly in $\btheta\in\Theta$.

  \noindent
  {\it Step 2.}
  {\blue Let $\Theta_0(r) = \mathcal{B}(\btheta_0, r)$, where $r\leq r_0$. By Assumption~\ref{ass:new1}, we see that $\Theta_1 := \Theta- \Theta_0(r)$ is compact.}
By Assumption \ref{ass:continuous}, $\Gamma(\btheta)$ is continuous. Combining these two pieces yields that
 $\max_{\btheta\in\Theta_1} \Gamma(\btheta)$ exists. 
 {\blue Again, by Assumption \ref{ass:new1}, we know that $\Gamma(\btheta_0)-\max_{\btheta\in\Theta_1}\Gamma(\btheta)\geq \xi_0$.
 }

  By {\it Step 1}, we can find a sufficiently large $N$ such that for all $n>N$,
\begin{equation*}
  \sup_{\btheta\in\Theta}|\Gamma_n(\btheta)-\Gamma(\btheta)|<\xi_0/2
\end{equation*}
holds almost surely.
Combining this with the definition of $\hat\btheta_n$ yields that
\begin{equation*}
  \begin{aligned}
    \Gamma(\btheta_0)< \Gamma_n(\btheta_0) + \xi_0/2
    \leq \Gamma_n(\hat\btheta_n) +\xi_0/2
    < \Gamma(\hat\btheta_n) +\xi_0.
  \end{aligned}
\end{equation*}
This implies that $\hat\btheta_n\not\in \Theta_1$, i.e., $\hat\btheta_n\in \Theta_0(r)$ for all $n>N$. Since this is true for {\blue any $r < r_0$,} we have
\begin{equation*}
\norm{\hat\btheta_n -\btheta_0} \rightarrow 0 \quad \text{almost surely,}
\end{equation*}
and hence also in probability. This completes the proof.
\end{proof}

\subsubsection{Proof of Theorem~\protect\ref{thm: general consrate}}

\begin{proof}
The proof is conducted in four steps. Based on the Hoeffding decomposition of $\Gamma_n(\btheta)$, we consider $\Gamma(\btheta)$, $\P_n g(\cdot;\btheta)$ and $\U_n h(\cdot,\cdot;\btheta)$ separately in the first three steps. We finally obtain the convergence rate  of $\hat\btheta_n$ in the last step. 

\noindent
{\it Step 1.} Fixing $\btheta\in\overline{\mathcal{B}}(\btheta_0,r)$, define
\begin{equation}\label{eqn:rhotheta}
\omega(\btheta) = \E \tau(\cdot;\btheta)- \E \tau(\cdot,\btheta_0) - (\btheta-\btheta_0)^\top \Vb(\btheta-\btheta_0) = 2\Gamma(\btheta)-(\btheta-\btheta_0)^\top \Vb(\btheta-\btheta_0).
\end{equation}
Additionally, expand $\omega(\btheta)$ about $\btheta_0$ to get
\begin{equation}\label{eqn:nablarhotheta}
\omega(\btheta) = (\btheta-\btheta_0)^\top \nabla_1\omega(\btheta'),
\end{equation}
where $\btheta'$ is a point on the line connecting $\btheta_0$ and $\btheta$, and $\nabla_1\omega(\btheta') = \nabla_1\E \tau(\cdot;\btheta') - 2\Vb(\btheta'-\btheta_0)$.
Expand $\nabla_1\E \tau(\cdot;\btheta')$ in $\nabla_1\omega(\btheta')$ about $\btheta_0$ to get
 \[
 \nabla_1\omega(\btheta')= 2\Vb(\btheta'')(\btheta'-\btheta_0) - 2\Vb(\btheta'-\btheta_0)=2\{\Vb(\btheta'')-\Vb\}(\btheta'-\btheta_0)
 \]
  for $\btheta''$ between $\btheta_0$ and $\btheta'$.  By Assumption~\ref{ass:taufunction}(ii) and (iii), we have
  \begin{equation}\label{eqn: rhothetabound}
  \begin{aligned}
     \sup_{\btheta'\in\overline{\mathcal{B}}(\btheta_0,r)}\norm{\nabla_1\omega(\btheta')}&\leq 2\sup_{\btheta'\in\overline{\mathcal{B}}(\btheta_0,r)}\norm{\Vb^{1/2}
     \{\Ib_p-\Vb^{-1/2}\Vb(\btheta'')\Vb^{-1/2}\}\Vb^{1/2}}\norm{\btheta'-\btheta_0}
     \\&\leq 2c_{\max}\rho(r)\norm{\btheta-\btheta_0}.
  \end{aligned}
  \end{equation}
Combining this with \eqref{eqn:rhotheta} and \eqref{eqn:nablarhotheta} yields
\begin{equation}\label{eqn:step1result}
\sup_{\btheta\in\overline{\mathcal{B}}(\btheta_0,r)}
\big|\Gamma(\btheta)-\frac{1}{2}(\btheta-\btheta_0)^\top \Vb(\btheta-\btheta_0)\big|\leq c_{\max}\rho(r) \norm{\btheta-\btheta_0}^2.
\end{equation}

\noindent
{\it Step 2.} Fixing $\bz$ in $\R^m$ and $\btheta$ in $\overline{\mathcal{B}}(\btheta_0,r)$, define
\[
\psi(\bz;\btheta) = \tau(\bz;\btheta)-\tau(\bz;\theta_0)-(\btheta-\btheta_0)^\top \nabla_1\tau(\bz;\btheta_0)-(\btheta-\btheta_0)^\top \Vb(\btheta-\btheta_0).
\]
With a little abuse of notation, we still use $\btheta'$ to denote some point between $\btheta_0$ and $\btheta$ below. Expand $\psi(\bz;\btheta)$ about $\btheta_0$ to get
\[
\psi(\bz;\btheta) = (\btheta-\btheta_0)^\top \nabla_1\psi(\bz;\btheta') = (\btheta-\btheta_0)^\top \{\nabla_1 \tau(\bz;\btheta') - \nabla_1 \tau(\bz;\btheta_0) - 2\Vb(\btheta'-\btheta_0)\}.
\]
Note that $\tau(\bz;\btheta') = \zeta(\bz;\btheta')+\E\tau(\cdot;\btheta')$. It then follows from the above equation and $\nabla_1\E \tau(\cdot;\btheta_0) = 0$ that
\begin{equation*}
\begin{aligned}
\P_n\psi(\cdot;\btheta) &= (\btheta-\btheta_0)^\top\{\P_n\nabla_1\zeta(\cdot;\btheta')-\P_n\nabla_1\zeta(\cdot;\btheta_0)+
\nabla_1\E \tau(\cdot;\btheta')- 2\Vb(\btheta'-\btheta_0)\}\\
&=(\btheta-\btheta_0)^\top \P_n\{\nabla_1\zeta(\cdot;\btheta')-\nabla_1\zeta(\cdot;\btheta_0)\} + (\btheta-\btheta_0)^\top \{\nabla_1\E\tau(\cdot;\btheta')- 2\Vb(\btheta'-\btheta_0)\}.
\end{aligned}
\end{equation*}
By {\it Step 1}, we have that
\begin{equation}\label{eqn: deterministic part}
  \sup_{\btheta\in\overline{\mathcal{B}}(\btheta_0,r)}\norm{(\btheta-\btheta_0)^\top \{\nabla_1\E\tau(\cdot;\btheta')- 2\Vb(\btheta'-\btheta_0)\}}\leq 2c_{\max}\rho(r) \norm{\btheta-\btheta_0}^2.
\end{equation}

 Next, we try to bound $ \sup_{\btheta\in\overline{\mathcal{B}}(\btheta_0,r)}\norm{(\btheta-\btheta_0)^\top \P_n\{\nabla_1\zeta(\cdot;\btheta')-\nabla_1\zeta(\cdot;\btheta_0)\}}$.
Consider the vector process
\[
\Lambda(\btheta)=\sqrt{n}\P_n\{\nabla_1\zeta(\cdot;\btheta)-\nabla_1\zeta(\cdot;\btheta_0)\}.
\]
 According to Assumption~\ref{ass:taufunction}(v), it holds,  for any $\bgamma_1$, $\bgamma_2\in \Sm^{p-1}$, that
\begin{equation*}
  \begin{aligned}
    \log \E \exp\left\{\lambda\bgamma_1^\top\nabla_1 \Lambda(\btheta)\bgamma_2\right\}=n\log\E\exp\left\{\frac{\lambda}{\sqrt{n}}\bgamma_1^\top
    \nabla_2\zeta(\cdot;\btheta)\bgamma_2\right\}
    \leq \frac{\nu_0^2\lambda^2}{2}
  \end{aligned}
\end{equation*}
for any $|\lambda|\leq g_n$ with $g_n=\sqrt{n}\ell_0$. It then follows from Theorem A.3 in \cite{spokoiny2013bernstein} that for any $0<\varepsilon<1$,
\begin{equation*}
  \P\bigg\{\sup_{\btheta\in\overline{\mathcal{B}}(\btheta_0,r)}\norm{\Lambda(\btheta)}>6\nu_0r d_p(\varepsilon)\bigg\}\leq\varepsilon,
\end{equation*}
where
  \begin{equation*}
      d_p(\varepsilon)=\left\{\begin{array}{ll}
      \sqrt{4p-2\log\varepsilon} \qquad &
      \text{if $4p-2\log\varepsilon\leq g_n^2$,}\\
      g_n^{-1}\log\varepsilon+\frac{1}{2}(4pg_n^{-1}+g_n) \qquad &
       \text{if $4p-2\log\varepsilon>g_n^2$.}
    \end{array}\right.
  \end{equation*}
  Thus,
  \begin{equation}\label{eqn: stochastic part}
  \P\bigg\{\sup_{\btheta\in\overline{\mathcal{B}}(\btheta_0,r)}\big|(\btheta-\btheta_0)^\top \P_n\{\nabla_1\zeta(\cdot;\btheta')-\nabla_1\zeta(\cdot;\btheta_0)\}\big|>\frac{6\nu_0r }{\sqrt{n}}d_p(\varepsilon) \norm{\btheta-\btheta_0} \bigg\}\leq\varepsilon.
\end{equation}
 This,  combined with \eqref{eqn: deterministic part}, implies that
  \begin{equation}\label{eqn:Pnpsi}
     \P\bigg\{\sup_{\btheta\in\overline{\mathcal{B}}(\btheta_0,r)}\norm{\P_n\psi(\cdot;\btheta)}>
    2 c_{\max}\rho(r) \norm{\btheta-\btheta_0}^2+\frac{6\nu_0r }{\sqrt{n}}d_p(\varepsilon) \norm{\btheta-\btheta_0} \bigg\}\leq\varepsilon.
  \end{equation}

Note that $\tau(\bz;\btheta_0)=0$ and
\begin{equation*}
\begin{aligned}
g(\bz;\btheta) &= \tau(\bz;\theta)-\tau(\bz;\btheta_0)-2\Gamma(\btheta) \\
&= (\btheta-\btheta_0)^\top \nabla_1\tau(\bz;\btheta_0) + \psi(\bz;\btheta)-\{2\Gamma(\btheta)-(\btheta-\btheta_0)^\top \Vb(\btheta-\btheta_0)\}\,.
\end{aligned}
\end{equation*}
Apply~\eqref{eqn:step1result} and~\eqref{eqn:Pnpsi} to see that
\begin{equation}\label{eqn:step2result}
  \P\bigg\{\sup_{\btheta\in\overline{\mathcal{B}}(\btheta_0,r)}\big|\P_ng(\cdot;\btheta)-\frac{1}
  {\sqrt{n}}(\btheta-\btheta_0)^\top \bwn\big|>
     4c_{\max}\rho(r) \norm{\btheta-\btheta_0}^2+\frac{6\nu_0r }{\sqrt{n}} d_p(\varepsilon) \norm{\btheta-\btheta_0} \bigg\}\leq\varepsilon,
\end{equation}
 where
$\bwn=\sqrt{n}\P_n\nabla_1 \tau(\cdot;\btheta_0)$.

\noindent
{\it Step 3.}
By Assumption \ref{ass:continuous}, $f(\bz_1,\bz_2;\btheta)$ is continuous at $\btheta_0$  almost surely. Since $\F$ is uniformly bounded, a dominated convergence argument implies that the same holds true for $h(\bz_1,\bz_2;\btheta)$. In view of $f(\bz_1,\bz_2,\btheta_0)=0$ for all $\bz_1,\bz_2$, it holds that $h(\bz_1,\bz_2;\btheta_0)=0$.
Thus, the boundedness of $h$ and the dominated convergence theorem establish
\begin{equation}\label{eqn:Qh}
\E h^2(\cdot,\cdot;\btheta)\rightarrow 0 \quad \text{as} \quad \norm{\btheta-\btheta_0}_2\rightarrow 0.
\end{equation}
Equivalently, there exists a constant $\alpha(r)>0$ such that $\sup_{\btheta\in\overline{\mathcal{B}}(\btheta_0,r)}\E h^2(\cdot,\cdot;\btheta)\leq \alpha(r)$ and $\alpha(r)\rightarrow 0$ as $r\rightarrow 0$. By Theorem \ref{thm:uniformloose}, there exists a sequence of nonnegative real numbers $\delta_n$ (depending on $\alpha(r),\nu,n$) converging to zero as $r\rightarrow 0 $ and $n\rightarrow \infty$,
 such that
\begin{equation}\label{eqn:step3result}
  \P\bigg\{ \sup_{\btheta\in\overline{\mathcal{B}}(\btheta_0,r)}|\U_n h(\cdot,\cdot;\btheta)|> \delta_n \nu/n\bigg\}\leq \epsilon
\end{equation}
holds for sufficiently large $n$.

\noindent
{\it Step 4.} The Hoeffding decomposition, combined with \eqref{eqn:step1result}, \eqref{eqn:step2result}, and \eqref{eqn:step3result} in the above three steps,  implies that
\begin{equation}\label{eqn:Gammantheta}
\begin{aligned}
  &\P\bigg\{\sup_{\btheta\in\overline{\mathcal{B}}(\btheta_0,r)}\big|\Gamma_n(\btheta)-\frac{1}{2}
  (\btheta-\btheta_0)^\top\Vb(\btheta-\btheta_0)-
  \frac{1}
  {\sqrt{n}}(\btheta-\btheta_0)^\top \bwn\big|>\\ & \hspace{8em} 5c_{\max}\rho(r) \norm{\btheta-\btheta_0}^2+\frac{6\nu_0r }{\sqrt{n}} d_p(\varepsilon) \norm{\btheta-\btheta_0}+\delta_n\frac{\nu}{n} \vphantom{\int_1^2} \bigg\}\leq2\varepsilon.
   \end{aligned}
\end{equation}
In view of $\bwn=\sqrt{n}\P_n\nabla_1 \tau(\cdot;\btheta_0)$, it holds that $(\btheta-\btheta_0)^\top \bwn/\{(\btheta-\btheta_0)^\top\Deltab(\btheta-\btheta_0)\}^{1/2}\Rightarrow N(0,1)$. This, combined with Assumption \ref{ass:taufunction}(iv), implies that there exists a constant $b_{\varepsilon}$ depending on $d_{\max}$ such that
\begin{equation*}
  \P\big\{|(\btheta-\btheta_0)^\top W_n|>b_{\varepsilon}\norm{\btheta-\btheta_0}\big\}\leq \varepsilon
\end{equation*}
holds for sufficiently large $n$.
Define the set
\begin{equation*}
\begin{aligned}
  \mathcal{A}_{n,\varepsilon} = &\bigg\{\bZ: \sup_{\btheta\in\overline{\mathcal{B}}(\btheta_0,r)}\big|\Gamma_n(\btheta)-\frac{1}{2}
  (\btheta-\btheta_0)^\top\Vb(\btheta-\btheta_0)\big|\leq
  \frac{b_{\varepsilon}}{\sqrt{n}}\norm{\btheta-\btheta_0} +\\&\hspace{8em} 5c_{\max}\rho(r) \norm{\btheta-\btheta_0}^2+\frac{6\nu_0r }{\sqrt{n}} d_p(\varepsilon) \norm{\btheta-\btheta_0}+\delta_n\frac{\nu}{n} \vphantom{\int_1^2} \bigg\},
\end{aligned}
\end{equation*}
then $\P(\mathcal{A}_{n,\varepsilon})\geq 1-3\varepsilon$ holds for sufficiently large $n$. The following analysis is on the set $\mathcal{A}_{n,\varepsilon}$.

By Theorem \ref{thm: consistency}, $\norm{\hat\btheta_n-\btheta_0}\rightarrow 0$ almost surely. Thus, for sufficiently large $n$, $\hat\btheta_n\in\overline{\mathcal{B}}(\btheta_0,r)$. This implies that
\begin{equation*}
  \begin{aligned}
    \Gamma_n(\hat\btheta_n)\leq& \frac{1}{2}
  (\hat\btheta_n-\btheta_0)^\top\Vb(\hat\btheta_n-\btheta_0)+   \frac{b_{\varepsilon}}{\sqrt{n}}\norm{\hat\btheta_n-\btheta_0}+5c_{\max}\rho(r) \norm{\hat\btheta_n-\btheta_0}^2
  \\ &~~~~~~~~~ +\frac{6\nu_0r }{\sqrt{n}} d_p(\varepsilon) \norm{\hat\btheta_n-\btheta_0}+\delta_n\frac{\nu}{n}.
  \end{aligned}
\end{equation*}
In view of $\Gamma_n(\hat\btheta_n)\geq \Gamma_n(\btheta_0)=0$, it holds that
\begin{equation*}
  \begin{aligned}
    0\leq \frac{1}{2}
  (\hat\btheta_n-\btheta_0)^\top\Vb(\hat\btheta_n-\btheta_0)+   \frac{b_{\varepsilon}}{\sqrt{n}}\norm{\hat\btheta_n-\btheta_0}+5c_{\max}\rho(r) \norm{\hat\btheta_n-\btheta_0}^2
  +\frac{6\nu_0r }{\sqrt{n}} d_p(\varepsilon) \norm{\hat\btheta_n-\btheta_0}+\delta_n\frac{\nu}{n}.
    \end{aligned}
\end{equation*}
This, combined with Assumption \ref{ass:taufunction}(ii) and $\rho(r)<\frac{c_{\min}}{11c_{\max}}$, implies that
\begin{equation}\label{eqn:kappa}
  \begin{aligned}
     \frac{1}{2}\kappa
  \norm{\hat\btheta_n-\btheta_0}^2\leq   \frac{b_{\varepsilon}}{\sqrt{n}}\norm{\hat\btheta_n-\btheta_0}
  +\frac{6\nu_0r }{\sqrt{n}} d_p(\varepsilon) \norm{\hat\btheta_n-\btheta_0}+\delta_n\frac{\nu}{n},
    \end{aligned}
\end{equation}
where $\kappa = c_{\min}-10c_{\max}\rho(r)>0$. By the definition of $d_p(\varepsilon)$ and $p/n\rightarrow 0$, there exists a constant $c_{\varepsilon}$ such that $d_p(\varepsilon)\leq c_{\varepsilon} \sqrt{p}$ for sufficiently large $n$. Combining this with \eqref{eqn:kappa} yields that
\begin{equation*}\label{eqn:kappa square}
  \begin{aligned}
     \frac{1}{2}\kappa\Big(
  \norm{\hat\btheta_n-\btheta_0}-\frac{b_{\varepsilon}+6\nu_0rc_{\varepsilon} \sqrt{p}}{\kappa\sqrt{n}}\Big)^2
  \leq  \frac{(b_{\varepsilon}+6\nu_0rc_{\varepsilon} \sqrt{p})^2}{2\kappa n}+\delta_n\frac{\nu}{n}.
    \end{aligned}
\end{equation*}
Solving the above equation establishes that
\begin{equation*}
   \norm{\hat\btheta_n-\btheta_0}\leq C_\varepsilon \sqrt{\frac{\nu\vee p}{n}}
\end{equation*}
holds for sufficiently large $n$, where
 $C_{\varepsilon}$ is some constant  depending only on $c_{\min},c_{\max},\rho(r),d_{\max},\varepsilon$, but not depending on $\nu,p,n$. Thus,
 \begin{equation*}
   \P\bigg\{\norm{\hat\btheta_n-\btheta_0}\leq C_\varepsilon \sqrt{\frac{\nu\vee p}{n}}\bigg\}\geq 1-3\varepsilon
\end{equation*}
holds for sufficiently large $n$. This completes the proof.
\end{proof}

\subsubsection{Proof of Theorem~\protect\ref{thm:generalASN}}

\begin{proof}
\hf{The proof is based on the proof of Theorem \ref{thm: general consrate}.} We first define $\btn=\sqrt{n}(\hat\btheta_n-\btheta_0)$ and $\btsn=-\Vb^{-1}\bwn$.
By Theorem \ref{thm: general consrate}, for any $\varepsilon>0$, there exists a constant $C'_\varepsilon>0$ such that
\begin{equation}\label{eqn: consrate result}
  \P\bigg\{\norm{\hat\btheta_n-\btheta_0}> C'_\varepsilon \sqrt{\frac{\nu\vee p}{n}}\bigg\}\leq \varepsilon
\end{equation}
holds for sufficiently large $n$. By the definition of $\bwn$, $\bwn=\sqrt{n}\P_n\nabla_1 \tau(\cdot;\btheta_0)$, it holds that for any $\bgamma\in\R^p$, $\bgamma^\top \btsn/(\bgamma^\top \Vb^{-1}\Deltab\Vb^{-1}\bgamma)^{1/2}\Rightarrow N(0,1)$. This, combined with Assumption \ref{ass:taufunction}(ii) and (iv), implies that there exists a constant $C''_\varepsilon$ such that \begin{equation}\label{eqn: normal result}
  \P\big\{\norm{\btsn}\geq C''_\varepsilon\big\}\leq \varepsilon
\end{equation}
holds for sufficiently large $n$.
Thus, by \eqref{eqn: consrate result}  and  \eqref{eqn: normal result}, there exists a constant $c'_\varepsilon$ depending on $C'_\varepsilon,C''_\varepsilon$ such that
\begin{equation}\label{eqn: normal consrate result}
  \P\big(\mathcal{A}'_{n,\varepsilon}
  \big)\geq 1-2 \varepsilon
\end{equation}
holds for sufficiently large $n$, where   $\mathcal{A}'_{n,\varepsilon} :=\{\bZ:\hat\btheta_n\in\overline{\mathcal{B}}(\btheta_0,r_n), \btsn/\sqrt{n}+\btheta_0\in\overline{\mathcal{B}}(\btheta_0,r_n)\}$ and $r_n:=c'_\varepsilon\sqrt{(\nu\vee p)/n}$.

Fix $\btheta\in\overline{\mathcal{B}}(\btheta_0,r_n)$. Then  following the proofs of {\it Steps 1--2} in Theorem \ref{thm: general consrate}, we have
\begin{equation}\label{eqn:step1result asn}
\sup_{\btheta\in\overline{\mathcal{B}}(\btheta_0,r_n)}
\big|\Gamma(\btheta)-\frac{1}{2}(\btheta-\btheta_0)^\top \Vb(\btheta-\btheta_0)\big|\leq c_{\max}\rho(r_n) \norm{\btheta-\btheta_0}^2,
\end{equation}
and
\begin{equation}\label{eqn:step2result asn}
\begin{aligned}
  &\P\bigg\{\sup_{\btheta\in\overline{\mathcal{B}}(\btheta_0,r_n)}
  \big|\P_ng(\cdot;\btheta)-\frac{1}
  {\sqrt{n}}(\btheta-\btheta_0)^\top \bwn\big|>
     4c_{\max}\rho(r_n) \norm{\btheta-\btheta_0}^2+
     \\ &\hspace{23em} \frac{6\nu_0r_n }{\sqrt{n}} d_p(\varepsilon) \norm{\btheta-\btheta_0} \bigg\}\leq\varepsilon.
\end{aligned}
\end{equation}

Since $\F$ is uniformly bounded and $\sup_{\overline{\mathcal{B}}(\btheta_0,r_n)}\E  h^2(\cdot,\cdot;\btheta)\leq \tilde C\epsilon_n$, Theorem \ref{thm:uniformloose}(ii) implies that there exists a   constant $C_{\varepsilon}$  such that
        \begin{equation}\label{eqn:step3result asn}
       \P\bigg\{ \sup_{\btheta\in\overline{\mathcal{B}}(\btheta_0,r_n)}\big|\U_nh(\cdot,\cdot;\btheta)\big| > C_{\varepsilon}\log(1/\eta_n)\eta_n^{1/2}\frac{\nu}{n}\bigg\}\leq \varepsilon
        \end{equation}
        holds for sufficiently large $n$. This, together with \eqref{eqn:step1result asn} and \eqref{eqn:step2result asn}, implies that
 \begin{equation}\label{eqn:Gammantheta asn}
\begin{aligned}
  &\P\bigg\{\sup_{\btheta\in\overline{\mathcal{B}}(\btheta_0,r_n)}\big|\Gamma_n(\btheta)-\frac{1}{2}
  (\btheta-\btheta_0)^\top\Vb(\btheta-\btheta_0)-
  \frac{1}
  {\sqrt{n}}(\btheta-\btheta_0)^\top \bwn\big|>5c_{\max}\rho(r_n) \norm{\btheta-\btheta_0}^2+\\ & \hspace{9em} \frac{6\nu_0r_n }{\sqrt{n}} d_p(\varepsilon) \norm{\btheta-\btheta_0}+C_{\varepsilon}\log(1/\eta_n)\eta_n^{1/2}\frac{\nu}{n} \vphantom{\int_1^2} \bigg\}\leq2\varepsilon.
   \end{aligned}
\end{equation}
In view of $\norm{\btheta-\btheta_0}\leq r_n$, $\rho(r_n)\leq pr_n$ and $d_p(\varepsilon)\leq c_\varepsilon\sqrt{p}$, it holds that
\begin{equation}\label{eqn:Gammantheta asn}
\begin{aligned}
  &\P\bigg\{\sup_{\btheta\in\overline{\mathcal{B}}(\btheta_0,r_n)}\big|\Gamma_n(\btheta)-\frac{1}{2}
  (\btheta-\btheta_0)^\top\Vb(\btheta-\btheta_0)-
  \frac{1}
  {\sqrt{n}}(\btheta-\btheta_0)^\top \bwn\big|>5c_{\max}pr_n^3+\\ & \hspace{14em} \frac{6\nu_0c_\varepsilon r_n^2\sqrt{p} }{\sqrt{n}} +C_{\varepsilon}\log(1/\eta_n)\eta_n^{1/2}\frac{\nu}{n} \vphantom{\int_1^2} \bigg\}\leq2\varepsilon
   \end{aligned}
\end{equation}
for sufficiently large $n$. Define the set
\begin{equation}\label{eqn:Gammantheta asn set}
\begin{aligned}
  &\mathcal{A}''_{n,\varepsilon}=\bigg\{\bZ:
  \sup_{\btheta\in\overline{\mathcal{B}}(\btheta_0,r_n)}\big|\Gamma_n(\btheta)-\frac{1}{2}
  (\btheta-\btheta_0)^\top\Vb(\btheta-\btheta_0)-
  \frac{1}
  {\sqrt{n}}(\btheta-\btheta_0)^\top \bwn\big|\leq \phi_\varepsilon(\nu,p,n) \bigg\},
   \end{aligned}
\end{equation}
where
\[
\phi_\varepsilon(\nu,p,n):=5c_{\max}pr_n^3+ \frac{6\nu_0c_\varepsilon r_n^2\sqrt{p} }{\sqrt{n}} +C_{\varepsilon}\log(1/\eta_n)\eta_n^{1/2}\frac{\nu}{n}.
\]
Then, $\P(\mathcal{A}''_{n,\varepsilon})\geq 1-2\varepsilon$. Additionally, $\P(\mathcal{A}'_{n,\varepsilon} \cap \mathcal{A}''_{n,\varepsilon})\geq 1-4\varepsilon$. The following analysis is on the set $\mathcal{A}'_{n,\varepsilon} \cap \mathcal{A}''_{n,\varepsilon}$.

By definition, $\Gamma_n(\hat\btheta_n)=\Gamma_n(\btn/\sqrt{n}+\btheta_0)
\geq\Gamma_n(\btsn/\sqrt{n}+\btheta_0)$. Apply the inequality in \eqref{eqn:Gammantheta asn set} twice, then multiply through by $n$, consolidate terms, and use the fact that $\Vb$ is negative definite to get that
\begin{equation}\label{eqn: close}
  0\leq -\frac{1}{2}( \btn-\btsn)^\top\Vb( \btn-\btsn)\leq 2n\phi_\varepsilon(\nu,p,n).
\end{equation}
Note that $\phi_\varepsilon(\nu,p,n)\lesssim (\nu\vee p)^{5/2}/n^{3/2} +\log(1/\eta_n)\eta_n^{1/2}\nu/n$. This, combined with \eqref{eqn: close} and Assumption \ref{ass:taufunction}(ii), implies that
\begin{equation*}
  \norm{\btn-\btsn} \lesssim \bigg\{\frac{(\nu\vee p)^{5/2}}{n^{1/2}}\vee \log(1/\eta_n)\eta_n^{1/2}\nu\bigg\}^{1/2}.
\end{equation*}
Recall the definition of $\btn$ and $\btsn$, we immediately have
\begin{equation*}
 \big\|\hat\btheta_n-\btheta_0+\Vb^{-1}\P_n \nabla_1 \tau(\cdot;\btheta_0)\big\|^2 = O_\P\Big\{\frac{(\nu\vee p)^{5/2}}{n^{3/2}}+ \frac{\log(1/\eta_n)\eta_n^{1/2}\nu}{n}\Big\}.
\end{equation*}
Furthermore,
if $\{(\nu\vee p)^{5/2} /n^{1/2}\}\vee \{\log(1/\eta_n)\eta_n^{1/2}\nu\}\rightarrow 0$, then by Assumption \ref{ass:taufunction}(iv) and Slutsky's Theorem, it hold that for any $\bgamma\in\R^p$, $\bgamma^\top \btn/\{\bgamma^\top \Vb^{-1}\Deltab\Vb^{-1}\bgamma\}^{1/2}\Rightarrow N(0,1)$. This completes the proof.
\end{proof}

\subsubsection{Proof of Theorem \protect\ref{thm: consistency cov}}

\begin{proof}
  {\blue Note that the function class $\F$ is uniformly bounded by an absolute constant. We immediately have that $\tilde \F$ is also uniformly bounded by an absolute constant. In addition, $\E\tau_n(\bz;\btheta)=\tau(\bz;\btheta)$.
  It then follows from Lemma \ref{lem:ep} that
  \begin{equation}\label{eqn: uniform taun}
    \begin{aligned}
      \sup_{\R^m \otimes \Theta}|\tau_n(\bz;\btheta)-\tau(\bz;\btheta)| = O_\P\big(\sqrt{\tilde\nu/n}\big).
    \end{aligned}
  \end{equation}
  Since $\varepsilon_n^{-1}\sqrt{\tilde\nu/n}\rightarrow 0$, we just need to consider
  \begin{equation*}
    \begin{aligned}
      \tilde \delta_{ij} := \P_n\{\tilde p_{ni}(\cdot;\hat\btheta_n) \tilde p_{nj}(\cdot;\hat\btheta_n)\},
    \end{aligned}
  \end{equation*}
  where
  \begin{equation*}
    \begin{aligned}
      \tilde p_{ni}(\bz;\btheta) := \varepsilon_n^{-1}\{\tau(\bz;\btheta+\varepsilon_n\bu_i) - \tau(\bz;\btheta)\}.
    \end{aligned}
  \end{equation*}
  Expand $\tilde p_{ni}(\bz;\hat\btheta_n)$ about $\btheta_0$ to get
  \begin{equation}\label{eqn: tylor tilde p}
    \begin{aligned}
      \tilde p_{ni}(\bz;\hat\btheta_n) = \tilde p_{ni}(\bz;\btheta_0) + \varepsilon_n^{-1}(\hat\btheta_n - \btheta_0)^\top \underbrace{\{\nabla_1 \tau(\bz;\btheta^*+\varepsilon_n\bu_i) - \nabla_1\tau(\bz;\btheta^*)\}}_{R_n},
    \end{aligned}
  \end{equation}
  where $\btheta^*$ denotes some point between $\hat\btheta_n$ and $\btheta_0$.
  Note that $\tau(\bz;\btheta) = \zeta(\bz;\btheta) + \E\tau(\cdot;\btheta)$. We can rewrite $R_n$ in the above equation as follows:
  \begin{equation*}
    \begin{aligned}
      R_n = \underbrace{\{\nabla_1 \zeta(\bz;\btheta^* + \varepsilon_n\bu_i) - \nabla_1 \zeta(\bz;\btheta^*)\}}_{R_{n1}} +\underbrace{\{\nabla_1\E \tau(\cdot; \btheta^* + \varepsilon_n\bu_i) - \nabla_1 \E\tau(\cdot;\btheta^*)
      \}}_{R_{n2}}.
    \end{aligned}
  \end{equation*}
  We discuss $R_{n1}$ and $R_{n2}$ separately. First, following the calculations in {\it Step 2} of the proof of Theorem \ref{thm: general consrate}, we have
  \begin{equation*}
    \begin{aligned}
      \sup_{\btheta^*\in\overline{\mathcal{B}}(\btheta_0,r_n)}\norm{\P_n \{\nabla_1 \zeta(\cdot;\btheta^* + \varepsilon_n\bu_i) - \nabla_1 \zeta(\cdot;\btheta^*)\}} = O_{\P}\big(r_n\sqrt{p/n}\big),
    \end{aligned}
  \end{equation*}
  where $r_n:=\sqrt{(\nu \vee p)/n}$. In view of $ R_{n1} = \P_n \{\nabla_1 \zeta(\cdot;\btheta^* + \varepsilon_n\bu_i) - \nabla_1 \zeta(\cdot;\btheta^*)\}$, it then holds that
  \begin{equation}\label{eqn: Rn1}
  \begin{aligned}
  \sup_{\btheta^*\in\overline{\mathcal{B}}(\btheta_0,r_n)} \norm{R_{n1}} = O_{\P}\big(r_n\sqrt{p/n}\big).
  \end{aligned}
  \end{equation}
  We now turn to consider $R_{n2}$. Following similar arguments as in {\it Step 1} of the proof of Theorem \ref{thm: general consrate}, we have
  \begin{equation*}
    \begin{aligned}
      \sup_{\btheta^*\in\overline{\mathcal{B}}(\btheta_0,r_n)} \norm{R_{n2}} = O\big(r_n\rho(r_n)+\varepsilon_n\big)=O\big(pr_n^3 + \varepsilon_n\big).
    \end{aligned}
  \end{equation*}
  Combining this with \eqref{eqn: Rn1} and \eqref{eqn: tylor tilde p} implies that
  \begin{equation}\label{eqn: tilde p}
    \begin{aligned}
      \tilde p_{ni}(\bz;\hat\btheta_n) = \tilde p_{ni}(\bz;\btheta_0) + O_{\P}\big(\varepsilon_n^{-1}r_n\big) O_{\P}\big(r_n\sqrt{p/n}+ pr_n^3 + \varepsilon_n\big).
    \end{aligned}
  \end{equation}
  Next, we consider
$\tilde p_{ni}(\cdot;\btheta_0) = \varepsilon_n^{-1}\{\tau(\bz;\btheta_0+\varepsilon_n\bu_i)-\tau(\bz;\btheta_0)\}$. Expand $\tau(\bz;\btheta_0+\varepsilon_n\bu_i)-\tau(\bz;\btheta_0)$ about $\varepsilon_n =0$ to get
\begin{equation}\label{eqn: tau epsilon}
  \begin{aligned}
    \tau(\bz;\btheta_0+\varepsilon_n\bu_i)-\tau(\bz;\btheta_0) = \varepsilon_n \bu_i^\top \nabla_1\tau(\bz;\btheta_0) +\varepsilon_n^2 \bu_i^\top \nabla_2\tau(\bz;\btheta_0 +\alpha\varepsilon_n\bu_i)\bu_i,
  \end{aligned}
\end{equation}
where $\alpha\in(0,1)$.
Again using the equality $\tau(\bz;\btheta) = \zeta(\bz;\btheta) + \E\tau(\cdot;\btheta)$, we have
\begin{equation}\label{eqn: Tn12}
  \begin{aligned}
    \bu_i^\top \nabla_2\tau(\bz;\btheta_0 +\alpha\varepsilon_n\bu_i)\bu_i = \underbrace{\bu_i^\top \Vb(\btheta_0 +\alpha\varepsilon_n\bu_i)\bu_i}_{T_{n1}} + \underbrace{\bu_i^\top \nabla_2\zeta(\bz;\btheta_0 +\alpha\varepsilon_n\bu_i)\bu_i}_{T_{n2}}.
  \end{aligned}
\end{equation}
By Assumption \ref{ass:taufunction}(ii) and (iii), we have
\begin{equation}\label{eqn: Tn1}
  \begin{aligned}
    \sup_{\alpha\in(0,1)}|T_{n1}| = \bu_i^\top \{ \Vb(\btheta_0 +\alpha\varepsilon_n\bu_i) - \Vb\}\bu_i + \bu_i^\top \Vb\bu_i=O(1).
  \end{aligned}
\end{equation}
By Assumption \ref{ass:taufunction}(v), we know that $T_{n2}$ is zero-mean subexponential. Thus, by the equivalent definitions of  zero-mean subexponential variables, it holds that
\begin{equation}\label{eqn: Tn2}
  \begin{aligned}
    \sup_{\alpha\in(0,1)}\E |T_{n2}| \leq \sup_{\alpha\in(0,1)}(\E T_{n2}^2)^{1/2}
  \end{aligned}
\end{equation}
is bounded. That is, $\sup_{\alpha\in(0,1)}|T_{n2}|=O_{\P}(1)$. Put \eqref{eqn: tau epsilon}--\eqref{eqn: Tn2} together. We then have
\begin{equation*}\label{eqn: tilde pni}
  \begin{aligned}
   \tilde p_{ni}(\bz;\btheta_0) = \bu_i^\top \nabla_1 \tau(\bz;\btheta_0) + O_{\P}(\varepsilon_n).
  \end{aligned}
\end{equation*}
This, combined with \eqref{eqn: tilde p}, implies that
\begin{equation*}
  \begin{aligned}
    \tilde p_{ni}(\bz;\hat\btheta_n) = \bu_i^\top \nabla_1 \tau(\bz;\btheta_0) + O_{\P}\big\{\varepsilon_n^{-1}\sqrt{\tilde v / n}+\varepsilon_n^{-1}r_n(r_n\sqrt{p/n}+ pr_n^3 + \varepsilon_n)+\varepsilon_n\big\}.
  \end{aligned}
\end{equation*}
Additionally, combining this with \eqref{eqn: uniform taun} implies that
\begin{equation*}
  \begin{aligned}
    \hat\delta_{ij} &= \P_n \{\bu_i^\top \nabla_1 \tau(\cdot;\btheta_0) \bu_j^\top \nabla_1 \tau(\cdot;\btheta_0)\} + O_{\P}\big[\{\varepsilon_n^{-1}\sqrt{\tilde v / n}+\varepsilon_n^{-1}r_n(r_n\sqrt{p/n}+ pr_n^3 + \varepsilon_n)+\varepsilon_n\}^2\big]\\
    &=\delta_{ij}+O_{\P}\big(1/\sqrt{n}\big)+O_{\P}\big[\{\varepsilon_n+\varepsilon_n^{-1}(r_n^2\sqrt{p/n}+ pr_n^4+\sqrt{\tilde v / n}) +r_n\}^2\big].
  \end{aligned}
\end{equation*}
Thus,
\begin{equation*}
  \begin{aligned}
    \norm{\hat\Deltab - \Deltab} = O_{\P}\big(p/\sqrt{n}\big)+O_{\P}\big[p\{\varepsilon_n+\varepsilon_n^{-1}(r_n^2\sqrt{p/n}+ pr_n^4 +\sqrt{\tilde v / n}) +r_n\}^2\big].
  \end{aligned}
\end{equation*}
Similarly,
\begin{equation*}
  \begin{aligned}
    \norm{\hat\Vb - \Vb} =O_{\P}\big(p/\sqrt{n}\big)+O_{\P}\big[p\{\varepsilon_n+\varepsilon_n^{-2}(r_n^2\sqrt{p/n}+ pr_n^4 +\sqrt{\tilde v / n}) +\varepsilon_n^{-1} r_n\}^2\big].
  \end{aligned}
\end{equation*}

By assumption, $(\tilde \nu \vee \nu\vee p)^{5/2} /n^{1/2} = o(1)$, $\varepsilon_n\sqrt{p}=o(1) $, and $\varepsilon_n^{-2}(\tilde \nu \vee \nu \vee p) /\sqrt{n}= o(1)$. It can then be easy to verify that
\begin{equation*}
  \begin{aligned}
    \norm{\hat\Deltab - \Deltab} = o_{\P}(1) ~~ \text{and} ~~
    \norm{\hat\Vb - \Vb}  = o_{\P}(1).
  \end{aligned}
\end{equation*}
This, combined with Assumption \ref{ass:taufunction}(ii) and (iv), implies that $\norm{\hat\Deltab} = O_{\P}(1)$, $\norm{\hat\Vb} = O_{\P}(1)$, and $\norm{\hat \Vb^{-1}} = O_{\P}(1)$. Note that $\hat\Vb^{-1} - \Vb^{-1} = \hat\Vb^{-1}(\Vb - \hat\Vb)\Vb^{-1}$. Then, we have
\begin{equation*}
  \begin{aligned}
    \norm{\hat\Vb^{-1}-\Vb^{-1}} \leq \norm{\hat\Vb^{-1}}\cdot\norm{\Vb - \hat\Vb}\cdot\norm{\Vb^{-1}} = o_{\P}(1).
  \end{aligned}
\end{equation*}
Note also that
\begin{equation*}
  \begin{aligned}
    &\hat\Vb^{-1}\hat\Deltab \hat\Vb^{-1} - \Vb^{-1} \Deltab\Vb^{-1} \\
    =  &(\hat\Vb^{-1} -\Vb^{-1})(\hat\Deltab - \Deltab)(\hat\Vb^{-1}-\Vb^{-1}) - \Vb^{-1}(\Deltab - \hat\Deltab)\hat\Vb^{-1}
    - \hat\Vb^{-1}\Deltab(\Vb^{-1} - \hat\Vb^{-1}) \\ ~~~ &- (\Vb^{-1} - \hat\Vb^{-1})\hat\Deltab\Vb^{-1}.
  \end{aligned}
\end{equation*}
Apply the triangle inequality to the above equation to get that
\begin{equation*}
  \begin{aligned}
    \norm{\hat\Vb^{-1}\hat\Deltab \hat\Vb^{-1} - \Vb^{-1} \Deltab\Vb^{-1}}= o_{\P}(1).
  \end{aligned}
\end{equation*}
This completes the proof.
  }
\end{proof}

\subsection{Proofs in Section \protect\ref{sec:application}}

For the example in Section \ref{subsec: HanMRC}, we define $\cF^\mathsf{H}%
=\{f^\mathsf{H}(\bz_1,\bz_2;\btheta):\btheta\in\Theta^\mathsf{H}\}$,
where $f^\mathsf{H}(\bz_1,\bz_2;\btheta)$ is defined in the main text, and
define $h^\mathsf{H}(\bz_1,\bz_2;\btheta) = f^\mathsf{H}(\bz_1,\bz_2;\bm%
\theta)-\mathbb{E} f^\mathsf{H}(\bz_1,\cdot;\btheta)- \mathbb{E} f^\mathsf{%
H}(\cdot,\bz_2;\btheta)+\Gamma^\mathsf{H}(\btheta)$. For the example in
Section \ref{subsec: example C}, we define $\cF^\mathsf{C}=\{f^\mathsf{C}(\bz%
_1,\bz_2;\btheta):\btheta\in\Theta^\mathsf{C}\}$ and $h^\mathsf{C}(\bz_1,%
\bz_2;\btheta)=f^\mathsf{C}(\bz_1,\bz_2;\btheta)-\mathbb{E} f^\mathsf{C}(%
\bz_1,\cdot;\btheta)-\mathbb{E} f^\mathsf{C}(\cdot,\bz_2;\btheta)+\Gamma^%
\mathsf{C}(\btheta)$. For the example in Section \ref{subsec: example K},
we define $\cF^\mathsf{K}=\{f^\mathsf{K}(\bz_1,\bz_2;\btheta):\bm%
\theta\in\Theta^\mathsf{K}\}$ and $h^\mathsf{K}(\bz_1,\bz_2;\btheta)=f^%
\mathsf{K}(\bz_1,\bz_2;\btheta)-\mathbb{E} f^\mathsf{K}(\bz_1,\cdot;\bm%
\theta)-\mathbb{E} f^\mathsf{K}(\cdot,\bz_2;\btheta)+\Gamma^\mathsf{K}(\bm%
\theta)$. For the example in Section \ref{subsec: example A}, we define $\cF^%
\mathsf{A}=\{f^\mathsf{A}(\bz_1,\bz_2;\btheta): \btheta\in\Theta^\mathsf{%
A}\big\}$ and $h^\mathsf{A}(\bz_1,\bz_2;\btheta)=f^\mathsf{A}(\bz_1,\bz_2;%
\btheta)-\mathbb{E} f^\mathsf{A}(\bz_1,\cdot;\btheta)-\mathbb{E} f^%
\mathsf{A}(\cdot,\bz_2;\btheta)+\mathbb{E} f^\mathsf{A}(\cdot,\cdot;\bm%
\theta)$.

\subsubsection{Some Additional Lemmas}

\begin{lemma}\label{lem:multsubgaussian}
  Suppose that Condition \ref{cond:H2} in the main text holds. Then $\Xlast-\E(\Xlast \mid \bX^\top\bbeta_0)$ is multivariate subgaussion.
\end{lemma}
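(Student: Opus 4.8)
The plan is to reduce this multivariate claim to a one-dimensional subgaussianity bound that is uniform over directions, and then to exploit the fact that conditioning is an $L^q$-contraction.

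First I would fix an arbitrary $\bgamma\in\Sm^{p-1}$ and observe that $\bgamma^\top\Xlast$ coincides with $\tilde{\bgamma}^\top\bX$, where $\tilde{\bgamma}:=(0,\bgamma^\top)^\top\in\R^{p+1}$ has $\norm{\tilde{\bgamma}}=1$, i.e., $\tilde{\bgamma}\in\Sm^{p}$. Hence Condition \ref{cond:H2} gives $\norm{\bgamma^\top\Xlast}_{\psi_2}=\norm{\tilde{\bgamma}^\top\bX}_{\psi_2}\leq c'$; in particular every moment of $\bgamma^\top\Xlast$ is finite, so $\E(\Xlast\mid\bX^\top\bbeta_0)$ is well defined and, by linearity of conditional expectation, $\bgamma^\top\E(\Xlast\mid\bX^\top\bbeta_0)=\E(\bgamma^\top\Xlast\mid\bX^\top\bbeta_0)$.

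Next, writing $U:=\bgamma^\top\Xlast$ and $T:=\bX^\top\bbeta_0$, I would show that $\norm{U-\E(U\mid T)}_{\psi_2}\leq 2\norm{U}_{\psi_2}$. This follows because, for every $q\geq1$, Minkowski's inequality in $L^q$ together with the conditional Jensen inequality for the convex map $x\mapsto|x|^q$ yields
\[
(\E|U-\E(U\mid T)|^q)^{1/q}\leq (\E|U|^q)^{1/q}+(\E|\E(U\mid T)|^q)^{1/q}\leq 2(\E|U|^q)^{1/q};
\]
multiplying by $q^{-1/2}$ and taking the supremum over $q\geq1$ in the definition of the $\psi_2$-norm gives the claimed inequality, hence $\norm{U-\E(U\mid T)}_{\psi_2}\leq 2c'$.

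Finally, since this bound is uniform in $\bgamma\in\Sm^{p-1}$, taking the supremum over $\bgamma$ shows $\sup_{\bgamma\in\Sm^{p-1}}\norm{\bgamma^\top\{\Xlast-\E(\Xlast\mid\bX^\top\bbeta_0)\}}_{\psi_2}\leq 2c'$, which is exactly the assertion that $\Xlast-\E(\Xlast\mid\bX^\top\bbeta_0)$ is multivariate subgaussian. There is no genuine obstacle in this argument; the only point requiring a little care is that the $\psi_2$-norm used here is the moment-based version, so one should establish the contraction at the level of the $L^q$-norms for each fixed $q\geq1$ before passing to the supremum, rather than quoting a generic ``conditioning does not increase the subgaussian norm'' fact.
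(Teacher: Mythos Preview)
Your proposal is correct and follows essentially the same approach as the paper: both arguments embed $\bgamma^\top\Xlast$ as a unit-norm projection of $\bX$, apply Minkowski's inequality in $L^q$, control the conditional-expectation term via Jensen's inequality for the convex map $|\cdot|^q$, and then pass to the $\psi_2$-norm by taking the supremum over $q\geq1$. Your presentation is slightly more compact, but the mathematical content matches the paper's proof line for line.
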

\begin{proof}
  Fix $\bu\in \Sm^{p-1}$. Applying the triangle inequality yields that
  \begin{equation*}
    \norm{\bu^\top \{\Xlast-\E(\Xlast\mid \bX^\top\bbeta_0)\}}_r\leq \underbrace{\norm{\bu^\top\Xlast}_r}_{B_1}+\underbrace{\norm{\bu^\top \E(\Xlast\mid \bX^\top\bbeta_0)}_r}_{B_2}.
  \end{equation*}
In what follows, we discuss $B_1$ and $B_2$ separately. We first consider $B_1$:
\begin{equation}\label{eqn: resultforB1}
  B_1=\norm{(0,\bu^\top)\bX}_r\leq\sup_{\bv\in\Sm^{p}}\norm{\bv^\top \bX}_r.
\end{equation}
We then consider $B_2$:
\begin{equation*}
  \begin{aligned}
    B_2=\{\E|\E(\bu^\top \Xlast\big| \bX^\top\bbeta_0)|^r\}^{\frac{1}{r}}
    &\leq [\E\{\E(|\bu^\top \Xlast|\big| \bX^\top\bbeta_0)\}^r]^{\frac{1}{r}}\\
    &\leq \{\E\E(|\bu^\top \Xlast|^r\big|  \bX^\top\bbeta_0)\}^{\frac{1}{r}}
    = \norm{\bu^\top\Xlast}_r
    \leq \sup_{\bv\in\Sm^{p}}\norm{\bv^\top \bX}_r,
  \end{aligned}
\end{equation*}
where the second and third inequalities hold because of the convexity of $|\cdot|^r$ for $r\geq 1$.
This, combined with \eqref{eqn: resultforB1} and Condition \ref{cond:H2}, implies that
\begin{equation*}
\begin{aligned}
  \norm{\bu^\top (\Xlast-\E(\Xlast\mid \bX^\top\bbeta_0))}_{\psi_2} =&
  \sup_{r\geq 1}r^{-1/2}\E \norm{\bu^\top \{\Xlast-\E(\Xlast\mid \bX^\top\bbeta_0)\}}_r\\
  \leq& 2\sup_{\bv\in\Sm^p}\sup_{r\geq 1}r^{-1/2}\E\norm{\bv^\top \bX}_r
  = 2\sup_{\bv\in\Sm^{p}}\norm{\bv^\top \bX}_{\psi_2}\leq 2c'',
  \end{aligned}
\end{equation*}
which completes the proof.
\end{proof}
Next, we give the following lemma which establishes the upper bound for $%
\sup_{\btheta\in\overline{\mathcal{B}}(\btheta_0,r)}\mathbb{E} \{h^%
\mathsf{H}(\cdot,\cdot;\btheta)\}^2$. 
\begin{lemma}\label{lem: fastrate H}
 Under Assumptions~\ref{ass:inftynormfinite} and \ref{ass:conditionaldensityfinite} in the main text, then  for any small $r>0$ with $\overline{\mathcal{B}}(\btheta_0,r)\subset \Theta^\sfH$, $\sup_{\btheta\in\overline{\mathcal{B}}(\btheta_0,r)}\E \{h^\sfH(\cdot,\cdot;\btheta)\}^2\lesssim \sqrt{p}\norm{\btheta-\btheta_0}_2$.
  \end{lemma}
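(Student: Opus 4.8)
The plan is to bound $\E\{h^{\mathsf{H}}(\cdot,\cdot;\btheta)\}^2$ by a small-ball probability for a one-dimensional linear index and then control that probability using the bounded conditional density. First I would use that $h^{\mathsf{H}}(\cdot,\cdot;\btheta)$ is the degenerate (second-order Hoeffding) component of $f^{\mathsf{H}}(\cdot,\cdot;\btheta)$, i.e.\ $h^{\mathsf{H}}=(I-P_1)(I-P_2)f^{\mathsf{H}}$ where $P_1,P_2$ are the conditional-expectation projections onto functions of the first and second coordinate respectively. Since $(I-P_1)(I-P_2)$ is a contraction on $L^2(\P\otimes\P)$, this gives $\E\{h^{\mathsf{H}}(\cdot,\cdot;\btheta)\}^2\le \E\{f^{\mathsf{H}}(\cdot,\cdot;\btheta)\}^2$. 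Using $\mathds{1}(y_1>y_2)\le 1$ together with $(a-b)^2=|a-b|$ for $a,b\in\{0,1\}$, the latter is at most
\[
\P\big\{\mathds{1}(\bX_1^\top\bbeta>\bX_2^\top\bbeta)\neq\mathds{1}(\bX_1^\top\bbeta_0>\bX_2^\top\bbeta_0)\big\},
\]
with $\bbeta=(1,\btheta^\top)^\top$ and $\bbeta_0=(1,\btheta_0^\top)^\top$.

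Next I would reduce this to a small-ball event. Writing $W:=(\bX_1-\bX_2)^\top\bbeta_0$ and noting that $(\bX_1-\bX_2)^\top(\bbeta-\bbeta_0)=(\Xlast_1-\Xlast_2)^\top(\btheta-\btheta_0)$ because the first coordinates cancel, an elementary sign comparison shows the two indicators can disagree only on the event $\{|W|\le|(\Xlast_1-\Xlast_2)^\top(\btheta-\btheta_0)|\}$. Conditioning on $(\Xlast_1,\Xlast_2)$, the independence of $\bX_1$ and $\bX_2$ together with Assumption~\ref{ass:conditionaldensityfinite} imply that $\bX_1^\top\bbeta_0$ and $\bX_2^\top\bbeta_0$ each have conditional density bounded by $C_0$, hence so does their difference $W$ (a convolution of a density bounded by $C_0$ against a probability density). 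Since $S:=|(\Xlast_1-\Xlast_2)^\top(\btheta-\btheta_0)|$ is measurable with respect to the conditioning, $\P(|W|\le S\mid\Xlast_1,\Xlast_2)\le 2C_0 S$, so taking expectations yields
\[
\E\{h^{\mathsf{H}}(\cdot,\cdot;\btheta)\}^2\le 2C_0\,\E\big|(\Xlast_1-\Xlast_2)^\top(\btheta-\btheta_0)\big|.
\]

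Finally, Cauchy--Schwarz bounds the right-hand side by $2C_0\{2(\btheta-\btheta_0)^\top\cov(\Xlast)(\btheta-\btheta_0)\}^{1/2}$, and then $(\btheta-\btheta_0)^\top\cov(\Xlast)(\btheta-\btheta_0)\le\norm{\btheta-\btheta_0}_2^2\,\tr(\cov(\Xlast))\le\norm{\btheta-\btheta_0}_2^2\sum_{i=2}^{p+1}\E X_i^2\le Cp\,\norm{\btheta-\btheta_0}_2^2$ by Assumption~\ref{ass:inftynormfinite}. Combining the two displays gives $\E\{h^{\mathsf{H}}(\cdot,\cdot;\btheta)\}^2\lesssim\sqrt{p}\,\norm{\btheta-\btheta_0}_2$ with an absolute implied constant, uniformly over $\btheta\in\overline{\mathcal{B}}(\btheta_0,r)$, which is the claim. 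The only genuinely delicate point is the conditional-density step: one must be careful that conditioning on $(\Xlast_1,\Xlast_2)$ leaves a single linear index $W$ whose density is uniformly controlled, and that $S$ is constant given the conditioning. Everything else is routine; in particular no zero-mean assumption on the design is needed, since $\Xlast_1-\Xlast_2$ is automatically centered and only $\tr(\cov(\Xlast))$ enters.
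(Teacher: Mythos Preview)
Your argument is correct and notably cleaner than the paper's. The paper proceeds by writing $H(\btheta)=\E\{h^{\sfH}(\cdot,\cdot;\btheta)\}^2$ explicitly as a sum of five pieces, then Taylor-expands $H(\btheta)$ around $\btheta_0$ and bounds $\|\nabla_1 H(\btheta')\|_\infty$ coordinatewise by differentiating integral representations of quantities like $\E\{\indic(y>Y)\indic(\bx^\top\bbeta>\bX^\top\bbeta)\}$; each partial derivative is controlled by $C_0\,\E|X_i|$ via Assumption~\ref{ass:conditionaldensityfinite}, and an $\ell_1$--$\ell_\infty$ H\"older step gives the $\sqrt{p}\,\|\btheta-\btheta_0\|_2$ factor. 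You instead bypass all the derivative bookkeeping in two moves: the $L^2$ contraction property of the Hoeffding projection to pass from $h^{\sfH}$ to $f^{\sfH}$, and the sign-change/small-ball bound $\{\indic(a>0)\neq\indic(b>0)\}\subset\{|b|\le|a-b|\}$, which converts the problem directly into a conditional density estimate. Your route is shorter and more transparent about where each assumption enters; the paper's route has the mild advantage that it exhibits the gradient structure of $H(\btheta)$ explicitly, but that structure is not used elsewhere in the paper. Both arguments ultimately lean on the same two assumptions in the same way (bounded conditional density of $\bX^\top\bbeta_0$ given $\Xlast$, and second moments of coordinates bounded by an absolute constant), so neither yields a sharper conclusion.
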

\begin{proof}
    Write $H(\btheta)=\E\{h^\sfH(\cdot,\cdot;\btheta)\}^2$. Substitute the equation for $h^\sfH(\bz_1,\bz_2;\btheta)$ into  $H(\btheta)$ and consolidate terms to get that
    \begin{equation*}
    \begin{aligned}
    H(\btheta) =& \underbrace{\E \{f^\sfH(\cdot,\cdot;\btheta)\}^2}_{H_1(\btheta)}-
    \underbrace{\E\{\E_\P f^\sfH(\bZ_1,\cdot;\btheta)\}^2}_{H_2(\btheta)}-
    \underbrace{\E\{\E_\P f^\sfH(\cdot,\bZ_2;\theta)\}^2}_{H_3(\btheta)}+
    2\underbrace{\E\{\E_\P f^\sfH(\bZ_1,\cdot;\btheta)\E_\P f^\sfH(\cdot,\bZ_2;\btheta)\}}_{H_4(\btheta)}\\
    &\quad -
    \{\Gamma^\sfH(\btheta)\}^2.
    \end{aligned}
    \end{equation*}
    Fix $\btheta\in\overline{\mathcal{B}}(\btheta_0,r)$. Expand $H(\btheta)$ about $\btheta_0$ to get
    \begin{equation*}
      H(\btheta)=(\btheta-\btheta_0)^\top \nabla_1 H(\btheta'),
    \end{equation*}
    where $\btheta'$ is between $\btheta_0$ and $\btheta$. We wish to bound $\norm{\nabla_1 H(\btheta')}_{\infty}$. To that end, we discuss $H_j(\btheta)$ separately   for $j=1,\cdots,4$.  With a little abuse of notation, we still use $\btheta$ instead of $\btheta'$ below.

    We first consider $\nabla_1 H_1(\btheta)$. By the property of exchangeability between integration and derivation {\blue with $\nabla_1 H_1(\btheta)$}, we have
    \begin{equation*}
      \nabla_1 H_1(\btheta) = \nabla_1 \E\{f^\sfH(\cdot,\cdot;\btheta)\}^2 = \E [\nabla_1 \E_\P\{f^\sfH(\bZ_1,\cdot;\btheta)\}^2] =  \E\{\nabla_1 h_1(\bZ_1;\btheta)\},
    \end{equation*}
    where
    \begin{equation*}
      \begin{aligned}
         h_1(\bz;\btheta) =\E_\P\{f^\sfH(\bz,\bZ;\btheta)\}^2
         &= \E\{\indic(y>Y)\indic(\bx^\top \bbeta>\bX^\top \bbeta)\} +
        \E\{\indic(y>Y)\indic(\bx^\top \bbeta_0>\bX^\top \bbeta_0)\}\\
        &\quad - 2\E\{\indic(y>Y)\indic(\bx^\top \bbeta>\bX^\top \bbeta)\indic(\bx^\top \bbeta_0>\bX^\top \bbeta_0) \}.
      \end{aligned}
    \end{equation*}
    Similarly, we can write $\nabla_1 H_2(\btheta)$, $\nabla_1 H_3(\btheta)$ and $\nabla_1 H_4(\btheta)$ respectively as
    \begin{equation*}
      \nabla_1 H_2(\btheta) = \E\{\nabla_1 h_2(\cdot;\btheta)\}, \ \nabla_1 H_3(\btheta) = \E\{\nabla_1 h_3(\cdot;\btheta)\}, \ \nabla_1 H_4(\btheta) = \E\{\nabla_1 h_4(\cdot,\cdot;\btheta)\},
    \end{equation*}
    where
    \begin{equation*}
      \begin{aligned}
        h_2(\bz;\btheta) = &[\E\{\indic(y>Y)\indic(\bx^\top \bbeta>\bX^\top \bbeta)\}]^2 +
        [\E\{\indic(y>Y)\indic(\bx^\top \bbeta_0>\bX^\top \bbeta_0)\}]^2\\
        &\ - 2\E\{\indic(y>Y)\indic(\bx^\top \bbeta>\bX^\top \bbeta)\}\cdot
        \E\{\indic(y>Y)\indic(\bx^\top \bbeta_0>\bX^\top \bbeta_0) \},\\
        h_3(\bz;\btheta) = &[\E\{\indic(Y>y)\indic(\bX^\top \bbeta>\bx^\top \bbeta)\}]^2 +
        [\E\{\indic(Y>y)\indic(\bX^\top \bbeta_0>\bx^\top \bbeta_0)\}]^2\\
        &\ - 2\E\{\indic(Y>y)\indic(\bX^\top \bbeta>\bx^\top \bbeta)\}\cdot
        \E\{\indic(Y>y)\indic(\bX^\top \bbeta_0>\bx^\top \bbeta_0) \},\\
      \end{aligned}
    \end{equation*}
    and
    \begin{equation*}
      \begin{aligned}
        h_4(\bz_1,\bz_2;\btheta) = &\E\{\indic(y_1>Y)I(\bx_1^\top \bbeta>\bX^\top \bbeta)\}\cdot
        \E\{\indic(Y>y_2)\indic(\bX^\top \bbeta>\bx_2^\top \bbeta)\}\\
        &\ - \E\{\indic(y_1>Y)\indic(\bx_1^\top \bbeta>\bX^\top \bbeta)\}\cdot
        \E\{\indic(Y>y_2)\indic(\bX^\top \bbeta_0>\bx_2^\top \bbeta_0) \}\\
        &\ - \E\{\indic(y_1>Y)\indic(\bx_1^\top \bbeta_0>\bX^\top \bbeta_0)\} \cdot \E\{\indic(Y>y_2)\indic(\bX^\top \bbeta>\bx_2^\top \bbeta)\}\\
        &\ + \E\{\indic(y_1>Y)\indic(\bx_1^\top \bbeta_0>\bX^\top \bbeta_0)\} \cdot\E\{\indic(Y>y_2)\indic(\bX^\top \bbeta_0>\bx_2^\top \bbeta_0) \}.
      \end{aligned}
    \end{equation*}
   Thus,  we can rewrite
    $\nabla_1 H(\btheta)$  as
    \begin{equation*}
      \nabla_1 H(\theta) = \E\{\nabla_1 h_1(\cdot;\btheta)\} - \E\{\nabla_1 h_2(\cdot;\btheta)\} - \E\{\nabla_1 h_3(\cdot;\btheta)\} + 2\E\{\nabla_1 h_4(\cdot,\cdot;\btheta)\} - \nabla_1 \{\Gamma^\sfH(\btheta)\}^2.
    \end{equation*}
   To simplify the {\blue expression forms of} the functions $h_j$ with $j=1,\cdots,4$, we introduce the following notations:
    \begin{equation*}
      \begin{aligned}
        \varphi_1(\bz;\btheta) &:= \E\{\indic(y>Y)\indic(\bx^\top \bbeta>\bX^\top \bbeta)\},\\
        \varphi_2(\bz,\btheta) &:=\E\{\indic(y>Y)\indic(\bx^\top \bbeta>\bX^\top \bbeta)\indic(\bx^\top \bbeta_0>\bX^\top \bbeta_0)\},\\
        \varphi_3(\bz) &:=\E\{\indic(y>Y)\indic(\bx^\top \bbeta_0>\bX^\top \bbeta_0)\},
      \end{aligned}
    \end{equation*}
    and
    \begin{equation*}
      \begin{aligned}
        \omega_1(\bz;\btheta)&:=\E\{\indic(Y>y)\indic(\bX^\top \bbeta>\bx^\top \bbeta)\},\\
        \omega_2(\bz;\btheta)&:=\E\{\indic(Y>y)\indic(\bX^\top \bbeta>\bx^\top \bbeta)\indic(\bX^\top \bbeta_0>\bx^\top \bbeta_0)\},\\
        \omega_3(\bz)&:=\E\{\indic(Y>y)\indic(\bX^\top \bbeta_0>\bx^\top \bbeta_0)\}.
      \end{aligned}
    \end{equation*}
    This, combined with that $\Gamma^\sfH(\btheta) = \E\{\varphi_1(\cdot;\btheta)-\varphi_3(\cdot)\}$, allows us to rewrite $\nabla_1 H(\btheta)$ as
    \begin{equation*}
      \begin{aligned}
        \nabla_1 H(\btheta) =& \E \nabla_1\{\varphi_1(\cdot;\btheta) - 2\varphi_2(\cdot;\btheta) + \varphi_3(\cdot) -\varphi_1^2(\cdot;\btheta)+2\varphi_1(\cdot;\btheta)\varphi_3(\cdot)-\varphi_3^2(\cdot)\\
        & \ -\omega_1^2(\cdot;\btheta)+2\omega_2(\cdot;\btheta)\omega_3(\cdot)-\omega_3^2(\cdot) + 2\varphi_1(\cdot;\btheta)\omega_1(\cdot;\btheta) - 2\varphi_1(\cdot;\btheta)
        \omega_3(\cdot)\\  & \ - 2\varphi_3(\cdot)\omega_1(\cdot;\btheta)+ 2\varphi_3(\cdot)\omega_3(\cdot)\} - 2\Gamma^\sfH(\btheta)\E\nabla_1\{\varphi_1(\cdot;\btheta)-\varphi_3(\cdot)\}.
      \end{aligned}
    \end{equation*}
    Since the functions $\varphi_j$, $\omega_j$ are all bounded, we just need to bound $\norm{\E|\nabla_1 \varphi_j|}_{\infty}$ and $\norm{\E|\nabla_1 \omega_j|}_{\infty}$  for $j=1,2,3$.

    We  first consider $\E|\nabla_1 \varphi_1(\cdot;\btheta)|$ and rewrite $\varphi_1(\bZ;\btheta)$ as follows:
    \begin{equation*}
      \begin{aligned}
        \varphi_1(\bZ;\btheta) = \int_{\bx^\top \bbeta<\bX^\top \bbeta} \rho_1(Y,\bx^\top \bbeta_0)G(\diff \bx),
      \end{aligned}
    \end{equation*}
    where $\rho_1(y,t) = \E\{\indic(y>Y)\mid \bX^\top\bbeta_0 = t\}$, and $G(\cdot)$ denotes the probability distribution of $\bX$.

    Let $\bu_i$ denote the unit vector in $\R^{p+1}$ with the $i$th component equal to one and let $\nabla_1^i$ denote the $i$th component of $\nabla_1$, where $i=2,\cdots,p+1$. By definition,
    \begin{equation*}
      \nabla_1^i \varphi_1(\bZ;\bbeta)=\lim_{\varepsilon\rightarrow 0} \varepsilon^{-1}\{\varphi_1(\bZ;\bbeta+\varepsilon \bu_i)-\varphi_1(\bZ;\bbeta)\}.
    \end{equation*}
    The term in brackets equals
    \begin{equation*}
      \int_{\bx^\top \bbeta<\bX^\top \bbeta +\varepsilon(X_i-x_i)}\rho_1(Y,\bx^\top\bbeta_0)G(\diff \bx)-
      \int_{\bx^\top\bbeta<\bX^\top \bbeta}\rho_1(Y,\bx^\top\bbeta_0)G(\diff \bx).
    \end{equation*}
    Change variables from $\bx=(x_1,\xlast)$ to $(\bx^\top\bbeta_0,\xlast)$, rearrange the terms in the fields of integration to get that
    \begin{equation*}
    \begin{aligned}
      &\int_{\bx^\top\bbeta_0<\bX^\top \bbeta - \xlast^\top (\btheta-\btheta_0) +\varepsilon(X_i-x_i)}\rho_1(Y,\bx^\top\bbeta_0)G(\diff \bx)-
      \int_{\bx^\top\bbeta_0<\bX^\top \bbeta- \xlast^\top (\btheta-\btheta_0)}\rho_1(Y,\bx^\top\bbeta_0)G(\diff \bx)\\
      =&\int\left\{\int_{\bX^\top\bbeta - \xlast^\top (\btheta-\btheta_0)}^{\bX^\top \bbeta - \xlast^\top (\btheta-\btheta_0) +\varepsilon(X_i-x_i)}\rho_1(Y,t)g_0(t\mid \xlast)\diff t\right\}G_{\Xlast}(\diff \xlast),
      \end{aligned}
    \end{equation*}
    where $G_{\Xlast}(\cdot)$ denotes the distribution of $\Xlast$. The inner integral equals
    \begin{equation*}
      \varepsilon(X_i-x_i)\rho_1\{Y,
      \bX^\top \bbeta - \xlast^\top (\btheta-\btheta_0)\}g_0\{\bX^\top \bbeta - \xlast^\top (\btheta-\btheta_0)\mid \xlast\}+|X_i-x_i|o(|\varepsilon|) \quad \text{as $\varepsilon\rightarrow 0 $.}
    \end{equation*}
    Integrate, then apply the moment condition $\sup_{i=2,\cdots,p+1}\E|X_i|\leq \sqrt{C}$ in Assumption \ref{ass:inftynormfinite} to see that
    \begin{equation*}
      \begin{aligned}
        \nabla_1^i\varphi_1(\bZ;\bbeta) = \int (X_i-x_i)\rho_1\{Y,\bX^\top \bbeta-\xlast^\top (\btheta-\btheta_0)\}g_0\{\bX^\top \bbeta-\xlast^\top (\btheta-\btheta_0)\mid \xlast\}G_{\Xlast}(\diff \xlast).
      \end{aligned}
    \end{equation*}
    Since $|\rho_1(y,t)|\leq 1$ and $g_0(\cdot\mid \xlast)\leq C_0$ by Assumption \ref{ass:conditionaldensityfinite}, it then holds that
    \begin{equation*}
      \begin{aligned}
        |\nabla_1^i \varphi_1(\bZ;\bbeta)|\leq C_0\int |X_i-x_i|G_{\Xlast}(\diff \xlast)\leq C_0(|X_i|+\E|X_i|).
      \end{aligned}
    \end{equation*}
Thus,
\begin{equation*}
  \sup_{i=2,\cdots,p+1}\E|\nabla_1^i \varphi_1(\bZ;\bbeta)|\leq 2C_0\sup_{i=2,\cdots,p+1}\E|X_i|\leq 2C_0\sqrt{C}.
\end{equation*}
Similarly,
\begin{equation*}
\begin{aligned}
  \sup_{i=2,\cdots,p+1}\E|\nabla_1^i \varphi_2(\bZ;\bbeta)|&\leq 2C_0\sup_{i=2,\cdots,p+1}\E|X_i|\leq 2C_0\sqrt{C},\\
  \sup_{i=2,\cdots,p+1}\E|\nabla_1^i \omega_1(\bZ;\bbeta)|&\leq 2C_0\sup_{i=2,\cdots,p+1}\E|X_i|\leq 2C_0\sqrt{C},\\
  \sup_{i=2,\cdots,p+1}\E|\nabla_1^i \omega_2(\bZ;\bbeta)|&\leq 2C_0\sup_{i=2,\cdots,p+1}\E|X_i|\leq 2C_0\sqrt{C}.
  \end{aligned}
\end{equation*}
Put all results together, and we have that
\begin{equation*}
  \norm{\nabla_1 H(\btheta)}_{\infty}\leq C_1 \sup_{i=2,\cdots,p+1}\E|X_i|\leq C_1\sqrt{C}
\end{equation*}
for some constant $C_1$ depending only on $C_0$.
Then
\begin{equation*}
  H(\btheta)=(\btheta-\btheta_0)^\top H(\btheta')\leq \norm{\btheta-\btheta_0}_1\norm{\nabla H(\btheta')}_{\infty}\leq C_2\norm{\btheta-\btheta_0}_1\leq C_2\sqrt{p}\norm{\btheta-\btheta_0}_2.
\end{equation*}
That is, $\sup_{\btheta\in\overline{\mathcal{B}}(\btheta_0,r)}\E \{h^\sfH(\cdot,\cdot;\btheta)\}^2\lesssim \sqrt{p}\norm{\btheta-\btheta_0}_2$. This completes the proof.
  \end{proof}

The next three lemmas give the upper bound for $\sup_{\btheta\in\overline{%
\mathcal{B}}(\btheta_0,r)}\mathbb{E} \{h^\mathsf{C}(\cdot,\cdot;\bm%
\theta)\}^2$, $\sup_{\btheta\in\overline{\mathcal{B}}(\btheta_0,r)}%
\mathbb{E} \{h^\mathsf{K}(\cdot,\cdot;\btheta)\}^2$, and $\sup_{\bm%
\theta\in\overline{\mathcal{B}}(\btheta_0,r)}\mathbb{E} \{h^\mathsf{A}%
(\cdot,\cdot;\btheta)\}^2$, respectively. Since the proofs of these lemmas
are similar to the proof of Lemma \ref{lem: fastrate H}, we omit the proofs
for simplicity. 
\begin{lemma}\label{lem: fastrate C}
 Suppose that Assumptions~\ref{ass:inftynormfinite}--\ref{ass:conditionaldensityfinite} in the main text hold. Then  for any small $r>0$ with $\overline{\mathcal{B}}(\btheta_0,r)\subset \Theta^\sfC$, $\sup_{\btheta\in\overline{\mathcal{B}}(\btheta_0,r)}\E \{h^\sfC(\cdot,\cdot;\btheta)\}^2\lesssim \sqrt{p}\norm{\btheta-\btheta_0}_2$.
  \end{lemma}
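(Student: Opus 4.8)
The plan is to follow the proof of Lemma~\ref{lem: fastrate H} almost verbatim, with the bounded trimming function $M(y_1)$, rather than the indicator $\indic(y_1>y_2)$, multiplying the difference of indicators in $f^\sfC$. Write $H^\sfC(\btheta):=\E\{h^\sfC(\cdot,\cdot;\btheta)\}^2$. Substituting the definition of $h^\sfC(\bz_1,\bz_2;\btheta)$ and consolidating terms exactly as in the first display of the proof of Lemma~\ref{lem: fastrate H}, $H^\sfC(\btheta)$ becomes a finite linear combination of $\E\{f^\sfC(\cdot,\cdot;\btheta)\}^2$, $\E\{\E_\P f^\sfC(\bZ_1,\cdot;\btheta)\}^2$, $\E\{\E_\P f^\sfC(\cdot,\bZ_2;\btheta)\}^2$, $\E\{\E_\P f^\sfC(\bZ_1,\cdot;\btheta)\,\E_\P f^\sfC(\cdot,\bZ_2;\btheta)\}$ and $\{\Gamma^\sfC(\btheta)\}^2$. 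Since $f^\sfC(\bz_1,\bz_2;\btheta_0)=0$ for all $\bz_1,\bz_2$, we have $h^\sfC(\cdot,\cdot;\btheta_0)\equiv 0$ and hence $H^\sfC(\btheta_0)=0$, so a first-order Taylor expansion about $\btheta_0$ gives $H^\sfC(\btheta)=(\btheta-\btheta_0)^\top\nabla_1 H^\sfC(\btheta')$ for some $\btheta'$ on the segment joining $\btheta_0$ and $\btheta$. It therefore suffices to show $\sup_{\btheta'\in\overline{\mathcal{B}}(\btheta_0,r)}\norm{\nabla_1 H^\sfC(\btheta')}_{\infty}\le C'$ for an absolute constant $C'$, since then $H^\sfC(\btheta)\le\norm{\btheta-\btheta_0}_1\norm{\nabla_1 H^\sfC(\btheta')}_{\infty}\le C'\sqrt{p}\,\norm{\btheta-\btheta_0}_2$, which is the claimed estimate.

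To bound $\nabla_1 H^\sfC(\btheta')$, I would interchange differentiation with the expectations defining the five terms above, reducing the problem to bounding $\norm{\E|\nabla_1\varphi^\sfC_j|}_\infty$ for the handful of uniformly bounded functions $\varphi^\sfC_j(\bz;\btheta)$ of the schematic form $\E\{M(Y)\,\indic(\bX^\top\bbeta<\bx^\top\bbeta)\cdots\}$, in analogy with the functions $\varphi_j,\omega_j$ appearing in the proof of Lemma~\ref{lem: fastrate H}. Because $\E\{M(Y)\mid\bX\}$ depends on $\bX$ only through $\bX^\top\bbeta_0$, each such expectation can be written with a kernel $\mu(\bX^\top\bbeta_0)$ satisfying $\lvert\mu\rvert\le\lvert a\rvert\vee\lvert b\rvert$, playing exactly the role that $\rho_1(y,\cdot)$ (bounded by $1$) played there. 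Writing $\bbeta=(1,\btheta^\top)^\top$ and $\indic(\bX^\top\bbeta<\bx^\top\bbeta)=\indic(X_1-x_1<(\xlast-\Xlast)^\top\btheta)$, a change of variables from $\bx=(x_1,\xlast)$ to $(\bx^\top\bbeta_0,\xlast)$ (legitimate since $X_1$ has an everywhere positive Lebesgue density conditional on $\Xlast$) followed by differentiation of the resulting inner integral yields, for each $i\in\{2,\dots,p+1\}$, an expression for $\nabla_1^i\varphi^\sfC_j(\bz;\btheta)$ whose integrand is bounded in absolute value by $(\lvert a\rvert\vee\lvert b\rvert)\,C_0\,\lvert X_i-x_i\rvert$ via Assumption~\ref{ass:conditionaldensityfinite} ($f_0(\cdot\mid\xlast)\le C_0$). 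Hence $\lvert\nabla_1^i\varphi^\sfC_j(\bz;\btheta)\rvert\le(\lvert a\rvert\vee\lvert b\rvert)C_0(\lvert X_i\rvert+\E\lvert X_i\rvert)$, and taking expectations and invoking Assumption~\ref{ass:inftynormfinite} ($\E\lvert X_i\rvert\le(\E X_i^2)^{1/2}\le\sqrt{C}$) gives $\sup_{i}\E\lvert\nabla_1^i\varphi^\sfC_j\rvert\le 2(\lvert a\rvert\vee\lvert b\rvert)C_0\sqrt{C}$. Summing the finitely many contributions bounds $\norm{\nabla_1 H^\sfC(\btheta')}_\infty$ by a constant depending only on $C_0$, $C$ and $\lvert a\rvert\vee\lvert b\rvert$, which is the required $C'$.

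The genuine content of the proof is thus minimal: it is the same computation as for Han's MRC, and the only point that needs care — the main (mild) obstacle — is the justification of differentiation under the expectation and integral signs in passing from each $\varphi^\sfC_j$ to $\nabla_1^i\varphi^\sfC_j$. This is legitimate because all integrands are uniformly bounded and, after differentiation, admit the integrable dominating envelope $(\lvert a\rvert\vee\lvert b\rvert)C_0(\lvert X_i\rvert+\E\lvert X_i\rvert)$, integrability being guaranteed by Assumption~\ref{ass:inftynormfinite}; this is precisely the step handled in the proof of Lemma~\ref{lem: fastrate H}, and the presence of the trimming function $M$ contributes nothing beyond the harmless bounded multiplicative factor $\lvert a\rvert\vee\lvert b\rvert$. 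For this reason I would, as the paper does, simply indicate the required substitutions rather than reproduce the computation in full.
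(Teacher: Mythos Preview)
Your proposal is correct and matches the paper's approach: the paper itself omits the proof, stating only that it is similar to that of Lemma~\ref{lem: fastrate H}, and you have spelled out exactly those substitutions. One small remark: your invocation of the single-index condition $\E\{M(Y)\mid\bX\}=\mu(\bX^\top\bbeta_0)$ is not actually needed here (and is not among the lemma's stated hypotheses); what matters for the gradient bound is only that the conditional kernel $\E\{M(Y)\mid\bX=\bx\}$ is uniformly bounded by $|a|\vee|b|$, which follows directly from the boundedness of $M$.
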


\begin{lemma}\label{lem: fastrate K}
 Suppose that Assumptions~\ref{ass:inftynormfinite}--\ref{ass:conditionaldensityfinite} in the main text hold. Then  for any small $r>0$ with $\overline{\mathcal{B}}(\btheta_0,r)\subset \Theta^\sfK$, $\sup_{\btheta\in\overline{\mathcal{B}}(\btheta_0,r)}\E \{h^\sfK(\cdot,\cdot;\btheta)\}^2\lesssim \sqrt{p}\norm{\btheta-\btheta_0}_2$.
  \end{lemma}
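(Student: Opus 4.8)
The plan is to follow, essentially verbatim, the strategy used in the proof of Lemma~\ref{lem: fastrate H}, since the Khan--Tamer kernel $f^\sfK(\bz_1,\bz_2;\btheta)=r_1\mathds{1}(v_1<v_2)\{\mathds{1}(\bx_1^\top\bbeta<\bx_2^\top\bbeta)-\mathds{1}(\bx_1^\top\bbeta_0<\bx_2^\top\bbeta_0)\}$ differs from Han's only in that the ``upper'' indicator $\mathds{1}(y_1>y_2)$ is replaced by the uniformly bounded weight $r_1\mathds{1}(v_1<v_2)$, which is inert under differentiation in $\btheta$. First I would set $H(\btheta):=\E\{h^\sfK(\cdot,\cdot;\btheta)\}^2$, substitute $h^\sfK(\bz_1,\bz_2;\btheta)=f^\sfK(\bz_1,\bz_2;\btheta)-\E f^\sfK(\bz_1,\cdot;\btheta)-\E f^\sfK(\cdot,\bz_2;\btheta)+\Gamma^\sfK(\btheta)$, and expand the square into a finite sum of expectations of products of at most two conditional-expectation functions of $f^\sfK$, minus $\{\Gamma^\sfK(\btheta)\}^2$ --- the exact analogue of the decomposition $H=H_1-H_2-H_3+2H_4-(\Gamma^\sfH)^2$ in Lemma~\ref{lem: fastrate H}. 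Since $f^\sfK(\bz_1,\bz_2;\btheta_0)=0$ identically, we have $h^\sfK(\cdot,\cdot;\btheta_0)=0$, hence $H(\btheta_0)=0$, so a first-order expansion yields $H(\btheta)=(\btheta-\btheta_0)^\top\nabla_1 H(\btheta')$ for some $\btheta'$ on the segment joining $\btheta_0$ to $\btheta$. It therefore suffices to bound $\norm{\nabla_1 H(\btheta')}_\infty$ by an absolute constant, after which $H(\btheta)\le\norm{\btheta-\btheta_0}_1\norm{\nabla_1 H(\btheta')}_\infty\le\sqrt p\,\norm{\nabla_1 H(\btheta')}_\infty\norm{\btheta-\btheta_0}_2$ gives the claim.

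The core step is to control, coordinatewise, the gradient of each building-block function. A representative one is $\varphi(\bz;\btheta)=\E\{w(\bZ_2;\bz)\,\mathds{1}(\bx^\top\bbeta<\bX_2^\top\bbeta)\}$, where $\bz=(r,v,\bx^\top)^\top$ is held fixed, $\bZ_2=(R_2,V_2,\bX_2^\top)^\top$ is integrated, and $w(\bZ_2;\bz)$ is a conditional-expectation weight assembled from the bounded quantities $r\mathds{1}(v<V_2)$ or $R_2\mathds{1}(V_2<v)$, so that $0\le w\le1$; its mirror in the second slot, and products of such functions with their $\btheta_0$-analogues, are treated the same way. Writing $\varphi(\bz;\btheta)=\int\mathds{1}(\bx^\top\bbeta<\bx_2^\top\bbeta)\,\bar w(\bx_2;\bz)\,G(\diff\bx_2)$ with $G$ the law of $\bX$ and $\bar w(\bx_2;\bz):=\E\{w(\bZ_2;\bz)\mid\bX_2=\bx_2\}\in[0,1]$, and using $\bx^\top\bbeta=x_1+\xlast^\top\btheta$, I would change variables $\bx_2=(x_{2,1},\xlast_2)\mapsto(\bx_2^\top\bbeta_0,\xlast_2)$ --- legitimate because Assumption~\ref{ass:conditionaldensityfinite} posits a conditional Lebesgue density $f_0(\cdot\mid\xlast_2)$ of $\bX^\top\bbeta_0$ given $\Xlast=\xlast_2$ --- so that the integration region becomes $\{t>\bx^\top\bbeta-\xlast_2^\top(\btheta-\btheta_0)\}$ and the $\btheta$-dependence is confined to this threshold. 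Differentiating under the integral in the $i$th coordinate of $\btheta$ --- valid by the same dominated-convergence argument as in Lemma~\ref{lem: fastrate H} --- collapses the indicator and leaves, at the threshold value, a factor $(x_i-x_{2,i})\,f_0(\cdot\mid\xlast_2)\,\bar w(\cdot)$; since $0\le\bar w\le1$ and $f_0\le C_0$, the $i$th partial derivative of $\varphi$ at a fixed $\bz$ is bounded in absolute value by $C_0(|x_i|+\E|X_i|)$, and taking expectations over the fixed argument (writing $\bZ$ for a generic data point) and invoking Assumption~\ref{ass:inftynormfinite} gives $\sup_i\E|\nabla_1^i\varphi(\bZ;\btheta)|\le2C_0\sup_i\E|X_i|\le2C_0\sqrt C$. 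As every remaining factor in $\nabla_1 H(\btheta')$ --- the other conditional-expectation functions and $\Gamma^\sfK(\btheta)=\E f^\sfK(\cdot,\cdot;\btheta)$ itself --- is uniformly bounded, a routine product-rule bookkeeping then yields $\norm{\nabla_1 H(\btheta')}_\infty\le C_1$ for an absolute constant $C_1$, completing the proof.

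The only genuinely delicate point --- and the main obstacle --- is the same as in Lemma~\ref{lem: fastrate H}: justifying the exchange of differentiation and integration and executing the change of variables so that the non-smooth index indicator $\mathds{1}(\bx^\top\bbeta<\bX^\top\bbeta)$ is traded for the bounded density $f_0$. The extra factor $r_1\mathds{1}(v_1<v_2)$ present in $f^\sfK$ but absent from Han's kernel causes no additional difficulty, since it is bounded by $1$, is independent of the direction of differentiation, and is simply absorbed into $\bar w$; in particular no conditioning of $V$ on $\bX^\top\bbeta_0$ is required, so Assumptions~\ref{ass:inftynormfinite}--\ref{ass:conditionaldensityfinite} indeed suffice. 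Lemma~\ref{lem: fastrate C} and the analogous bound for the partially linear index model of Section~\ref{subsec: example A} follow by the same argument, the only change being the precise form of $\bar w$, which there carries the bounded trimming factor $M(y)$, respectively the bounded kernel factor $K\{(w_1-w_2)/b\}$.
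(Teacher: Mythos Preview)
Your proposal is correct and is precisely the approach the paper intends: the paper omits the proof and states that it is ``similar to the proof of Lemma~\ref{lem: fastrate H}'', and your write-up carries out exactly that adaptation, replacing $\mathds{1}(y_1>y_2)$ by the bounded weight $r_1\mathds{1}(v_1<v_2)$ and otherwise repeating the gradient bound via the change of variables $(x_{2,1},\xlast_2)\mapsto(\bx_2^\top\bbeta_0,\xlast_2)$. Your observation that one only needs boundedness of the conditional weight $\bar w(\bx_2;\bz)$, rather than a single-index structure for $(R,V)\mid\bX$, is a nice clarification but not a departure from the paper's argument.
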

\begin{lemma}\label{lemma: fastrate A}
 Suppose that Assumptions~\ref{ass:inftynormfinite} and \ref{ass: cdfinite addw} in the main text hold. Then  for any small $r>0$ with $\overline{\mathcal{B}}(\btheta_0,r)\subset \Theta^\sfA$, $\sup_{\btheta\in\overline{\mathcal{B}}(\btheta_0,r)}\E \{h^\sfA(\cdot,\cdot;\btheta)\}^2\lesssim \sqrt{p}\norm{\btheta-\btheta_0}_2$.
  \end{lemma}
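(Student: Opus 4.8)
The plan is to replay, essentially verbatim, the argument that established Lemma~\ref{lem: fastrate H} for Han's estimator, the only genuinely new feature being the $\btheta$-free kernel weight $K\{(w_1-w_2)/b\}$ sitting inside $f^\sfA$. First I would set $H(\btheta):=\E\{h^\sfA(\cdot,\cdot;\btheta)\}^2$. Since $\bbeta=\bbeta_0$ forces the bracketed difference in $f^\sfA$ to vanish, $f^\sfA(\cdot,\cdot;\btheta_0)\equiv 0$, hence $h^\sfA(\cdot,\cdot;\btheta_0)\equiv 0$ and $H(\btheta_0)=0$. The mean value theorem then gives, for each $\btheta\in\overline{\mathcal{B}}(\btheta_0,r)$, a point $\btheta'$ on the segment from $\btheta_0$ to $\btheta$ with $H(\btheta)=(\btheta-\btheta_0)^\top\nabla_1 H(\btheta')$, so it suffices to show $\norm{\nabla_1 H(\btheta')}_\infty$ is bounded by an absolute constant uniformly over $\overline{\mathcal{B}}(\btheta_0,r)$; the conclusion $H(\btheta)\le\norm{\btheta-\btheta_0}_1\norm{\nabla_1 H(\btheta')}_\infty\lesssim\sqrt{p}\norm{\btheta-\btheta_0}_2$ then follows immediately.

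Next I would expand the square, $\{h^\sfA\}^2=\{f^\sfA-\E f^\sfA(\bZ_1,\cdot;\btheta)-\E f^\sfA(\cdot,\bZ_2;\btheta)+\E f^\sfA(\cdot,\cdot;\btheta)\}^2$, and take expectations to obtain, by exactly the algebra in Lemma~\ref{lem: fastrate H},
\[
H(\btheta)=H_1(\btheta)-H_2(\btheta)-H_3(\btheta)+2H_4(\btheta)-\{\E f^\sfA(\cdot,\cdot;\btheta)\}^2,
\]
with $H_1=\E\{f^\sfA(\cdot,\cdot;\btheta)\}^2$, $H_2=\E\{\E_\P f^\sfA(\bZ_1,\cdot;\btheta)\}^2$, $H_3=\E\{\E_\P f^\sfA(\cdot,\bZ_2;\btheta)\}^2$, and $H_4=\E\{\E_\P f^\sfA(\bZ_1,\cdot;\btheta)\,\E_\P f^\sfA(\cdot,\bZ_2;\btheta)\}$. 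Because $K\{(W_1-W_2)/b\}$ is bounded by an absolute constant under Assumption~\ref{ass:gen to A}(v) and carries no $\btheta$-dependence, exchanging differentiation and integration expresses each $\nabla_1 H_j$ as a finite combination of $\nabla_1$ applied to conditional-expectation functions of the indicator-comparison type — the analogues of the $\varphi_j$, $\omega_j$ of the Han proof, each now additionally weighted by a bounded kernel factor $K\{(w-W)/b\}$ or $K\{(W-w)/b\}$.

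The remaining step is the pointwise derivative bound, which I would carry out exactly as for $\varphi_1$ in Lemma~\ref{lem: fastrate H}. For a representative term such as $\varphi_1^\sfA(\bz;\btheta):=\E\{\indic(y>Y)\,\indic(\bx^\top\bbeta>\bX^\top\bbeta)\,K((w-W)/b)\}$, I would rewrite the comparison event as $\{\bX^\top\bbeta_0<\bx^\top\bbeta-\Xlast^\top(\btheta-\btheta_0)\}$, change variables from $\bx=(x_1,\xlast)$ to $(\bx^\top\bbeta_0,\xlast)$ using the everywhere-positive conditional density of $X_1$ given $(\Xlast,W)$ (Assumption~\ref{ass:gen to A}(iii)), and differentiate in $\theta_i$; the boundary term contributes $(X_i-x_i)$ times a bounded integrand, since $K$ rides along undifferentiated and bounded and the conditional density $f_0(\cdot\mid\xlast,w)\le C_1$ of Assumption~\ref{ass: cdfinite addw} plays the part that Assumption~\ref{ass:conditionaldensityfinite} played in the Han case, yielding $|\nabla_1^i\varphi_1^\sfA(\bz;\btheta)|\le C(|X_i|+\E|X_i|)$. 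Taking expectations and using $\sup_i\E|X_i|^2\le C$ (Assumption~\ref{ass:inftynormfinite}) gives $\sup_i\E|\nabla_1^i\varphi_1^\sfA|\lesssim 1$, and the same bound holds for the analogues of $\varphi_2,\varphi_3,\omega_1,\omega_2,\omega_3$; assembling these yields $\norm{\nabla_1 H(\btheta')}_\infty\le C_1$ for an absolute constant $C_1$, closing the argument.

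I expect the only delicate point — and it is one of bookkeeping rather than substance — to be checking that the kernel smoothing hides no $b^{-1}$ blow-up in this lemma: because $\cF^\sfA$ is built from $K\{(w_1-w_2)/b\}$ rather than the normalized $K_b(\cdot)$, the kernel factor is genuinely uniformly bounded, is never differentiated in $\btheta$, and all of the bandwidth-dependence of the procedure is absorbed instead into the separate analysis of $\sup_\btheta|\E\Gamma_n^\sfA(\btheta)-\Gamma^\sfA(\btheta)|$. Consequently the Han-case computation transfers essentially line for line, so I would omit the routine replication and only flag the two substitutions of assumptions just described.
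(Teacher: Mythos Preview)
Your proposal is correct and is exactly the approach the paper takes: the paper explicitly omits the proof, stating that it ``is similar to the proof of Lemma~\ref{lem: fastrate H}'', and your write-up spells out precisely the two substitutions (Assumption~\ref{ass: cdfinite addw} replacing Assumption~\ref{ass:conditionaldensityfinite}, and the bounded $\btheta$-free kernel riding along harmlessly) that make the Han argument go through verbatim. Your final remark that $f^\sfA$ carries $K\{(w_1-w_2)/b\}$ rather than the normalized $K_b$, so no $b^{-1}$ factor contaminates the bound, is the right bookkeeping observation.
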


\subsubsection{Proof of Corollary \protect\ref{cor:H}}

\begin{proof}
Note that $\F^\sfH$ is uniformly bounded.
  To prove Corollary \ref{cor:H}(i) and (ii), it suffices to show that the VC-dimension of $\F^\sfH$ is $\sim p$ by Theorems \ref{thm: consistency} and \ref{thm: general consrate}.

  To see this, define the following function:
  \begin{equation*}
    g(\bz_1,\bz_2,t;\gamma,\gamma_1,\gamma_2,\bdelta_1,\bdelta_2)=\gamma t+\gamma_1 y_1+\gamma_2 y_2+\bdelta_1^\top\bx+\bdelta_2^\top\bx,
  \end{equation*}
  and the following function class:
   \begin{equation*}
   \mathscr{G}=\{ g(\bz_1,\bz_2,t;\gamma,\gamma_1,\gamma_2,\bdelta_1,\bdelta_2):\gamma,\gamma_1,\gamma_2\in\R,
   \bdelta_1,\bdelta_2\in\R^{p+1}\}.
  \end{equation*}
  Note that $\mathscr{G}$ is a $(2p+5)$-dimensional vector space of real-valued functions. By Lemma 18 in \cite{pollard2012convergence} and Lemma 2.4 in \cite{pakes1989simulation}, $\{\mathscr{G}\geq s\}$ and $\{\mathscr{G}>s\}$ are VC-classes of VC-dimensions $2p+5$ for any $s\in\R$. We further have, for any $\btheta\in \Theta^\sfH$, $\bbeta=(1,\btheta^\top)^\top$, and $\bbeta_0=(1,\btheta_0^\top)^\top$,
  \begin{equation*}
    \begin{aligned}
      \textnormal{subgraph}\{f^\sfH(\cdot,\cdot;\btheta)\}=&\{(\bz_1,\bz_2,t)\in\S \otimes\S \otimes\R:
      t<f^\sfH(\bz_1,\bz_2;\btheta)\}\\
      =&\Big\{\{y_1-y_2>0\}\cap\{\bx_1^\top\bbeta-\bx_2^\top\bbeta>0\}\cap
      \{\bx_1^\top\bbeta_0-\bx_2^\top\bbeta_0>0\}^\sfc\cap\{t\geq 1\}^\sfc\Big\}
      \\&~~ \cup\Big\{\{y_1-y_2>0\}^\sfc\cap\{t\geq 0\}^\sfc\Big\} \cup
       \Big\{\{y_1-y_2>0\}\cap\{\bx_1^\top\bbeta-\bx_2^\top\bbeta>0\}^\sfc\cap \\&~~
      \{\bx_1^\top\bbeta_0-\bx_2^\top\bbeta_0>0\}\cap\{t\geq -1\}^\sfc\Big\}
      \\=&\Big\{\{g_1>0\}\cap\{g_2>0\}\cap
      \{g_3>0\}^\sfc\cap \{g_4\geq 1\}^\sfc\Big\} \cup \Big\{\{g_1>0\}^\sfc\cap\{g_4\geq 0\}^\sfc\Big\}\\&~~ \cup\Big\{\{g_1>0\}\cap\{g_2>0\}^\sfc\cap
      \{g_3>0\}\cap\{g_4\geq -1\}^\sfc\Big\}
    \end{aligned}
  \end{equation*}
  for $g_1,\ldots,g_4\in\mathscr{G}$. This, combined with Lemma 9.7 in \cite{kosorok2007introduction}, implies that $\F^\sfH$ is a VC-class of VC-dimension $\sim p$. Then,
  apply Theorems \ref{thm: consistency} and \ref{thm: general consrate} to complete the proof of Corollary \ref{cor:H}(i) and (ii).

  Next, we prove Corollary \ref{cor:H}(iii). By Lemma \ref{lem: fastrate H}, we see that for any $c>0$, $\sup_{\btheta\in\overline{\mathcal{B}}(\btheta_0,c\sqrt{p/n})}\E \\ \{h^\sfH(\cdot,\cdot;\btheta)\}^2\lesssim p/\sqrt{n}$ if $p/n\rightarrow 0$. Connecting this with Theorem \ref{thm:generalASN} implies that $\epsilon_n\sim p/\sqrt{n}$ and $\eta_n\sim p/\sqrt{n}$. Thus, by Theorem \ref{thm:generalASN}, we conclude that
  if $\log(n/p^2)p^{3/2}/n^{5/4}\rightarrow 0$, we have
 \begin{align}\label{eq:kiefer}
 \norm{\hat\btheta_n^\sfH-\btheta_0+(\Vb^\sfH)^{-1}\P_n \nabla_1 \tau^\sfH(\cdot;\btheta_0)}^2 = O_\P\big\{\log(n/p^2)p^{3/2}/n^{5/4}\big\}.
 \end{align}
 In particular,
  if $\log(n/p^2)p^{3/2}/n^{1/4}\rightarrow 0$, then for any $\bgamma\in\R^{p}$,
  \[
  \sqrt{n}\bgamma^\top(\hat\btheta^\sfH_n-\btheta_0) / \{\bgamma^\top (\Vb^\sfH)^{-1}\Deltab^\sfH (\Vb^\sfH)^{-1}\bgamma\}^{1/2}\Rightarrow N(0,1).
  \]
  This completes the proof.

  {\blue To prove Corollary \ref{cor:H}(iv), we only need to evaluate the order of $\tilde \nu^\sfH$, the VC-dimension of $\tilde\F^\sfH:=\{f^\sfH(\bz,\cdot;\btheta)+f^\sfH(\cdot,\bz;\btheta):\bz\in\R^{p+1},
  \btheta\in\Theta^\sfH\}$. Following similar arguments above, we can know that $\tilde \nu^\sfH$ is also of order $p$. Then, the claim in Corollary \ref{cor:H}(iv) follows from Theorem \ref{thm: consistency cov}.
  }
\end{proof}

\subsubsection{Proof of Corollary \protect\ref{cor:C}}

\begin{proof}
  Similar to the proof of Corollary \ref{cor:H}, it can be easy to show that the VC-dimensions of $\F^\sfC$ {\blue and $\tilde\F^\sfC := \{f^\sfC(\bz,\cdot;\btheta)+f^\sfC(\cdot,\bz;\btheta):\bz\in\R^{p+1},
  \btheta\in\Theta^\sfC\}$
  are}  both of order $ p$. This, combined with that  $\F^\sfC$ is uniformly bounded, proves Corollary \ref{cor:H}(i) and (ii) by Theorems \ref{thm: consistency} and
  \ref{thm: general consrate}. Corollary \ref{cor:H}(iii) follows from Lemma
  \ref{lem: fastrate C} and Theorem \ref{thm:generalASN}. {\blue Corollary \ref{cor:H}(iv) follows from Theorem \ref{thm: consistency cov}.}
\end{proof}

\subsubsection{Proof of Corollary \protect\ref{cor:K}}

\begin{proof}
  Similar to the proof of Corollary \ref{cor:H}, one could show that the VC-dimension of $\F^\sfK$ {\blue and $\tilde\F^\sfK := \{f^\sfK(\bz,\cdot;\btheta)+f^\sfK(\cdot,\bz;\btheta):\bz\in\R^{p+1},
  \btheta\in\Theta^\sfK\}$
  are}  both of order $ p$. Then, the proofs of Corollary \ref{cor:K} (i) and (ii) follow directly from the proof of Corollary \ref{cor:H}. Finally, Lemma \ref{lem: fastrate K}, together with Theorem
  \ref{thm:generalASN} imply Corollary \ref{cor:K}(iii). {\blue Corollary \ref{cor:K}(iv) follows from Theorem \ref{thm: consistency cov}.}
\end{proof}

\subsubsection{Proof of Corollary \protect\ref{cor:A}}

\begin{proof}
  (i) Similar to the proof of Theorem \ref{thm: consistency}, the proof is twofold. We first show that   $\Gamma_n^\sfA(\btheta)$ converges in probability to $\Gamma^\sfA(\btheta)$ uniformly in $\btheta\in\Theta^\sfA$, and then establish the consistency of $\hat\btheta_n^\sfA$.

  \noindent
  {\it Step 1.}
  Since $K(\cdot)$ is continuously differential with compact support by Assumption \ref{ass:gen to A}(vi), $K(\cdot)$ is bounded and is also a function of bounded variation. Thus, $K(\cdot)$ can be written as $K(\cdot)=K_1(\cdot)-K_2(\cdot)$ with appropriate bounded and monotone functions $K_1(\cdot)$ and $K_2(\cdot)$. Let $C_1$ and $C_2$ denote the upper bounds of $|K_1(\cdot)|$ and $|K_2(\cdot)|$ respectively.

  Let $\F_1^\sfA =\big\{\indic(Y_1>Y_2)\indic(\bX_1^\top\bbeta>\bX_2^\top\bbeta)K_1\{(W_1-W_2)/b\}:
  \btheta\in\Theta^\sfA\big\}$ and  $\F_2^\sfA =\big\{\indic(Y_1>Y_2)\indic(\bX_1^\top\bbeta>\bX_2^\top\bbeta)K_2\{(W_1-W_2)/b\}:
  \btheta\in\Theta^\sfA\big\}$.  Then, $\F^\sfA = \F_1^\sfA-\F_2^\sfA$. Similar to the proof of Corollary \ref{cor:H},
  it can be easy to verify that  the VC-dimensions of $\F_1^\sfA$ and $\F_2^\sfA$ are both $\sim p$ by considering the class of  subgraphs of all functions in $\F_1^\sfA$ and $\F_2^\sfA$ separately.
  By Lemma 16 in \cite{nolan1987u}, the covering number of $\F^\sfA$ is bounded through
  $N_r(\varepsilon,\P\otimes\P,\F^\sfA,C_1+C_2)\leq N_r(\varepsilon/4, \P\otimes\P,\F_1^\sfA,C_1)N_r(\varepsilon/4,\P\otimes\P,\F_2^\sfA,C_2)$.
This, combined  with Theorem 9.3 in \cite{kosorok2007introduction}, Lemma \ref{lem:ep}, \ref{lem:uniform as convergence}, and Hoeffding decomposition implies that
  \begin{equation}\label{eqn:first result A}
    \sup_{\btheta\in\Theta^\sfA}\big |\Gamma_n^\sfA(\btheta)-\E\Gamma_n^\sfA(\btheta)\big |=
    O_\P\Big(\frac{\sqrt{p}}{b\sqrt{n}}\Big)=O_\P\Big(\sqrt{\frac{p}{n^{1-2\delta}}}\Big) = o_\P(1).
  \end{equation}

Next, we try to bound $\sup_{\btheta\in\Theta^\sfA}\big|\E\{\Gamma_n^\sfA(\btheta)\} -\Gamma^\sfA(\btheta)\big |$. Note that
\begin{equation}\label{eqn:middle result A}
  \begin{aligned}
    \E\Gamma_n^\sfA(\btheta)&=\E[\{m(\bZ_1,\bZ_2;\btheta)-m(\bZ_1,\bZ_2;\btheta_0)\}K_b(W_1-W_2)]\\
    &=\E[\E\{m(\bZ_1,\bZ_2;\btheta)-m(\bZ_1,\bZ_2;\btheta_0)\mid W_1, W_2\}K_b(W_1-W_2)]\\
    &=\frac{1}{b}\int \int \psi(w_1,w_2;\btheta)K\Big(\frac{w_1-w_2}{b}\Big)\phi(w_1)\phi(w_2)\diff w_1 \diff w_2\\
    &=\int \int \psi(bu+w_2,w_2;\btheta)\phi(bu+w_2)K(u)\phi(w_2)\diff u \diff w_2.
  \end{aligned}
\end{equation}
A $J$th-order Tylor expansion of $\E\{\Gamma_n^\sfA(\btheta)\}$ with respect to $b$ at 0 and Assumptions \ref{ass:gen to A}(vi)--(viii) imply that
\begin{equation}\label{eqn:second result A}
  \sup_{\btheta\in\Theta^\sfA}\big|\E\Gamma_n^\sfA(\btheta) -\Gamma^\sfA(\btheta)\big | \lesssim b^J =o(1).
\end{equation}
This, combined with \eqref{eqn:first result A} and the triangular inequality, implies that
\begin{equation*}
  \sup_{\btheta\in\Theta^\sfA}\big|\Gamma_n^\sfA(\btheta) -\Gamma^\sfA(\btheta)\big |  =o_\P(1).
\end{equation*}
Thus, the uniform convergence of $\Gamma_n^\sfA(\btheta)$ is shown.

\noindent
{\it Step 2.} Following {\it Step 2} in the proof of Theorem \ref{thm: consistency}, it can be easy to show that $\norm{\hat\btheta_n^\sfA-\btheta_0}\xrightarrow{\P} 0$. This completes  proof of Corollary \ref{cor:A}(i).

(ii) Similar to the proof of Theorem \ref{thm: general consrate}, the proof is conducted in four steps. We first define $f_n^\sfA(\bz_1,\bz_2;\btheta) = f^\sfA(\bz_1,\bz_2;\btheta)/b$. Thus, $\E\Gamma_n^\sfA(\btheta) = \E f_n^\sfA(\cdot,\cdot;\btheta)$.
By a Hoeffding decomposition of $\Gamma_n^\sfA(\btheta)$, we have
\begin{equation*}
  \Gamma_n^\sfA(\btheta) = \E\Gamma_n^\sfA(\btheta)+\P_n g_n^\sfA(\cdot;\btheta)+\U_n h_n^\sfA(\cdot,\cdot;\btheta),
\end{equation*}
where
\begin{equation*}
 g_n^\sfA(\bz;\btheta)=\E f_n^\sfA(\bz,\cdot;\btheta) + \E f_n^\sfA(\cdot,\bz;\btheta)-2\E\Gamma_n^\sfA(\btheta),
\end{equation*}
and
\begin{equation*}
  h_n^\sfA(\bz_1,\bz_2;\btheta)=f_n^\sfA(\bz_1,\bz_2;\btheta)-\E f_n^\sfA(\bz_1,\cdot;\btheta)-\E f_n^\sfA(\cdot,\bz_2;\btheta)+\E\Gamma_n^\sfA(\btheta).
\end{equation*}
The first three steps aim to establish  bounds {\blue that are} similar to \eqref{eqn:step1result}, \eqref{eqn:step2result} and \eqref{eqn:step3result}, respectively. The last step establishes the rate of convergence of $\hat\btheta_n^\sfA$.

\noindent
{\it Step 1.} We first consider $\E\Gamma_n^\sfA(\btheta)$. By \eqref{eqn:first result A}, there exists a constant $C>0$ such that
\begin{equation}\label{eqn: first result AA}
  \sup_{\btheta\in\Theta^\sfA}\big|\E\Gamma_n^\sfA(\btheta) -\Gamma^\sfA(\btheta)\big |\leq C b^J.
\end{equation}
Fix $\btheta\in\overline{\mathcal{B}}(\btheta_0,r)\subset \Theta^\sfA$. Similar to {\it Step 1} in the proof of Theorem \ref{thm: general consrate}, we have
\begin{equation}
\sup_{\btheta\in\overline{\mathcal{B}}(\btheta_0,r)}
\big|\Gamma(\btheta)-\frac{1}{2}(\btheta-\btheta_0)^\top \Vb^\sfA(\btheta-\btheta_0)\big|\leq c_{\max}\rho(r) \norm{\btheta-\btheta_0}^2.
\end{equation}
This, combined with \eqref{eqn: first result AA}, implies that
\begin{equation}\label{eqn:step1result A}
\sup_{\btheta\in\overline{\mathcal{B}}(\btheta_0,r)}
\big|\E\Gamma_n^\sfA(\btheta)-\frac{1}{2}(\btheta-\btheta_0)^\top \Vb^\sfA(\btheta-\btheta_0)\big|\leq c_{\max}\rho(r) \norm{\btheta-\btheta_0}^2 + C b^J.
\end{equation}

\noindent
{\it Step 2.} Similar to \eqref{eqn:middle result A}, a change of variables and a $J$th-order Tylor expansion imply that
\begin{equation*}
  |\P_n g_n^\sfA(\cdot;\btheta)-\P_n\{\tau^\sfA(\cdot;\btheta)-\tau^\sfA(\cdot;\btheta_0)-
  2\E\Gamma_n^\sfA(\btheta)\}|\leq C' b^J
\end{equation*}
for some constant $C'>0$. This, combined with \eqref{eqn: first result AA}, implies that
\begin{equation}\label{eqn:step2 firstresult A}
\begin{aligned}
  |\P_n g_n^\sfA(\cdot;\btheta)-\P_n\{\tau^\sfA(\cdot;\btheta)-\tau^\sfA(\cdot;\btheta_0)-
  2\Gamma^\sfA(\btheta)\}|\leq (C+C') b^J.
  \end{aligned}
\end{equation}
Following the proof  of Theorem \ref{thm: general consrate} in {\it Step 2}, we additionally have
\begin{equation*}
\begin{aligned}
  \P\bigg\{\sup_{\btheta\in\overline{\mathcal{B}}(\btheta_0,r)}\big|\P_n\{\tau^\sfA(\cdot;\btheta)-\tau^\sfA(\cdot;\btheta_0)-
  2\Gamma^\sfA(\btheta)\}-\frac{1}
  {\sqrt{n}}(\btheta-\btheta_0)^\top \bwn^\sfA\big|>
     4c_{\max}\rho(r) \norm{\btheta-\btheta_0}^2+\\
     \frac{6\nu_0r }{\sqrt{n}} d_p(\varepsilon) \norm{\btheta-\btheta_0} \bigg\}\leq\varepsilon,
     \end{aligned}
\end{equation*}
 where
$\bwn^\sfA=\sqrt{n}\P_n\nabla_1 \tau^\sfA(\cdot;\btheta_0)$. Combining this with \eqref{eqn:step2 firstresult A} implies that
\begin{equation}\label{eqn:step2result A}
\begin{aligned}
  \P\bigg\{\sup_{\btheta\in\overline{\mathcal{B}}(\btheta_0,r)}\big|\P_ng_n^\sfA(\cdot;\btheta)-\frac{1}
  {\sqrt{n}}(\btheta-\btheta_0)^\top \bwn^\sfA\big|>(C+C') b^J+
     2c_{\max}\rho(r) \norm{\btheta-\btheta_0}^2+\\
     \frac{6\nu_0r }{\sqrt{n}} d_p(\varepsilon) \norm{\btheta-\btheta_0} \bigg\}\leq\varepsilon,
     \end{aligned}
\end{equation}

\noindent
{\it Step 3.} Following the proof of Theorem \ref{thm:uniformloose}(i), one can get
\begin{equation*}
  \P\bigg\{ \sup_{\btheta\in\overline{\mathcal{B}}(\btheta_0,r)}|\U_n h^\sfA(\cdot,\cdot;\btheta)|> \delta_n p/n\bigg\}\leq \epsilon,
\end{equation*}
where $\delta_n$ is a sequence of nonnegative real numbers converging to zero.
Thus,
\begin{equation}\label{eqn:step3result A}
  \P\bigg\{ \sup_{\btheta\in\overline{\mathcal{B}}(\btheta_0,r)}|\U_n h_n^\sfA(\cdot,\cdot;\btheta)|> \delta_n p/(bn)\bigg\}\leq \epsilon.
\end{equation}

\noindent
{\it Step 4.} By the Hoeffding decomposition of $\Gamma_n^\sfA(\btheta)$ and the results in \eqref{eqn:step1result A}, \eqref{eqn:step2result A} and \eqref{eqn:step3result A}, we have
\begin{equation}\label{eqn:Gammantheta}
\begin{aligned}
  &\P\bigg\{\sup_{\btheta\in\overline{\mathcal{B}}(\btheta_0,r)}\big|\Gamma_n^\sfA(\btheta)-\frac{1}{2}
  (\btheta-\btheta_0)^\top\Vb^\sfA(\btheta-\btheta_0)-
  \frac{1}
  {\sqrt{n}}(\btheta-\btheta_0)^\top \bwn^\sfA\big|>(2C+C') b^J+ \\ & \hspace{14em} 5c_{\max}\rho(r) \norm{\btheta-\btheta_0}^2+\frac{6\nu_0r }{\sqrt{n}} d_p(\varepsilon) \norm{\btheta-\btheta_0}+\delta_n\frac{p}{bn} \vphantom{\int_1^2} \bigg\}\leq2\varepsilon.
   \end{aligned}
\end{equation}
Then, following the proof of Theorem \ref{thm: general consrate} in {\it Step 4}, we conclude that there exists a sufficiently large constant $C'_\varepsilon>0$ such that
 \begin{equation}\label{eqn: step3 middle result A}
   \P\bigg\{\norm{\hat\btheta_n^\sfA-\btheta_0}\leq C'_\varepsilon \sqrt{\frac{p}{n^{1-\delta}}}\bigg\}\geq 1-3\varepsilon
\end{equation}
holds for sufficiently large $n$.

Since $\F^\sfA$ is uniformly bounded and $\sup_{\overline{\mathcal{B}}(\btheta_0,C'_\varepsilon\sqrt{p/n^{1-\delta}})}\E  \{h^\sfA(\cdot,\cdot;\btheta)\}^2\lesssim p/\sqrt{n^{1-\delta}}$ by Lemma \ref{lemma: fastrate A}, Theorem \ref{thm:uniformloose}(ii) implies that there exists a   constant $C''_{\varepsilon}$  such that
\begin{equation*}
    \P\bigg\{ \sup_{\btheta\in\overline{\mathcal{B}}(\btheta_0,C'_\varepsilon\sqrt{p/n^{1-\delta}})}
    \big|\U_nh^\sfA(\cdot,\cdot;\btheta)\big| > C''_{\varepsilon}\log(n^{1-\delta}/p^2)p^{\frac{3}{2}}/n^{\frac{5-\delta}{4}}\bigg\}\leq \varepsilon
 \end{equation*}
        holds for sufficiently large $n$. Thus,
 \begin{equation}\label{eqn:step3result fast A}
       \P\bigg\{ \sup_{\btheta\in\overline{\mathcal{B}}(\btheta_0,C'_\varepsilon\sqrt{p/n^{1-\delta}})}\big|\U_nh_n^\sfA(\cdot,\cdot;\btheta)\big| > C''_{\varepsilon}\log(n^{1-\delta}/p^2)p^{\frac{3}{2}}/n^{\frac{5(1-\delta)}{4}}\bigg\}\leq \varepsilon.
 \end{equation}
In view of $\delta<\frac{1}{5}$, it holds that $\{\log(n^{1-\delta}/p^2)/n^{\frac{5(1-\delta)}{4}}\}/(1/n)\rightarrow 0$ as $n\rightarrow \infty$. This, combined with \eqref{eqn:step3result fast A}, implies that
 \begin{equation}\label{eqn:step3result fast A}
       \P\bigg\{ \sup_{\btheta\in\overline{\mathcal{B}}(\btheta_0,C'_\varepsilon\sqrt{p/n^{1-\delta}})}\big|\U_nh_n^\sfA(\cdot,\cdot;\btheta)\big| > C''_{\varepsilon}p^{\frac{3}{2}}/n\bigg\}\leq \varepsilon.
 \end{equation}
 Based on similar analyses at the beginning of this step, we conclude that, there exists a sufficiently large constant $c'_\varepsilon>0$ such that
 \begin{equation*}
   \P\bigg\{\norm{\hat\btheta_n^\sfA-\btheta_0}\leq c'_\varepsilon \sqrt{\frac{p^{3/2}}{n}}\bigg\}\geq 1-6\varepsilon
\end{equation*}
holds for sufficiently large $n$. This, combined with \eqref{eqn: step3 middle result A}, implies that there exists a sufficiently large constant $C_\varepsilon>0$ such that
\begin{equation*}
   \P\bigg\{\norm{\hat\btheta_n^\sfA-\btheta_0}\leq C_\varepsilon \sqrt{\frac{p}{n^{1-\delta}}\wedge\frac{p^{3/2}}{n}}\bigg\}\geq 1-9\varepsilon.
\end{equation*}
This completes  proof of (ii).

(iii) Similar to the proof of Theorem \ref{thm:generalASN}, we first define  ${\btsn}^\sfA=-(\Vb^\sfA)^{-1}\bwn^\sfA$. Similarly, there exists a constant $c'_\varepsilon$ such that
\begin{equation}\label{eqn: normal consrate result A}
  \P\big(\mathcal{A}'_{n,\varepsilon}
  \big)\geq 1-2 \varepsilon
\end{equation}
holds for sufficiently large $n$, where  $\mathcal{A}'_{n,\varepsilon} :=\{\bZ:\hat\btheta^\sfA_n\in\overline{\mathcal{B}}(\btheta_0,r_n), {\btsn}^\sfA/\sqrt{n}+\btheta_0\in\overline{\mathcal{B}}(\btheta_0,r_n)\}$ and $r_n:=c'_\varepsilon\sqrt{p/n^{1-\delta}}$.

Fix $\btheta\in\overline{\mathcal{B}}(\btheta_0,r_n)$. Then  following the proofs of Corollary \ref{cor:A}(ii) in {\it Step 1--2}, we have
\begin{equation}\label{eqn:step1result asn A}
\sup_{\btheta\in\overline{\mathcal{B}}(\btheta_0,r_n)}
\big|\E\Gamma_n^\sfA(\btheta)-\frac{1}{2}(\btheta-\btheta_0)^\top \Vb^\sfA(\btheta-\btheta_0)\big|\leq c_{\max}\rho(r_n) \norm{\btheta-\btheta_0}^2 + C b^J,
\end{equation}
and
\begin{equation}\label{eqn:step2result asn A}
\begin{aligned}
  &\P\bigg\{\sup_{\btheta\in\overline{\mathcal{B}}(\btheta_0,r_n)}
 \big|\P_ng_n^\sfA(\cdot;\btheta)-\frac{1}
  {\sqrt{n}}(\btheta-\btheta_0)^\top \bwn^\sfA\big|>(C+C') b^J+
     4c_{\max}\rho(r_n) \norm{\btheta-\btheta_0}^2+\\ &\hspace{25em}
     \frac{6\nu_0r_n}{\sqrt{n}} d_p(\varepsilon) \norm{\btheta-\btheta_0} \bigg\}\leq\varepsilon.
\end{aligned}
\end{equation}
Similar to \eqref{eqn:step3result fast A}, we have
 \begin{equation}\label{eqn:step3result asn A}
       \P\bigg\{ \sup_{\btheta\in\overline{\mathcal{B}}(\btheta_0,r_n)}\big|\U_nh_n^\sfA(\cdot,\cdot;\btheta)\big| > C'_{\varepsilon}\log(n^{1-\delta}/p^2)p^{\frac{3}{2}}/n^{\frac{5(1-\delta)}{4}}\bigg\}\leq \varepsilon.
 \end{equation}
This, together with \eqref{eqn:step1result asn A} and \eqref{eqn:step2result asn A}, implies that
 \begin{equation}\label{eqn:Gammantheta asn A}
\begin{aligned}
  &\P\bigg\{\sup_{\btheta\in\overline{\mathcal{B}}(\btheta_0,r_n)}\big|\Gamma^\sfA_n(\btheta)-
  \frac{1}{2}
  (\btheta-\btheta_0)^\top\Vb^\sfA(\btheta-\btheta_0)-
   \frac{1}
  {\sqrt{n}}(\btheta-\btheta_0)^\top \bwn^\sfA\big|> (2C+C') b^J+\\ & \hspace{4em}
  5c_{\max}\rho(r_n) \norm{\btheta-\btheta_0}^2+ \frac{6\nu_0r_n }{\sqrt{n}} d_p(\varepsilon) \norm{\btheta-\btheta_0}+
  C'_{\varepsilon}\log(n^{1-\delta}/p^2)p^{\frac{3}{2}}/n^{\frac{5(1-\delta)}{4}}  \bigg\}\leq2\varepsilon.
   \end{aligned}
\end{equation}
The remaining proofs are straightforward and follow the proof of Theorem \ref{thm:generalASN}. In conclusion, if $\log(n^{1-\delta}/p^2)p^{3/2}/n^{(5-5\delta)/4}\to 0$, we have
 \[
 \norm{\hat\btheta_n^\sfA-\btheta_0+(\Vb^\sfA)^{-1}\P_n \nabla_1 \tau^\sfA(\cdot;\btheta_0)}^2 = O_\P\big\{n^{-\delta J}\vee \log(n^{1-\delta}/p^2)p^{3/2}/n^{(5-5\delta)/4}\big\}.
 \]
 In addition,
if $\log(n^{1-\delta}/p^2)p^{3/2}/n^{(1-5\delta)/4}\rightarrow 0$, then for any $\bgamma\in\R^{p}$,
  \[
\sqrt{n}\bgamma^\top(\hat\btheta^\sfA_n-\btheta_0) / \{\bgamma^\top (\Vb^\sfA)^{-1}\Deltab^\sfA (\Vb^\sfA)^{-1}\bgamma\}^{1/2}\Rightarrow N(0,1).
  \]
This completes the proof of (iii).

{\blue (iv) Note that $\E\tau_n^\sfA(\bz;\btheta)\neq\tau^\sfA(\bz;\btheta)$. The proof is a little different from that in proving Theorem \ref{thm: consistency cov}. To see this, define $\tilde \F^\sfA=\{ f^\sfA(\bz,\cdot;\btheta)+f^\sfA(\cdot,\bz;\btheta):\bz\in\R^{p+1},
  \btheta\in\Theta^\sfA\}$. Following the similar arguments in proof of (i), one can show that the VC-dimension of $\F^\sfA$ is of order $p$. It then follows from Lemma \ref{lem:ep} that \begin{equation*}
    \begin{aligned}
      \sup_{\R^m \otimes \Theta^\sfA}|\tau_n^\sfA(\bz;\btheta)-\E\tau_n^\sfA(\bz;\btheta)| = O_\P\big\{\sqrt{p}/(b\sqrt{n})\big\} = O_{\P}\big(p/n^{1-2\delta}\big).
    \end{aligned}
  \end{equation*}
  Similar to the derivations in \eqref{eqn:middle result A} and \eqref{eqn:second result A},  we have
  \begin{equation*}
    \begin{aligned}
      \sup_{\R^m \otimes \Theta^\sfA}|\E\tau_n^\sfA(\bz;\btheta)-\tau^\sfA(\bz;\btheta)| = O(b^J)=O(n^{-\delta J}).
    \end{aligned}
  \end{equation*}
  Then, following from the proof of Theorem \ref{thm: consistency cov}, we  get that
  \begin{equation*}
    \begin{aligned}
      \norm{\hat\Deltab^\sfA - \Deltab^\sfA} &= O_{\P}\big(p/\sqrt{n}\big)+O_{\P}\big[p\{\varepsilon_n+\varepsilon_n^{-1}(r_n^2\sqrt{p/n}+ pr_n^4 +\sqrt{p / n^{1-2\delta}} +n^{-\delta J}) +r_n\}^2\big],\\
      \norm{\hat\Vb^\sfA - \Vb^\sfA} &=O_{\P}\big(p/\sqrt{n}\big)+O_{\P}\big[p\{\varepsilon_n+\varepsilon_n^{-2}(r_n^2\sqrt{p/n}+ pr_n^4 +\sqrt{p / n^{1-2\delta}} +n^{-\delta J}) +\varepsilon_n^{-1} r_n\}^2\big],
    \end{aligned}
  \end{equation*}
  where $r_n = \sqrt{p/n^{1-\delta}}\wedge \sqrt{p^{3/2}/n}$.
  By assumption,  $\log(n^{1-\delta}/p^2)p^{3/2}/n^{(1-5\delta)/4}=o(1)$, $\varepsilon_n\sqrt{p} = o(1)$, and  $\varepsilon_n^{-2}p/\sqrt{n^{1-2\delta}}=o(1)$, one can show that
      \begin{equation*}
        \begin{aligned}
              \norm{\hat\Deltab^\sfA - \Deltab^\sfA} = o_{\P}(1), ~~ \text{and} ~~
    \norm{\hat\Vb^\sfA - \Vb^\sfA}  = o_{\P}(1).
        \end{aligned}
      \end{equation*}
      The remaining proof follows exactly from that in the proof of Theorem \ref{thm: consistency cov}.
}
\end{proof}

\subsection{Proof of Theorem \protect\ref{prop:H}}

\begin{proof}
We check Assumption \ref{ass:taufunction}(iii) and (v) separately under Conditions \ref{cond:H2}--\ref{cond:H1}.
\begin{enumerate}
  \item[(iii)] The proof proceeds in two steps. We first calculate the third order mixed partial derivatives of $\E\tau^\sfH(\cdot;\cdot)$. Then we establish the bound of $\norm{\Vb^\sfH(\btheta) - \Vb^\sfH}$ for any $\btheta\in\overline{\mathcal{B}}(\btheta_0,r)$.

   \hf{  {\it Step 1.} Fix $\bz=(\bx,y)^T\in\R^m$ and $\btheta\in \overline{\mathcal{B}}(\btheta_0,r)$. Note that
  \begin{equation*}
    \begin{aligned}
      \tau^\sfH(\bz;\btheta)=\int_{-\infty}^{\bx^\top\bbeta}\int_{-\infty}^{y}g_0(t\mid s; \btheta)G_Y(\diff s)\diff t + \int_{\bx^\top\bbeta}^{\infty}\int_y^{\infty}g_0(t\mid s;\btheta)G_Y(\diff s)\diff t+C(\btheta_0),
    \end{aligned}
  \end{equation*}
  where  $G_Y(\cdot)$ denotes the marginal distribution of $Y$ and $g_0(\cdot\mid s;\btheta)$ denotes the conditional density function of $\bX^\top\bbeta$ given $Y =s$, $C(\btheta_0)$ is a term that does not depend on $\btheta$, and
  \begin{equation*}
    \begin{aligned}
      g_0(t\mid s;\btheta) = \int g_0(t\mid s, \tilde \bx;\btheta)G_{\tilde \bX\mid Y=s}(\diff \tilde \bx)
       = \int f_0(t-\tilde\bx^\top\btheta\mid s, \tilde \bx;\btheta)G_{\tilde \bX\mid Y=s}(\diff \tilde \bx).
    \end{aligned}
  \end{equation*}
   For simplicity, we consider only the first part of $\tau^\sfH(\bz;\btheta)$ and denote
     \begin{equation*}
    \begin{aligned}
      \tau_1^\sfH(\bz;\btheta)=\int_{-\infty}^{\bx^\top\bbeta}\int_{-\infty}^{y}g_0(t\mid s; \btheta)G_Y(\diff s)\diff t.
    \end{aligned}
  \end{equation*}
   After some simple calculations, we have
   \begin{equation*}
     \begin{aligned}
       \frac{\partial \tau_1^\sfH(\bz;\btheta)}{\partial \theta_i} = \tilde x_i\bigg\{\int^y_{-\infty}g_0(\bx^\top\bbeta\mid s;\btheta)G_Y(\diff s) - \int_{-\infty}^{\bx^\top\bbeta}\int_{-\infty}^y g_{0,1}(t\mid s;\btheta) G_Y(\diff s)\diff t\bigg\},
     \end{aligned}
   \end{equation*}
   where
   \[
   g_{0,1}(t\mid s;\btheta) = \int f_0^{(1)}(t-\tilde \bx^\top\btheta\mid s,\tilde \bx)G_{\tilde\bX\mid Y=s}(\diff \tilde \bx).
   \]
   Additionally, we have
   \begin{equation*}
     \begin{aligned}
       \frac{\partial^3 \tau_1^\sfH(\bz;\btheta)}{\partial \theta_i\partial\theta_j\partial \theta_k} = \tilde x_i\tilde x_j\tilde x_k\bigg\{\underbrace{\int_{-\infty}^y g_{0,2}(\bx^\top\bbeta\mid s;\btheta)
       G_Y(\diff s) - \int_{-\infty}^{\bx^\top\bbeta} \int_{-\infty}^y g_{0,3}(t\mid s;\btheta) G_Y(\diff s)\diff t}_{A_1(\bx,y;\btheta)}\bigg\},
     \end{aligned}
   \end{equation*}
   where
   \[
   g_{0,m}(t\mid s;\btheta) = \int f_0^{(m)}(t-\tilde \bx^\top\btheta\mid s,\tilde \bx)G_{\tilde\bX\mid Y=s}(\diff \tilde \bx), \quad m = 2, 3.
   \]
   According to Condition \ref{cond: cond density X1}, we know that $A_1(\bx,y;\btheta)$ is uniformly upper bounded: $|A_1(\bx,y;\btheta)|\leq K$ for some absolute constant $K>0$. We could then similarly define $A_2(\bx,y;\btheta)$ for the second part and write $A(\bx,y;\btheta)=A_1(\bx,y;\btheta)+A_2(\bx,y;\btheta)$.
}

   {\it Step 2.} For any $\bgamma\in \Sm^{p-1}$, we consider $\bgamma^\top \{\Vb^\sfH(\btheta)-\Vb^\sfH\}\bgamma$. Expand $\bgamma^\top \{\Vb^\sfH(\btheta)-\Vb^\sfH\}\bgamma$ about $\btheta_0$ to get
  \begin{equation*}
    \begin{aligned}
      \bgamma^\top \{\Vb^\sfH(\btheta)-\Vb^\sfH\}\bgamma = \sum_{i,j,k} \gamma_i\gamma_j(\theta_k-\theta_{0,k})\frac{\partial^3\E\tau^\sfH(\cdot;\btheta^*)}{\partial \theta_i\partial\theta_j\partial\theta_k}=\bgamma^\top \E\big\{A(\bX,Y;\btheta) \tilde \bX^\top(\btheta-\btheta_0) \tilde \bX^\top\tilde \bX\big\} \bgamma.
    \end{aligned}
  \end{equation*}
 Then,
\begin{align*}
      \sup_{\bgamma\in\Sm^{p-1}}|\bgamma^\top \{\Vb^\sfH(\btheta)-\Vb^\sfH\}\bgamma|\leq& 2K \big[\E\{\tilde \bX^\top (\btheta-\btheta_0)\}^2\big]^{1/2}\big\{\E(\bgamma^\top\tilde\bX)^4\big\}^{1/2}.
\end{align*}
  By Condition \ref{cond:H2}, we know that there exists an absolute constant $C$ such that
   \begin{equation*}
    \begin{aligned}
      \sup_{\bgamma\in\Sm^{p-1}}|\bgamma^\top \{\Vb^\sfH(\btheta)-\Vb^\sfH\}\bgamma|\leq KC\norm{\btheta-\btheta_0}\leq KCr_0.
    \end{aligned}
  \end{equation*}
   Then, we can choose $r_0$ small enough such that $KCr_0\leq c_{\min}/(11c_{\max})$. The first part of Assumption \ref{ass:taufunction}(iii) has been verified.

   Next, we try to verify the second part of Assumption \ref{ass:taufunction}(iii). According to the results in {\it Step 1}, we
   expand $\Vb^\sfH_{ij}(\btheta)-\Vb^\sfH_{ij}$ about $\btheta_0$ to get
  that
  \begin{equation*}
    \begin{aligned}
      \sup_{i,j}|\Vb_{ij}^\sfH(\btheta)-\Vb^\sfH_{ij}| \leq cr,
    \end{aligned}
  \end{equation*}
  where $c$ depends only on the absolute constants $K$ and $C'$.  Then, by the relationship between different matrix norms, we have that
  \begin{equation*}
    \begin{aligned}
      \norm{\Vb^\sfH(\btheta)-\Vb^\sfH}\leq \norm{\Vb^\sfH(\btheta)-\Vb^\sfH}_1\leq p \sup_{i,j}|\Vb^\sfH_{ij}(\btheta)-\Vb^\sfH_{ij}(\btheta)|\leq c pr.
    \end{aligned}
  \end{equation*}
  Finally,
  \begin{equation*}
    \begin{aligned}
      \norm{{\Ib}_p-(\Vb^\sfH)^{-1/2}\Vb^\sfH(\btheta)(\Vb^\sfH)^{-1/2}}\leq \norm{(\Vb^\sfH)^{-1/2}}\norm{\Vb^\sfH(\btheta)-\Vb^\sfH}\norm{(\Vb^\sfH)^{-1/2}}\leq \frac{cpr}{c_{\min}}.
    \end{aligned}
  \end{equation*}
  This completes the verification of Assumption \ref{ass:taufunction}(iii).
  \item[(v)] We first consider  $\btheta=\btheta_0$. Since $\bX$ is multivariate subgaussian by Condition \ref{cond:H2}, it holds that $\sup_{i=1,\ldots,p+1}\E|X_i|^2\leq c_0$.
According to calculations in the proof of Theorem 4 in \cite{sherman1993limiting}, we have
\begin{equation*}
\nabla_2 \tau^\sfH(\bZ,\btheta_0)=\{\Xlast-\E(\Xlast\mid \bX^\top \bbeta_0)\}\{\Xlast-\E(\Xlast\mid \bX^\top \bbeta_0)\}^\top \lambda_2^\sfH(Y,
\bX^\top\bbeta_0).
\end{equation*}
 For any $\bgamma_1,\bgamma_2\in \Sm^{p-1}$, Lemma~\ref{lem:multsubgaussian} implies that under Conditions \ref{cond:H1} and \ref{cond:H2}, $\bgamma_1^\top\{\Xlast-\E(\Xlast\mid \bX^\top \bbeta_0)\}$ and $\bgamma_2^\top\{\Xlast-\E(\Xlast\mid \bX^\top \bbeta_0)\}\lambda_2^\sfH(Y,\bX^\top\bbeta_0)$ are both subgaussian with  subgaussian norms $2c'$ and $2c'c''$, respectively. Because the product of two subgaussian random variables is subexponential, $\bgamma_1^\top \nabla_2\tau(\bZ,\btheta_0)\bgamma_2$ is subexponential with a subexponential norm that depends only on $c'$ and $c''$. By the definition of subexponential variables and $\zeta^\sfH(\bz;\btheta_0)=\tau^\sfH(\bz;\btheta_0)-\E\{\tau^\sfH(\cdot;\btheta_0)\} = \tau^\sfH(\bz;\btheta_0)-\Vb$, we have
 \begin{equation}\label{eqn: subexponential}
 \begin{aligned}
   \E\exp\{\lambda\bgamma_1^\top \nabla_2\zeta^\sfH(\cdot,\btheta_0)\bgamma_2\}&= \E\exp[\lambda\bgamma_1^\top \{\nabla_2\tau^\sfH(\cdot,\btheta_0)-\Vb\}\bgamma_2]\\
   &\leq \exp[C_0\lambda^2\norm{\bgamma_1^\top \{\nabla_2\tau^\sfH(\bZ,\btheta_0)-\Vb\}\bgamma_2}_{\psi_1}^2]\\
   &\leq \exp\{4C_0\lambda^2\norm{\bgamma_1^\top \nabla_2\tau^\sfH(\bZ,\btheta_0)\bgamma_2}_{\psi_1}^2\}\\
   &\leq \exp(\nu_0^2\lambda^2/2), \quad \text{for $|\lambda|\leq \ell_0$,}
   \end{aligned}
 \end{equation}
where $\nu_0$ and $\ell_0$ are constants depend on constants $c_0,c',c''$. This shows that Assumption~\ref{ass:taufunction}(v) holds at $\btheta=\btheta_0$.

{\blue Note that there are several equivalent definitions for a generic zero-mean subexponential variable $U$. One of them is defined as follows:}
 there is a constant $c_1>0$ such that $\E\exp(\lambda U)$ is bounded for all $|\lambda|\leq c_1$. 
 {\blue This definition implies that, for the subexponential variable $\bgamma_1^\top \nabla_2\zeta^\sfH(\bZ,\btheta_0)\gamma_2$,} there is a constant $c_2>0$ such that  $\E\exp\{\lambda\bgamma_1^\top \nabla_2\zeta^\sfH(\cdot,\btheta_0)\bgamma_2\}$ is bounded  for all $|\lambda|\leq c_2$.
Because $\E\exp\{\lambda\bgamma_1^\top \nabla_2\zeta^\sfH(\cdot,\btheta)\bgamma_2\}$ is a continuous function in $(\lambda,\btheta^\top)\in[-c_2,c_2]\otimes\overline{\mathcal{B}}(\btheta_0,r)$, and in addition that the domain of this function  is a compact set, it then holds  \[
\sup_{|\lambda|\leq c_2}\sup_{\btheta\in\overline{\mathcal{B}}(\btheta_0,r)} \E\exp\{\lambda\bgamma_1^\top \nabla_2\zeta^\sfH(\cdot,\btheta)\bgamma_2\}< C.
\]
Thus,
 $\bgamma_1^\top \nabla_2\zeta^\sfH(\cdot,\btheta)\bgamma_2$ is subexponential for any $\btheta\in\overline{\mathcal{B}}(\btheta_0,r)$. Similar to \eqref{eqn: subexponential}, we can establish the bound in Assumption~\ref{ass:taufunction}(v).
\end{enumerate}

This completes the proof.
\end{proof}

\bibliographystyle{apalike}
\bibliography{mybib}

\begin{thebibliography}{}

\bibitem[Abrevaya and Shin, 2011]{abrevaya2011rank}
Abrevaya, J. and Shin, Y. (2011).
\newblock Rank estimation of partially linear index models.
\newblock {\em The Econometrics Journal}, 14(3):409--437.

\bibitem[Bahadur, 1966]{bahadur1966note}
Bahadur, R.~R. (1966).
\newblock A note on quantiles in large samples.
\newblock {\em The Annals of Mathematical Statistics}, 37(3):577--580.

\bibitem[Belloni et~al., 2018]{belloni2015uniformly}
Belloni, A., Chernozhukov, V., Chetverikov, D., and Wei, Y. (2018).
\newblock Uniformly valid post-regularization confidence regions for many
  functional parameters in {Z}-estimation framework.
\newblock {\em The Annals of Statistics}, 46(6B):3643--3675.

\bibitem[Belloni et~al., 2014]{belloni2014uniform}
Belloni, A., Chernozhukov, V., and Kato, K. (2014).
\newblock Uniform post-selection inference for least absolute deviation
  regression and other {Z}-estimation problems.
\newblock {\em Biometrika}, 102(1):77--94.

\bibitem[Caner, 2014]{caner2014near}
Caner, M. (2014).
\newblock Near exogeneity and weak identification in generalized empirical
  likelihood estimators: Many moment asymptotics.
\newblock {\em Journal of Econometrics}, 182(2):247--268.

\bibitem[Cattaneo et~al., 2018a]{cattaneo2016alternative}
Cattaneo, M.~D., Jansson, M., and Newey, W.~K. (2018a).
\newblock Alternative asymptotics and the partially linear model with many
  regressors.
\newblock {\em Econometric Theory}, 34:277--301.

\bibitem[Cattaneo et~al., 2018b]{cattaneo2017inference}
Cattaneo, M.~D., Jansson, M., and Newey, W.~K. (2018b).
\newblock Inference in linear regression models with many covariates and
  heteroskedasticity.
\newblock {\em Journal of the American Statistical Association},
  113(523):1350--1361.

\bibitem[Cavanagh and Sherman, 1998]{cavanagh1998rank}
Cavanagh, C. and Sherman, R.~P. (1998).
\newblock Rank estimators for monotonic index models.
\newblock {\em Journal of Econometrics}, 84(2):351--381.

\bibitem[Chernozhukov et~al., 2017]{chernozhukov2017central}
Chernozhukov, V., Chetverikov, D., and Kato, K. (2017).
\newblock Central limit theorems and bootstrap in high dimensions.
\newblock {\em The Annals of Probability}, 45(4):2309--2352.

\bibitem[Chernozhukov et~al., 2015]{chernozhukov2015valid}
Chernozhukov, V., Hansen, C., and Spindler, M. (2015).
\newblock Valid post-selection and post-regularization inference: An
  elementary, general approach.
\newblock {\em Annual Review of Economics}, 7:649--688.

\bibitem[de~la Pena and Gin{\'e}, 2012]{de2012decoupling}
de~la Pena, V. and Gin{\'e}, E. (2012).
\newblock {\em Decoupling: From Dependence to Independence}.
\newblock New York: Springer.

\bibitem[Dudley, 1999]{dudley1999uniform}
Dudley, R.~M. (1999).
\newblock {\em Uniform Central Limit Theorems}.
\newblock Cambridge University Press.

\bibitem[Fan et~al., 2015]{fan2015power}
Fan, J., Liao, Y., and Yao, J. (2015).
\newblock Power enhancement in high-dimensional cross-sectional tests.
\newblock {\em Econometrica}, 83(4):1497--1541.

\bibitem[Han, 1987]{han1987non}
Han, A.~K. (1987).
\newblock Non-parametric analysis of a generalized regression model: the
  maximum rank correlation estimator.
\newblock {\em Journal of Econometrics}, 35(2-3):303--316.

\bibitem[Han and Phillips, 2006]{han2006gmm}
Han, C. and Phillips, P.~C. (2006).
\newblock {GMM} with many moment conditions.
\newblock {\em Econometrica}, 74(1):147--192.

\bibitem[Han et~al., 2017]{han2017provable}
Han, F., Ji, H., Ji, Z., and Wang, H. (2017).
\newblock A provable smoothing approach for high dimensional generalized
  regression with applications in genomics.
\newblock {\em Electronic Journal of Statistics}, 11(2):4347--4403.

\bibitem[He and Shao, 1996]{he1996general}
He, X. and Shao, Q.-M. (1996).
\newblock A general {B}ahadur representation of {M}-estimators and its
  application to linear regression with nonstochastic designs.
\newblock {\em The Annals of Statistics}, 24(6):2608--2630.

\bibitem[He and Shao, 2000]{he2000parameters}
He, X. and Shao, Q.-M. (2000).
\newblock On parameters of increasing dimensions.
\newblock {\em Journal of Multivariate Analysis}, 73(1):120--135.

\bibitem[Hoeffding, 1948]{hoeffding1948class}
Hoeffding, W. (1948).
\newblock A class of statistics with asymptotically normal distribution.
\newblock {\em The Annals of Mathematical Statistics}, 19(3):293--325.

\bibitem[Honor{\'e} and Powell, 2005]{honore1997pairwise}
Honor{\'e}, B.~E. and Powell, J. (2005).
\newblock Pairwise difference estimators for nonlinear models.
\newblock In {\em Andrews, D.W.K., Stock, J.H. (Eds.) Identification and
  Inference in Econometric Models. Essays in Honor of Thomas Rothenberg}, pages
  520--553. Cambridge University Press.

\bibitem[Huber, 1967]{huber1967behavior}
Huber, P.~J. (1967).
\newblock The behavior of maximum likelihood estimates under nonstandard
  conditions.
\newblock In {\em Proceedings of the Fifth Berkeley Symposium on Mathematical
  Statistics and Probability}, pages 221--233. Berkeley, CA.

\bibitem[Huber, 1973]{huber1973robust}
Huber, P.~J. (1973).
\newblock Robust regression: Asymptotics, conjectures and {M}onte {C}arlo.
\newblock {\em The Annals of Statistics}, 1(5):799--821.

\bibitem[Javanmard and Montanari, 2018]{javanmard2015biasing}
Javanmard, A. and Montanari, A. (2018).
\newblock De-biasing the lasso: Optimal sample size for {G}aussian designs.
\newblock {\em The Annals of Statistics}, 46(6A):2593--2622.

\bibitem[Jure{\v{c}}kov{\'a} et~al., 2012]{jurevckova2012methodology}
Jure{\v{c}}kov{\'a}, J., Sen, P.~K., and Picek, J. (2012).
\newblock {\em Methodology in Robust and Nonparametric Statistics}.
\newblock CRC Press.

\bibitem[Khan and Tamer, 2007]{khan2007partial}
Khan, S. and Tamer, E. (2007).
\newblock Partial rank estimation of duration models with general forms of
  censoring.
\newblock {\em Journal of Econometrics}, 136(1):251--280.

\bibitem[Kiefer, 1967]{kiefer1967bahadur}
Kiefer, J. (1967).
\newblock On {B}ahadur's representation of sample quantiles.
\newblock {\em The Annals of Mathematical Statistics}, 38(5):1323--1342.

\bibitem[Kosorok, 2007]{kosorok2007introduction}
Kosorok, M.~R. (2007).
\newblock {\em Introduction to Empirical Processes and Semiparametric
  Inference}.
\newblock Springer.

\bibitem[Lee et~al., 2016]{lee2016exact}
Lee, J.~D., Sun, D.~L., Sun, Y., and Taylor, J.~E. (2016).
\newblock Exact post-selection inference, with application to the lasso.
\newblock {\em The Annals of Statistics}, 44(3):907--927.

\bibitem[Lei et~al., 2018]{lei2016asymptotics}
Lei, L., Bickel, P.~J., and Karoui, N.~E. (2018).
\newblock Asymptotics for high dimensional regression m-estimates: Fixed design
  results.
\newblock {\em Probability Theory and Related Fields}, 172(3-4):983---1079.

\bibitem[Mammen, 1989]{mammen1989asymptotics}
Mammen, E. (1989).
\newblock Asymptotics with increasing dimension for robust regression with
  applications to the bootstrap.
\newblock {\em The Annals of Statistics}, 17(1):382--400.

\bibitem[Mammen, 1993]{mammen1993bootstrap}
Mammen, E. (1993).
\newblock Bootstrap and wild bootstrap for high dimensional linear models.
\newblock {\em The Annals of Statistics}, 21(1):255--285.

\bibitem[Negahban et~al., 2012]{negahban2012unified}
Negahban, S.~N., Ravikumar, P., Wainwright, M.~J., and Yu, B. (2012).
\newblock A unified framework for high-dimensional analysis of {$M$}-estimators
  with decomposable regularizers.
\newblock {\em Statistical Science}, 27(4):538--557.

\bibitem[Newey and Windmeijer, 2009]{newey2009generalized}
Newey, W.~K. and Windmeijer, F. (2009).
\newblock Generalized method of moments with many weak moment conditions.
\newblock {\em Econometrica}, 77(3):687--719.

\bibitem[Nolan and Pollard, 1987]{nolan1987u}
Nolan, D. and Pollard, D. (1987).
\newblock U-processes: rates of convergence.
\newblock {\em The Annals of Statistics}, 15(2):780--799.

\bibitem[Pakes and Pollard, 1989]{pakes1989simulation}
Pakes, A. and Pollard, D. (1989).
\newblock Simulation and the asymptotics of optimization estimators.
\newblock {\em Econometrica}, 57(5):1027--1057.

\bibitem[Pollard, 1984]{pollard2012convergence}
Pollard, D. (1984).
\newblock {\em Convergence of Stochastic Processes}.
\newblock Springer.

\bibitem[Portnoy, 1984]{portnoy1984asymptotic}
Portnoy, S. (1984).
\newblock Asymptotic behavior of {M}-estimators of $p$ regression parameters
  when $p^2/n$ is large. {I. Consistency}.
\newblock {\em The Annals of Statistics}, 12(4):1298--1309.

\bibitem[Portnoy, 1985]{portnoy1985asymptotic}
Portnoy, S. (1985).
\newblock Asymptotic behavior of {M} estimators of $p$ regression parameters
  when $p^2/n$ is large; {II. Normal approximation}.
\newblock {\em The Annals of Statistics}, 13(4):1403--1417.

\bibitem[Portnoy, 1988]{portnoy1988asymptotic}
Portnoy, S. (1988).
\newblock Asymptotic behavior of likelihood methods for exponential families
  when the number of parameters tends to infinity.
\newblock {\em The Annals of Statistics}, 16(1):356--366.

\bibitem[Sherman, 1993]{sherman1993limiting}
Sherman, R.~P. (1993).
\newblock The limiting distribution of the maximum rank correlation estimator.
\newblock {\em Econometrica}, 61(1):123--137.

\bibitem[Sherman, 1994]{sherman1994maximal}
Sherman, R.~P. (1994).
\newblock Maximal inequalities for degenerate {U}-processes with applications
  to optimization estimators.
\newblock {\em The Annals of Statistics}, 22(1):439--459.

\bibitem[Spokoiny, 2012a]{spokoiny2012parametric}
Spokoiny, V. (2012a).
\newblock Parametric estimation. {F}inite sample theory.
\newblock {\em The Annals of Statistics}, 40(6):2877--2909.

\bibitem[Spokoiny, 2012b]{spokoiny2012supp}
Spokoiny, V. (2012b).
\newblock Supplement to ``{P}arametric estimation. {F}inite sample theory".
\newblock {\em The Annals of Statistics}.

\bibitem[Spokoiny, 2013]{spokoiny2013bernstein}
Spokoiny, V. (2013).
\newblock Bernstein-von {M}ises {T}heorem for growing parameter dimension.
\newblock {\em arXiv preprint arXiv:1302.3430}.

\bibitem[Subbotin, 2008]{subbotin2008essays}
Subbotin, V.~Y. (2008).
\newblock {\em Essays on the Econometric Theory of Rank Regressions}.
\newblock PhD thesis, Northwestern University.

\bibitem[Van~de Geer et~al., 2014]{van2014asymptotically}
Van~de Geer, S., B{\"u}hlmann, P., Ritov, Y., and Dezeure, R. (2014).
\newblock On asymptotically optimal confidence regions and tests for
  high-dimensional models.
\newblock {\em The Annals of Statistics}, 42(3):1166--1202.

\bibitem[van~der Vaart and Wellner, 1996]{wellner1996weak}
van~der Vaart, A. and Wellner, J. (1996).
\newblock {\em Weak Convergence and Empirical Processes}.
\newblock Springer.

\bibitem[Wang, 2007]{wang2007note}
Wang, H. (2007).
\newblock A note on iterative marginal optimization: a simple algorithm for
  maximum rank correlation estimation.
\newblock {\em Computational Statistics and Data Analysis}, 51(6):2803--2812.

\bibitem[Yu, 1997]{yu1997assouad}
Yu, B. (1997).
\newblock Assouad, {F}ano, and {L}e {C}am.
\newblock {\em \textnormal{In} Festschrift for Lucien Le Cam,
  \textnormal{423--435}. \textnormal{Springer, New York}}.

\bibitem[Zhang and Zhang, 2014]{zhang2014confidence}
Zhang, C.-H. and Zhang, S.~S. (2014).
\newblock Confidence intervals for low dimensional parameters in high
  dimensional linear models.
\newblock {\em Journal of the Royal Statistical Society: Series B},
  76(1):217--242.

\end{thebibliography}

\newpage{}

{\ \renewcommand{\tabcolsep}{5pt} \renewcommand{\arraystretch}{1.1} 
\begin{table}[]
\caption{Coverage probability under the first projection direction.}
\label{tab:1}\vspace{0.2cm} \centering
\begin{tabular}{llllllllllll}
\hline
\multicolumn{1}{l|}{\multirow{2}{*}{$n$}} & \multicolumn{1}{l|}{%
\multirow{2}{*}{$p$}} & \multicolumn{10}{c}{nominal coverage probability} \\ 
\cline{3-12}
\multicolumn{1}{l|}{} & \multicolumn{1}{l|}{} & 0.5 & 0.55 & 0.6 & 0.65 & 0.7
& 0.75 & 0.8 & 0.85 & 0.9 & 0.95 \\ \hline
\multicolumn{1}{l|}{\multirow{4}{*}{100}} & \multicolumn{1}{l|}{1} & 0.606 & 
0.644 & 0.692 & 0.731 & 0.781 & 0.822 & 0.860 & 0.890 & 0.914 & 0.932 \\ 
\multicolumn{1}{l|}{} & \multicolumn{1}{l|}{2} & 0.806 & 0.829 & 0.844 & 
0.862 & 0.881 & 0.895 & 0.907 & 0.920 & 0.930 & 0.948 \\ 
\multicolumn{1}{l|}{} & \multicolumn{1}{l|}{3} & 0.923 & 0.930 & 0.938 & 
0.947 & 0.953 & 0.957 & 0.963 & 0.964 & 0.970 & 0.973 \\ 
\multicolumn{1}{l|}{} & \multicolumn{1}{l|}{4} & 0.877 & 0.892 & 0.905 & 
0.920 & 0.926 & 0.939 & 0.945 & 0.948 & 0.956 & 0.964 \\ \hline
\multicolumn{1}{l|}{\multirow{4}{*}{200}} & \multicolumn{1}{l|}{1} & 0.518 & 
0.561 & 0.619 & 0.672 & 0.719 & 0.763 & 0.809 & 0.861 & 0.903 & 0.939 \\ 
\multicolumn{1}{l|}{} & \multicolumn{1}{l|}{2} & 0.598 & 0.655 & 0.704 & 
0.754 & 0.801 & 0.826 & 0.863 & 0.890 & 0.912 & 0.938 \\ 
\multicolumn{1}{l|}{} & \multicolumn{1}{l|}{3} & 0.702 & 0.746 & 0.788 & 
0.820 & 0.846 & 0.874 & 0.893 & 0.911 & 0.930 & 0.953 \\ 
\multicolumn{1}{l|}{} & \multicolumn{1}{l|}{4} & 0.852 & 0.871 & 0.887 & 
0.902 & 0.920 & 0.923 & 0.934 & 0.940 & 0.952 & 0.960 \\ \hline
\multicolumn{1}{l|}{\multirow{4}{*}{400}} & \multicolumn{1}{l|}{1} & 0.502 & 
0.552 & 0.588 & 0.648 & 0.699 & 0.749 & 0.797 & 0.857 & 0.900 & 0.946 \\ 
\multicolumn{1}{l|}{} & \multicolumn{1}{l|}{2} & 0.500 & 0.555 & 0.604 & 
0.663 & 0.724 & 0.766 & 0.819 & 0.858 & 0.905 & 0.945 \\ 
\multicolumn{1}{l|}{} & \multicolumn{1}{l|}{3} & 0.576 & 0.627 & 0.672 & 
0.715 & 0.765 & 0.809 & 0.844 & 0.882 & 0.900 & 0.929 \\ 
\multicolumn{1}{l|}{} & \multicolumn{1}{l|}{4} & 0.613 & 0.672 & 0.711 & 
0.737 & 0.782 & 0.833 & 0.870 & 0.890 & 0.920 & 0.944 \\ \hline
\end{tabular}%
\end{table}
}

{\ \renewcommand{\tabcolsep}{5pt} \renewcommand{\arraystretch}{1.1} 
\begin{table}[]
\caption{Coverage probability under the second projection direction.}
\label{tab:2}\vspace{0.2cm} \centering
\begin{tabular}{llllllllllll}
\hline
\multicolumn{1}{l|}{\multirow{2}{*}{$n$}} & \multicolumn{1}{l|}{%
\multirow{2}{*}{$p$}} & \multicolumn{10}{c}{nominal coverage probability} \\ 
\cline{3-12}
\multicolumn{1}{l|}{} & \multicolumn{1}{l|}{} & 0.5 & 0.55 & 0.6 & 0.65 & 0.7
& 0.75 & 0.8 & 0.85 & 0.9 & 0.95 \\ \hline
\multicolumn{1}{l|}{\multirow{4}{*}{100}} & \multicolumn{1}{l|}{1} & 0.606 & 
0.644 & 0.692 & 0.731 & 0.781 & 0.822 & 0.860 & 0.890 & 0.914 & 0.932 \\ 
\multicolumn{1}{l|}{} & \multicolumn{1}{l|}{2} & 0.790 & 0.820 & 0.839 & 
0.858 & 0.875 & 0.890 & 0.905 & 0.914 & 0.929 & 0.945 \\ 
\multicolumn{1}{l|}{} & \multicolumn{1}{l|}{3} & 0.920 & 0.928 & 0.938 & 
0.947 & 0.952 & 0.956 & 0.963 & 0.965 & 0.970 & 0.973 \\ 
\multicolumn{1}{l|}{} & \multicolumn{1}{l|}{4} & 0.876 & 0.890 & 0.903 & 
0.918 & 0.926 & 0.939 & 0.944 & 0.949 & 0.956 & 0.965 \\ \hline
\multicolumn{1}{l|}{\multirow{4}{*}{200}} & \multicolumn{1}{l|}{1} & 0.518 & 
0.561 & 0.619 & 0.672 & 0.719 & 0.763 & 0.809 & 0.861 & 0.903 & 0.939 \\ 
\multicolumn{1}{l|}{} & \multicolumn{1}{l|}{2} & 0.578 & 0.638 & 0.691 & 
0.732 & 0.773 & 0.810 & 0.857 & 0.883 & 0.909 & 0.934 \\ 
\multicolumn{1}{l|}{} & \multicolumn{1}{l|}{3} & 0.699 & 0.735 & 0.770 & 
0.801 & 0.831 & 0.869 & 0.889 & 0.912 & 0.929 & 0.947 \\ 
\multicolumn{1}{l|}{} & \multicolumn{1}{l|}{4} & 0.841 & 0.865 & 0.883 & 
0.900 & 0.911 & 0.919 & 0.932 & 0.943 & 0.952 & 0.958 \\ \hline
\multicolumn{1}{l|}{\multirow{2}{*}{400}} & \multicolumn{1}{l|}{1} & 0.502 & 
0.552 & 0.588 & 0.648 & 0.699 & 0.749 & 0.797 & 0.857 & 0.900 & 0.946 \\ 
\multicolumn{1}{l|}{} & \multicolumn{1}{l|}{2} & 0.519 & 0.573 & 0.623 & 
0.661 & 0.701 & 0.754 & 0.810 & 0.861 & 0.901 & 0.947 \\ 
\multicolumn{1}{l|}{} & \multicolumn{1}{l|}{3} & 0.568 & 0.615 & 0.673 & 
0.717 & 0.760 & 0.800 & 0.837 & 0.868 & 0.903 & 0.929 \\ 
\multicolumn{1}{l|}{} & \multicolumn{1}{l|}{4} & 0.592 & 0.637 & 0.675 & 
0.732 & 0.774 & 0.817 & 0.856 & 0.881 & 0.911 & 0.937 \\ \hline
\end{tabular}%
\end{table}
}

{\ \renewcommand{\tabcolsep}{5pt} \renewcommand{\arraystretch}{1.1} 
\begin{table}[]
\caption{Coverage probability under the third projection direction.}
\label{tab:3}\vspace{0.2cm} \centering
\begin{tabular}{llllllllllll}
\hline
\multicolumn{1}{l|}{\multirow{2}{*}{$n$}} & \multicolumn{1}{l|}{%
\multirow{2}{*}{$p$}} & \multicolumn{10}{c}{nominal coverage probability} \\ 
\cline{3-12}
\multicolumn{1}{l|}{} & \multicolumn{1}{l|}{} & 0.5 & 0.55 & 0.6 & 0.65 & 0.7
& 0.75 & 0.8 & 0.85 & 0.9 & 0.95 \\ \hline
\multicolumn{1}{l|}{\multirow{4}{*}{100}} & \multicolumn{1}{l|}{1} & 0.606 & 
0.644 & 0.692 & 0.731 & 0.781 & 0.822 & 0.860 & 0.890 & 0.914 & 0.932 \\ 
\multicolumn{1}{l|}{} & \multicolumn{1}{l|}{2} & 0.804 & 0.828 & 0.846 & 
0.861 & 0.880 & 0.897 & 0.907 & 0.921 & 0.931 & 0.948 \\ 
\multicolumn{1}{l|}{} & \multicolumn{1}{l|}{3} & 0.923 & 0.929 & 0.938 & 
0.947 & 0.953 & 0.957 & 0.963 & 0.964 & 0.970 & 0.974 \\ 
\multicolumn{1}{l|}{} & \multicolumn{1}{l|}{4} & 0.877 & 0.892 & 0.904 & 
0.920 & 0.926 & 0.939 & 0.945 & 0.948 & 0.956 & 0.964 \\ \hline
\multicolumn{1}{l|}{\multirow{4}{*}{200}} & \multicolumn{1}{l|}{1} & 0.518 & 
0.561 & 0.619 & 0.672 & 0.719 & 0.763 & 0.809 & 0.861 & 0.903 & 0.939 \\ 
\multicolumn{1}{l|}{} & \multicolumn{1}{l|}{2} & 0.601 & 0.658 & 0.710 & 
0.754 & 0.799 & 0.828 & 0.864 & 0.895 & 0.913 & 0.940 \\ 
\multicolumn{1}{l|}{} & \multicolumn{1}{l|}{3} & 0.712 & 0.749 & 0.787 & 
0.820 & 0.843 & 0.874 & 0.893 & 0.913 & 0.930 & 0.954 \\ 
\multicolumn{1}{l|}{} & \multicolumn{1}{l|}{4} & 0.852 & 0.870 & 0.886 & 
0.902 & 0.919 & 0.924 & 0.933 & 0.940 & 0.952 & 0.960 \\ \hline
\multicolumn{1}{l|}{\multirow{2}{*}{400}} & \multicolumn{1}{l|}{1} & 0.502 & 
0.552 & 0.588 & 0.648 & 0.699 & 0.749 & 0.797 & 0.857 & 0.900 & 0.946 \\ 
\multicolumn{1}{l|}{} & \multicolumn{1}{l|}{2} & 0.502 & 0.547 & 0.602 & 
0.661 & 0.720 & 0.771 & 0.813 & 0.861 & 0.908 & 0.944 \\ 
\multicolumn{1}{l|}{} & \multicolumn{1}{l|}{3} & 0.566 & 0.618 & 0.672 & 
0.720 & 0.765 & 0.808 & 0.844 & 0.881 & 0.902 & 0.931 \\ 
\multicolumn{1}{l|}{} & \multicolumn{1}{l|}{4} & 0.617 & 0.663 & 0.708 & 
0.738 & 0.789 & 0.835 & 0.871 & 0.892 & 0.920 & 0.946 \\ \hline
\end{tabular}%
\end{table}
}

\begin{figure}[]
\begin{center}
\begin{tabular}{cccc}
{\scriptsize $p=1$} & {\scriptsize $p=2$} & {\scriptsize $p=3$} & 
{\scriptsize $p=4$ } \\ 
\hskip-20pt \includegraphics[width=.23%
\textwidth,angle=0]{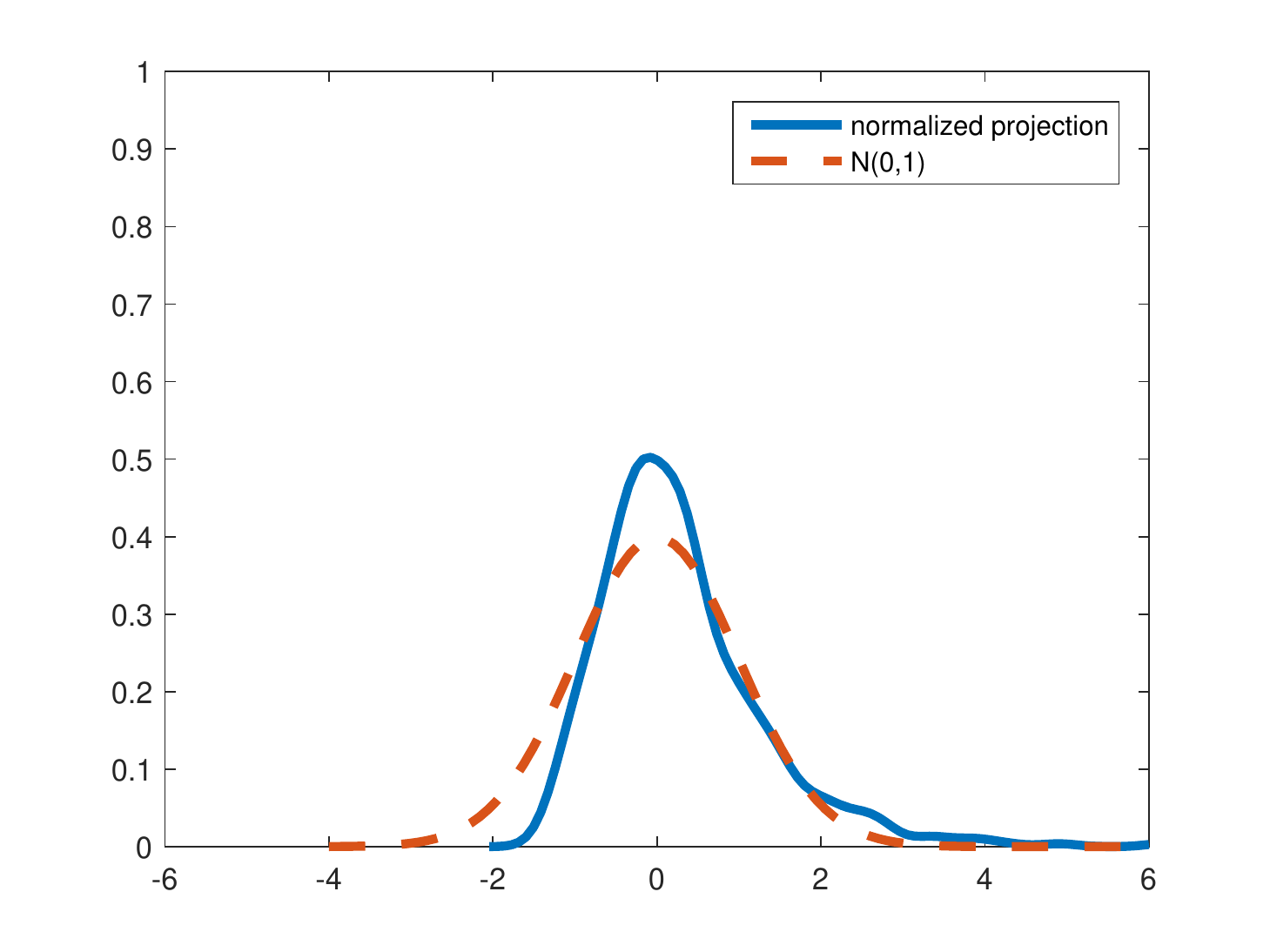} & \hskip-20pt %
\includegraphics[width=.23%
\textwidth,angle=0]{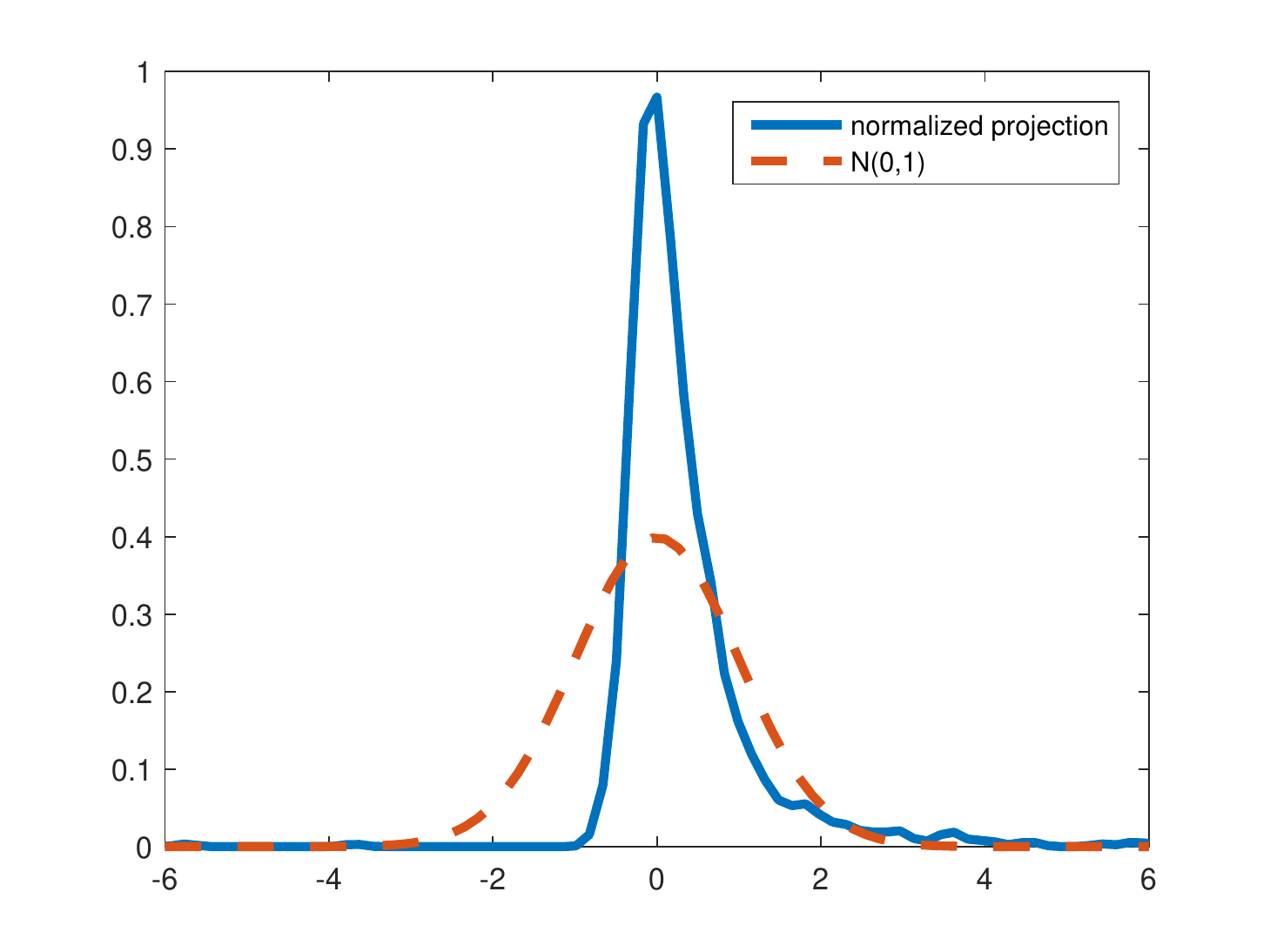} & \hskip-20pt %
\includegraphics[width=.23%
\textwidth,angle=0]{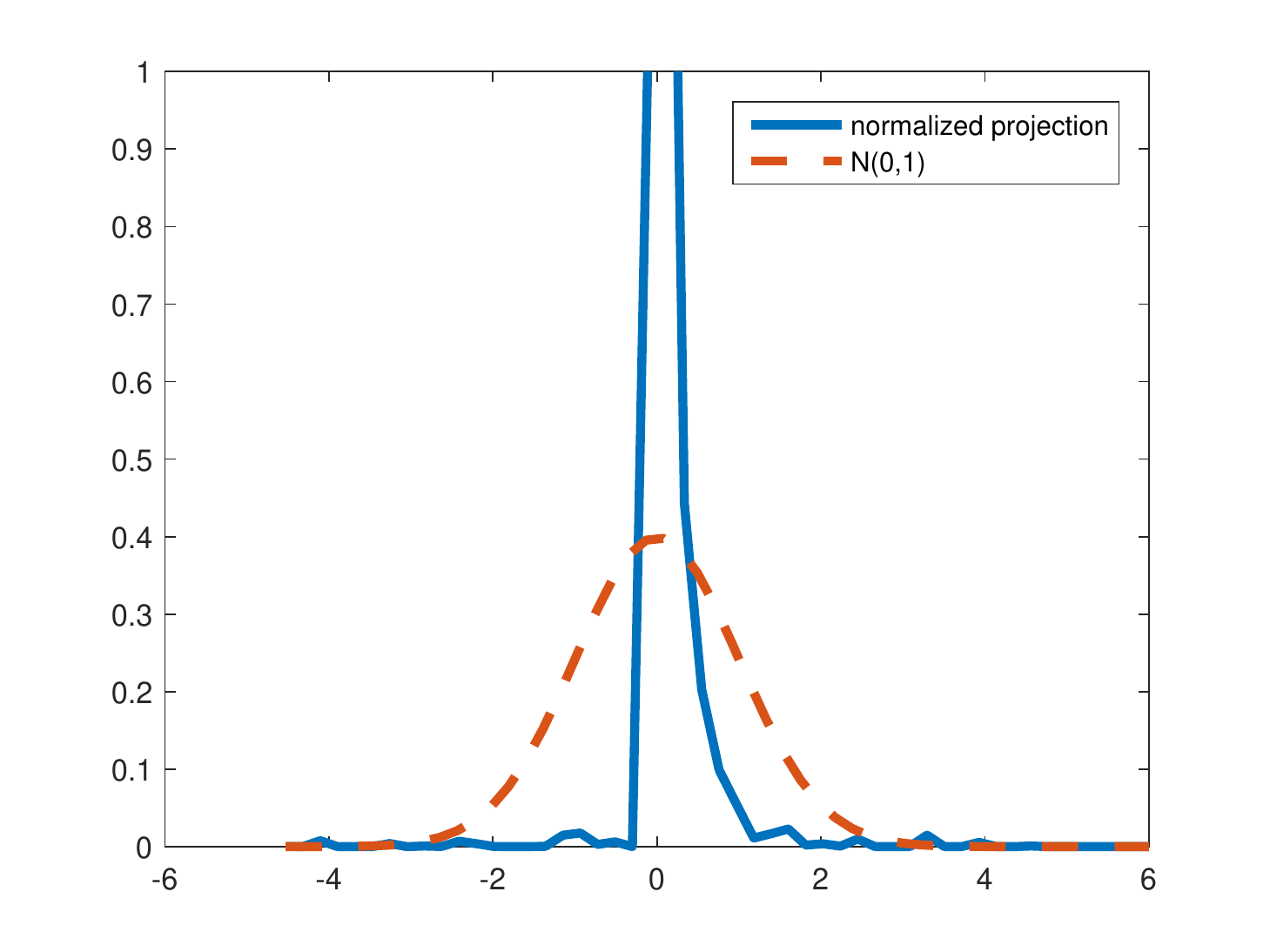} & \hskip-20pt %
\includegraphics[width=.23%
\textwidth,angle=0]{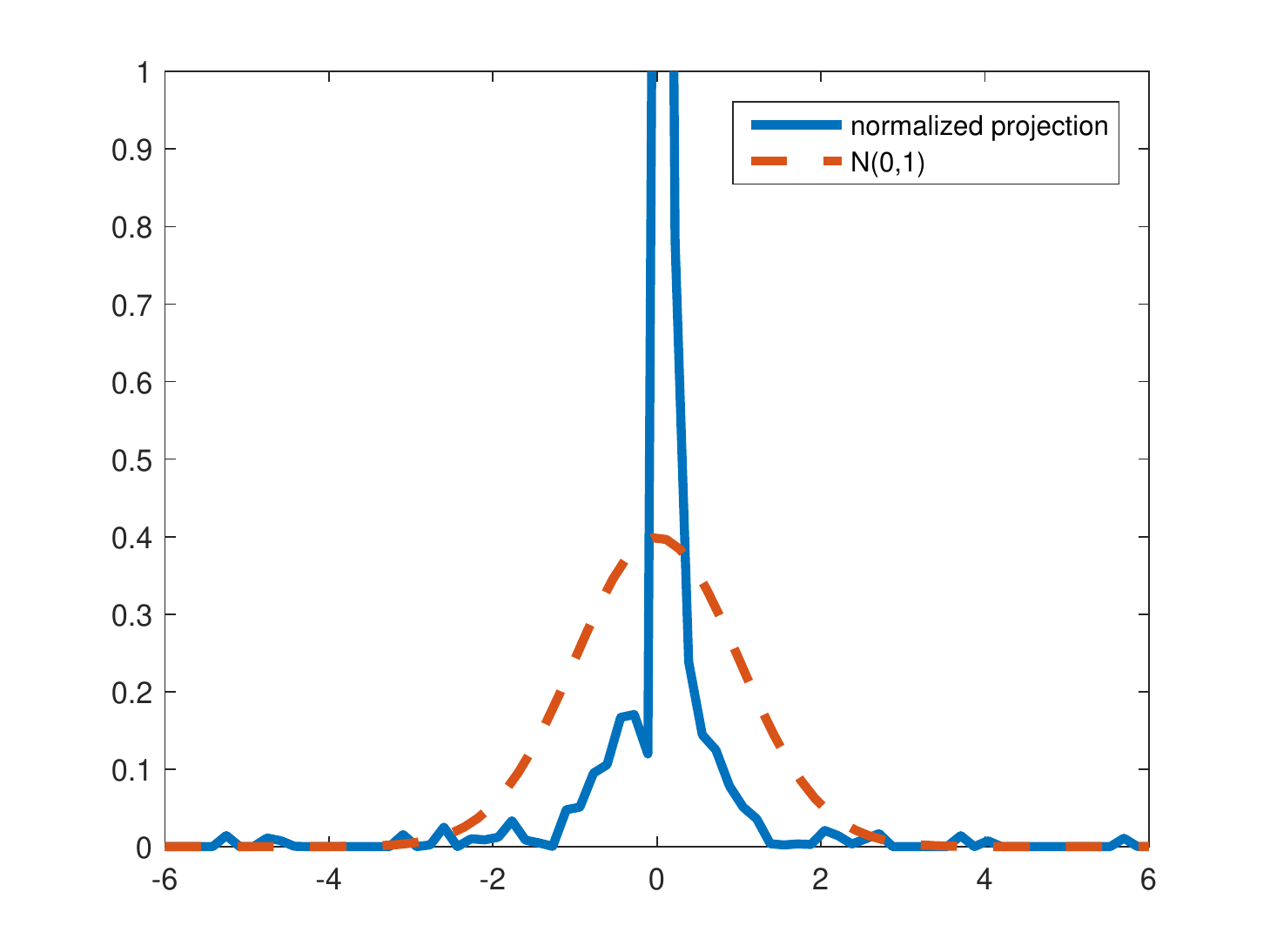} \\[-5pt] 
\hskip-20pt \includegraphics[width=.23%
\textwidth,angle=0]{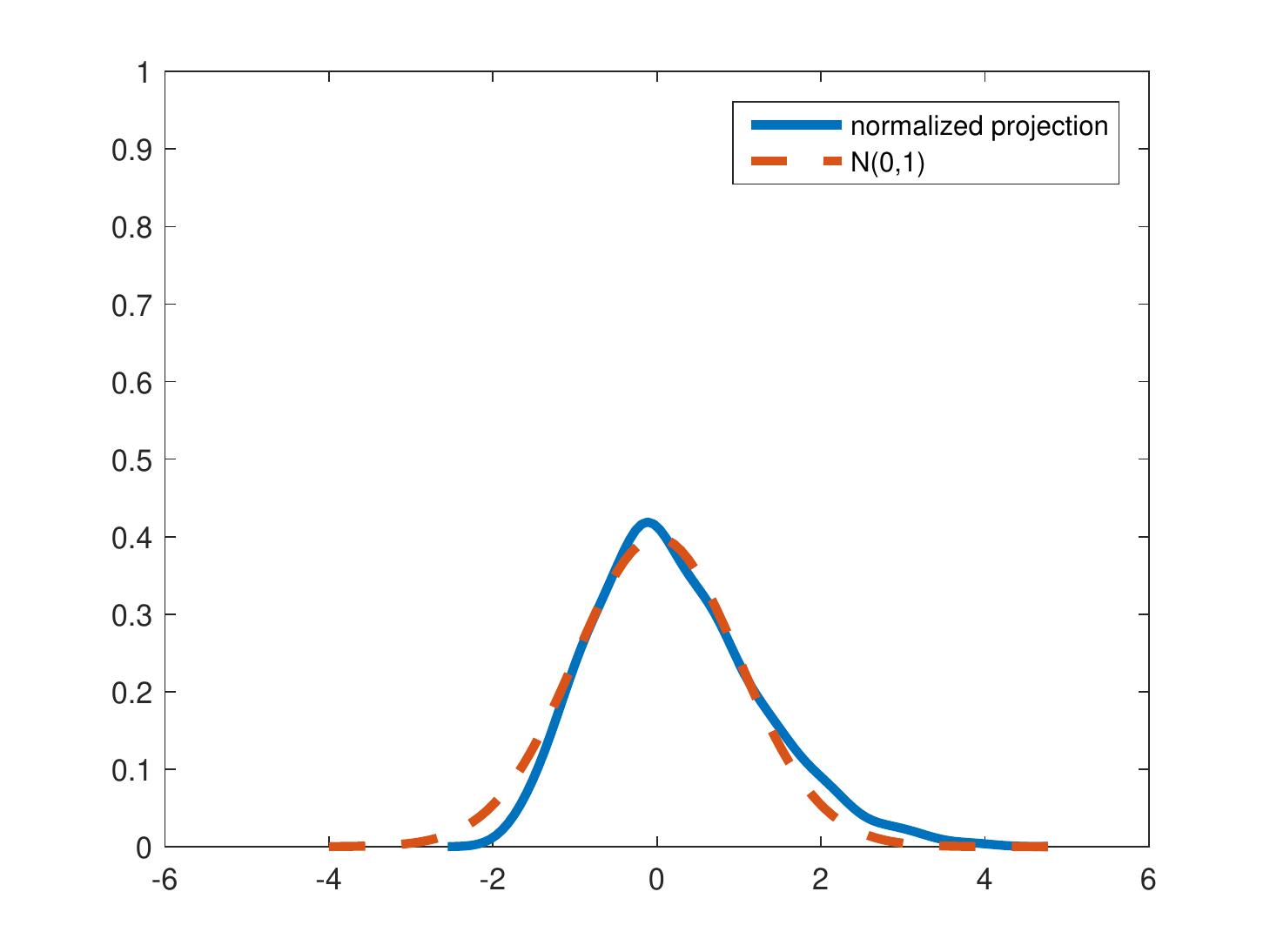} & \hskip-20pt %
\includegraphics[width=.23%
\textwidth,angle=0]{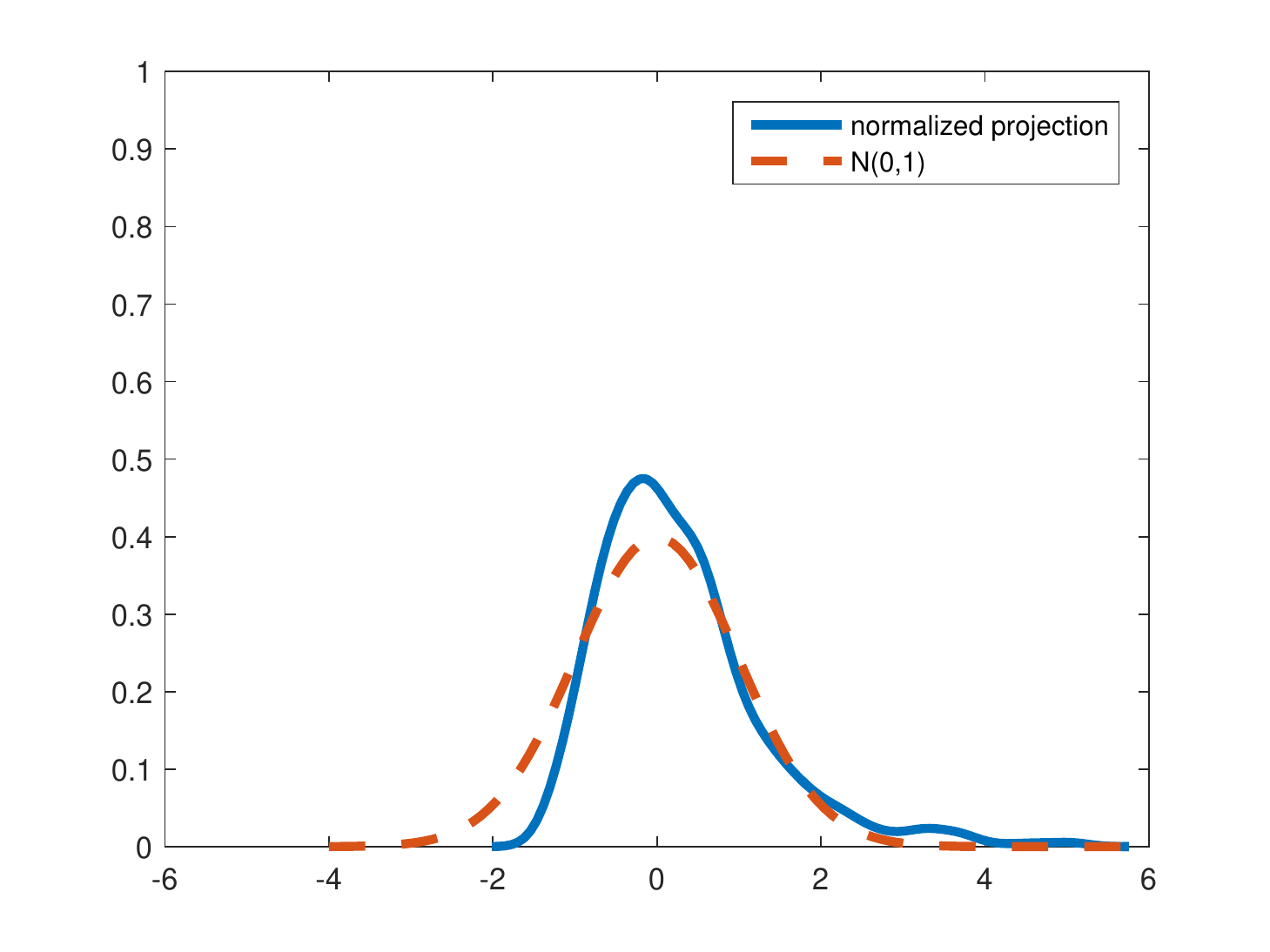} & \hskip-20pt %
\includegraphics[width=.23%
\textwidth,angle=0]{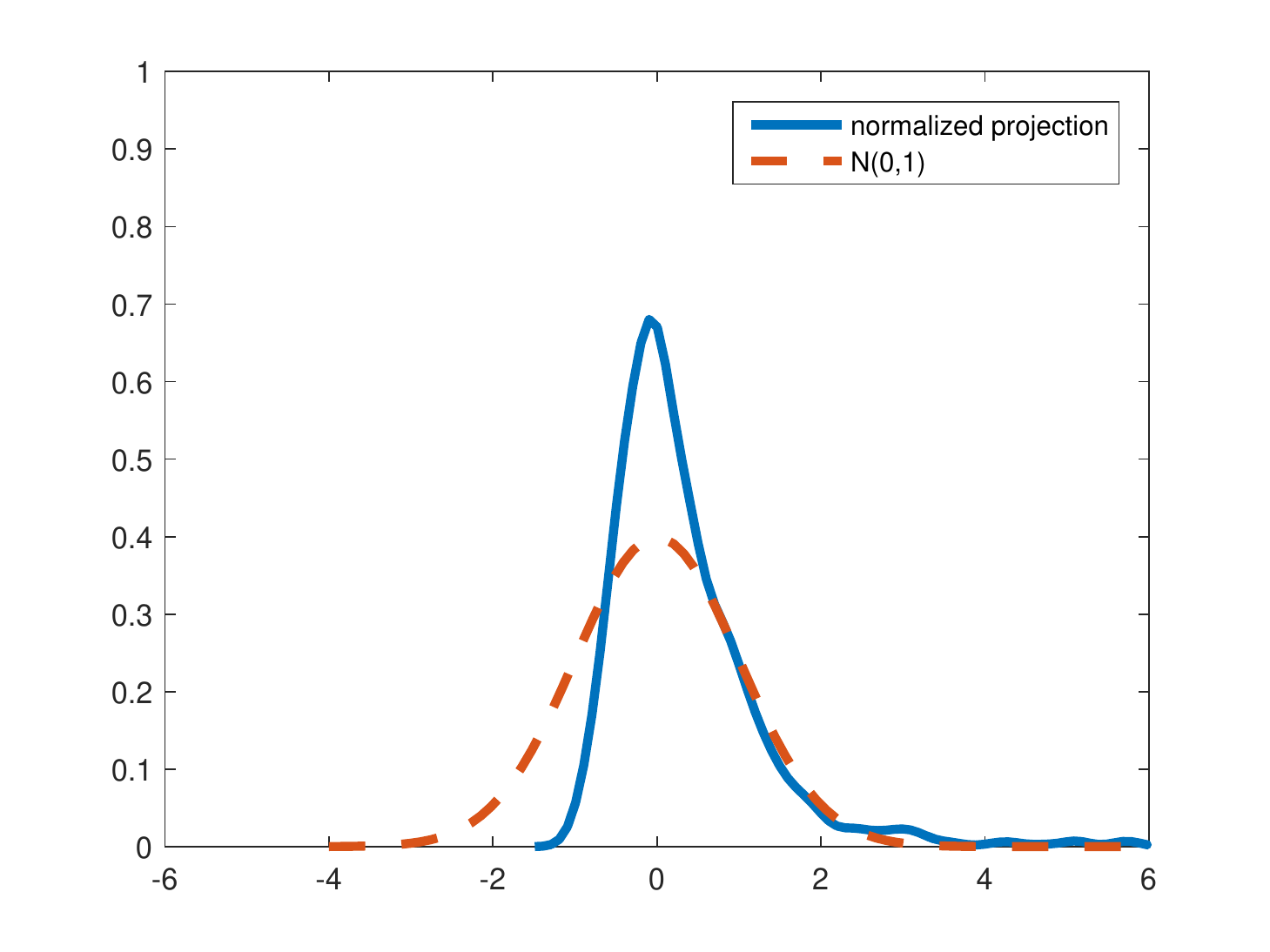} & \hskip-20pt %
\includegraphics[width=.23%
\textwidth,angle=0]{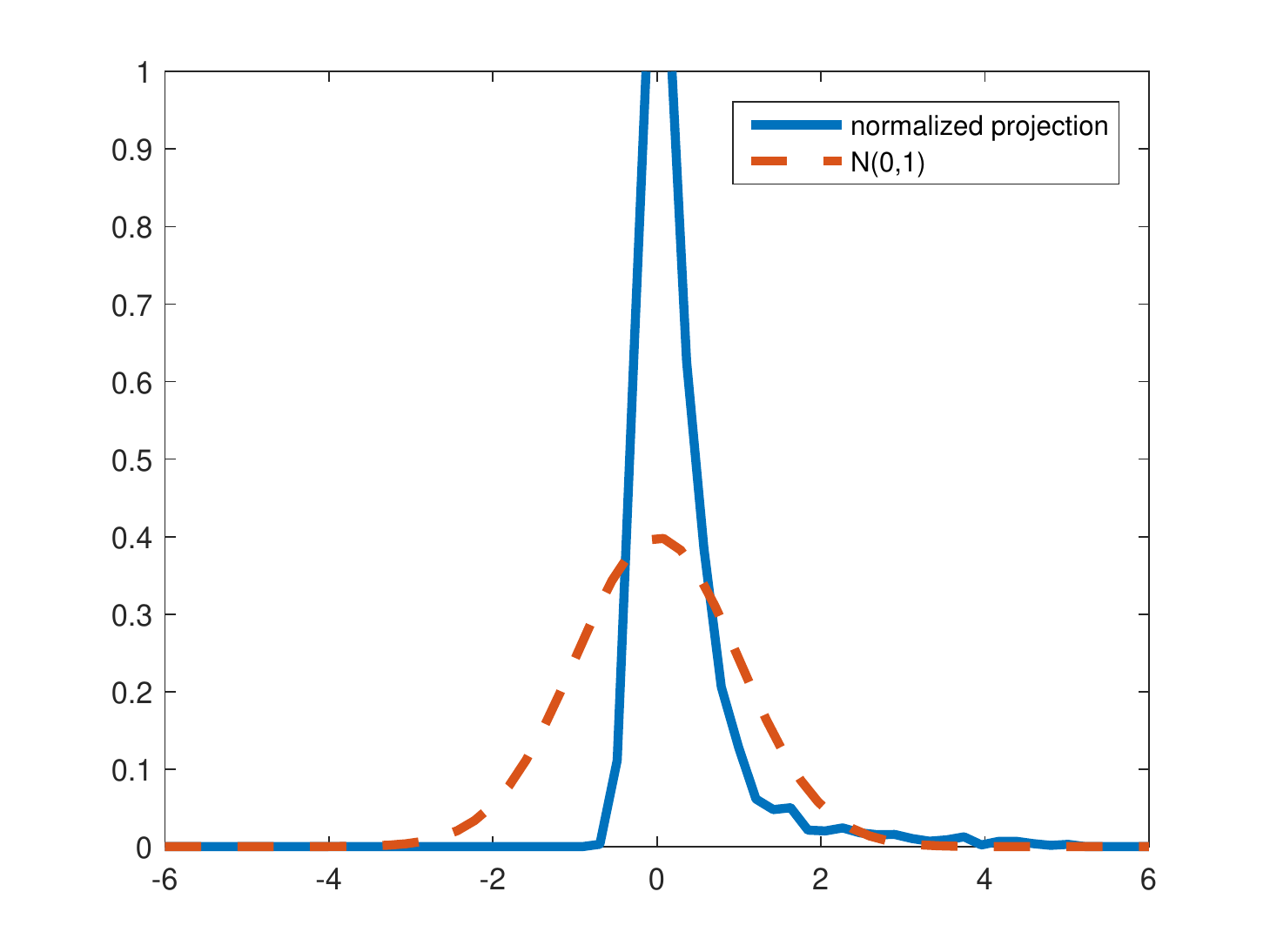} \\[-5pt] 
\hskip-20pt \includegraphics[width=.23%
\textwidth,angle=0]{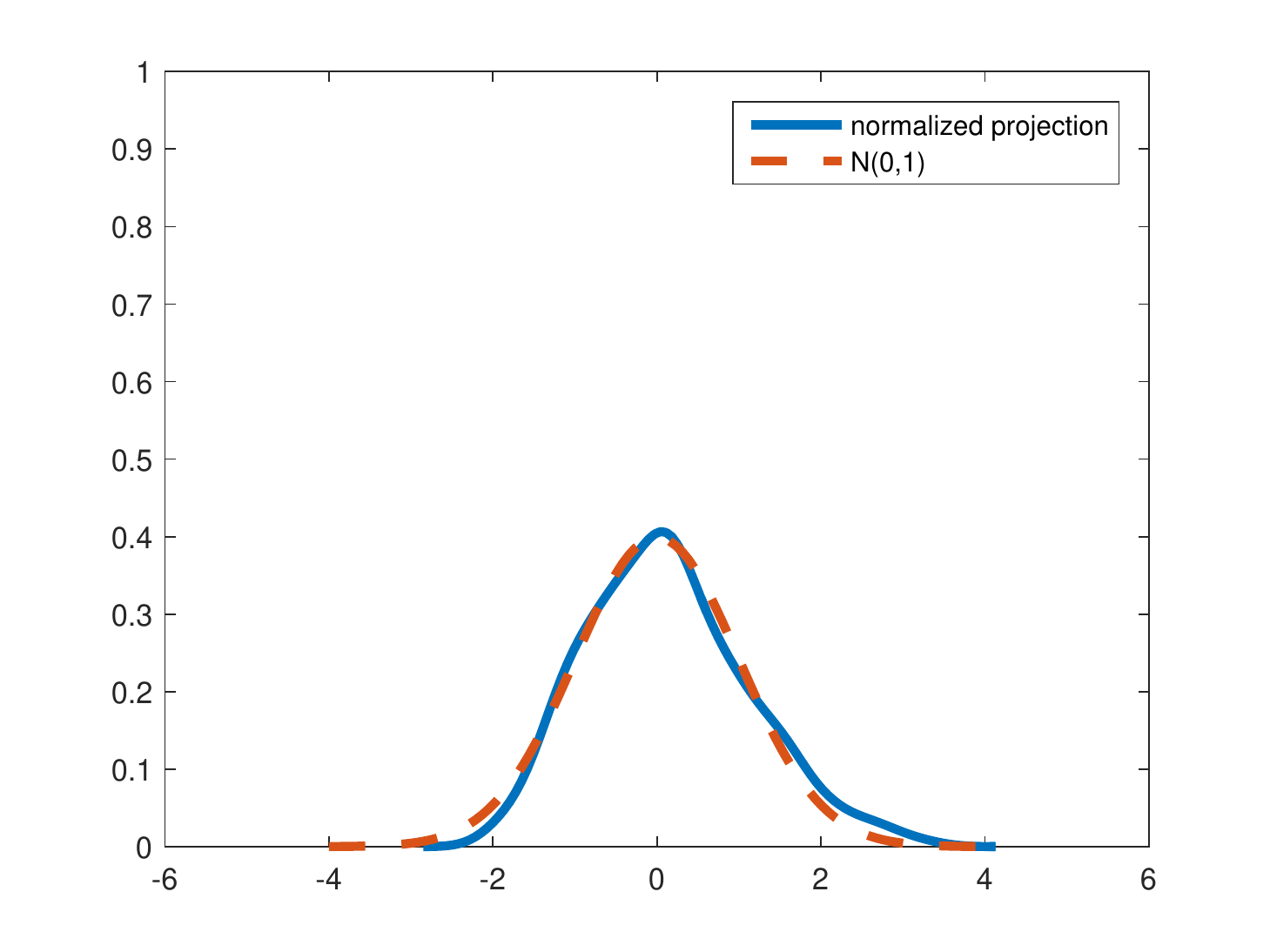} & \hskip-20pt %
\includegraphics[width=.23%
\textwidth,angle=0]{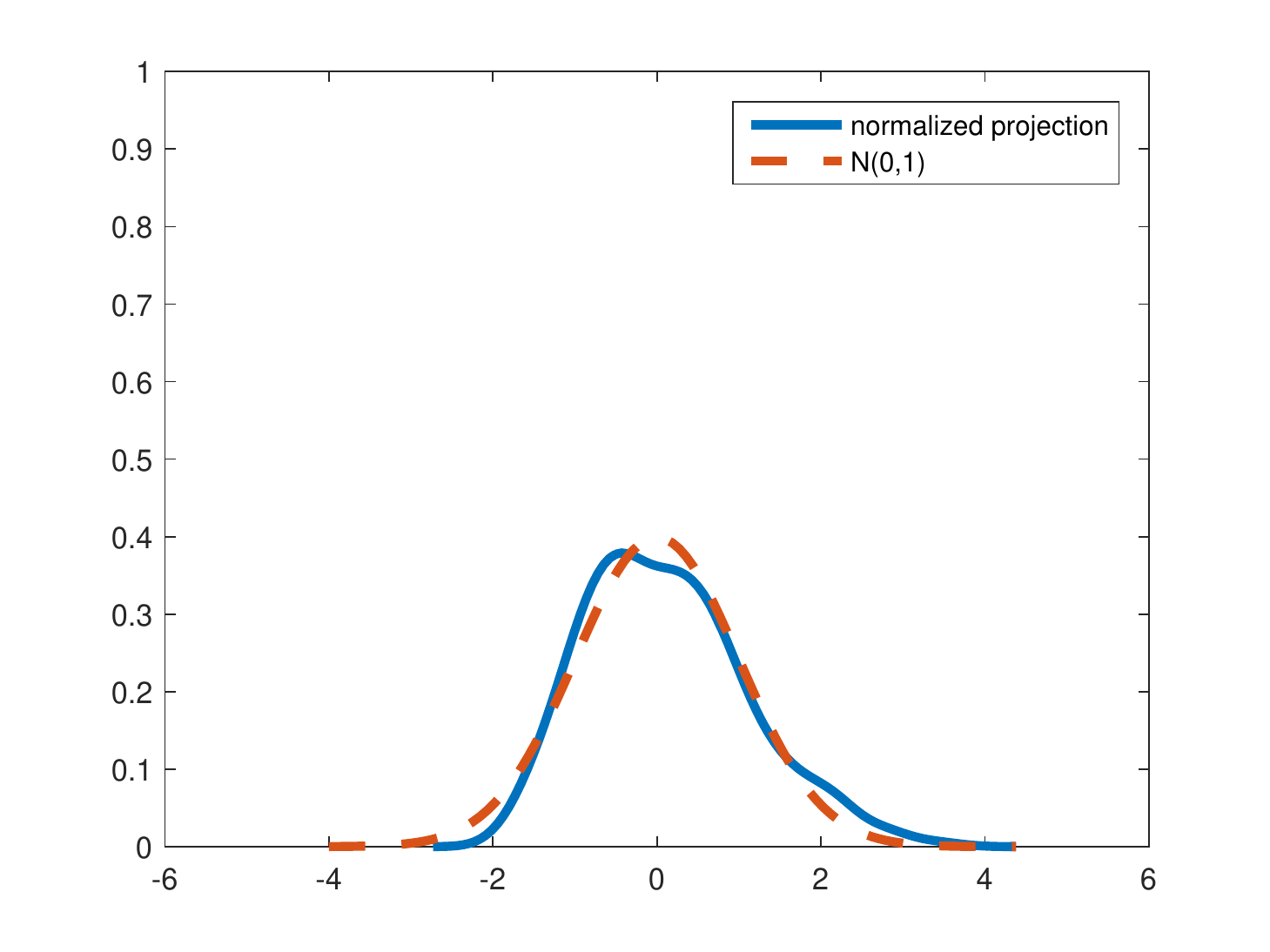} & \hskip-20pt %
\includegraphics[width=.23%
\textwidth,angle=0]{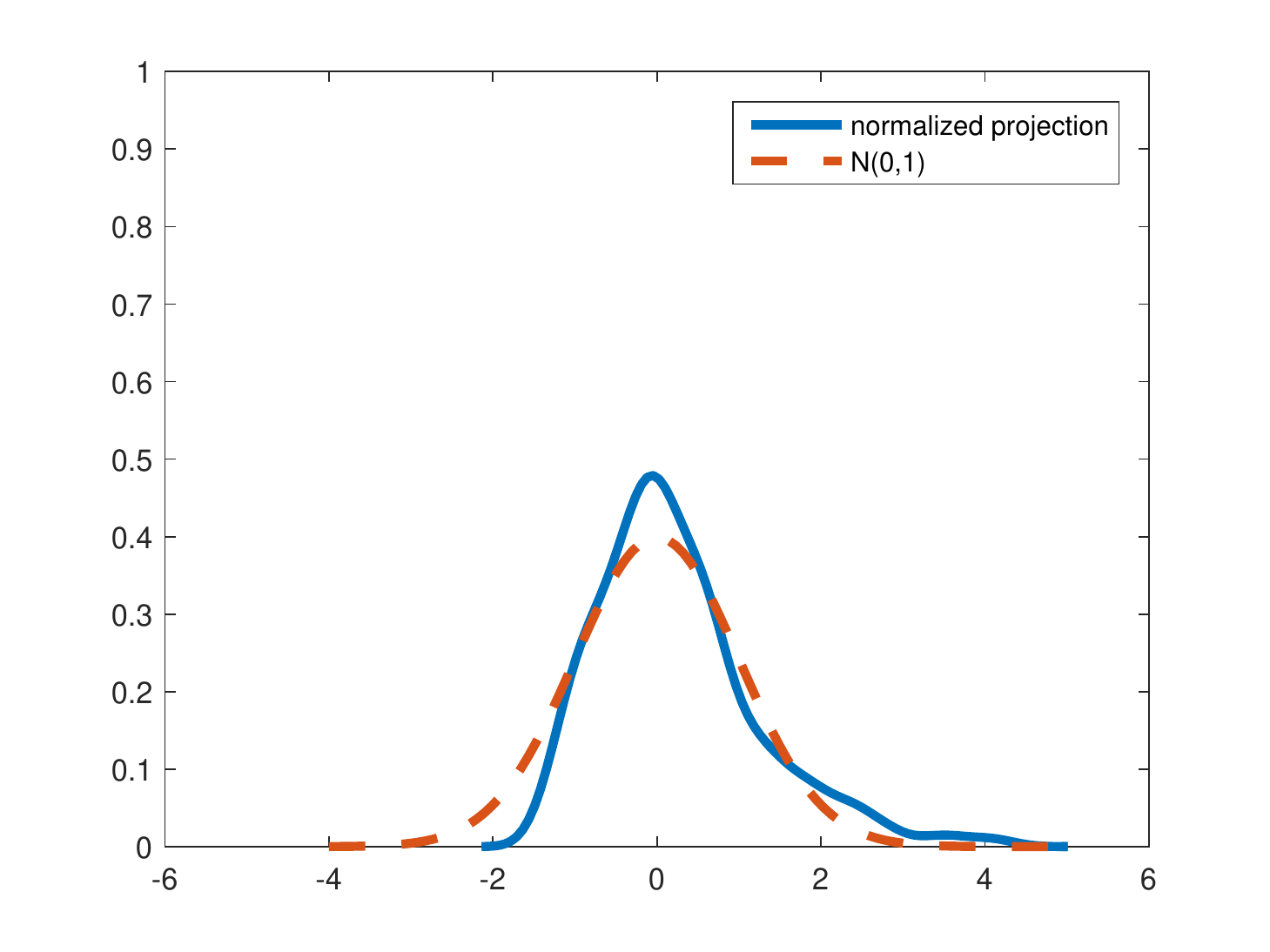} & \hskip-20pt %
\includegraphics[width=.23%
\textwidth,angle=0]{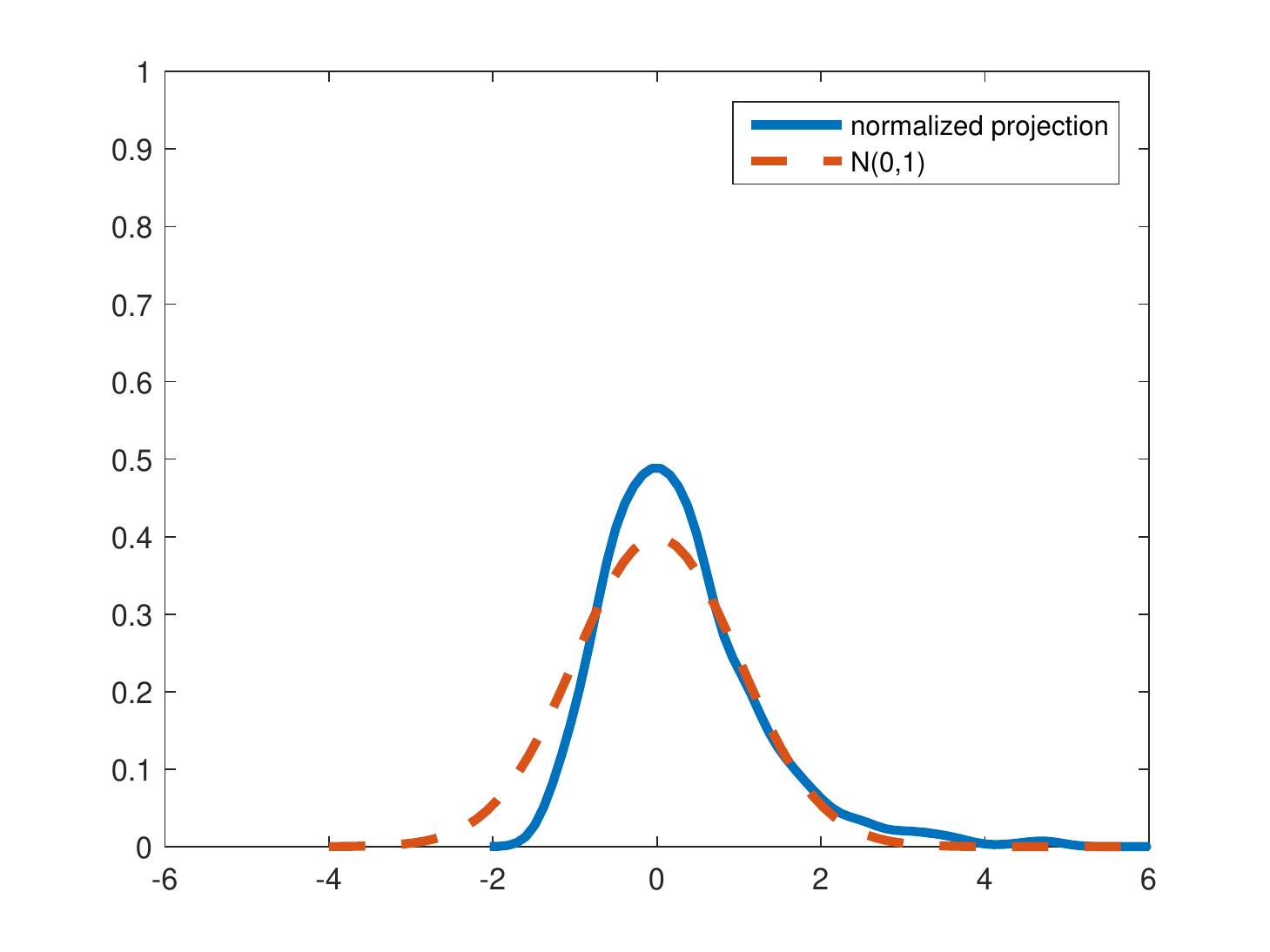} \\[-5pt] 
&  &  &  \\[-15pt] 
&  &  & 
\end{tabular}%
\end{center}
\par
\vspace{-0.5cm} \vspace{-0.5cm}
\caption{{\protect\small Plots of the kernel density estimates of the
normalized estimates (blue) v.s. $N(0,1)$ (red) under the first projection
direction ($n=100,200,400$ from top to bottom).}}
\label{fig:1}
\end{figure}

\begin{figure}[]
\begin{center}
\begin{tabular}{cccc}
{\scriptsize $p=1$} & {\scriptsize $p=2$} & {\scriptsize $p=3$} & 
{\scriptsize $p=4$ } \\ 
\hskip-20pt \includegraphics[width=.23%
\textwidth,angle=0]{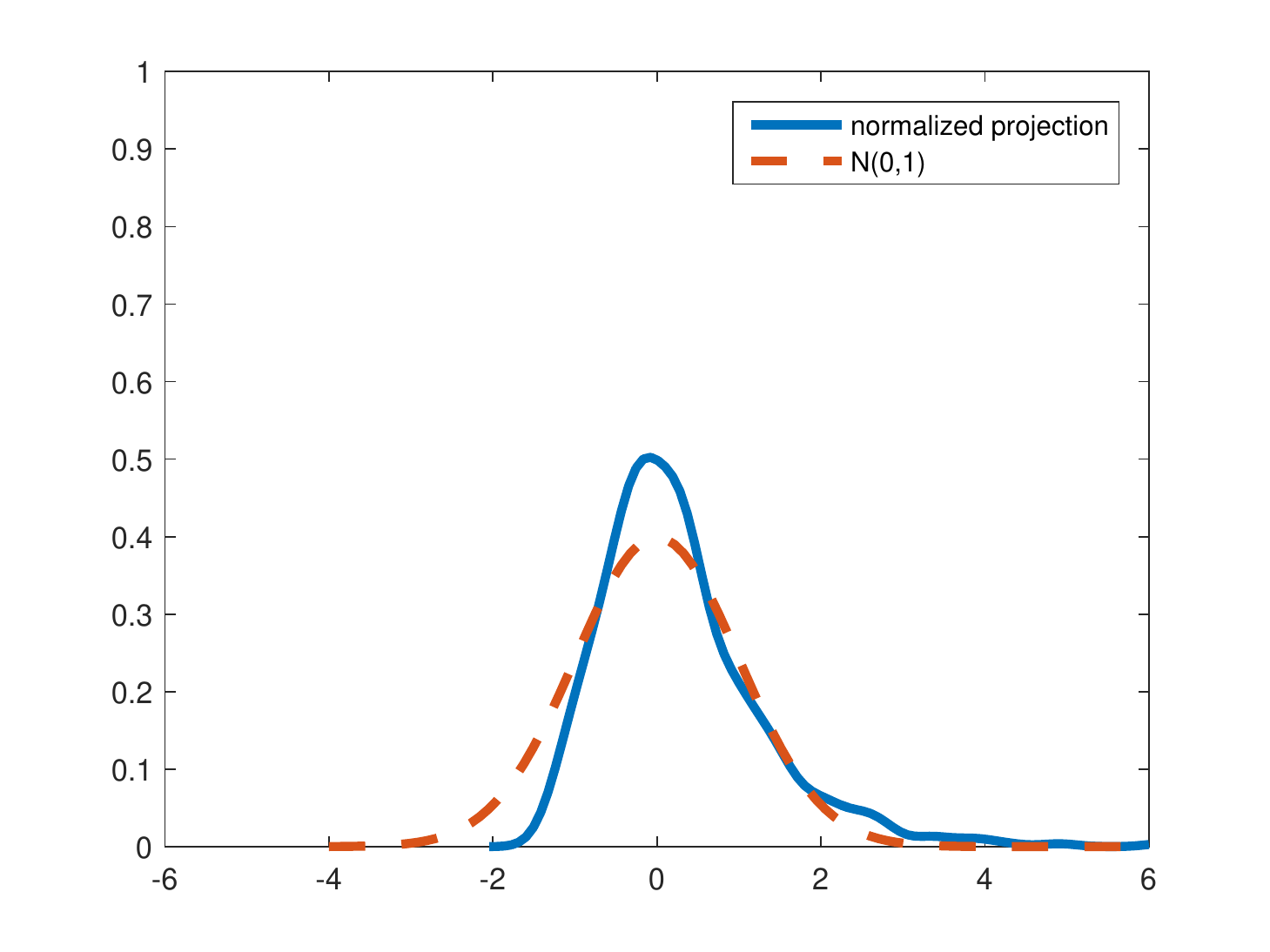} & \hskip-20pt %
\includegraphics[width=.23\textwidth,angle=0]{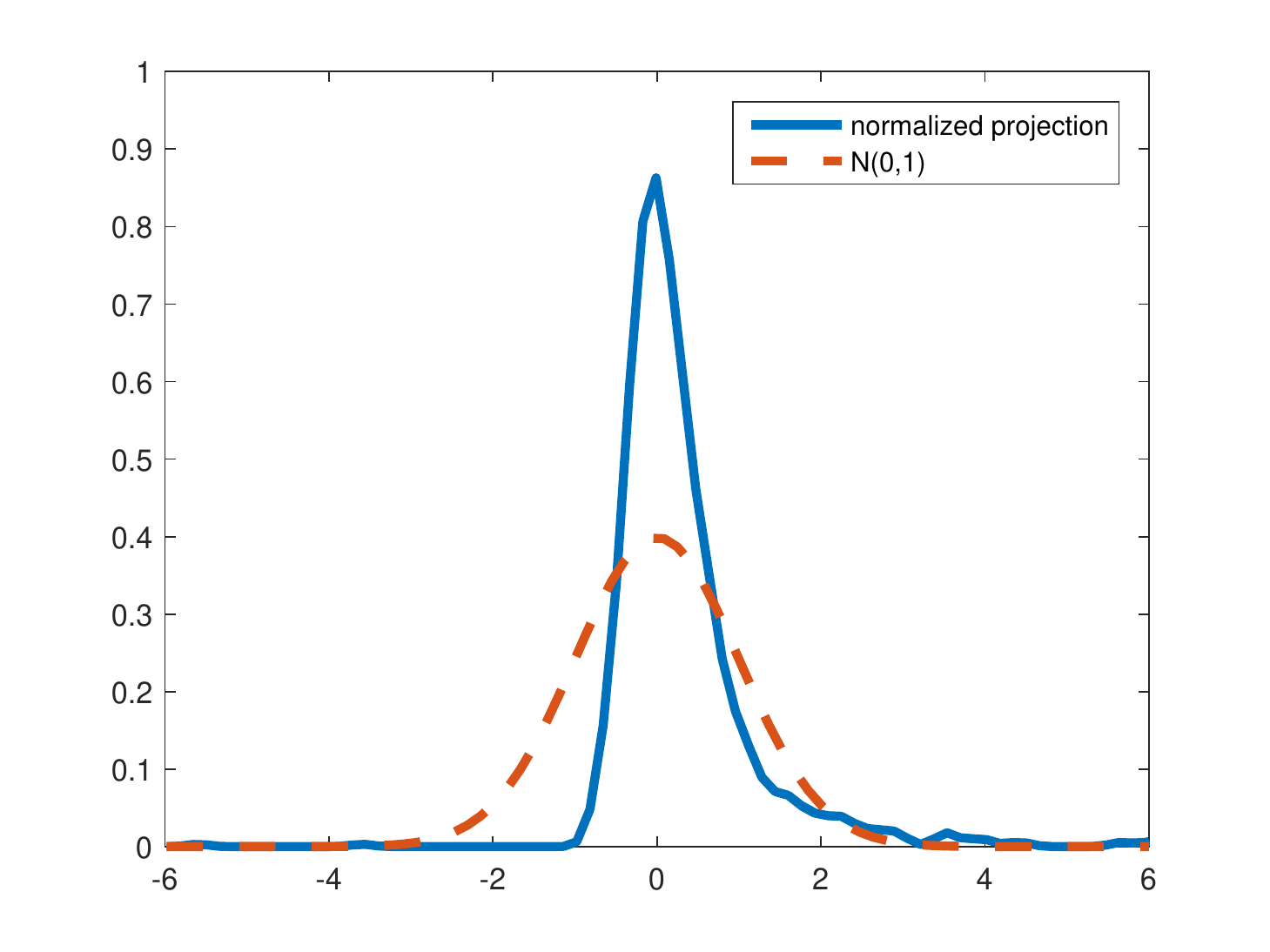}
& \hskip-20pt \includegraphics[width=.23%
\textwidth,angle=0]{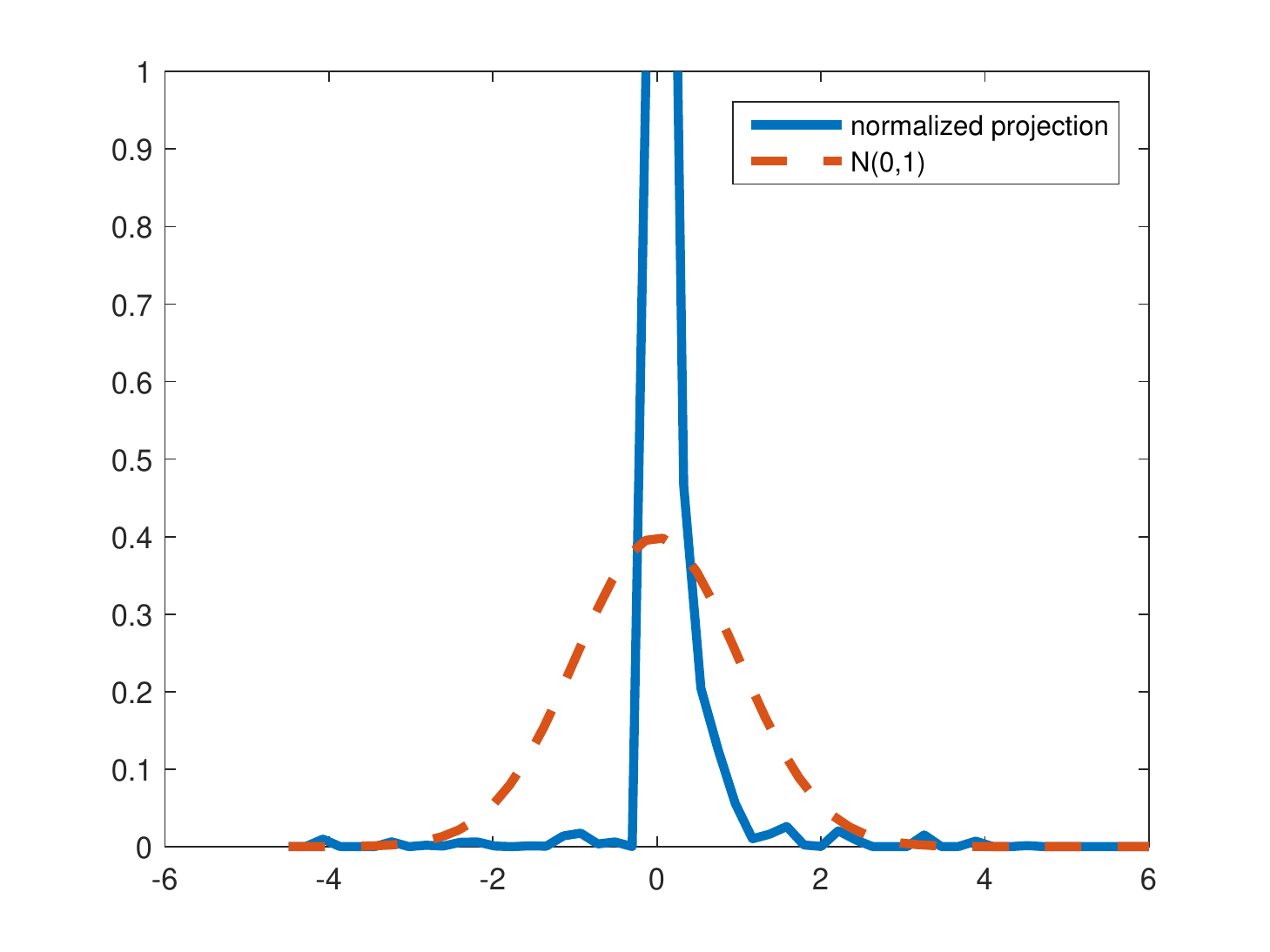} & \hskip-20pt %
\includegraphics[width=.23\textwidth,angle=0]{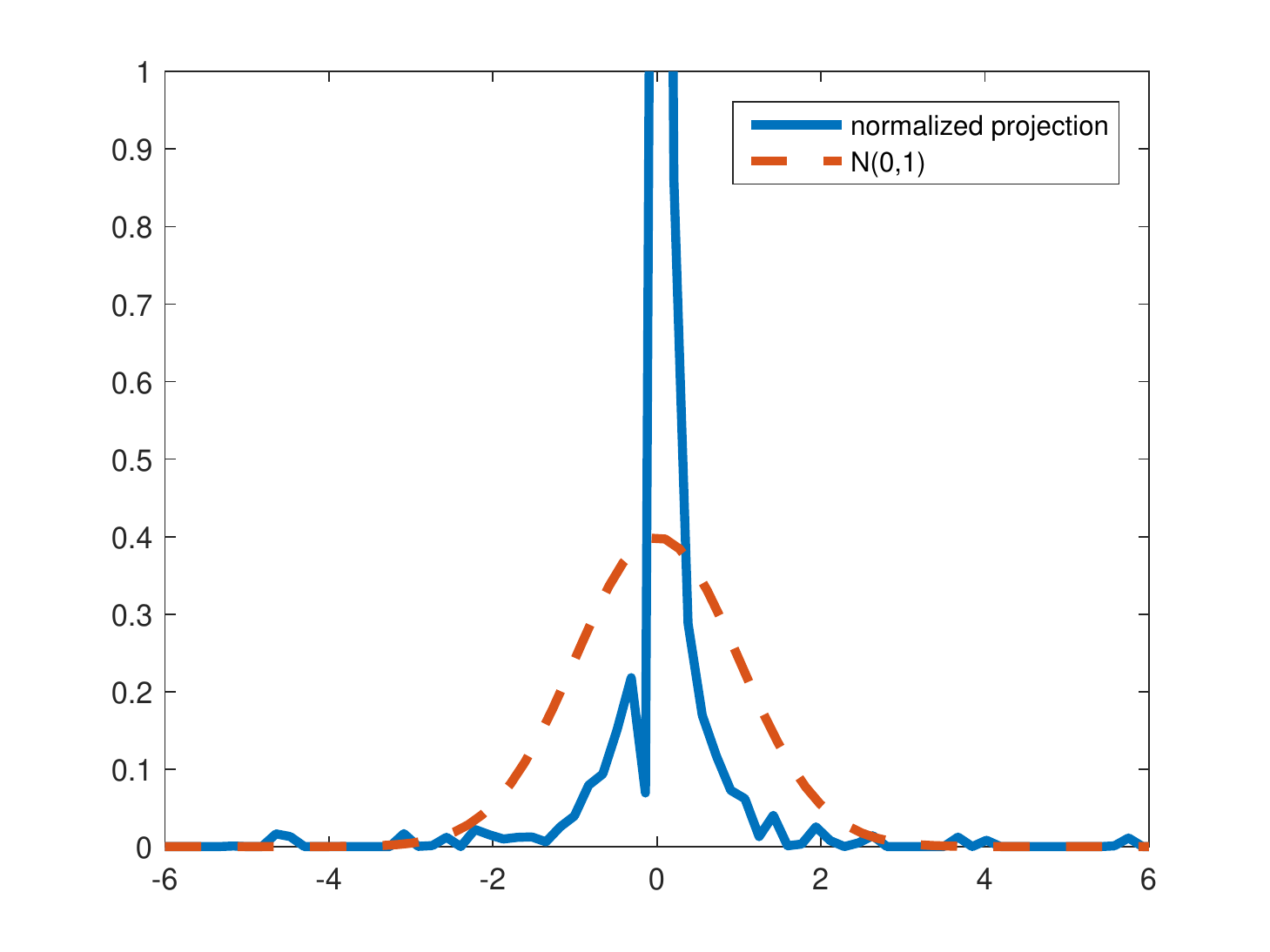}
\\[-5pt] 
\hskip-20pt \includegraphics[width=.23%
\textwidth,angle=0]{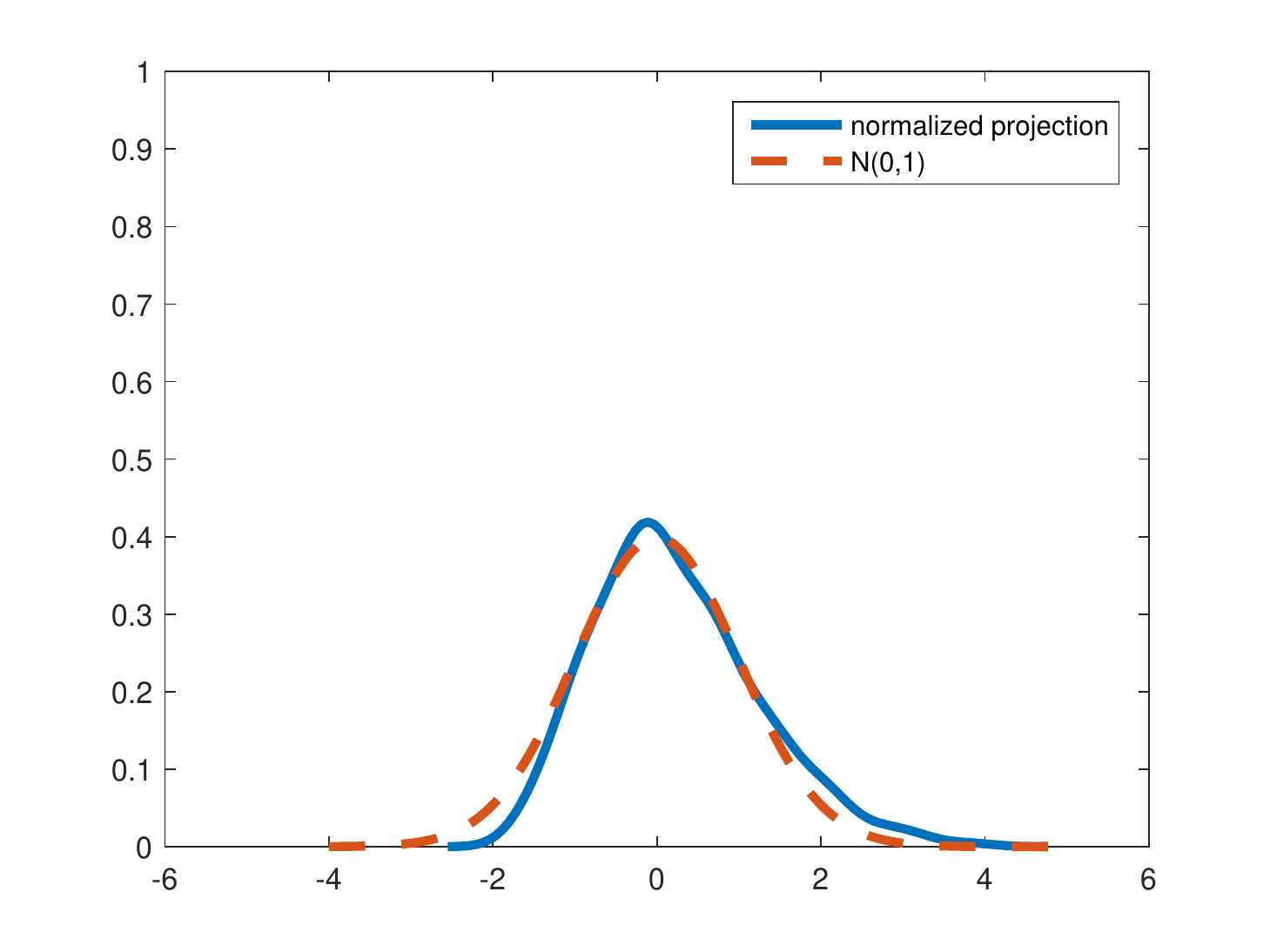} & \hskip-20pt %
\includegraphics[width=.23\textwidth,angle=0]{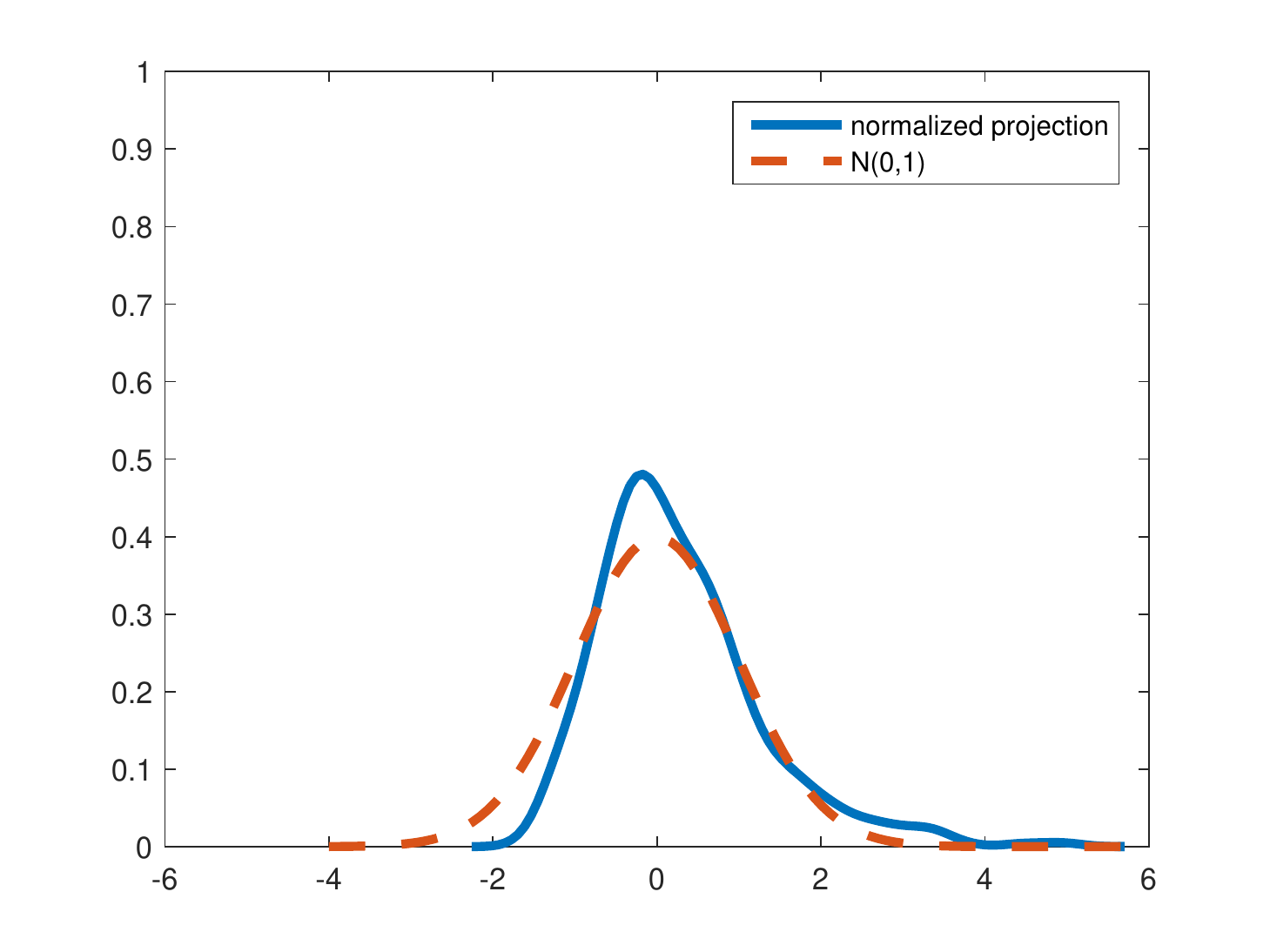}
& \hskip-20pt \includegraphics[width=.23%
\textwidth,angle=0]{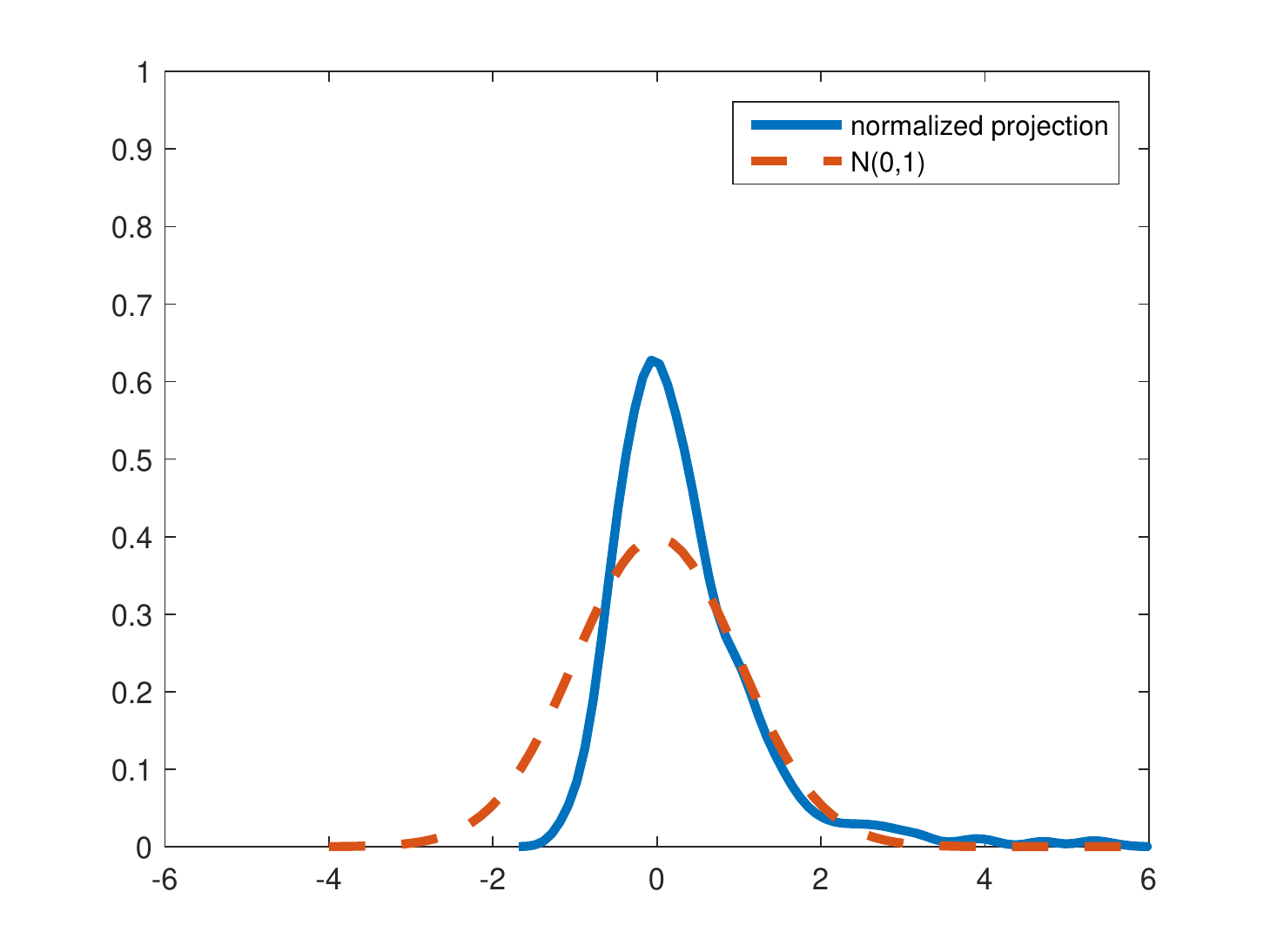} & \hskip-20pt %
\includegraphics[width=.23\textwidth,angle=0]{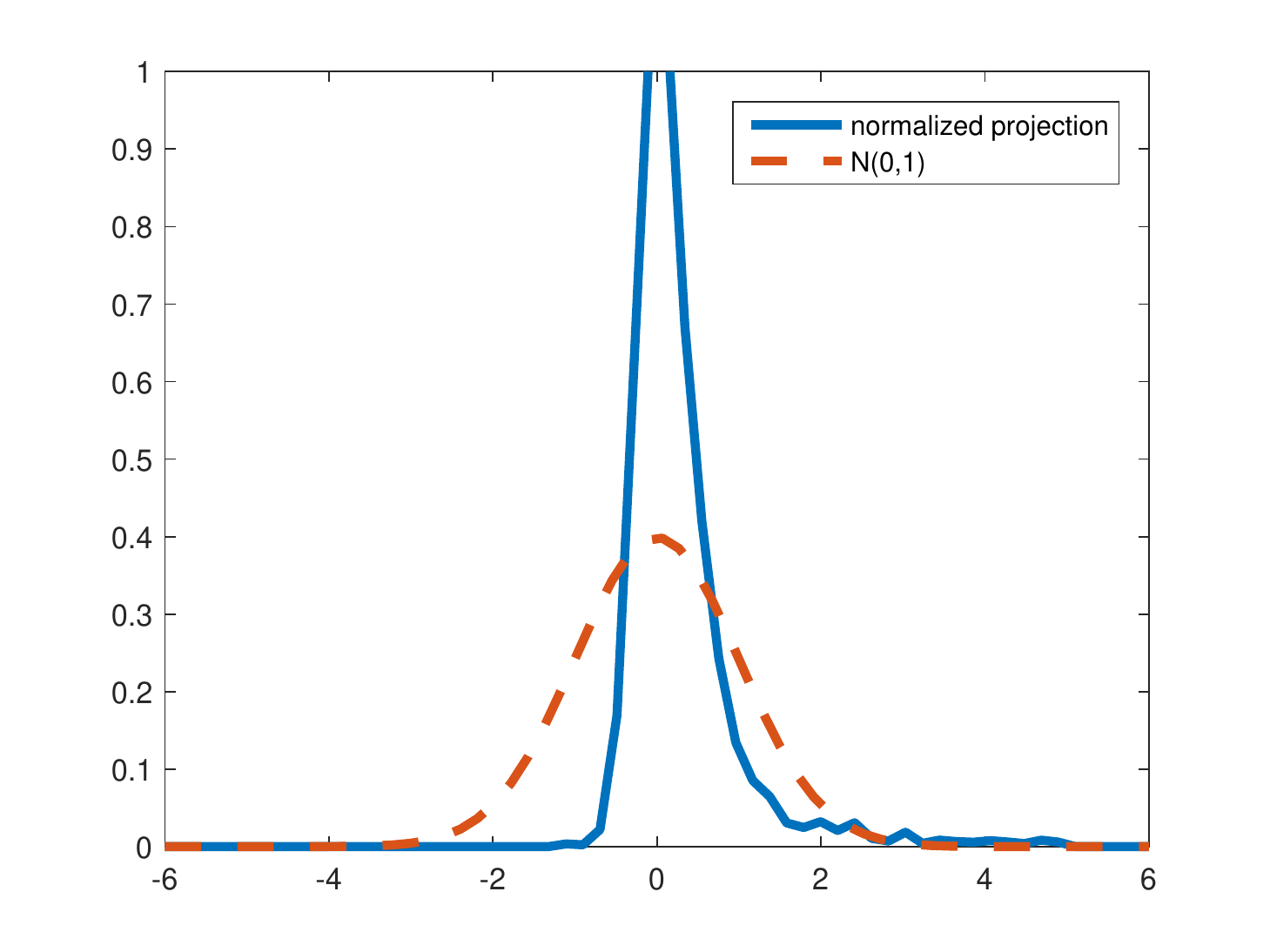}
\\[-5pt] 
\hskip-20pt \includegraphics[width=.23%
\textwidth,angle=0]{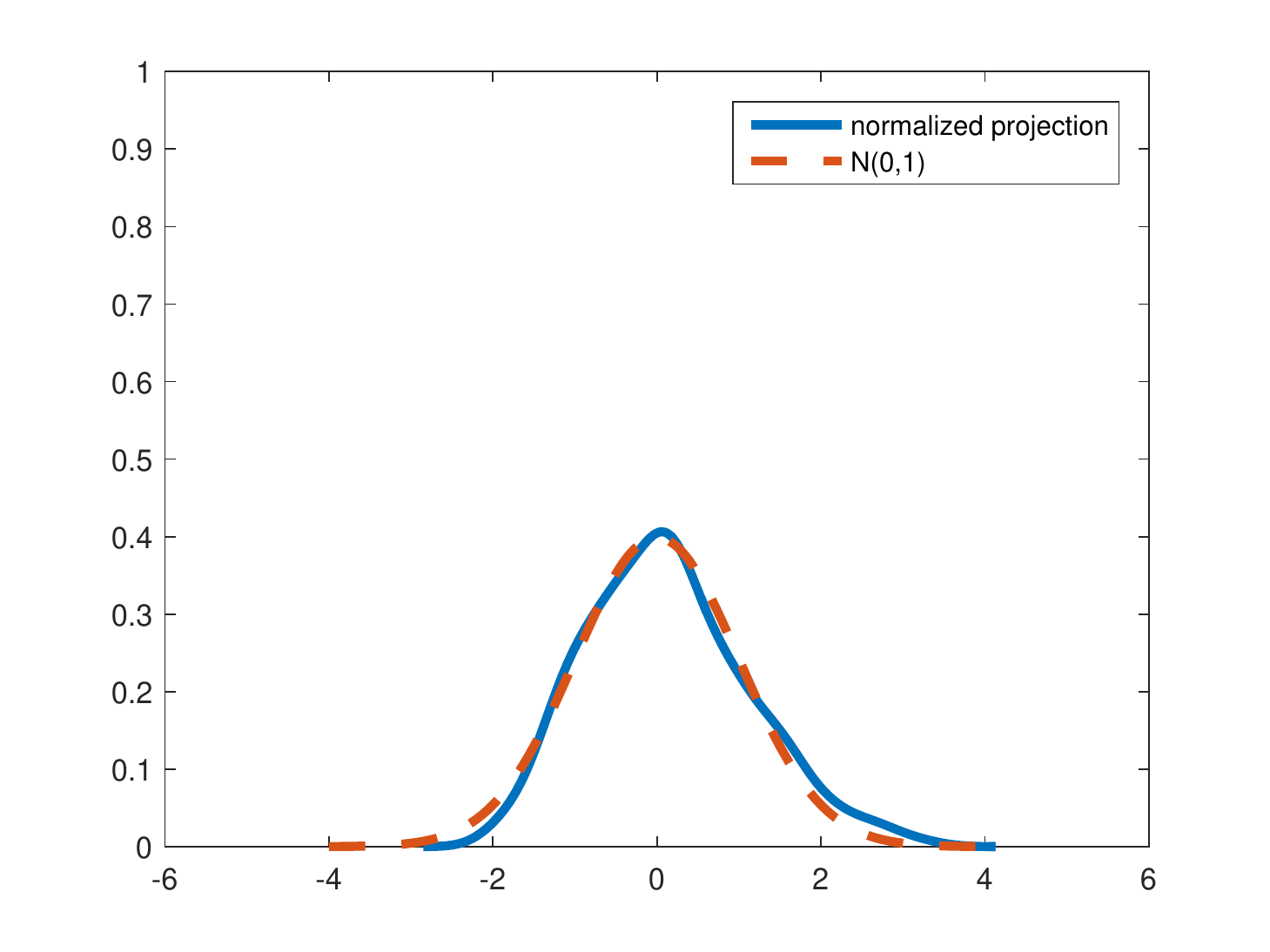} & \hskip-20pt %
\includegraphics[width=.23\textwidth,angle=0]{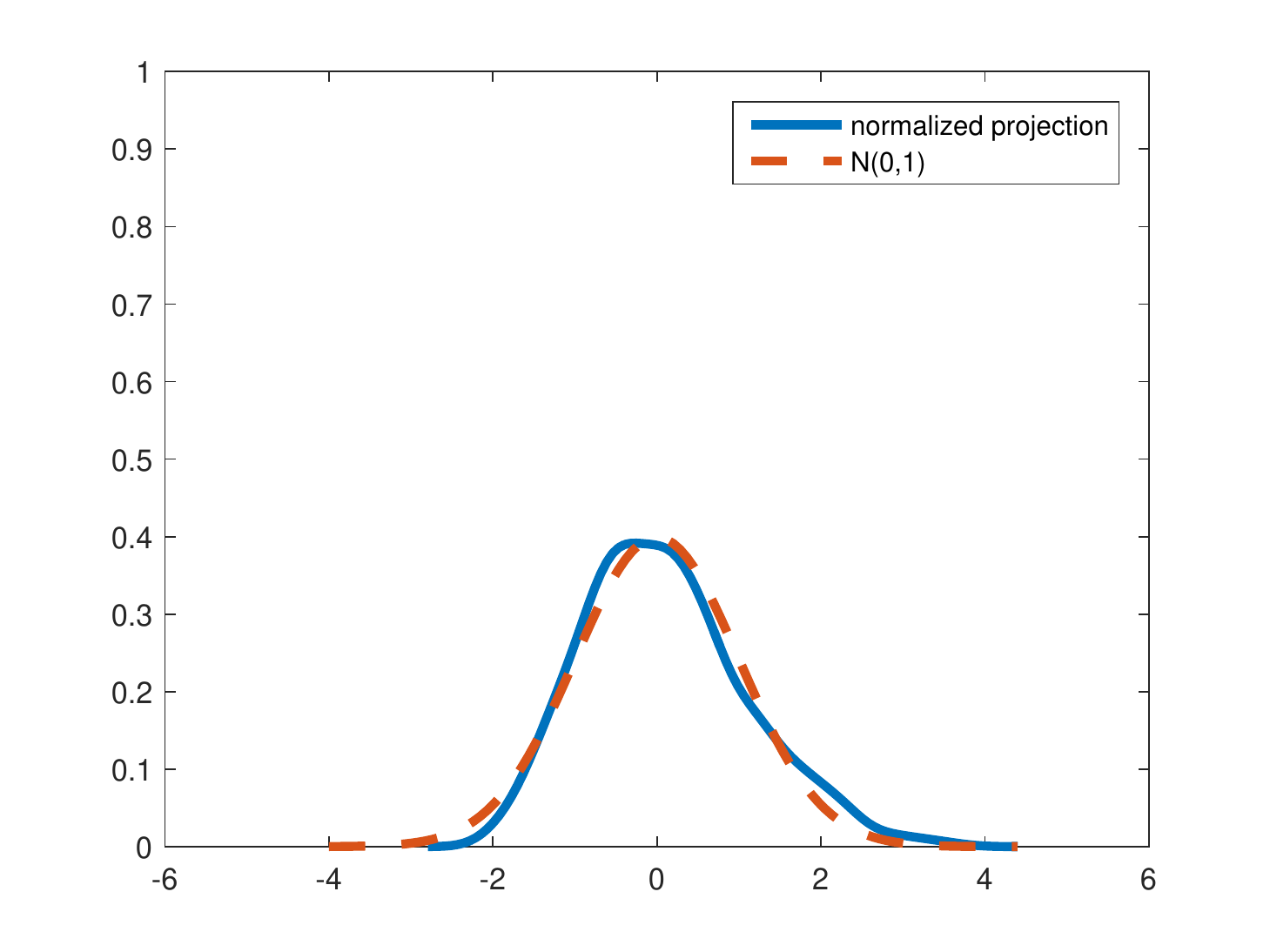}
& \hskip-20pt \includegraphics[width=.23%
\textwidth,angle=0]{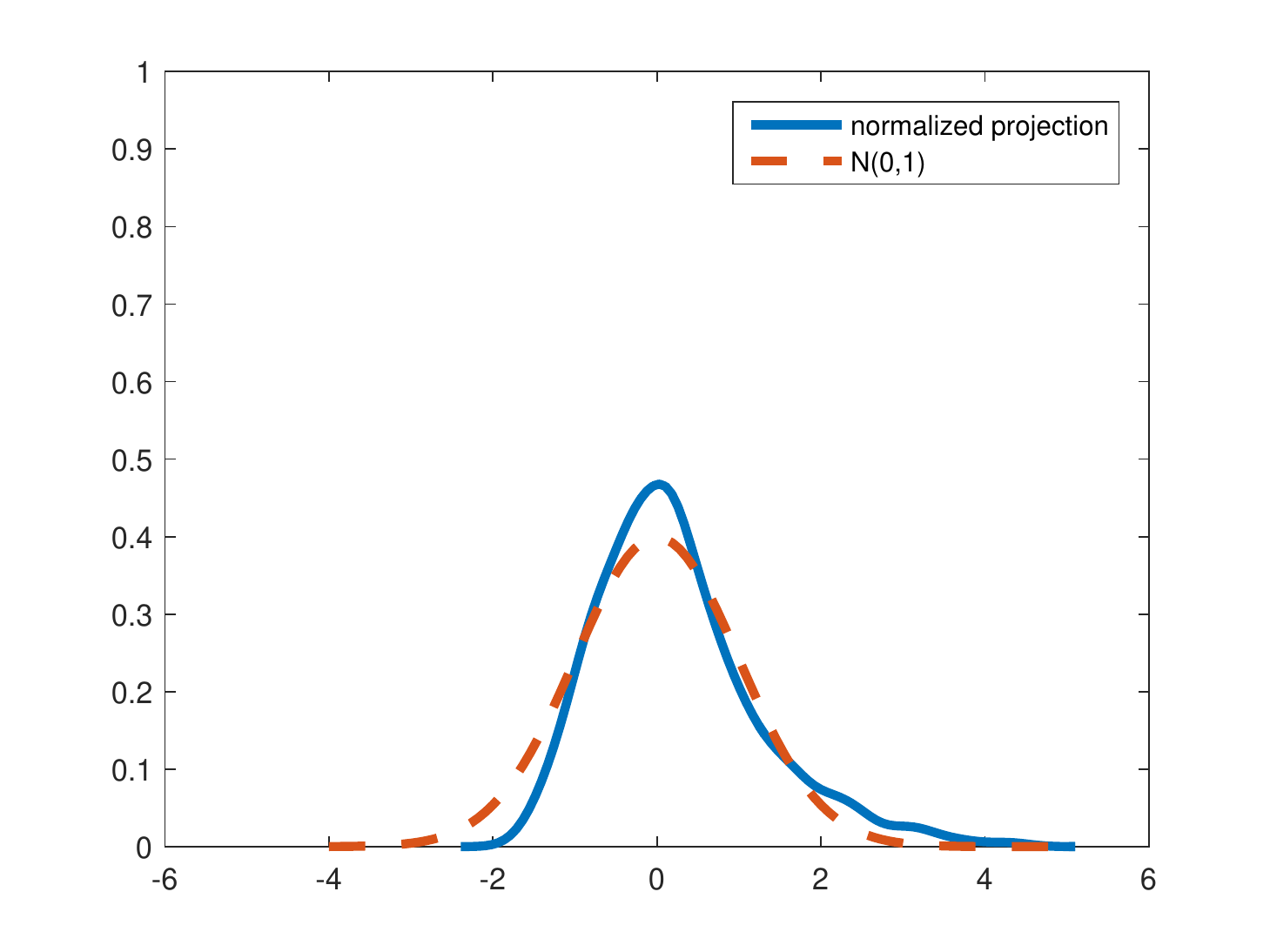} & \hskip-20pt %
\includegraphics[width=.23\textwidth,angle=0]{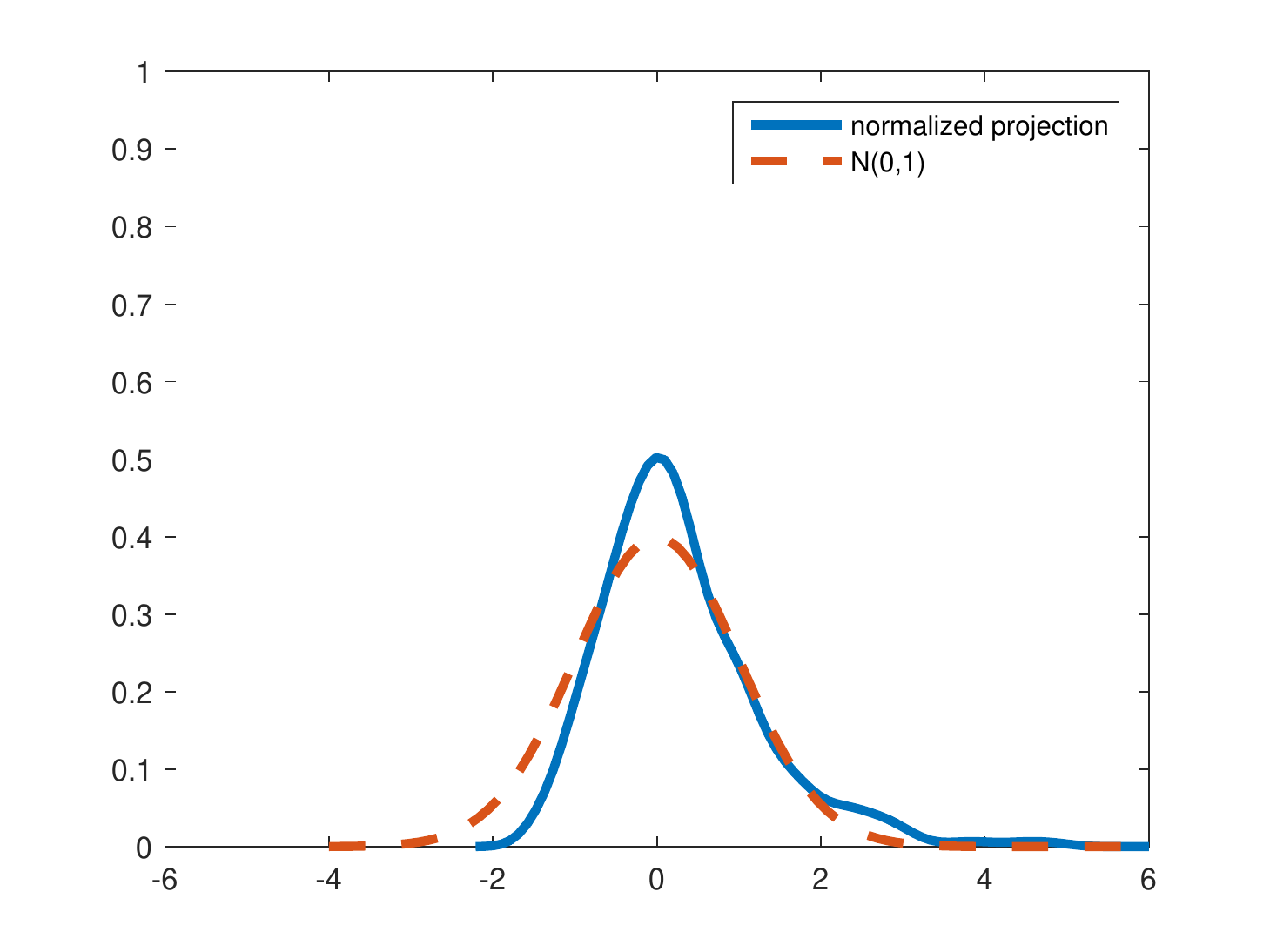}
\\[-5pt] 
&  &  &  \\[-15pt] 
&  &  & 
\end{tabular}%
\end{center}
\par
\vspace{-0.3cm} \vspace{-0.5cm}
\caption{{\protect\small Plots of the kernel density estimates of the
normalized estimates (blue) v.s. $N(0,1)$ (red) under the second projection
direction ($n=100,200,400$ from top to bottom).}}
\label{fig:2}
\end{figure}

\begin{figure}[]
\begin{center}
\begin{tabular}{cccc}
{\scriptsize $p=1$} & {\scriptsize $p=2$} & {\scriptsize $p=3$} & 
{\scriptsize $p=4$ } \\ 
\vspace{-0.1in} \hskip-20pt \includegraphics[width=.23%
\textwidth,angle=0]{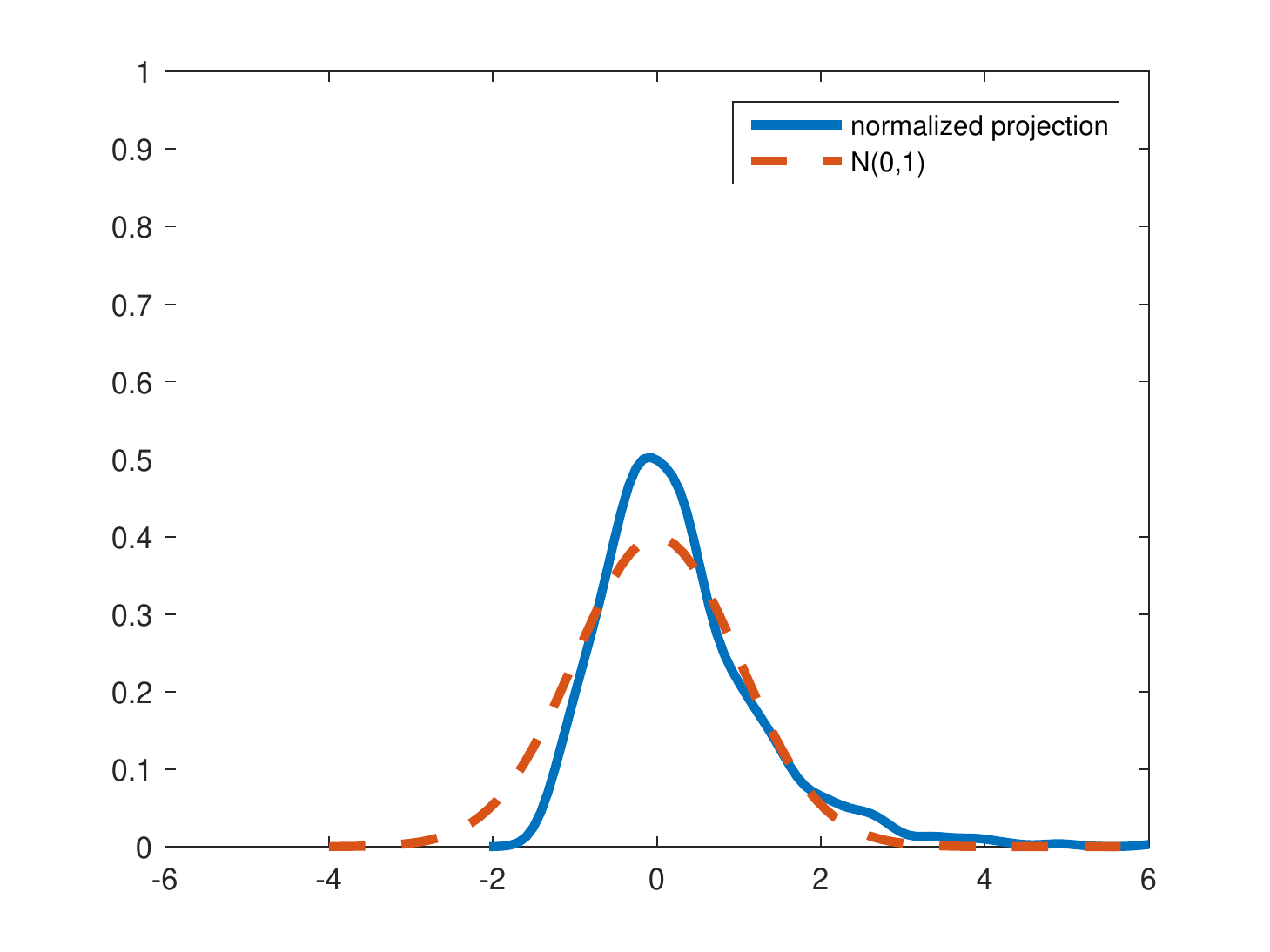} & \hskip-20pt %
\includegraphics[width=.23%
\textwidth,angle=0]{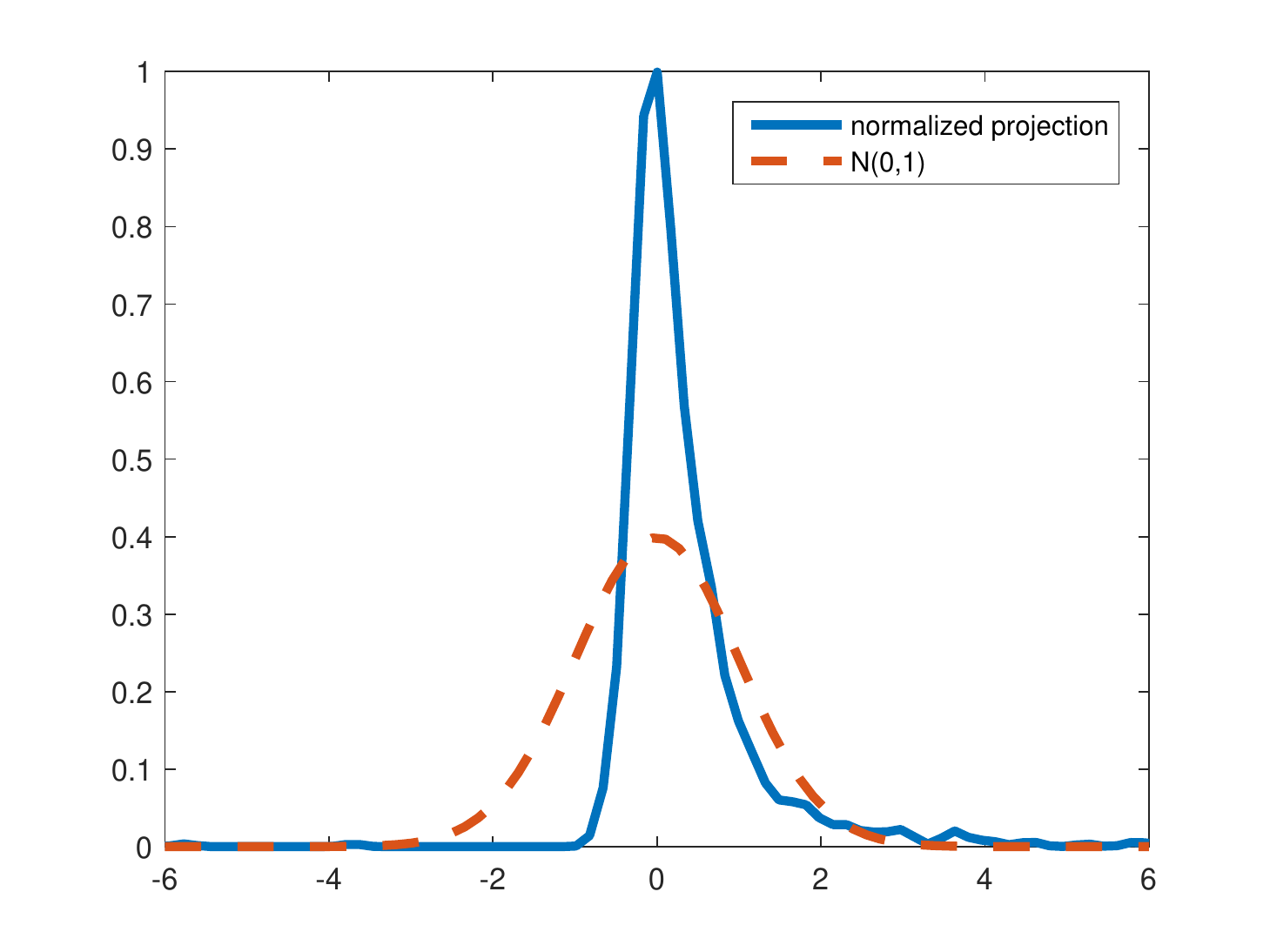} & \hskip-20pt %
\includegraphics[width=.23%
\textwidth,angle=0]{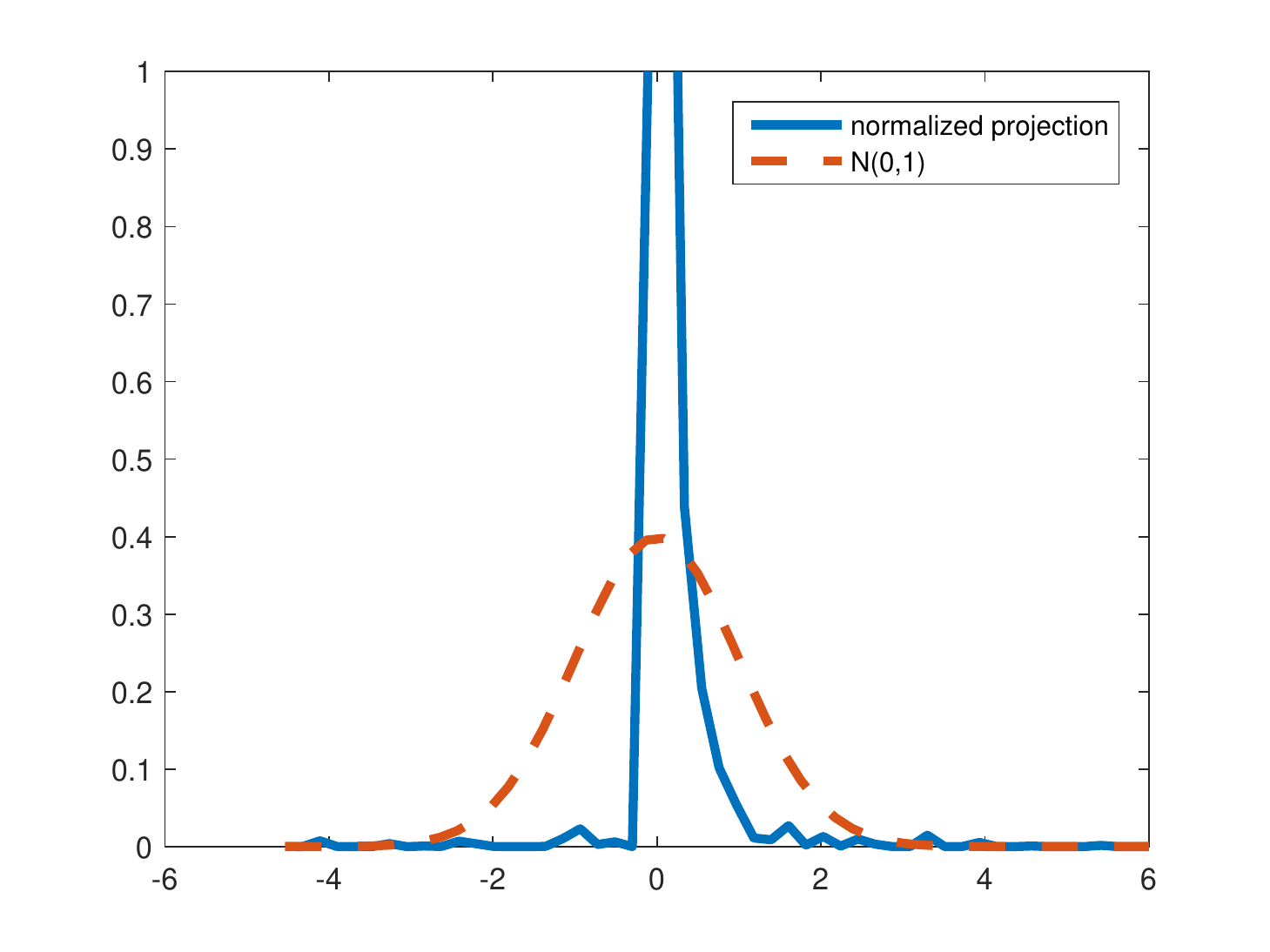} & \hskip-20pt %
\includegraphics[width=.23%
\textwidth,angle=0]{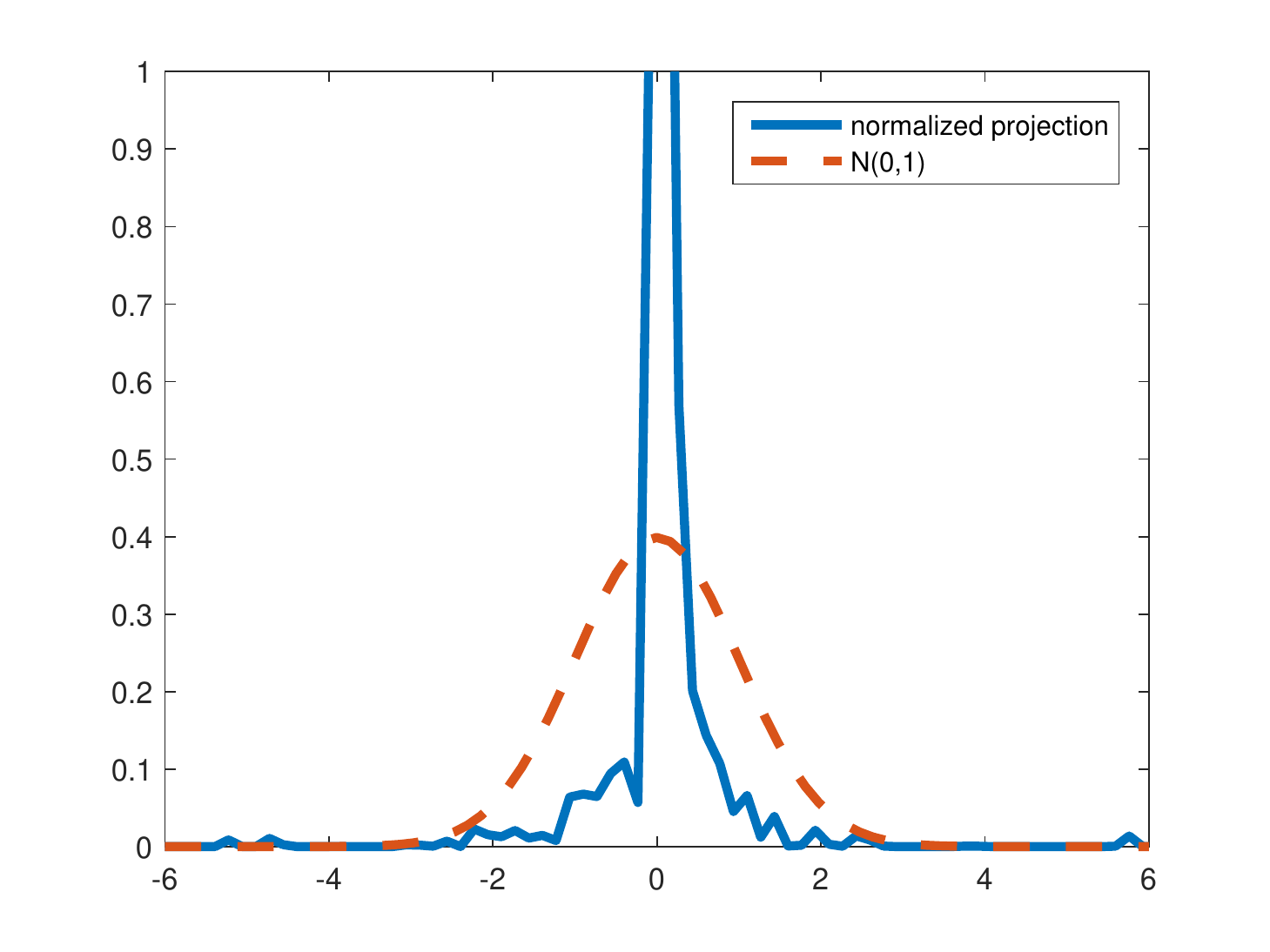} \\[-5pt] 
\hskip-20pt \includegraphics[width=.23%
\textwidth,angle=0]{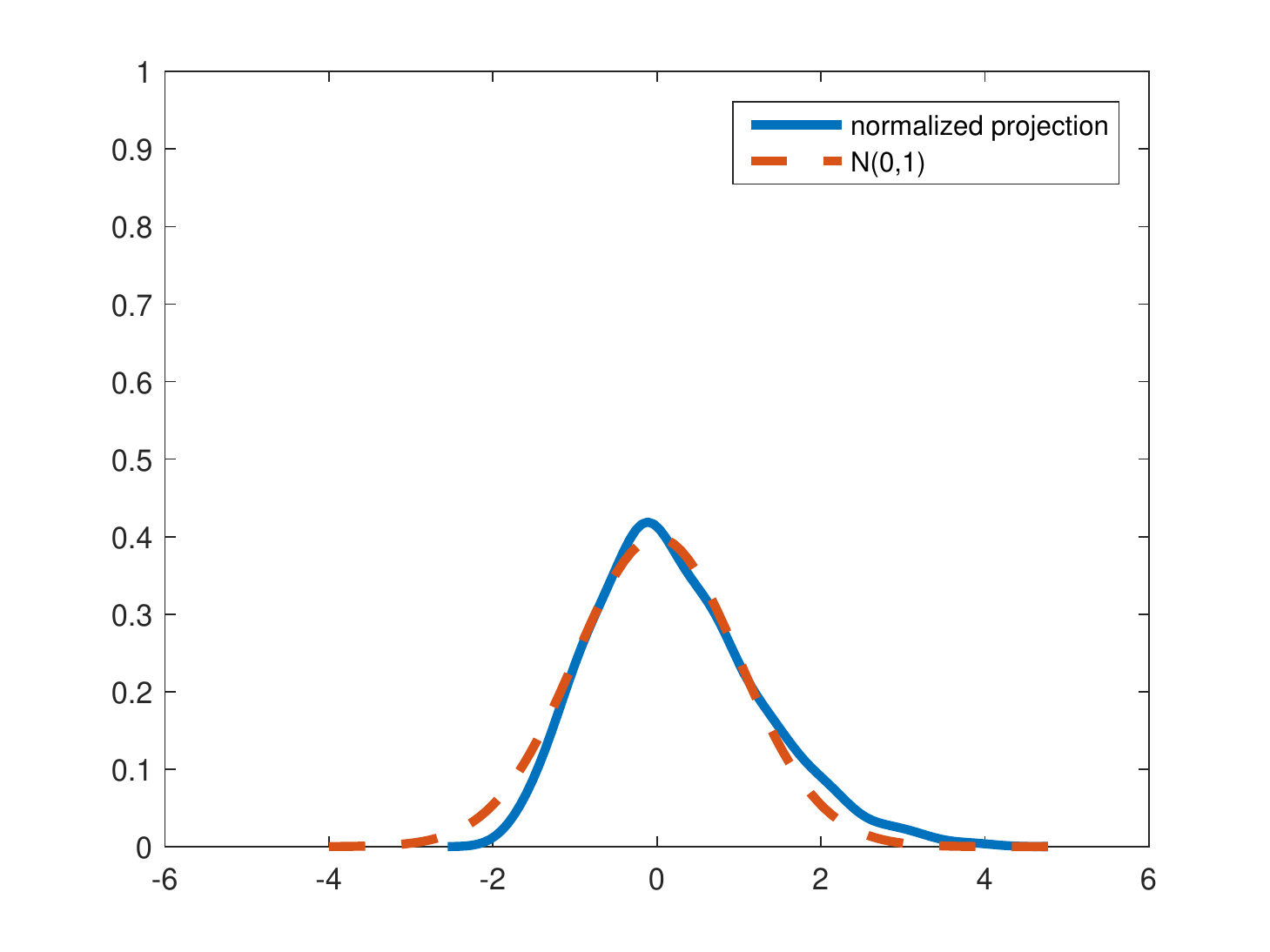} & \hskip-20pt %
\includegraphics[width=.23%
\textwidth,angle=0]{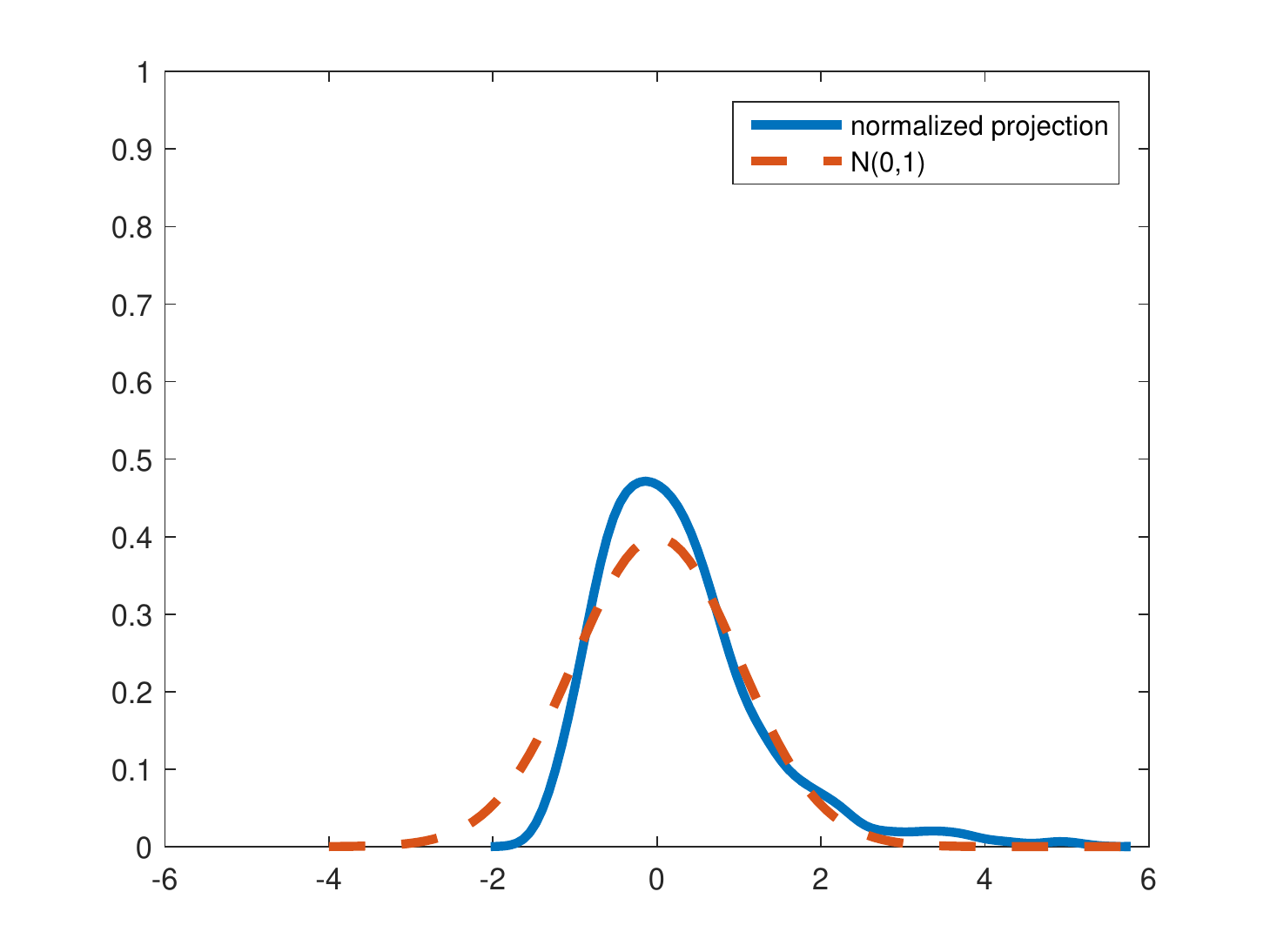} & \hskip-20pt %
\includegraphics[width=.23%
\textwidth,angle=0]{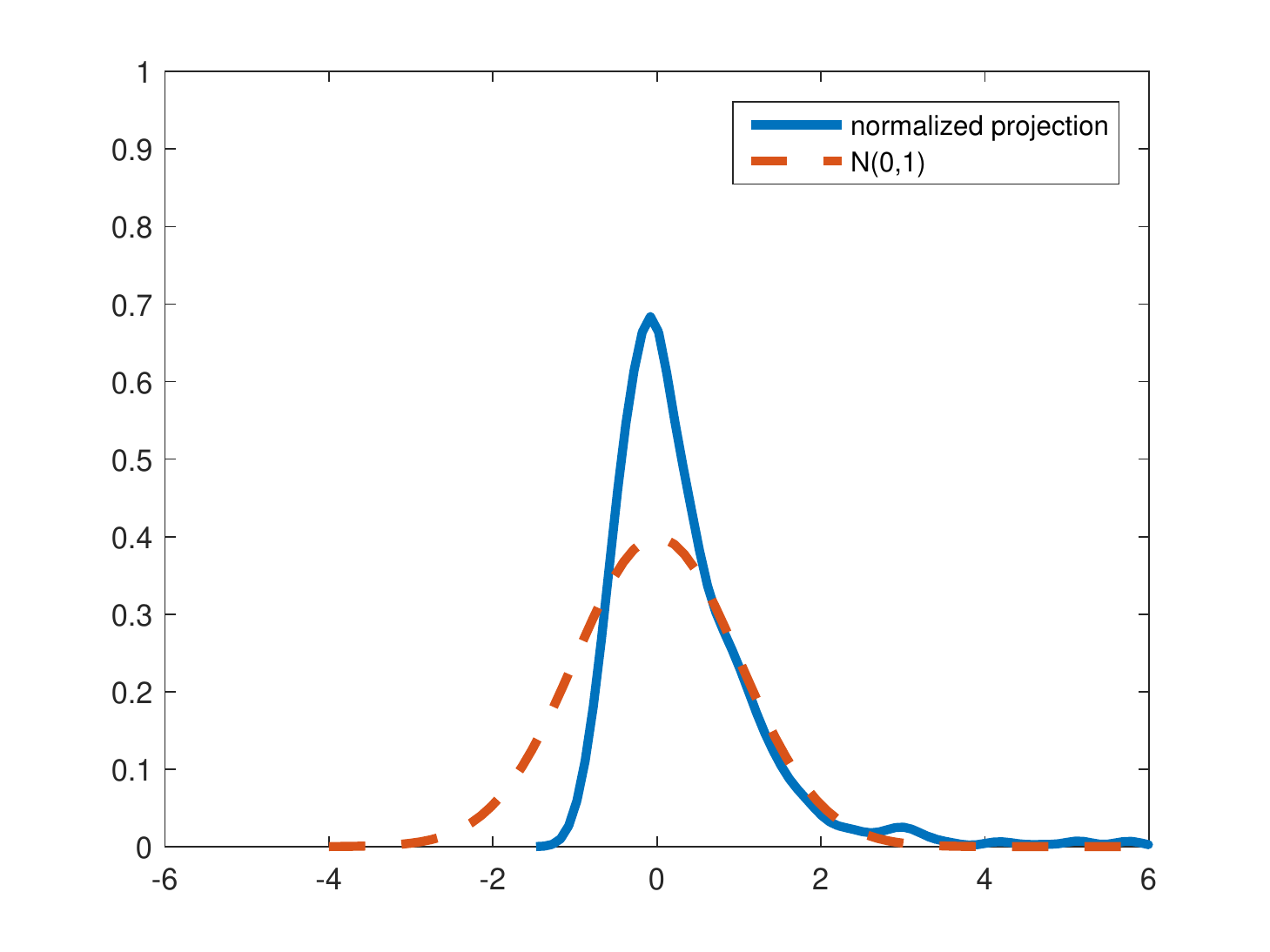} & \hskip-20pt %
\includegraphics[width=.23%
\textwidth,angle=0]{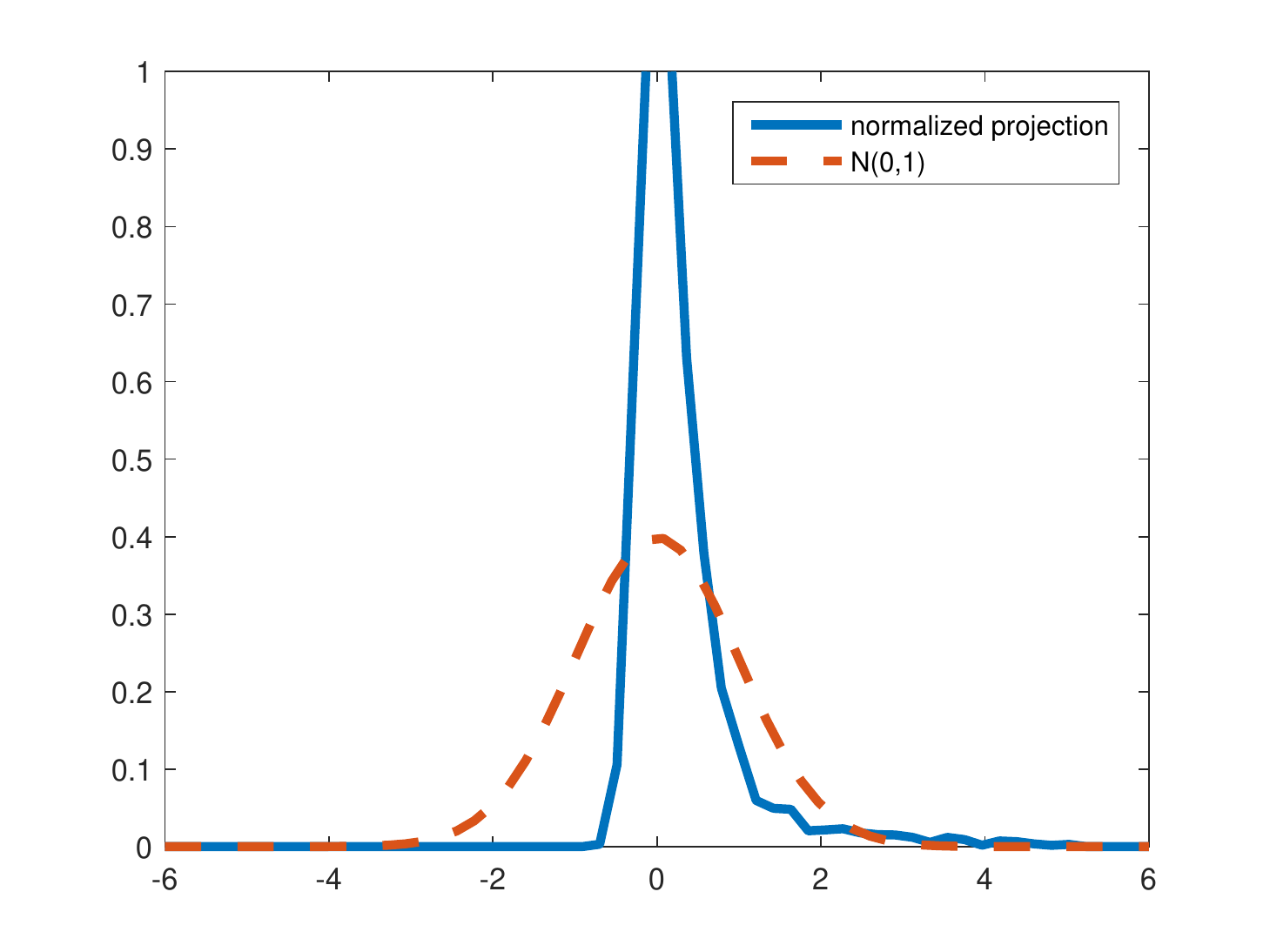} \\[-5pt] 
\hskip-20pt \includegraphics[width=.23%
\textwidth,angle=0]{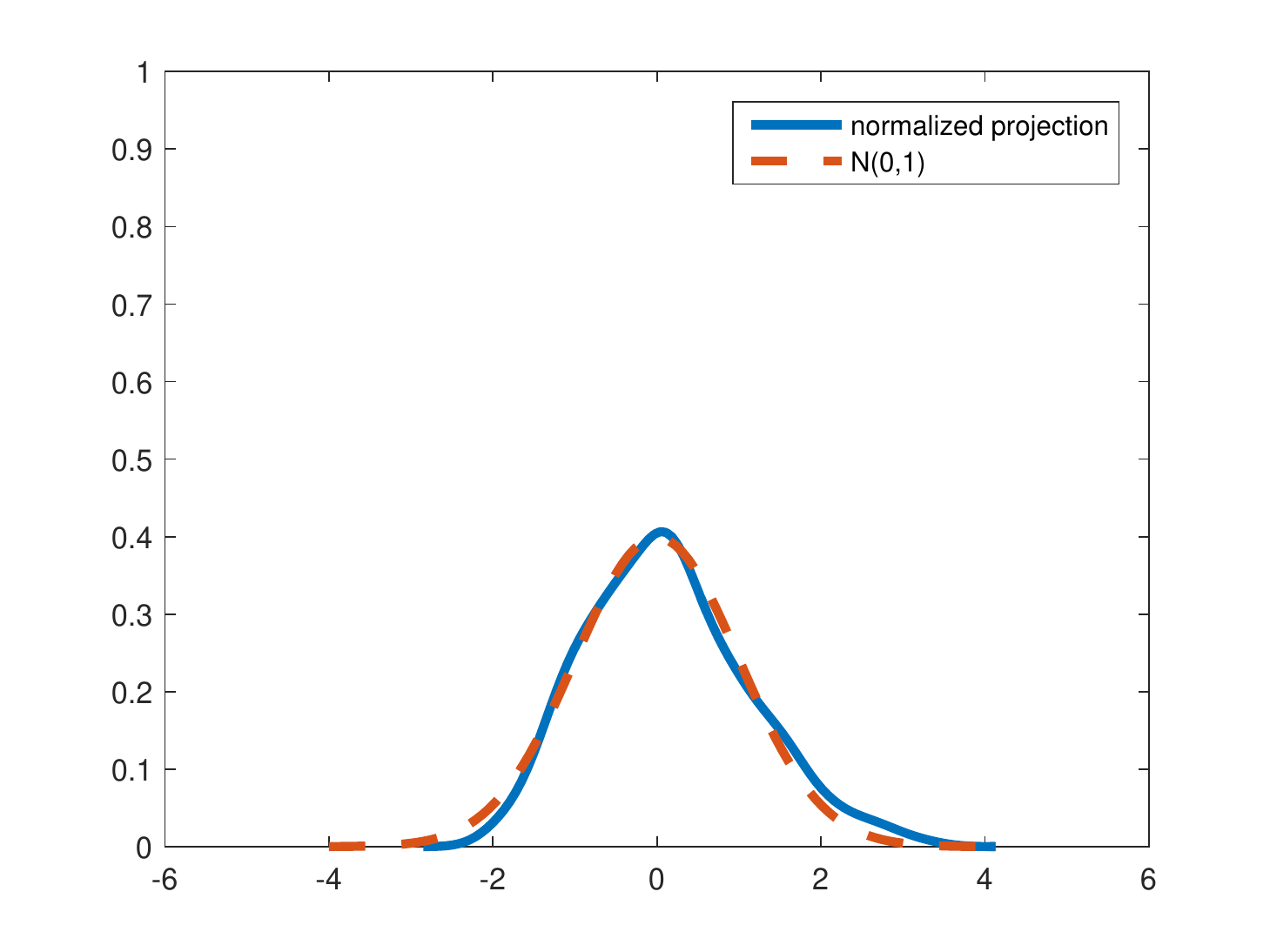} & \hskip-20pt %
\includegraphics[width=.23%
\textwidth,angle=0]{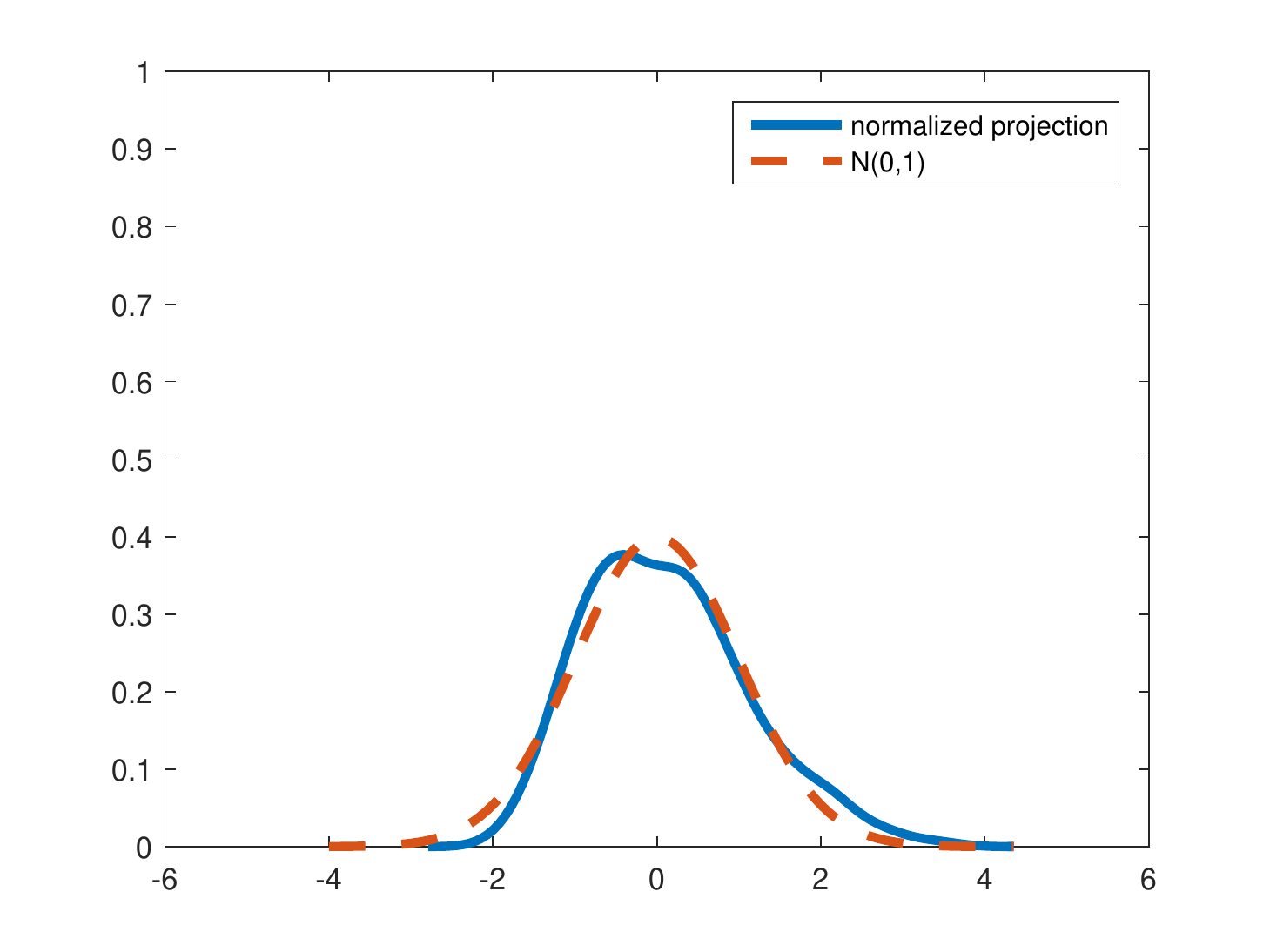} & \hskip-20pt %
\includegraphics[width=.23%
\textwidth,angle=0]{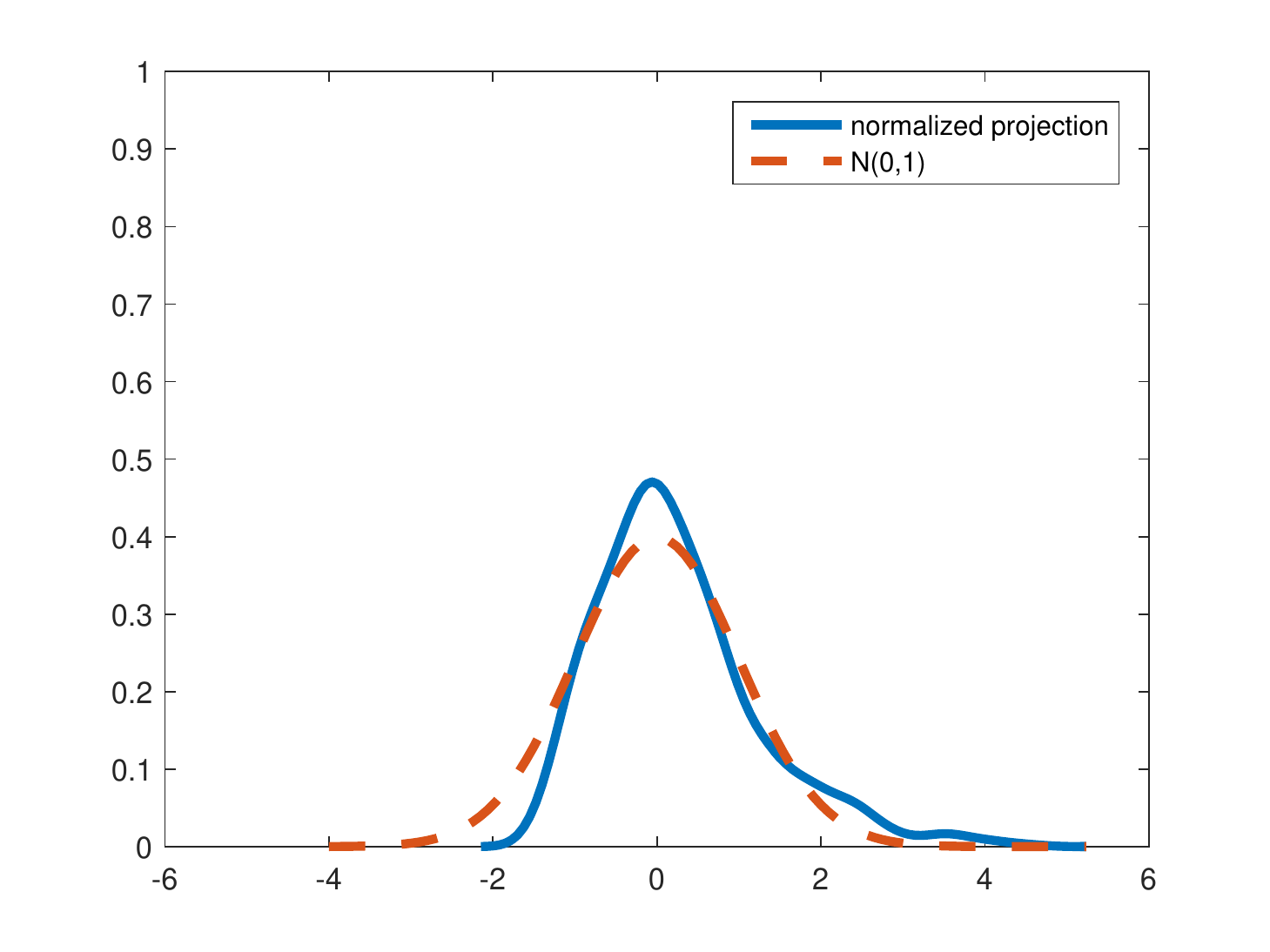} & \hskip-20pt %
\includegraphics[width=.23%
\textwidth,angle=0]{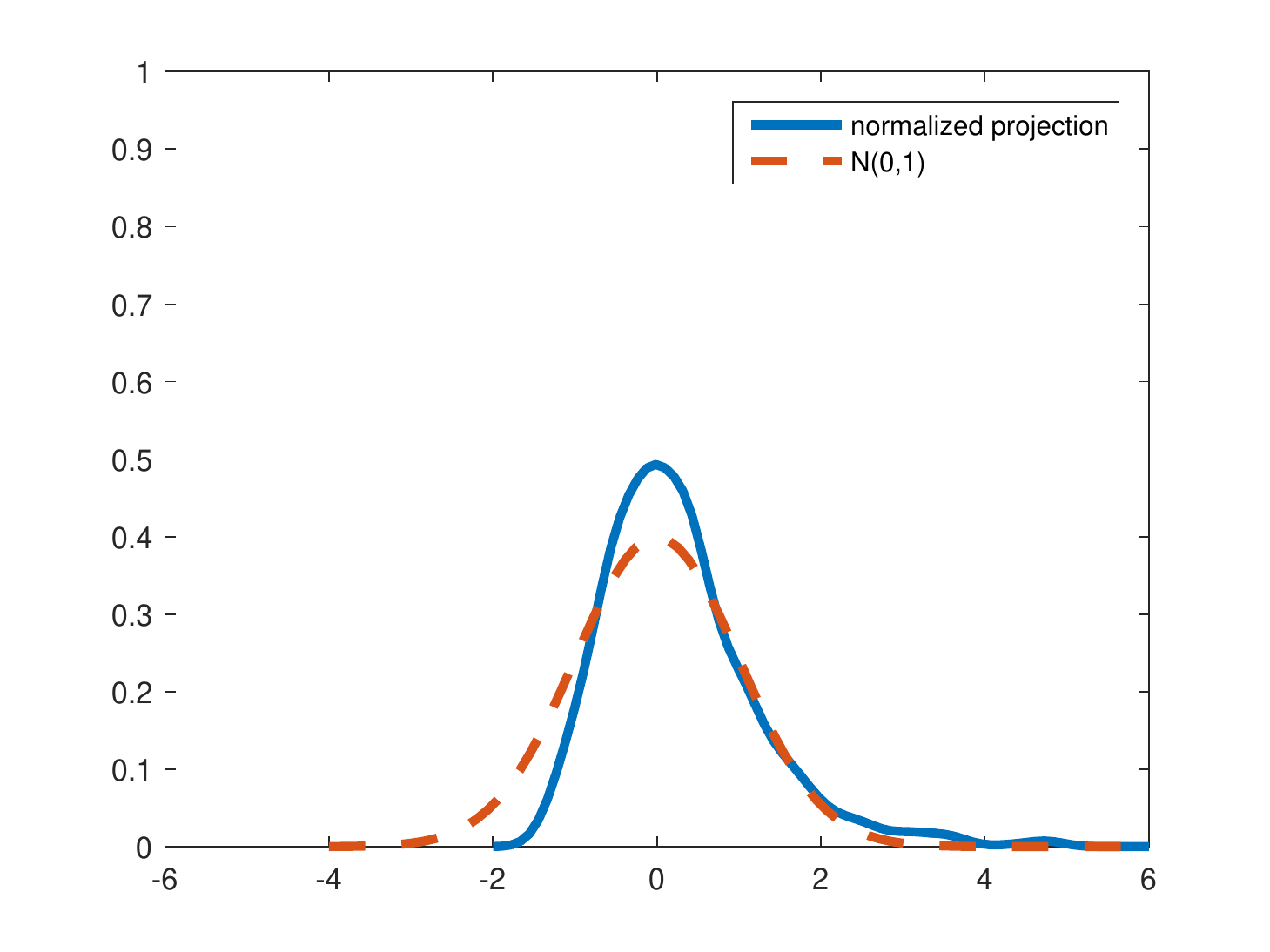} \\[-5pt] 
&  &  &  \\[-15pt] 
&  &  & 
\end{tabular}%
\vspace{-0.5cm}
\end{center}
\caption{{\protect\small Plots of the kernel density estimates of the
normalized estimates (blue) v.s. $N(0,1)$ (red) under the third projection
direction ($n=100,200,400$ from top to bottom).}}
\label{fig:3}
\end{figure}

{\ \renewcommand{\tabcolsep}{3pt} \renewcommand{\arraystretch}{1.1} 
\begin{table}[]
\caption{MAE of the covariance estimator. The results are obtained using
1,000 replications.}
\label{tab:4}\centering
\vspace{0.2cm} \centering
\vspace{0.2cm}
\par
\begin{tabular}{l|llll|llll|llll}
\hline
\multirow{2}{*}{$\epsilon_{n}$} & \multicolumn{4}{c|}{$n=100$} & 
\multicolumn{4}{c|}{$n=200$} & \multicolumn{4}{c}{$n=400$} \\ \cline{2-13}
& \multicolumn{1}{c}{$p=1$} & \multicolumn{1}{c}{$p=2$} & \multicolumn{1}{c}{%
$p=3$} & \multicolumn{1}{c|}{$p=4$} & \multicolumn{1}{c}{$p=1$} & 
\multicolumn{1}{c}{$p=2$} & \multicolumn{1}{c}{$p=3$} & \multicolumn{1}{c|}{$%
p=4$} & \multicolumn{1}{c}{$p=1$} & \multicolumn{1}{c}{$p=2$} & 
\multicolumn{1}{c}{$p=3$} & \multicolumn{1}{c}{$p=4$} \\ \hline
$1.1n^{-1/6}$ & 0.476 & 1.509 & 2.198 & \textbf{3.705} & 0.170 & \textbf{%
0.656} & \textbf{1.208} & \textbf{1.601} & 0.081 & \textbf{0.267} & \textbf{%
0.635} & \textbf{1.101} \\ 
$0.9n^{-1/6}$ & \textbf{0.468} & 1.555 & \textbf{2.197} & 3.786 & \textbf{%
0.160} & 0.663 & 1.269 & 1.695 & 0.073 & 0.275 & 0.671 & 1.144 \\ 
$0.7n^{-1/6}$ & 0.494 & 1.433 & 2.247 & 3.870 & \textbf{0.160} & 0.690 & 
1.257 & 1.755 & \textbf{0.071} & 0.303 & 0.722 & 1.214 \\ 
$0.5n^{-1/6}$ & 0.521 & 1.402 & 2.473 & 3.802 & 0.175 & 0.755 & 1.334 & 1.774
& 0.081 & 0.339 & 0.764 & 1.261 \\ 
$0.3n^{-1/6}$ & 0.503 & \textbf{1.379} & 2.665 & 3.867 & 0.235 & 0.814 & 
1.445 & 1.916 & 0.121 & 0.408 & 0.843 & 1.343 \\ 
$0.1n^{-1/6}$ & 0.657 & 1.464 & 2.962 & 4.762 & 0.329 & 0.874 & 1.452 & 2.161
& 0.201 & 0.475 & 0.869 & 1.379 \\ \hline
\end{tabular}%
\end{table}
}

\end{document}